\newtheorem{assumption}{Assumption}
\newtheorem{theorem}{Theorem}[section]
\newtheorem{lemma}[theorem]{Lemma}
\newtheorem{remark}{Remark}[section]
\theoremstyle{definition}
\newtheorem{prop}{Proposition}[section]
\newcommand{\cha}{\mathbb{1}_{\Omega_0}}
\newcommand{\out}{\mathbb{1}_{\mathbb{R}\setminus\Omega_0}}
\newcommand{\om}{{\Omega_{0}}}
\newcommand{\mL}{{\mathbb{L}}}
\newcommand{\R}{{\mathbb{R}}}
\newcommand{\G}{{\mathbb{G}}}
\newcommand{\B}{{\mathbb{B}}}
\newcommand{\mA}{{\mathbb{A}}}
\newcommand{\mB}{{\mathbb{B}}}
\newcommand{\oF}{\overline{\mathbb{F}}}
\newcommand{\oL}{\overline{\mathbb{L}}}
\newcommand{\oA}{\overline{\mathbb{A}}}
\newcommand{\oB}{\overline{\mathbb{B}}}
\newcommand{\gdag}{{\gamma^\dagger}}
\newcommand{\og}{\overline{\gamma}}
\newcommand{\ogd}{\overline{\gamma}^\dagger}
\newcommand{\mln}{{\mathbb{\Lambda}_\nu}}
\newcommand{\bydef}{\stackrel{\mbox{\tiny\textnormal{\raisebox{0ex}[0ex][0ex]{def}}}}{=}}
\title{Constructive proofs of existence and stability of solitary waves in the Whitham and capillary-gravity Whitham  equations}
\author{
Matthieu Cadiot
\footnote{McGill University, Department of Mathematics and Statistics, 805 Sherbrooke Street West, Montreal, QC, H3A 0B9, Canada. {\tt matthieu.cadiot@mail.mcgill.ca}}}
\begin{document}

\maketitle

\begin{abstract}
In this manuscript, we present a method to prove constructively the existence and  spectral stability of solitary waves in both the Whitham and the capillary-gravity Whitham  equations. By employing Fourier series analysis and computer-aided techniques, we successfully approximate the Fourier multiplier operator in this equation, allowing the construction of an approximate inverse for the linearization around an approximate solution $u_0$. Then, using a Newton-Kantorovich approach, we provide a sufficient condition under which the existence of a unique solitary wave $\tilde{u}$ in a ball centered at $u_0$ is obtained. The verification of such a condition is established combining analytic techniques and rigorous numerical computations. Moreover, we derive a methodology to control the spectrum of the linearization around $\tilde{u}$, enabling the study of spectral stability of the solution. As an illustration, we provide a (constructive) computer-assisted proof of existence of stable  {solitary waves} in both the case with capillary effects ($T>0$) and without capillary effects ($T=0$).  { Moreover, we provide an existence proof for a branch of solitary waves in the case $T=0$ via a rigorous continuation in the wave velocity.} The methodology presented in this paper can be generalized and provides a new approach for addressing the existence and spectral stability of solitary waves in nonlocal nonlinear equations. All computer-assisted proofs, including the requisite codes, are accessible on GitHub at \cite{julia_cadiot}.
\end{abstract}

\begin{center}
{\bf \small Key words.} 
{ \small Solitary Waves, Nonlocal Equations, Whitham Equation, Spectral Stability, Newton-Kantorovich Method, Computer-Assisted Proofs}
\end{center}

\section{Introduction}
In this paper, a computer-assisted analysis of solitary wave solutions to the Whitham equation ( {WE}) and to the capillary-gravity Whitham  equation ( {cgWE}) is presented. Building upon the findings in \cite{unbounded_domain_cadiot}, which primarily addresses the PDE case, we introduce new techniques to rigorously treat Fourier multiplier operators in nonlocal equations. These techniques are applied to both  {WE} and  {cgWE}, and the details for their specific analysis are exposed. These equations were originally proposed by Whitham to offer a more accurate model for surface water waves than the celebrated Korteweg-de-Vries (KdV) equation (see \cite{lannes_water_wave, whitham_variational, whitham2011linear} for a introduction to this model).  {Whitham's model} captures intricate fluid dynamics phenomena, such as wave breaking  {(see \cite{Hur_wave_breaking, saut_wave_breaking})} and cusped solutions  {(see \cite{EHRNSTROM2019on_whitham_conjecture})}.  Notably, it features solitary wave solutions, the central topic of study of this article. More specifically,
we investigate the existence and spectral stability of traveling solitary waves in the following equation  
\begin{equation}\label{eq : original Whitham}
   u_t +  \partial_x{\mathbb{M}_T}u + \frac{1}{2} u \partial_xu =0,
\end{equation}
where $\mathbb{M}_T$ is a Fourier multiplier operator defined via its symbol 
\begin{equation}\label{def : Kernel whitham}
        \mathcal{F}(\mathbb{M}_Tu)(\xi) \bydef m_T(2\pi\xi) \hat{u}(\xi) \bydef \sqrt{\frac{\tanh(2\pi\xi)(1+T(2\pi\xi)^2)}{2\pi\xi}}\hat{u}(\xi)
\end{equation}
for all $\xi \in \mathbb{R}$. The quantity $T\geq 0$ is the Bond number accounting for the capillary effects (also known as surface tension). If $T=0$, \eqref{eq : original Whitham} is fully gravitational and becomes the ``Whitham equation",  {denoted  {WE }along this paper. On the other hand, if $T>0$,  \eqref{eq : original Whitham} is called the ``capillary-gravity Whitham equation" and will be denoted  {cgWE }}. We will keep this distinction of name in mind as the analysis of the cases $T=0$ and $T>0$ will have to be handled separately. Using the traveling wave ansatz $X = x-ct$ in \eqref{eq : original Whitham}, we look for a solitary wave  $u : \R \to \R$ such that
\begin{align}\label{eq : whitham stationary}
   \mathbb{F}(u) \bydef {\mathbb{M}_T}u -cu + u^2 =0
\end{align}
where $c \in \mathbb{R}$ and $u(x) \to 0$ as $|x| \to \infty.$ Moreover, we look for even solutions to \eqref{eq : whitham stationary}, that is solutions $u$ satisfying $u(x) = u(-x)$ for all $x \in \R$. Acknowledging the presence of cusped solutions (see \cite{EHRNSTROM2019on_whitham_conjecture}), we restrict to smooth solutions to simplify the analysis.  
 {Specifically, we look for a solutions in an Hilbert space $\mathcal{H}_e$ (defined in \eqref{def : definition Hl}), which is a subspace of $H^2(\R).$ }
Our investigation of solitary waves is then achieved  by studying the zeros of  {$\mathbb{F} : \mathcal{H}_e \to H^2(\R)$}
In addition, smooth (classical) and even solutions to \eqref{eq : whitham stationary} are equivalently zeros of $\mathbb{F}$ in $\mathcal{H}_e$ (cf. Proposition \ref{prop : regularity of the solution}).

Constructively proving the existence of solutions to nonlocal equations is in general  {a very complex task}.  As we will present later on, non-constructive existence results are numerous, but only a few provide quantitative results about the solution itself (e.g. its shape, its amplitude, its symmetry, etc). This difficulty arises from the fact that solutions live in an infinite-dimensional function space and the position of the solution in this function space is usually unknown. From that aspect, computer-assisted proofs (CAPs) have become a natural tool to prove constructively the existence of solutions to nonlinear equations.  { Indeed, computer-aided techniques have displayed their potential through a wide variety of results, including
the Feigenbaum conjectures \cite{MR648529}, the existence of chaos
and global attractor in the Lorenz equations \cite{MR1276767,MR1701385,MR1870856},
 Wright's conjecture \cite{MR3779642},
chaos in the Kuramoto-Sivashinsky PDE \cite{MR4113209}, blowup in 3D
Euler \cite{blowup_3D_Euler} and imploding solutions for 3D compressible
fluids \cite{imploding_sols_3D_fluids}.}
We refer the interested reader to the following review papers \cite{nakao_numerical, gomez_cap, jb_rigorous_dynmamics, koch_computer_assisted} and the book \cite{plum_numerical_verif} for additional details. We want to emphasize the work by Enciso et al. in \cite{javier_convexity_highest} where a constructive proof of existence of a cusped periodic wave was obtained in the WE . The authors successfully demonstrated the convex profile of the periodic wave, resolving the conjecture proposed by Ehrnstr\"om and Wahlén in \cite{EHRNSTROM2019on_whitham_conjecture}. The proof is computer-assisted and relies on the approximation of the solution by a mix of Clausen functions (which allows to approximate precisely the cusp) and Fourier series.  {Note that the exact leading-order asymptotic behavior of the cusped solution was recently established analytically in \cite{cusped_behavior_ehsnstrom}}.

To lay the groundwork for subsequent discussions in this paper, we provide a brief overview of existing techniques and their applications to \eqref{eq : whitham stationary}. In particular, we distinguish two categories of methods : those relying on the concentration-compactness method and the ones arising from perturbation or bifurcation arguments.

One of the most general approach to tackle nonlocal equations is the concentration-compactness method \cite{lions1984concentration}. Indeed, defining a well-chosen functional $\mathcal{E} : H^k(\R) \to \R$ such that the minimizers of $\mathcal{E}$ (under some constraints) are solutions to \eqref{eq : whitham stationary}, provides general results of existence and energetic stability. For instance,  \cite{albert_concentration_compactness} and \cite{arnesen_existence_solitary} present a general setting to prove the existence and conditional energetic stability of solitary waves in a large class of nonlocal 1D equations. In particular, \cite{arnesen_existence_solitary} obtained the existence of solitary waves (with conditional energetic stability) in \eqref{eq : original Whitham} for any $T>0$ and $c < \min_{\xi \in \R} m_T(\xi)$. Note that under the assumption $c < \min_{\xi \in \R} m_T(\xi)$, the operator $\mathbb{M}_T - cI_d : H^{\frac{1}{2}}(\R) \to L^2(\R)$ has a bounded inverse. This assumption will be required in our set-up as well.  {It is worth noting that \cite{Buffoni_existence_conditional, BUFFONI20131006} provide the existence of solitary waves higher dimensional wave equations using improvements of the concentration-compactness method.} In the case of the  {WE  }($T=0$), the existence of a family of solitary waves  of small amplitude has been obtained in \cite{Ehrnstrom_existence_stability_solitary}. Similarly as in \cite{arnesen_existence_solitary}, the conditional energetic stability is obtained.  Their proof relies on the concentration-compactness method combined with an approximation of a solution by the known KdV solitary waves. In fact, when $T=0$ and  $c$ is close to $1$, the solitary waves in the  {WE  }\eqref{eq : whitham stationary} can be approximated by those of KdV equation. This local result  allowed the development of bifurcation or perturbation methods to study solitary waves, leading to a second category of techniques.

Indeed, the existence of solitary waves in \eqref{eq : original Whitham} can be obtained locally using the known explicit solutions in $\text{sech}^2$ of the KdV equation. For instance, Stefanov and Wright \cite{Stefanov2018SmallAT} proved the existence of small amplitude solitary waves as well as periodic traveling waves in the  {WE }for $c$ slightly bigger than $1.$ In addition, using the known solitons in the KdV equation, they were able to prove spectral stability by controlling the spectrum of the linearization. Johnson et al.  \cite{johnson_generalized} extended this result and proved the existence of generalized solitary waves in the  {cgWE}. Recently, Truong et al. \cite{truong_global_whitham, johnson_global_capillary_whitham} used the center manifold theorem \cite{scheel_center_manifold, faye2020corrigendum} to prove the existence of a bifurcation of solitary waves in both the case $T=0$ and $T>0$. Using a specific system of ODEs as an approximation, a local branch of solutions can be proven. It is worth mentioning that a global bifurcation is obtained in \cite{truong_global_whitham} in the WE , leading to the proof of existence of a cusped  {solitary wave} with a $C^{\frac{1}{2}}$ regularity ( {the regularity of the solitary waves on the branch being already established in \cite{EHRNSTROM2019on_whitham_conjecture})}.  Ehrnstrom et al. \cite{ehrnstrom_direct} obtained a similar result by using a family of periodic solutions converging to the solitary wave as the period goes to infinity.  Notably, the concept of a period-limiting sequence had been previously employed in  {\cite{ Buffoni_existence_conditional, Ehrnstrom_existence_stability_solitary}} or \cite{Hildrum_2020}.  {Moreover, the authors in \cite{arnesen_maximization_technique} recently established the existence of a family of solitary waves by considering subproblems on intervals $[-2^l,2^l]$ and successfully took the limit as $l \to \infty$ thanks to the development of sharp estimates and an innovative use of Orlicz spaces.}   {The combination of period-limiting and compactly supported functions} is central in our analysis, as we leverage the strong connection between the periodic problem and the problem defined on $\R$ to constructively establish the existence of solitary waves through a periodic approximation over a sufficiently large interval.  {In particular, we are able to prove that the obtained solitary waves are the limit of a branch of periodic solutions when letting the period tend to infinity (cf. Theorems \ref{th : proof whitham} and \ref{th: : proof capillary whitham}).}

In general, proving constructively the existence of solitary waves and determining their spectral stability , without restriction on the parameters (e.g. $c$ being in an epsilon neighborhood of $1$ in the WE ), is a highly complex problem. In  {a more general context}, the non-local equation might not always be locally  approximated by an  {explicitly known} differential equation. In this paper, we partially address this question for \eqref{eq : whitham stationary} and provide a general methodology to establish constructively the existence of  {solitary waves} as well as their spectral stability. Specifically, we develop new computer-assisted techniques to handle directly non-local equations, which we present in the following paragraphs.

As a matter of fact, we  use the method developed in \cite{unbounded_domain_cadiot}, which is based on Fourier series analysis, and extend it to nonlocal equations. First, one needs to construct an approximate even solution $u_0 : \R \to \R$ in an Hilbert space $\mathcal{H}_e$ (cf. \eqref{def : definition of He}) such that its support is contained in an interval $\om \bydef (-d,d)$. In particular, $u_0$ is defined via its Fourier coefficients $(a_n)_{n \in \mathbb{N}\cup\{0\}}$ on $\om$, which is chosen in such a way that $u_0$ is smooth (cf. Section \ref{ssec : approximate solution}) and even. In other terms,
\begin{align*}
    u_0(x) = \cha(x) \left(a_0 + {2}\sum_{n \in \mathbb{N}}a_n\cos\left(\frac{n\pi}{d}x\right) \right)
\end{align*}
for all $x \in \R$,  where $\cha$ is the characteristic function on $\om.$
Intuitively, the restriction of $u_0$ to $\om$ approximates a periodic solution to \eqref{eq : whitham stationary}. If $d$ is big enough, then $u_0$ is supposedly a good approximation for a solitary wave as well.   
Now, in the PDE case  presented in \cite{unbounded_domain_cadiot}, given a linear differential operator $\mathbb{L}$ with constant coefficients and with an even symbol $l$, we have
\[(\mathbb{L}u_0)(x) = \cha(x)\left(l(0) a_0 +  {2}\sum_{n \in \mathbb{N}}l\left(\frac{n\pi}{d}\right)a_n\cos\left(\frac{n\pi}{d}x\right)\right)\]
for all $x \in \R$. The above is simply obtained leveraging the facts that $u_0$ is smooth  {(providing convergence of the Fourier series and regularity at $\pm d$)}  and that $\mathbb{L}$ is a local operator. The main difficulty in \eqref{eq : whitham stationary} arises from the presence of the nonlocal operator $\mathbb{M}_T.$ More specifically, the simple evaluation of the function $\mathbb{M}_Tu_0$ on $\R$ is  challenging.
In this paper, we present a computer-assisted approach to answer this problem. In fact, using the transformation $\Gamma^\dagger$ defined in \eqref{def : Gamma and Gamma dagger}, we prove that $\mathbb{M}_Tu_0$ can be approximated by $\Gamma^\dagger\left(M_T\right) u_0$, which is given by 
\begin{align*}
    \left(\Gamma^\dagger\left(M_T\right) u_0\right)(x) = \cha(x)\left(m_T(0) a_0 +  {2}\sum_{n \in \mathbb{N}}m_T\left(\frac{n\pi}{d}\right)a_n\cos\left(\frac{n\pi}{d}x\right)\right).
\end{align*}
Intuitively, $\Gamma^\dagger\left(M_T\right)$ is the periodization on $\om$ of the operator $\mathbb{M}_T$, which had already been introduced and studied in \cite{EHRNSTROM2019on_whitham_conjecture}. 
 In particular, we prove that an upper bound for $\|\mathbb{M}_Tu_0 - \Gamma^\dagger\left(M_T\right) u_0\|_{L^2(\R)}$ can be computed explicitly with the use of rigorous numerics (cf. Section \ref{sec : Y0 bound}). This upper bound depends on  the domain of analyticity of $m_T$ and we prove that it is exponentially decaying with $d$. Therefore, given a function $u_0$ with compact support on a big enough domain $\om$, our approach provides a rigorous approximation to $\mathbb{M}_Tu_0$ with high accuracy. This approach can be generalized to Fourier multiplier operators which symbol is analytic on some strip of the complex plane (see Section \ref{conclusion}).  {This is, to the best of our knowledge, a new result for the treatment of Fourier multiplier operators in the computer-assisted field.}
 
 Now, given a fixed approximate solution $u_0$, our goal is to develop a Newton-Kantorovich approach in a neighborhood of $u_0$. In particular, we require the construction of an approximate inverse $\mathbb{A}_T$ of $D\mathbb{F}(u_0)$. By approximate inverse, we mean an operator $\mathbb{A}_T : H^2_e \to \mathcal{H}_e$ (where $H^2_e$ is the restriction of $H^2(\R)$ to even functions) such that the $\mathcal{H}_e$ operator norm satisfies $\|I_d - \mathbb{A}_TD\mathbb{F}(u_0)\|_{\mathcal{H}} < 1$ and can be computed explicitly (see Section \ref{sec : computation of the bounds} and the computation of $\mathcal{Z}_1$). Generalizing the results of \cite{unbounded_domain_cadiot}, we can readily treat the case $T>0$  since $m_T(\xi) \to \infty$ as $|\xi| \to \infty$.\\ However,  {in the case $T=0$,} notice that $m_0(\xi) \to 0$ as $|\xi| \to \infty$ meaning the theory developed in \cite{unbounded_domain_cadiot} does not apply anymore. In fact, as $\xi$ gets large, then $D\mathbb{F}(u_0)  \approx  \left(I_d - \frac{2}{c}\mathbb{u}_0\right)\mathbb{L}$, where $\mathbb{u}_0$ is the multiplication operator by $u_0$. Consequently, we need to ensure that $ I_d - \frac{2}{c}\mathbb{u}_0 : L^2 \to L^2$ is invertible and has a bounded inverse. Having this strategy in mind, we use the construction presented in  \cite{breden_polynomial_chaos} to build $\mathbb{A}_0$.  {For $w_0$ chosen appropriately,  we choose}  $\mathbb{A}_0 \approx \mathbb{L}^{-1}\mathbb{w}_0$ for high frequencies, where $\mathbb{w}_0$ is the multiplication operator associated to the function $w_0 \in L^2(\R)$. In particular, $w_0$ is chosen so that $(1 - \frac{2}{c}{u}_0)w_0 \approx 1$. In practice $w_0$ is  {determined} numerically and the quantity $(1 - \frac{2}{c}{u}_0)w_0$ can be controlled rigorously thanks to the arithmetic on intervals (see \cite{Moore_interval_analysis} and \cite{julia_interval}). Consequently, we can verify explicitly that $\mathbb{A}_0$ is indeed an accurate approximate inverse (see Section \ref{sec : bound Z1}).  Note that this approach only makes sense if $|u_0 - \frac{c}{2}| >0$ uniformly. 
 Consequently,  {in the case $T=0$}, we assume  that there exists $\epsilon>0$ (for which the existence is verified numerically) such that $u_0 + \epsilon < \frac{c}{2}$ (cf. Assumption \ref{ass : u0 is smaller than}) and the construction of the approximate inverse $\mathbb{A}_0$ is presented in Section \ref{ssec : approximate inverse}.  { Note that such a requirement is not needed for  {cgWE }, that is when $T>0$}. This assumption is not surprising as it allows to restrict to smooth solutions (cf. \cite{EHRNSTROM2019on_whitham_conjecture} for instance) and avoids the critical cusped solution which happens exactly when the maximal height of the solution is $\frac{c}{2}$ (hence $D\mathbb{F}(u_0)$ would be singular).
 
 The construction of an approximate inverse is  essential in our analysis as it allows to develop a Newton-Kantorovish approach. Indeed, defining 
 \[
 \mathbb{T}(u) = u - \mathbb{A}_T\mathbb{F}(u)
 \]
 and assuming that $\mathbb{A}_T$ is injective, we prove that $\mathbb{T}$ is contracting from a closed ball $\overline{B_r(u_0)}$ in $\mathcal{H}_e$ of radius $r$ and centered at $u_0$ to itself. Using the Banach fixed point theorem, we obtain the existence of  a unique solution in $\mathcal{H}_e$ close to $u_0$ (cf. Theorem \ref{th: radii polynomial}). In particular, the radius $r$ controls rigorously the accuracy of the  numerical approximation $u_0$. In practice, $r$ is usually relatively small and provides a sharp control on the true solution (cf. Theorems \ref{th : proof whitham} and \ref{th: : proof capillary whitham}). 
 Similarly as in \cite{unbounded_domain_cadiot}, the proof of  $\mathbb{T} : \overline{B_r(u_0)} \to \overline{B_r(u_0)}$ being contractive and $\mathbb{A}_T$ being injective relies on the explicit computation of some upper bounds $\mathcal{Y}_0, \mathcal{Z}_1, \mathcal{Z}_2$. We expose the computation of such bounds in Section \ref{sec : computation of the bounds} in a general framework.
 
 On the other hand, being able to construct an approximate inverse allows  {to tackle different problem of interest such as stabillity and continuation}. Indeed, we can first obtain rigorous enclosures of simple eigenvalues using a Newton-Kantorovish approach, but also prove non-existence of eigenvalues. This strategy is used in Section \ref{sec : stability} where we  control the non-positive part of the spectrum of the linearization around the solutions of Theorems \ref{th : proof whitham} and \ref{th: : proof capillary whitham}. In particular, we prove that the zero eigenvalue is simple and that there exists only one negative eigenvalue which is also simple. This allows to conclude about the spectral stability of the aforementioned solitary wave solutions. 
On the other hand, the framework of the presented method allows to combine the computer-assisted proof of existence with a rigorous continuation.  { In fact, following the set-up introduced in \cite{breden_polynomial_chaos, henot_marchal}, we are able to use a numerical Chebyshev expansion in the wave velocity $c$ and obtain a constructive proof of a branch of solitary waves with high accuracy (cf. Theorem \ref{th : proof of a branch}). This is, to the best of our knowledge, the first computer-assisted proof of a branch of solutions in nonlocal equations. }

 Consequently, this paper introduces innovative methodologies  {to first evaluate Fourier multiplier operators and, most importantly,  approximate rigorously the inverse of the linearization} of \eqref{eq : whitham stationary} around an approximate solution.  The existence and spectral stability of solutions to \eqref{eq : whitham stationary} can then be studied via a Newton-Kantorovish approach. These results can  be generalized to a large class of nonlocal equations on $\R^n$ which we describe in Section \ref{conclusion}. We organize the paper as follows. Section \ref{sec : formulation of the problem} provides the required notations as well as the set-up of the problem. Then we expose in Section \ref{sec : computer-assisted approach} the construction of the approximate solution $u_0$ as well as the approximate inverse $\mathbb{A}_T$. Moreover, we establish the computer-assisted strategy for the (constructive) proof of solitary waves. In Section \ref{sec : computation of the bounds}, we provide explicit computations of bounds which are required in the computer-assisted approach. Combining these analytic bounds with rigorous numerics, we prove the existence of four solitary waves, three in the  {cgWE } $(T = 0.25, 0.5, 3)$ and one in the  {WE }$(T=0)$ (cf. Theorems \ref{th : proof whitham} and \ref{th: : proof capillary whitham}).  {Then, using the estimates of Section \ref{sec : computation of the bounds}, we provide an existence proof of a branch of solitary waves in the WE, parametrized by the velocity $c$.}
 Finally, we expose in Section \ref{sec : stability} a methodology to control the spectrum of the linearized operator. Proofs of spectral stability are obtained for the aforementioned solutions. The codes to perform the computer-assisted proofs are available at \cite{julia_cadiot}. In particular, all computational aspects are implemented in Julia (cf. \cite{julia_fresh_approach_bezanson}) via the package RadiiPolynomial.jl (cf. \cite{julia_olivier}) which relies on the package IntervalArithmetic.jl (cf. \cite{julia_interval}) for rigorous floating-point computations.

\section{Formulation of the problem}\label{sec : formulation of the problem}

 Recall the Lebesgue notation $L^2 = L^2(\mathbb{R})$ or $L^2(\Omega_0)$ on a bounded domain $\om$. More generally, $L^p$ denotes the usual $p$ Lebesgue space on $\mathbb{R}$ associated to its norm $\| \cdot \|_{p}$. For a bounded linear operator $\mathbb{B} : L^2 \to L^2$, we define $\mathbb{B}^*$ as the adjoint of $\mathbb{B}$ in $L^2$. Moreover, if $v \in L^2$, we define $\hat{v} \bydef \mathcal{F}(v)$ as the Fourier transform of $v$. More specifically,  $\displaystyle \hat{v}(\xi)  \bydef  \int_{\mathbb{R}}v(x)e^{-2\pi i x\xi}dx$ for all $\xi \in \mathbb{R}.$ 
Given $u \in L^\infty$, denote by
\begin{align}\label{eq : multiplication operator L2}
    \mathbb{u} \colon L^2 &\to L^2\\
    v &\mapsto \mathbb{u}v \bydef uv
\end{align} 
the linear multiplication operator associated to $u$.
Finally, given $u, v \in L^2$, we denote  $u*v$ the convolution of $u$ and $v$.

In this paper, given $c \in \R$ and $T\geq 0$, we wish to study the existence of solutions to 
\begin{align*}
    {\mathbb{M}_T}u -cu + u^2 =0
\end{align*}
such that $u$ is even, $u(x) \to 0$ as $|x| \to \infty$ and where $\mathbb{M}_T$ is defined in \eqref{def : Kernel whitham}.  

Having in mind a set-up similar as the one presented in \cite{unbounded_domain_cadiot}, we impose the invertibility of the linear operator $\mathbb{M}_T - cI_d$.
Equivalently, we require $|l(\xi)|>0$ for all $\xi \in \R$. This assumption is essential in our analysis and will make sense later on (see \eqref{def : definition Hl} or Section \ref{sec : decay of the kernel operator} for instance). 
The following lemma provides values of $c$ and $T$ for which such a condition is satisfied.

\begin{lemma}\label{lem : value for c and T}
If $T >0$, then there exists $c_T\leq 1$ such that if $c < c_T$, then  $m_T(\xi) - c >0$
for all $\xi \in \mathbb{R}$.  If $T=0$ and if $c>1$ or $c<0$, then $|m_0(\xi) - c| >0$ for all $\xi \in \R$.
\end{lemma}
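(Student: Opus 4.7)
The plan is to analyze the symbol $m_T(\xi) = \sqrt{\tanh(\xi)(1+T\xi^2)/\xi}$ (extended by continuity to $\xi=0$ with value $1$) as a function on $\mathbb{R}$ and separate the argument into the capillary case $T>0$ and the pure gravity case $T=0$.

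First I would record the basic properties that hold for every $T\geq 0$: the function $\xi\mapsto \tanh(\xi)/\xi$ is even, continuous, and strictly positive on $\mathbb{R}$ (with the extension $\tanh(\xi)/\xi\big|_{\xi=0}=1$), so $m_T$ is well-defined, continuous, even and strictly positive on $\mathbb{R}$, with $m_T(0)=1$.

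For the case $T>0$, I would study the behavior of $m_T$ at infinity. Using $\tanh(\xi)\to 1$ as $\xi\to +\infty$, the quantity $\tanh(\xi)(1+T\xi^2)/\xi$ behaves like $T\xi$ as $|\xi|\to\infty$, hence $m_T(\xi)\to +\infty$. Combined with continuity and positivity, this implies $m_T$ attains a positive minimum on $\mathbb{R}$. Defining
\begin{equation*}
c_T \bydef \min_{\xi\in\mathbb{R}} m_T(\xi),
\end{equation*}
we immediately get $c_T>0$ and $c_T \leq m_T(0)=1$. Therefore, whenever $c<c_T$, one has $m_T(\xi)-c \geq c_T-c>0$ for all $\xi\in\mathbb{R}$, which is the first claim.

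For the case $T=0$, I would argue that $0<m_0(\xi)\leq 1$ for all $\xi\in\mathbb{R}$. The upper bound follows from the elementary inequality $\tanh(\xi)\leq \xi$ on $[0,+\infty)$ (with the even extension on $(-\infty,0]$), which gives $\tanh(\xi)/\xi \leq 1$ and hence $m_0(\xi)\leq 1$; strict positivity was noted above. Given this, if $c>1$ then $m_0(\xi)-c \leq 1-c <0$, and if $c<0$ then $m_0(\xi)-c \geq -c>0$, so in either case $|m_0(\xi)-c|>0$ uniformly in $\xi$, completing the proof.

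The only mildly delicate point is the asymptotic analysis of $m_T$ as $|\xi|\to\infty$ when $T>0$ and the elementary inequality $\tanh(\xi)\leq \xi$ used to bound $m_0$; both are routine calculus facts, so no serious obstacle arises in this lemma.
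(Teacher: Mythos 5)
Your proof is correct and follows essentially the same route as the paper, namely a direct elementary analysis of the symbol $m_T$: for $T>0$ define $c_T$ as the global minimum of $m_T$ (which exists since $m_T$ is continuous, even, positive and tends to $+\infty$, and satisfies $c_T\leq m_T(0)=1$), and for $T=0$ use $0<m_0\leq 1$ via $\tanh\xi\leq\xi$. The only difference is that the paper additionally pins down where the minimum is attained (at $0$ when $T\geq\tfrac13$, at $\pm x_T$ with $c_T<1$ when $0<T<\tfrac13$), which your coercivity argument bypasses without affecting the validity of the stated lemma.
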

\begin{proof}
The proof is a simple analysis of the function $m_T$. When $T \geq \frac{1}{3}$, the minimum of $m_T$ is reached at $0$ and has a value of 1. When $0 < T< \frac{1}{3}$, the minimum is reached at $x = \pm x_T$, for some $x_T >0$, and has a value $c_T \bydef m_T(x_T) <1.$ Finally, if $T=0$, notice that $m_0$ has a global maximum at $0$ and $m_0(0) = 1$. Moreover $m_0 \geq 0$ and $m_0(\xi) \to 0$ as $\xi \to \infty$. Therefore, if $c>1$ or if $c<0$, then $|m_0(\xi) - c| >0$ for all $\xi \in \R$.
\end{proof}

In order to ensure that $|l| >0$, we require the following Assumption \ref{ass : value of c and T}, which is justified by Lemma \ref{lem : value for c and T}.
\begin{assumption}\label{ass : value of c and T}
    Assume that $T \geq 0$ and $c \in \R$ are chosen such that :\\
   \noindent \underline{Case $T>0$} : If $T >0$, then assume $c <c_T$, where $c_T$ is defined in Lemma \ref{lem : value for c and T}.\\
   \noindent \underline{Case $T=0$} : If $T =0$, then assume $c >1$.
\end{assumption}
 {Now, using similar notations as \cite{unbounded_domain_cadiot}, let us define $\mathbb{L}$ as the linear part of $\mathbb{F}$ (cf. \eqref{eq : whitham stationary}) and $\mathbb{G}$ the nonlinear one, that is
\begin{align}
    \mathbb{L} \bydef 
        \mathbb{M}_T - cI_d 
    ~~ \text{ and } ~~  \mathbb{G}(u) \bydef
        u^2 
\end{align}}
At this point, we need to choose a space of functions for our analysis. Let us define $\nu > 0$ as
\begin{align}\label{def : nu}
    \nu \bydef \begin{cases}
        T &\text{ if } T>0\\
        \frac{4}{\pi^2} &\text{ if } T=0
    \end{cases}
\end{align}
and denote 
\begin{equation}\label{def : definition Lnu}
     {\mathbb{\Lambda}_{\nu}} \bydef I_d - \nu\Delta,
\end{equation}
where $\Delta$ is the one dimensional Laplacian. In particular,  {$ \mathbb{L}$ and $ {\mathbb{\Lambda}_{\nu}}$ have a symbol $l$ and $l_\nu : \mathbb{R} \to \mathbb{R}$ respectively given by 
\begin{align}\label{def : fourier transform linear part}
  l(2\pi \xi) \bydef m_T(2\pi\xi) - c ~~ \text{ and } ~~ l_\nu(2\pi\xi) \bydef 1+ \nu (2\pi\xi)^2 >0
\end{align}
 for all $\xi \in \mathbb{R}$}.  {Now, we slightly modify the definition of the classical Sobolev space $H^2 \bydef H^2(\R)$ to comply with the lifting operator $\mathbb{\Lambda}_\nu$. Specifically, we consider the norm $\|\cdot\|_{H^2}$ given by
\begin{align*}
    \|u\|_{H^2} = \|\mathbb{\Lambda}_\nu u \|_{2}
\end{align*}
for all $u \in H^2.$ Moreover, we introduce $\mathcal{H}$ as the  Hilbert space given by
\begin{align}\label{def : definition Hl}
    \mathcal{H} \bydef \{u \in L^2, ~~\|u\|_{\mathcal{H}} < \infty\}
\end{align}
and associated with the following inner product and norm
\begin{align}
\nonumber
(u,v)_{\mathcal{H}} \bydef (\mathbb{L}\mathbb{\Lambda}_\nu u,\mathbb{L}\mathbb{\Lambda}_\nu v)_2, ~~
    \|u\|_{\mathcal{H}} \bydef \|\mathbb{L}\mathbb{\Lambda}_\nu u\|_2 
\end{align}
for all $u,v \in \mathcal{H}$.
}
Note that,  under Assumption \ref{ass : value of c and T},  $|l(\xi)|>0$ (cf. \eqref{def : fourier transform linear part})  for all $\xi \in \R$. This provides the well-definedness of the Hilbert space $\mathcal{H}$. Now, we show that the additional regularity provided by $\mathbb{\Lambda}_\nu$ allows to obtain the well-definedness of the non-linearity $\G$. In the case $T>0$,  we have $\mathcal{H} = H^{\frac{5}{2}}(\mathbb{R})$ by equivalence of norms and $H^{\frac{5}{2}}(\mathbb{R})$ is a Banach algebra. In particular, it implies that $\mathbb{G} : \mathcal{H} \to H^2$ is a well-defined and  smooth operator. Similarly, if $T=0$, then we have $\mathcal{H} = H^{2}(\mathbb{R})$, which is also a Banach algebra. In particular, we obtain the following result.

\begin{lemma}\label{lem : banach algebra}
    If $\kappa_T >0$ satisfies 
    \begin{equation}\label{def : definition of kappa}
        \kappa_T \geq \frac{1}{\min_{\xi \in \R}|l(\xi)|^2\sqrt{\nu}},
    \end{equation}
    then $\|uv\|_{H^2} \leq \kappa_T \|u\|_{\mathcal{H}}\|v\|_{\mathcal{H}}$ for all $u, v \in \mathcal{H}$, where $ {\Lambda_{\nu}}$ is defined in \eqref{def : definition Lnu}.
\end{lemma}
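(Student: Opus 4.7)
The plan is to work on the Fourier side, where $\mathbb{L}_\nu$ becomes multiplication by $l_\nu(2\pi\xi) = 1 + \nu(2\pi\xi)^2$ and the product $uv$ becomes the convolution $\hat u \ast \hat v$. Since $\mathbb{L} = \mathbb{L}_\nu(\mathbb{M}_T - cI_d)$ has symbol $(m_T - c)\,l_\nu$ and $|m_T - c| \geq \min_{\xi\in\R}|m_T(\xi) - c| > 0$ by Assumption~\ref{ass : value of c and T}, Parseval immediately provides the reduction
\[
\|\mathbb{L}_\nu u\|_2 = \|l_\nu \hat u\|_2 \leq \frac{1}{\min_{\xi\in\R}|m_T(\xi)-c|}\,\|(m_T-c)\,l_\nu\hat u\|_2 = \frac{\|u\|_l}{\min_{\xi\in\R}|m_T(\xi)-c|}.
\]
Hence the claim reduces to a weighted Banach-algebra-type inequality of the form $\|\mathbb{L}_\nu(uv)\|_2 \leq C(\nu)\,\|\mathbb{L}_\nu u\|_2\,\|\mathbb{L}_\nu v\|_2$ with $C(\nu) \leq 1/\sqrt{\nu}$.

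For this algebra estimate, I would use a Peetre-type splitting of the weight. Since $(a+b)^2 \leq 2(a^2+b^2)$, one has $l_\nu(2\pi\xi) \leq 2\,l_\nu(2\pi\eta) + 2\,l_\nu(2\pi(\xi-\eta))$ for all $\xi,\eta\in\R$. Substituting this into Plancherel's identity $\|\mathbb{L}_\nu(uv)\|_2 = \|l_\nu\,(\hat u \ast \hat v)\|_2$ and taking absolute values splits the weight across the two factors of the convolution; applying Young's inequality $L^2 \ast L^1 \hookrightarrow L^2$ then yields
\[
\|\mathbb{L}_\nu(uv)\|_2 \leq 2\,\|\mathbb{L}_\nu u\|_2\,\|\hat v\|_1 + 2\,\|\hat u\|_1\,\|\mathbb{L}_\nu v\|_2,
\]
and each Fourier $L^1$ norm is controlled by Cauchy--Schwarz through $\|\hat v\|_1 \leq \|1/l_\nu\|_2\,\|\mathbb{L}_\nu v\|_2$, with the symmetric bound for $\hat u$.

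The explicit $\nu$-dependence then comes out of the elementary weight integrals
\[
\int_\R \frac{d\xi}{1+\nu(2\pi\xi)^2} = \frac{1}{2\sqrt{\nu}}, \qquad \int_\R \frac{d\xi}{(1+\nu(2\pi\xi)^2)^2} = \frac{1}{4\sqrt{\nu}},
\]
obtained via the substitution $y = 2\pi\sqrt{\nu}\xi$ together with $\int_\R (1+y^2)^{-1}dy = \pi$ and $\int_\R(1+y^2)^{-2}dy = \pi/2$. Plugging the second identity into the Cauchy--Schwarz step and combining with the Fourier reduction for both $u$ and $v$ produces a bound of the form $\|\mathbb{L}_\nu(uv)\|_2 \leq \kappa_T\,\|u\|_l\|v\|_l$ with the stated prefactor $(\min_\xi|m_T(\xi)-c|)^{-2}$. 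The main delicate point will be the careful tracking of absolute constants through the Peetre, Young, and Cauchy--Schwarz steps so that the final coefficient assembles into exactly $\kappa_T \geq 1/(\min_\xi|m_T(\xi)-c|^2\sqrt{\nu})$ as stated: a naive assembly produces only a $\nu^{-1/4}$ scaling, so obtaining the sharper $\nu^{-1/2}$ form will require either a symmetric weighted splitting that routes the $L^1$-integral of $1/l_\nu$ (rather than its $L^2$-integral) into the final estimate, or an alternative $L^\infty$-based estimate using $\|u\|_\infty \leq \|\hat u\|_1$ twice, which directly contributes the $1/\sqrt{\nu}$ factor.
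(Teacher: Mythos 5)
Your route is essentially the paper's own proof of Lemma~\ref{lem : banach algebra}: the Peetre splitting $l_\nu(2\pi\xi)\le 2\,l_\nu(2\pi\eta)+2\,l_\nu(2\pi(\xi-\eta))$, Plancherel, Young's inequality $L^2\ast L^1\hookrightarrow L^2$, Cauchy--Schwarz to control the Fourier $L^1$ norm, and the integral $\int_\R(1+\nu(2\pi\xi)^2)^{-2}d\xi=\tfrac{1}{4\sqrt{\nu}}$ are exactly the steps used there. The only difference is bookkeeping: you run Cauchy--Schwarz against $1/l_\nu$ and pull out two factors of $\min_{\xi}|m_T(\xi)-c|^{-1}$ separately, while the paper runs it against the full symbol $1/l$ and then estimates $\|1/l(2\pi\cdot)\|_2$; both assemblies give the same number, namely $\|\mathbb{L}_\nu(uv)\|_2\le \tfrac{2}{\nu^{1/4}}\,\min_\xi|m_T(\xi)-c|^{-2}\,\|u\|_l\|v\|_l$.

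The ``delicate point'' you flag at the end is therefore not a defect of your argument relative to the paper's: the paper's own chain, assembled carefully, also produces the factor $2\nu^{-1/4}$ rather than $\nu^{-1/2}$, since $\|\mathbb{L}_\nu(uv)\|_2\le 4\|1/l(2\pi\cdot)\|_2\,\max_\xi|m_T(\xi)-c|^{-1}\,\|u\|_l\|v\|_l$ together with $\|1/l(2\pi\cdot)\|_2^2\le\tfrac{1}{4\sqrt{\nu}}\max_\xi|m_T(\xi)-c|^{-2}$ gives $\tfrac{2}{\nu^{1/4}}$; the constant stated in \eqref{def : definition of kappa} appears to result from inserting the value of $\|1/l\|_2^2$ where $\|1/l\|_2$ is required. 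Since $\tfrac{2}{\nu^{1/4}}>\tfrac{1}{\sqrt{\nu}}$ exactly when $\nu>1/16$ (which covers both $\nu=T=0.5$ and $\nu=4/\pi^2$), neither your assembly nor the paper's justifies the stated bound, and you should not expect your two suggested repairs to recover it by this route: Young's inequality forces one factor of the product to carry the full weight $l_\nu$ in $L^2$ while the other enters only through its Fourier $L^1$ norm, so the $L^1$ integral $\|1/l_\nu\|_1=\tfrac{1}{2\sqrt{\nu}}$ cannot simply replace the $L^2$ quantity, and the $\|u\|_\infty\le\|\hat u\|_1$ idea runs into the same obstruction. In short: same method, correctly executed to the same point; the residual mismatch you identified is real, and the honest conclusion of this argument is the lemma with $\kappa_T\ge \tfrac{2}{\nu^{1/4}\min_\xi|m_T(\xi)-c|^{2}}$ in place of the constant displayed in \eqref{def : definition of kappa}.
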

\begin{proof}
    The proof follows the steps of Lemma 2.4 in \cite{unbounded_domain_cadiot}. Let $u, v \in \mathcal{H}$ and notice that 
    $1 + \nu(2\pi\xi)^2 \leq 2(1+\nu(2\pi x)^2) + 2(1+\nu(2\pi x-2\pi \xi)^2)$ for all $x, \xi \in \R$. Therefore,
    \begin{align}\label{eq : banach first step}
    \nonumber
         \left|l_\nu(2\pi\xi)\mathcal{F}\left( uv\right)(\xi)\right|
    &= \left|(1+\nu(2\pi\xi)^2)\left(\hat{u}*\hat{v}\right)(\xi)\right| \\ \nonumber
    &\le 2\int_{\mathbb{R}} |(1+\nu(2\pi x)^2)\hat{u}(x)\hat{v}(\xi-x)| dx  +  |\hat{u}(x)(1+\nu(2\pi(\xi-x))^2)\hat{v}(\xi-x)| dx\\ 
     \le 2\max_{\xi \in \R}&\frac{1+\nu(2\pi \xi)^2}{|l(2\pi\xi)|}\int_{\mathbb{R}} |l(2\pi x)\hat{u}(x)\hat{v}(\xi-x)| dx  +  |\hat{u}(x)l\left(2\pi(\xi-x)\right)\hat{v}(\xi-x)| dx.
    \end{align}
    Then, notice $\|l(2\pi\xi) l_\nu(2\pi\xi) \hat{u}(\xi)\|_2 = \|u\|_{\mathcal{H}}$ by Plancherel's identity and the definition of the norm on $\mathcal{H}$. Therefore, using Plancherel's identity again in \eqref{eq : banach first step} and  using Young's inequality for the convolution we get
\begin{align}\label{eq : banach last step}
    \|uv\|_{H^2} = \|l_\nu(2\pi\xi) (u*v)(\xi)\|_2 \leq 2\max_{\xi \in \R}\frac{1}{|l(\xi)|}\left( \|u\|_{\mathcal{H}} \|\hat{v}\|_1 + \|v\|_{\mathcal{H}} \|\hat{u}\|_1 \right).
\end{align}
Now, note that
 {
\begin{align}\label{eq : cauchy S lem banach}
    \|\hat{u}\|_1  = \left\|\frac{1}{l_\nu(2\pi \cdot)} (l_\nu(2\pi \cdot)\hat{u})\right\|_1  
    \leq \left\|\frac{1}{l_\nu(2\pi \cdot)}\right\|_2\|\frac{l(2\pi\cdot)}{l(2\pi\cdot)} l_\nu(2\pi \cdot)\hat{u}\|_2 \leq \max_{\xi \in \R}\frac{1}{|l(\xi)|} \left\|\frac{1}{l(2\pi \cdot)}\right\|_2\|u\|_{\mathcal{H}}.
\end{align}
Therefore, combining \eqref{eq : banach last step} and \eqref{eq : cauchy S lem banach}, we obtain \[ \|uv\|_{H^2} \leq 4\left\|\frac{1}{l_\nu(2\pi \cdot)}\right\|_2 \max_{\xi \in \R}\frac{1}{|l(\xi)|^2}.\] To conclude the proof it remains to compute $\|\frac{1}{l_\nu(2\pi \cdot)}\|_2$. We have
\begin{align}\label{eq : computation norm 1/l}
    \int_\R \frac{1}{l_\nu(2\pi\xi)^2}d\xi = \int_\R \frac{1}{(1+(2\pi\nu\xi)^2))^2}d\xi 
    &= \frac{1}{4\sqrt{\nu}}.
\end{align}}
\end{proof}

The previous lemma provides the explicit computation of a constant $\kappa_T$ such that $\|\G(u)\|_{H^2} \leq \kappa_T \|u\|_{\mathcal{H}}^2$. In particular, the value of $\kappa_T$ is essential in our computer-assisted approach  (cf. Lemma \ref{lem : bound Z_2} for instance). Moreover, we define 
\[
\mathbb{F}(u) \bydef \mathbb{L}u + \mathbb{G}(u)
\]
where $\mathbb{F} : \mathcal{H} \to H^2$.  In particular, the zero finding problem $\mathbb{F}(u) =0$ is well-defined on $\mathcal{H}.$ The condition $u  \to 0$ as $|x| \to \infty$ is satisfied implicitly if $u \in \mathcal{H}.$

Notice that the set of solutions to \eqref{eq : whitham stationary} possesses a natural translation invariance. In order to isolate this invariance, we choose to look for even solutions. Consequently, denote by $\mathcal{H}_e \subset \mathcal{H}$ the Hilbert subspace of $\mathcal{H}$ consisting of real-valued even functions
\begin{align}\label{def : definition of He}
    \mathcal{H}_e \bydef \{ u \in \mathcal{H},~ u(x) = u(-x)  \in \R \text{ for all } x \in \mathbb{R} \}.
\end{align} 
We similarly denote $H^2_e, L^2_e$ as the restriction of $H^2, L^2$ respectively to even functions.  In particular, notice that $l$ (defined in \eqref{def : fourier transform linear part}) is even so $\mathbb{L} u \in H^2_e$. Similarly $\mathbb{G}(u) \in H^2_e$ for all $u \in \mathcal{H}_e.$ Therefore we consider $\mL$, $\G$ and $\mathbb{F}$ as operator from $\mathcal{H}_e$ to $H^2_e$ 

\begin{remark}
    Note that the choice of the operator $ {\mathbb{\Lambda}_{\nu}}$ is justified by the fact that $ {\mathbb{\Lambda}_{\nu}}$ is not only an invertible differential operator, but also conserves the even symmetry. This point is essential in our set-up.
\end{remark}

Finally, we look for solutions of the following problem
\begin{equation}\label{eq : f(u)=0 on He}
    \mathbb{F}(u) = 0 ~~ \text{ and } ~~ u \in \mathcal{H}_e.
\end{equation}
In the case $T>0$, one can easily prove that solutions to \eqref{eq : f(u)=0 on He} are equivalently classical solutions of \eqref{eq : whitham stationary} using some bootstrapping argument (see \cite{unbounded_domain_cadiot}). In the case $T=0$, because $m_0(\xi) \to 0$ as $\xi \to \infty$, the same argument does not apply and an additional condition is needed to obtain the a posteriori regularity of the solution. We summarize these results in the next proposition.
\begin{prop}\label{prop : regularity of the solution}
Let $u \in \mathcal{H}_e$ such that $u$ solves ${\mathbb{F}}(u) = 0,$ then we separate two cases.

If $T>0$, then $u \in H^\infty(\R) \subset C^\infty(\mathbb{R})$ and $u$ is an even classical solution of \eqref{eq : whitham stationary}.\\
If $T=0$ and if in addition $u(x) < \frac{c}{2}$ for all $x \in \R$, then $u \in H^\infty(\R) \subset C^\infty(\mathbb{R})$ and $u$ is an even classical solution of \eqref{eq : whitham stationary}.
\end{prop}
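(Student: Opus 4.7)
The plan is to reduce the statement to showing $u \in H^\infty(\R)$ in both cases, since then the reduced equation $(\mathbb{M}_T - cI_d)u + u^2 = 0$ holds classically, $u \to 0$ at infinity follows from $u \in H^1 \hookrightarrow C_0(\R)$, and evenness is immediate from $u \in H^l_e$. First observe that $\mL_\nu$ is a Fourier multiplier with strictly positive symbol $l_\nu$, hence a topological isomorphism between Sobolev spaces, so $\mathbb{F}(u) = \mL_\nu\bigl((\mathbb{M}_T - cI_d)u + u^2\bigr) = 0$ in $L^2$ if and only if $(\mathbb{M}_T - cI_d)u + u^2 = 0$ in $L^2$. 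All the work is therefore in the bootstrap analysis of this reduced equation.

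For the case $T > 0$, the symbol $m_T(\xi)$ grows like $|\xi|^{1/2}$ as $|\xi| \to \infty$, so, combined with Assumption \ref{ass : value of c and T}, the inverse multiplier $(\mathbb{M}_T - cI_d)^{-1}$ is of order $-1/2$ and hence maps $H^s \to H^{s + 1/2}$ for every $s \geq 0$. Starting from $u \in H^l_e$, which is equivalent to $H^{5/2}_e$, the fact that $H^{5/2}(\R)$ is a Banach algebra (which underlies Lemma \ref{lem : banach algebra}) gives $u^2 \in H^{5/2}$, and hence $u = -(\mathbb{M}_T - cI_d)^{-1}(u^2) \in H^3$. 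Iterating this argument yields $u \in H^s$ for every $s \geq 0$.

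The case $T = 0$ is more delicate because $m_0(\xi) \to 0$ at infinity, so that $(\mathbb{M}_0 - cI_d)^{-1}$ does not smooth. Here the extra hypothesis $u < c/2$ plays the decisive role. Since $u \in H^l = H^2 \hookrightarrow C_0(\R)$, the pointwise strict inequality promotes to $\delta := c - 2\sup_{\R} u > 0$. Differentiating the reduced equation, written in the form $u(c - u) = \mathbb{M}_0 u$, gives $(c - 2u)u' = \mathbb{M}_0(u')$, i.e.\ $u' = (c - 2u)^{-1}\mathbb{M}_0(u')$. Since $m_0(\xi) = O(|\xi|^{-1/2})$ at infinity, $\mathbb{M}_0$ is smoothing of order $1/2$. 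Coupled with the fact that $(c - 2u)^{-1}$ is bounded below away from zero and inherits at each bootstrap step the regularity already established for $u$, a $W^{k,\infty}$-multiplier estimate preserves $H^{s + 1/2}$ regularity after composition. The recursion $u \in H^s \Rightarrow u' \in H^{s-1} \Rightarrow \mathbb{M}_0(u') \in H^{s - 1/2} \Rightarrow u' \in H^{s-1/2} \Rightarrow u \in H^{s+1/2}$ therefore lifts $u$ from $H^2$ to every Sobolev space.

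The principal obstacle is the careful management of the multiplication by $(c - 2u)^{-1}$ in the $T = 0$ case: because this function does not vanish at infinity, a pure Banach-algebra estimate cannot be applied, and one must invoke at each step a $W^{k,\infty}$-multiplier bound with $k$ updated from the current Sobolev regularity of $u$. Once $u \in H^\infty$, all quantities appearing in $(\mathbb{M}_T - cI_d)u + u^2 = 0$ are in $C^\infty \cap C_0(\R)$, and the equation holds pointwise, delivering the even classical solution of \eqref{eq : whitham stationary} with the required decay.
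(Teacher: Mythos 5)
Your overall strategy is sound, and it is essentially the argument behind the results the paper invokes: the paper itself disposes of this proposition by citation (the $T>0$ case to \cite{unbounded_domain_cadiot}, the $T=0$ case to \cite{EHRNSTROM2019on_whitham_conjecture}), so you are supplying the details rather than diverging from them. The reduction through $\mathbb{L}_\nu$, the $T>0$ iteration using that $(\mathbb{M}_T-cI_d)^{-1}$ gains half a derivative while $H^{s}$, $s\geq 5/2$, is an algebra, and the observation that in the $T=0$ case the strict pointwise bound upgrades to $\inf_\R\,(c-2u)>0$ (since $u\in H^2\hookrightarrow C_0(\R)$, so either $\sup u\leq 0$ or the supremum is attained) are all correct.

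The weak link is the multiplication step in the $T=0$ bootstrap. Taken literally, the ``$W^{k,\infty}$-multiplier bound with $k$ updated from the current regularity of $u$'' stalls at the very first iteration: $u\in H^2$ only gives $(c-2u)^{-1}\in W^{1,\infty}$ (its second derivative is merely in $L^2$), and multiplication by a $W^{1,\infty}$ function is only guaranteed to preserve $H^t$ for $0\le t\le 1$, so after multiplying you cannot retain $\mathbb{M}_0(u')\in H^{3/2}$ and the scheme never gains the promised half derivative. The repair is standard but must be stated: write $(c-2u)^{-1}=\tfrac{1}{c}+F(u)$ with $F(t)=\tfrac{2t}{c(c-2t)}$, which is smooth on a neighbourhood of the compact range of $u$ and satisfies $F(0)=0$, so the Moser composition estimate gives $F(u)\in H^{s}$ whenever $u\in H^{s}$, $s>1/2$; the product estimate $H^{s}\cdot H^{t}\subset H^{t}$ for $s\ge t\ge 0$, $s>1/2$, then lets you multiply $\mathbb{M}_0(u')\in H^{s-1/2}$ without loss and the recursion closes. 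Alternatively you can avoid differentiating altogether: from $\mathbb{M}_0u=u(c-u)$ and $c-2u\ge\delta>0$ one has $(c-2u)^2=c^2-4\,\mathbb{M}_0u\ge\delta^2$, hence $u=\tfrac12\bigl(c-\sqrt{c^2-4\,\mathbb{M}_0u}\bigr)$, and composition with the smooth square root lifts $u$ by half a derivative per step; this is the route taken in the cited reference.
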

\begin{proof}
The proof of the case $T>0$ follows identical steps as the one of Proposition 2.5 in \cite{unbounded_domain_cadiot}. For the case $T=0$, the proof is derived in \cite{EHRNSTROM2019on_whitham_conjecture}.
\end{proof}

The above proposition shows that if one looks for a smooth solution to \eqref{eq : whitham stationary}, then equivalently, one can study \eqref{eq : f(u)=0 on He} instead. Consequently, without loss of generality, the strong even solutions of \eqref{eq : whitham stationary} can be studied through the  zero finding problem \eqref{eq : f(u)=0 on He}. Note that, in the case $T=0$, one needs to verify a posteriori that $u(x) < \frac{c}{2}$ for all $x \in \R$ in order to obtain a smooth solution. We illustrate this point in Section \ref{ssec : approximate inverse}.

Finally, denote by $\|\cdot\|_{\mathcal{H},H^2}$ the operator norm for any bounded linear operator between the two Hilbert spaces $\mathcal{H}$ and $H^2$. Similarly denote by $\|\cdot\|_{\mathcal{H}}$, $\|\cdot\|_{H^2}$ and $\|\cdot\|_{H^2,\mathcal{H}}$ the operator norms for bounded linear operators on $\mathcal{H} \to \mathcal{H}$, $H^2\to H^2$ and $H^2 \to \mathcal{H}$ respectively.

\subsection{Periodic spaces}\label{sec : periodic spaces}

In this section we recall some notations on periodic spaces introduced in Section 2.4 of \cite{unbounded_domain_cadiot}. Indeed, the objects introduced in the previous section possess a Fourier series counterpart. We want to use this correlation in our computer-assisted approach to study \eqref{eq : whitham stationary}. Denote $$\Omega_0 \bydef (-d,d)$$  where $1<d <\infty$. $\Omega_0$ is the domain on which we which we construct the approximate solution $u_0$ (cf. Section \ref{ssec : approximate solution}).
Then, define $$\tilde{n} \bydef \frac{n}{2d} \in \mathbb{R}$$ for all $n \in \mathbb{Z}$. Denote by $\ell^p$  the usual $p$ Lebesgue space for sequences indexed on $\mathbb{Z}$ associated to its norm $\| \cdot \|_{\ell^p}$. Then, using coherent notations, we introduce $\mathscr{h}$ as the Hilbert space defined as 
\begin{align}\label{def : h}
     \mathscr{h} \bydef \{ U=(u_n)_{n\in \mathbb{Z}} \text{ such that $(U,U)_{\mathscr{h}} < \infty$} \}
 \end{align} with $(U,V)_{\mathscr{h}} \bydef \sum\limits_{\underset{}{n \in \mathbb{Z}}} u_n\overline{v_n}|l(2\pi\tilde{n})|^{2}$. 
 Similarly, we denote $(\cdot,\cdot)_{\ell^2}$ the usual inner product in $\ell^2.$ Moreover, denote $h^k$ the Hilbert space defined as 
 \[
 h^k \bydef \left\{U=(u_n)_{n\in \mathbb{Z}} \text{ such that } \sum_{n \in \mathbb{Z}} |u_n|^2(1+(2\pi\tilde{n})^2)^k < \infty\right\}.
 \]
 $h^k$ is the Fourier series equivalent of $H^k.$ Now, given a sequence  of Fourier coefficients $U = (u_n)_{n \in \mathbb{Z}}$ representing an even function, $U$  satisfies
\begin{align*}
    u_{n} = u_{-n} \text{ for all } n \in \mathbb{Z}.
\end{align*}
 {Consequently, for a given $p \geq 1$, we define $\ell^p_e$ as the following subset of $\ell^p$
\[
    \ell^p_{e} \bydef \left\{U = (u_n)_{n \in \mathbb{Z}} \in \ell^p, ~ u_n = u_{-n} \text{ for all } n \in \mathbb{Z}\right\}.
\]
}
Now, using the notations of \cite{unbounded_domain_cadiot}, we  define $\gamma : L^2 \to \ell^2$   and $\gamma^\dagger : \ell^2 \to L^2$ as
 {
\begin{align}\label{def : gamma}
  \left(\gamma(u)\right)_n \bydef  \frac{1}{|\om|}\int_\om u(x) e^{-2\pi i \tilde{n} x}dx ~~ \text{ and } ~~ \gamma^\dagger\left(U\right)(x) \bydef \cha(x) \sum_{n \in \mathbb{Z}} u_n e^{2\pi i \tilde{n}x}
\end{align}
for all $n \in \mathbb{Z}$, all $x\in \R$ and all $U = \left(u_n\right)_{n \in \mathbb{Z}} \in \ell^2$, where $\cha$ is the characteristic function on $\om$. Given $u \in L^2$, $\gamma(u)$ represents the Fourier coefficients  of the restriction of $u$ on $\om$.  Conversely, given a sequence $U\in \ell^2$ of Fourier coefficients, $\gdag\left(U\right)$  is the function representation of $U$ in $L^2$ with support contained in $\om$. In particular, notice that $\gdag\left(U\right)(x) =\gamma^\dagger\left(V\right)(x) = 0$ for all $x \notin \om.$  }
Then, define
\begin{align}\label{def : space for functions with support on omega}
    H_{\om} &\bydef \left\{u \in H ~: ~ \text{supp}(u) \subset \overline{\om} \right\}, ~ \text{ where $H$ is a Hilbert space of functions on $\R$}.
\end{align}
For instance, $L^{2}_{\om} = \{u \in L^2 ~: ~ \text{supp}(u) \subset \overline{\om} \}$ and $H^{l}_{e,\om} = \{u \in \mathcal{H}_e ~: ~ \text{supp}(u) \subset \overline{\om} \}$.

Then, given an Hilbert space $H$, denote by $\mathcal{B}\left(H\right)$ the set of bounded linear operators from $H$ to itself. Moreover, if $H$ is an Hilbert space on functions defined on $\R$, define $\mathcal{B}_\om\left(H\right) \subset \mathcal{B}\left(H\right)$ as 
\begin{equation}\label{def : Bomega}
    \mathcal{B}_\om(H) \bydef \{\mathbb{B}_\om \in \mathcal{B}(H) :  \mathbb{B}_\om = \cha \mathbb{B}_\om \cha\}.
\end{equation}
Finally, define $\Gamma : \mathcal{B}(L^2) \to \mathcal{B}(\ell^2)$ and $\Gamma^\dagger : \mathcal{B}(\ell^2) \to \mathcal{B}(L^2)$ as follows
\begin{equation}\label{def : Gamma and Gamma dagger}
    \Gamma(\mathbb{B}) \bydef \gamma \mathbb{B} \gdag ~~ \text{ and } ~~  \Gamma^\dagger(B) \bydef \gamma^\dagger {B} \gamma 
\end{equation}
for all $\mB \in \mathcal{B}(L^2)$ and all $B \in \mathcal{B}(\ell^2)$. Intuitively, given a bounded linear operator $\mathbb{B} : L^2 \to L^2$, $\Gamma(\mathbb{B})$ provides a corresponding bounded linear operator $B : \ell^2 \to \ell^2$ on Fourier coefficients. $\Gamma^\dagger$ provides the converse construction. Note that since supp$(\gamma^\dagger(U)) \subset \overline{\om}$, then $\mathbb{B} = \Gamma^\dagger\left(\Gamma(\mathbb{B})\right)$ if and only if $\mathbb{B} = \cha \mathbb{B} \cha$, that is if and only if $\mathbb{B} \in \mathcal{B}_\om(L^2)$. 

The maps defined above in \eqref{def : gamma} and \eqref{def : Gamma and Gamma dagger} are fundamental in our analysis as they allow to pass from the problem on $\R$ to the one in  $\ell^2$ and vice-versa. Furthermore, we provide in the following lemma, which is proven in \cite{unbounded_domain_cadiot} using Parseval's identity, that this passage is actually an isometric isomorphism when restricted to the relevant spaces.
\begin{lemma}\label{lem : gamma and Gamma properties}
    The map $\sqrt{|\om|} \gamma : L^2_{\om} \to \ell^2$ (resp. $\Gamma : \mathcal{B}_\om(L^2) \to \mathcal{B}(\ell^2)$  is an isometric isomorphism whose inverse is given by $\frac{1}{\sqrt{|\om|}} \gamma^\dagger: \ell^2 \to L^2_{\om}$   (resp. $\Gamma^\dagger :   \mathcal{B}(\ell^2) \to \mathcal{B}_\om(L^2)$. In particular,
    \begin{align*}
        \|u\|_2 = \sqrt{|\om|}\|U\|_2 ~\text{ and }~ \|\mathbb{B}\|_2 = \|B\|_2
    \end{align*}
    for all $u \in L^2_\om$ and all $\mathbb{B} \in \mathcal{B}_\om(L^2)$, where $U \bydef \gamma(u)$ and $B \bydef \Gamma(\mathbb{B})$.
\end{lemma}

The above lemma not only provides a one-to-one correspondence between the elements in $L^2_\om$ (resp. $\mathcal{B}_\om(L^2)$) and the ones in $\ell^2$ (resp. $\mathcal{B}(\ell^2)$) but it also provides an identity on norms. In particular, given a bounded linear operator $B : \ell^2_e \to \ell^2_e$, which has been obtained numerically for instance, then Lemma \ref{lem : gamma and Gamma properties} provides that $\mathbb{B} \bydef \Gamma^\dagger(B)$  {satisfies} $\|\mathbb{B}\|_2 = \|B\|_2$. Consequently, the previous lemma provides a convenient strategy to build bounded linear operators on $L^2_e$, from which norms computations can be obtained throughout their Fourier coefficients counterpart. This property is essential in our construction of an approximate inverse  in Section \ref{ssec : approximate inverse}.

Now, given $k>0$, we define the Hilbert spaces $\mathscr{h}_e$ and  $h^k_e$ as 
\begin{align*}
    \mathscr{h}_e \bydef \mathscr{h}\cap \ell^2_e ~~ \text{ and } ~~ h^k_e \bydef h^k\cup \ell^2_e.
\end{align*}
Such spaces allow us to define the Fourier coefficients version of the operators introduced earlier.
Denote by $L : \mathscr{h}_e \to \ell^2_e$, $M_T : \mathscr{h}_e \to \ell^2_e$, $ {\Lambda_{\nu}} : \mathscr{h}_e \to \ell^2_e$ and $G : \mathscr{h}_e \to \ell^2_e$ the Fourier coefficients representations of $\mathbb{L}$, $\mathbb{M}_T$, $  {\mathbb{\Lambda}_{\nu}}$ and $\mathbb{G}$ respectively. More specifically, $L$, $M_T$ and ${\Lambda}_\nu$ are  infinite diagonal matrices with, respectively, coefficients $\left(l(2\pi\tilde{n})\right)_{n\in \mathbb{Z}}$,  $\left(m_T(2\pi\tilde{n})\right)_{n\in \mathbb{Z}}$ and  $\left({l}_\nu(2\pi\tilde{n})\right)_{n\in \mathbb{Z}}$ on the diagonal.  {Moreover, we have $G(U) =  U*U$, where $U*V$ is defined as the usual discrete convolution given by
\begin{align}\label{def : discrete convolution usual form}
    (U*V)_n = \sum_{k \in \mathbb{Z}}u_{n-k}v_k = \left(\gamma\left(\gamma^\dagger\left(U\right)\gamma^\dagger\left(V\right)\right)\right)_n
\end{align}
for all $n \in \mathbb{Z}.$} In other terms, $U*V$ is the sequence of Fourier coefficients of the product $uv$ where $u$ and $v$ are the function representation of $U$ and $V$ respectively. In particular, notice that Young's inequality for the convolution is applicable and we have
\begin{align}\label{eq : youngs inequality}
    \|U*V\|_2 \leq \|U\|_2 \|V\|_1
\end{align}
for all  {$U \in \ell^2$ and $V\in \ell^1.$} Now, we define $F(U) \bydef LU + G(U)$ and introduce 
\begin{equation}\label{eq : F(U)=0 in h}
    F(U) =0 ~~ \text{ and } ~~ U \in \mathscr{h}_e
\end{equation}
as the periodic equivalent on $\om$ of \eqref{eq : f(u)=0 on He}. Finally, similarly as in \eqref{eq : multiplication operator L2}, given $U \in \ell^1$, we define the linear  discrete  convolution operator
\begin{align}\label{def : discrete conv operator}
    \mathbb{U} \colon \ell^2 &\to \ell^2\\
                    V &\mapsto \mathbb{U}V \bydef U*V.       
\end{align}
 Finally, we slightly abuse notation and  {denote by $\partial_x$ the linear operator given by 
 \begin{align*}
    \partial_x \colon &h^1 \to \ell^2\\
    &U \mapsto \partial_x U = (2\pi i\tilde{n}u_n)_{n\in \mathbb{Z}}.
\end{align*}
In other words, given $U \in h^1$ and $u_{p}$ its function representation on $\om$, then $\partial_x U$ is the sequence of  Fourier coefficients of $\partial_x u_{p}.$}

 {
\begin{remark}
    Note that sequences in $\ell^2_e$ can be represented by their restriction to the reduced set $\mathbb{N}\cup\{0\}$ using the symmetry $u_n=u_{-n}$. Indeed, there is a bijection between $\ell^2_e$ and $\ell^2(\mathbb{N}\cup\{0\})$. Consequently, we numerically store finite sequences in $\ell^2_e$ as finite sequences in $\ell^2(\mathbb{N}\cup\{0\})$ to gain computer memory. The same idea applies for operators on $\ell^2_e \to \ell^2_e$, which can be stored as operators on $\ell^2(\mathbb{N}\cup\{0\}) \to \ell^2(\mathbb{N}\cup\{0\}).$ Finally, the even symmetry provides an isometry between $\ell^2_e$ and $\ell^2(\mathbb{N}\cup\{0\})$, when $\ell^2(\mathbb{N}\cup\{0\})$ is associated with the following norm
    \[
    \|U\|_{\ell^2(\mathbb{N}\cup\{0\})}^2 =  |u_0|^2 + 2 \sum_{n \in \mathbb{N}} |u_n|^2. 
    \]
   Such an isometry provides a natural way to reduce numerical complexity. 
\end{remark}

}

\section{Computer-assisted approach}\label{sec : computer-assisted approach}

In this section, we present a  Newton-Kantorovich approach and the construction of the required objects to apply it. More specifically, we want to turn the zeros of  \eqref{eq : f(u)=0 on He} into fixed points of some contracting operator $\mathbb{T}$ defined below.

Let $u_0 \in  \mathcal{H}_e$, such that supp$(u_0) \subset \overline{\om}$, be an approximate solution of \eqref{eq : f(u)=0 on He}. Given a  bounded injective linear operator $\mathbb{A}_T : H^2_e \to \mathcal{H}_e$, which will be defined in Section \ref{ssec : approximate inverse}, we want to prove that there exists $r>0$ such that $\mathbb{T} : \overline{B_r(u_0)} \to \overline{B_r(u_0)}$ defined as
\[
\mathbb{T}(u) \bydef u - \mathbb{A}_T\mathbb{F}(u)
\]
is well-defined and is a contraction, where ${B_r(u_0)} \subset \mathcal{H}_e$ is the open ball centered at $u_0$ and of radius $r$.  Note that we explicit the dependency in $T$ of $\mathbb{A}_T$ as we will need to separate the cases $T=0$ and $T>0$. In order to determine a possible value for $r>0$ that would provide the contraction and the well-definedness of $\mathbb{T}$, we want to build $\mathbb{A}_T : H^2_e \to \mathcal{H}_e$, $\mathcal{Y}_0, \mathcal{Z}_1$ and $\mathcal{Z}_2 >0$ in such a way that the hypotheses of the following Theorem \ref{th: radii polynomial} are satisfied.
\begin{theorem}\label{th: radii polynomial}
Let $\mathbb{A}_T : H^2_e \to \mathcal{H}_e$ be an injective bounded linear operator and let $\mathcal{Y}_0, \mathcal{Z}_1$ and $\mathcal{Z}_2$ be non-negative constants such that
  \begin{align}\label{eq: definition Y0 Z1 Z2}
  \nonumber
    \|\mathbb{A}_T{\mathbb{F}}(u_0)\|_{\mathcal{H}} &\leq \mathcal{Y}_0\\
    \|I_d - \mathbb{A}_TD{\mathbb{F}}(u_0)\|_{\mathcal{H}} &\leq \mathcal{Z}_1\\\nonumber
    \|\mathbb{A}_T\left({D}{\mathbb{F}}(v) - D{\mathbb{F}}(u_0)\right)\|_{\mathcal{H}} &\leq \mathcal{Z}_2r, ~~ \text{for all } v \in \overline{B_r(u_0)}.
\end{align}  
If there exists $r>0$ such that
\begin{equation}\label{condition radii polynomial}
    \mathcal{Z}_2r^2 - (1-\mathcal{Z}_1)r + \mathcal{Y}_0 < 0,
 \end{equation}
then there exists a unique $\tilde{u} \in \overline{B_r(u_0)} \subset \mathcal{H}_e$ solving \eqref{eq : f(u)=0 on He}.
\end{theorem}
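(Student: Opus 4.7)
The plan is to recognize Theorem \ref{th: radii polynomial} as a standard radii-polynomial / Newton-Kantorovich statement, and to prove it by verifying the hypotheses of the Banach fixed point theorem for $\mathbb{T}$ on the closed ball $\overline{B_r(u_0)}$, then upgrading the unique fixed point of $\mathbb{T}$ to a unique zero of $\mathbb{F}$ via the injectivity of $\mathbb{A}_T$.

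First I would show that $\mathbb{T}$ maps $\overline{B_r(u_0)}$ into itself. Given $v \in \overline{B_r(u_0)}$, I would rewrite
\begin{align*}
\mathbb{T}(v) - u_0 &= -\mathbb{A}_T \mathbb{F}(u_0) + (I_d - \mathbb{A}_T D\mathbb{F}(u_0))(v-u_0) \\
&\quad + \mathbb{A}_T \int_0^1 \bigl( D\mathbb{F}(u_0) - D\mathbb{F}(u_0 + t(v-u_0)) \bigr)(v-u_0)\,dt,
\end{align*}
using the fundamental theorem of calculus on $\mathbb{F}(v) - \mathbb{F}(u_0)$; here each intermediate point $u_0+t(v-u_0)$ lies in $\overline{B_r(u_0)}$ because the ball is convex. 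Taking $\|\cdot\|_l$ norms and applying the three estimates in \eqref{eq: definition Y0 Z1 Z2} yields
\begin{equation*}
\|\mathbb{T}(v) - u_0\|_l \le \mathcal{Y}_0 + \mathcal{Z}_1 r + \mathcal{Z}_2 r^2,
\end{equation*}
and \eqref{condition radii polynomial} forces this to be strictly less than $r$.

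Second, I would show $\mathbb{T}$ is a contraction. For $v_1,v_2 \in \overline{B_r(u_0)}$, the same manipulation gives
\begin{equation*}
\mathbb{T}(v_1) - \mathbb{T}(v_2) = (I_d - \mathbb{A}_T D\mathbb{F}(u_0))(v_1-v_2) + \mathbb{A}_T \int_0^1 \bigl(D\mathbb{F}(u_0) - D\mathbb{F}(w_t)\bigr)(v_1-v_2)\,dt
\end{equation*}
with $w_t = v_2 + t(v_1-v_2) \in \overline{B_r(u_0)}$, so that $\|\mathbb{T}(v_1)-\mathbb{T}(v_2)\|_l \le (\mathcal{Z}_1 + \mathcal{Z}_2 r)\|v_1-v_2\|_l$. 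Since $\mathcal{Y}_0 \ge 0$ and $r>0$, inequality \eqref{condition radii polynomial} implies $\mathcal{Z}_1 + \mathcal{Z}_2 r < 1$, so $\mathbb{T}$ is a strict contraction on the complete metric space $\overline{B_r(u_0)} \subset H^l_e$.

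Finally, Banach's fixed point theorem produces a unique $\tilde{u} \in \overline{B_r(u_0)}$ with $\mathbb{T}(\tilde u) = \tilde u$, i.e.\ $\mathbb{A}_T \mathbb{F}(\tilde u) = 0$; the injectivity of $\mathbb{A}_T$ then gives $\mathbb{F}(\tilde u) = 0$, and uniqueness of the fixed point transfers to uniqueness of the zero of $\mathbb{F}$ inside $\overline{B_r(u_0)}$. No step is really an obstacle here: the only mild subtlety is justifying the integral form of the Taylor remainder, which is legitimate because $\mathbb{F} : H^l_e \to L^2_e$ is smooth (its nonlinear part $\mathbb{G}$ is a polynomial plus a bilinear form handled by Lemma \ref{lem : banach algebra}, guaranteeing Fréchet differentiability and local Lipschitz continuity of $D\mathbb{F}$), so $t \mapsto D\mathbb{F}(u_0 + t(v-u_0))(v-u_0)$ is Bochner-integrable in $L^2_e$, and $\mathbb{A}_T$ commutes with the integral by boundedness.
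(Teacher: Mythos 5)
Your proof is correct and is exactly the standard radii-polynomial / Newton--Kantorovich argument (self-mapping plus contraction of $\mathbb{T}$ on $\overline{B_r(u_0)}$, Banach fixed point, then injectivity of $\mathbb{A}_T$), which is what the paper invokes by citing \cite{van2021spontaneous} instead of writing it out. All the estimates, the use of convexity of the ball for the mean-value/integral remainder, and the deduction $\mathcal{Z}_1+\mathcal{Z}_2 r<1$ from \eqref{condition radii polynomial} are handled properly, so nothing is missing.
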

\begin{proof}
The proof can be found in \cite{van2021spontaneous}.
\end{proof}

\subsection{Construction of an approximate solution \texorpdfstring{$u_0$}{u0}}\label{ssec : approximate solution}

In order to use Theorem \ref{th: radii polynomial}, one first needs to build an approximate solution $u_0 \in \mathcal{H}_e$. We actually need to add some additional constraints on $u_0$ which will be necessary in order to perform a computer-assisted approach. These requirements are presented in this section.

We begin by introducing some notation. Define $H^4_e(\R)$  as the subspace of $H^4(\R)$ restricted to even functions, that is
\[
H^4_e(\R) \bydef \{u \in H^4(\R), ~ u(x) = u(-x) \text{ for all } x \in \R\}.
\]
Now, let us fix $N \in \mathbb{N}$. $N$ represents the size of the numerical problem ; that is $N$ is the size of the Fourier series truncation. Then, we introduce the following projection operators
 {
 \begin{align}\label{def : projection truncation}
     \left(\pi^N(U)\right)_k  &=  \begin{cases}
          u_k,  & |k| \leq N \\
              0, & |k| > N
    \end{cases} ~~ \text{ and } ~~
     \left(\pi_N(U)\right)_k  =  \begin{cases}
          0,  & |k| \leq N \\
              u_k, & |k| > N
    \end{cases}
 \end{align}
  for all $k \in \mathbb{Z}$ and all  $U = (u_k)_{k \in \mathbb{Z}} \in \ell^2$. 
    In particular if $U$ satisfies $U = \pi^{N} U$, it means that $U$  only has a finite number of non-zero coefficients ($U$ can be seen as a vector).}

 Now that the required notations are introduced, we can present the construction of $u_0$. In practice, we start by numerically computing the Fourier coefficients $\tilde{U}_0 \in \ell^2_e$ of an approximate solution. In particular, because $\tilde{U}_0$ is numerical, we have $\tilde{U}_0 = \pi^N \tilde{U}_0$ ($\tilde{U}_0$ is represented as a vector on the computer). The construction of $\tilde{U}_0$ can be established using different numerical approaches. In our case, $\tilde{U}_0$ is obtained using the relationship that exists between the KdV equation and the WE . Indeed, it is well-known that the KdV equation $u'' - \alpha u + \gamma u^2=0$ provides approximate  {solitary waves} for the  {WE }when the constant $c$ in \eqref{eq : whitham stationary} is close to $1$ (see \cite{johnson_generalized} or \cite{Stefanov2018SmallAT} for instance). In particular, using the known soliton solutions in $\text{sech}^2$ in the KdV equation, we can obtain a first approximate solution $\tilde{U}_0 \in  X^4_e$. Then, we refine this approximate solution using a Newton method and obtain a new approximate solution that we still denote $\tilde{U}_0 \in  X^4_e$. If needed, continuation can be used to move along branches and reach the desired value for $c$ and $T$.
 
 Then, notice that $\tilde{u}_0 \bydef \gamma^\dagger\left(\tilde{U}_0\right) \in L^2_{e,\om}$ but $\tilde{u}_0$ is not necessarily smooth. In order to ensure smoothness on $\R$, we need to project $\tilde{u}_0$ into the space of functions having a null trace at $x= \pm d$. 
 In terms of regularity, we need $u_0 \in \mathcal{H}_{e,\om}$  in order to apply Theorem \ref{th: radii polynomial}. Moreover,  Lemma \ref{lem : bound Y_0} presented below requires $u_0 \in H^4_{e,\om}(\R)$ to be applicable (that is $u_0 \in H^4_e(\R)$ and supp$(u_0) \subset \overline{\om}$). Noticing that $\mathcal{H}_e \subset H^4_e(\R)$, it is enough to construct $u_0 \in H^4_{e,\om}(\R)$. Therefore, using Section 4 from \cite{unbounded_domain_cadiot}, we need to ensure that the trace of order 4 of $u_0$ at $d$ is null. Equivalently, we need to project $\tilde{u}_0$ into the set of functions with a null trace of order 4 and define $u_0$ as the projection.  

 First, note that if $u$ has a null trace of order 4, then equivalently  $u(\pm d) = u'(\pm d) = u''(\pm d) = u'''(\pm d) = 0$. If in addition $u = \gamma^\dagger\left(U\right)$ for some $U \in \ell^2_e$ such that $U = \pi^N U$, then $u$ is even and periodic, and the null trace conditions reduce to $u(d)  = u''(d) = 0$. Now, note that these two equations can equivalently be written thanks to the coefficients $U$. Indeed, define  { $\mathcal{T} = \mathcal{T}_0 \times \mathcal{T}_2 : \ell^2_e \to \R^2$ as 
\begin{align*}
    \mathcal{T}_j(V) \bydef \displaystyle\sum_{|n| \leq N } v_{n} (-1)^n\big(\frac{\pi n}{d}\big)^j  
\end{align*}
for all $V = (v_n)_{n \in \mathbb{Z}} \in \ell^2_e$ and $j \in \{0, 2\}$.} If $U = \pi^N U$ and if $\mathcal{T}(U) =0$, then $\gamma^\dagger\left(U\right)(d) = \gamma^\dagger\left(U\right)''(d) = 0$. Therefore, given our approximate solution $\tilde{U}_0 \in X^4_e$ such that $U_0 = \pi^N U_0$, then by projecting $\tilde{U}_0$ in the kernel of $\mathcal{T}$, we obtain that the function representation of the projection is smooth on $\R$. Now, notice that $\mathcal{T}$ can be represented by a $2$ by $N+1$ matrix. We abuse notation and identify $\mathcal{T}$ by its matrix representation. Following the construction presented in \cite{unbounded_domain_cadiot}, we define $D$ to be the diagonal matrix with entries  { $\left(\frac{1}{l(2\pi\tilde{n})}\right)_{|n| \leq N}$ } on the diagonal and  we build a projection  $U_0$ of $\tilde{U}_0$ in the kernel of ${\mathcal{T}}$ defined as
\begin{equation}\label{eq : projection in h^k_0}
    U_0  \bydef \tilde{U}_0 - D\mathcal{T}^*(\mathcal{T}D\mathcal{T}^*)^{-1}\mathcal{T}\tilde{U}_0,
\end{equation}
where $\mathcal{T}^*$ is the adjoint of $\mathcal{T}.$ We abuse notation in the above equation as $U_0$ and $\tilde{U}_0$ are seen as vectors in $\R^{N+1}.$ Specifically, \eqref{eq : projection in h^k_0} is implemented numerically  using interval arithmetic (cf. IntervalArithmetic on Julia \cite{julia_interval,  julia_fresh_approach_bezanson}). This provides a computer-assisted approach to rigorously construct vectors in Ker$(\mathcal{T})$.
Consequently, we obtain that $U_0 = \pi^N U_0$  and that $u_0 \bydef \gamma^\dagger\left(U_0\right) \in H^4_{e,\om}(\R)$ by construction. In particular, if $\mathcal{T}\left(\tilde{U}_0\right)$ is small, then $U_0$ and $\tilde{U}_0$ are close in norm.

For the rest of the article, we assume that $u_0 = \gamma^\dagger\left(U_0\right) \in H^4_{e,\om}$ with $U_0 \in X^4_e$ satisfying $U_0 = \pi^N U_0$.

\subsection{Construction of the approximate inverse \texorpdfstring{$\mathbb{A}_T$}{AT}}\label{ssec : approximate inverse}

The second ingredient that we require for the use of Theorem \ref{th: radii polynomial} is the linear operator $\mathbb{A}_T$. This operator approximates the inverse of $D\mathbb{F}(u_0)$. The accuracy of the approximation is controlled by the bound $\mathcal{Z}_1$ in \eqref{eq: definition Y0 Z1 Z2}, which, in practice, needs to be strictly smaller than $1$. In this section, we present how to define the operator $\mathbb{A}_T$. The construction is based on the theory exposed in \cite{unbounded_domain_cadiot} but we also develop a new strategy for the case $T=0$.

In fact, when $T=0$,  an extra assumption on $u_0$ is required for the construction of the approximate inverse $\mA_0$ (cf. Section \ref{subsec : AT in the case T=0}). As presented in the introduction, we have $\mathbb{M}_0 - cI_d + 2\mathbb{u}_0 \to -cI_d + 2\mathbb{u}_0$ as the Fourier transform variable $\xi$ goes to infinity. This implies that we need to ensure that $-cI_d + 2\mathbb{u}_0$ is invertible if we want to build an approximate inverse. Assumption \ref{ass : u0 is smaller than} takes care of that condition by imposing that $c - 2u_0$ is uniformly bounded away from zero.

\begin{assumption}\label{ass : u0 is smaller than}
    If $T=0$, assume that there exists $\epsilon >0$ such that $u_0(x) + \epsilon \leq \frac{c}{2}$ for all $x \in \R$.
\end{assumption}

Under Assumption \ref{ass : u0 is smaller than}, we can generalize the theory developed in Section 3 of \cite{unbounded_domain_cadiot} and build an approximate inverse $\mathbb{A}_0$. For the case $T>0$, we recall the construction developed in \cite{unbounded_domain_cadiot}.

\subsubsection{The case \texorpdfstring{$T>0$}{T>0}}

Our construction of an approximate inverse is based on the fact that $\mathbb{L}$ is an isometric isomorphism between $\mathcal{H}_e$ and $H^2_e$. Therefore, we can equivalently build an approximate inverse for $D\mathbb{F}(u_0)\mathbb{L}^{-1} = I_d + D\mathbb{G}(u_0)\mathbb{L}^{-1} : H^2_e \to H^2_e$.  {Then, using that $\mathbb{\Lambda}_\nu : H^2_e \to L^2_e$ is an isometric isomorphism, we equivalently build an approximate inverse for  $\mathbb{\Lambda}_\nu D\mathbb{F}(u_0)\mathbb{L}^{-1}\mathbb{\Lambda}_\nu^{-1} = I_d + \mathbb{\Lambda}_\nu D\mathbb{G}(u_0)\mathbb{L}^{-1}\mathbb{\Lambda}_\nu^{-1}  : L^2_e \to L^2_e$}
Then, using that Fourier series form a basis for $L^2(\om)$, we hope to approximate the inverse of the aforementioned operator thanks to Fourier series operators. The construction goes as follows.  

First, we build numerically an operator $A_T^N : h^2_e \to \mathscr{h}_e$ approximating the inverse of $\pi^N DF(U_0)\pi^N $, where $\pi^N$ is defined in \eqref{def : projection truncation}. In particular, we choose $A_T^N$ is such a way that $A_T^N = \pi^N A_T^N \pi^N$, that is $A_T^N$ is represented numerically by a square matrix of size $2N+1$. Then, define $A_T : h^2_e \to \mathscr{h}_e$ as 
\begin{align}\label{def : definition of AT periodic}
   A_T \bydef L^{-1}\pi_{N} + A_T^N 
\end{align}
and $B_T : \ell^2_e \to \ell^2_e$ as 
\[
B_T \bydef \pi_{N} + B_T^N
\]
where $B_T^N \bydef L\Lambda_s A_T^N \Lambda_s^{-1}.$ Intuitively, $A_T$ is an approximate inverse of $DF(U_0) : \mathscr{h}_e \to h^2_e$. Now, let us define 
\begin{align}\label{def : operator A for T>0}
    \mathbb{A}_T \bydef \mathbb{L}^{-1}\mathbb{\Lambda}_\nu^{-1} \mathbb{B}_T \mathbb{\Lambda}_\nu  ~ \text{ and } ~ \mathbb{B}_T \bydef \out + \Gamma^\dagger\left(B_T\right),
\end{align}
where $\out$ is the characteristic function on $\R \setminus \om$.
Using Lemma \ref{lem : gamma and Gamma properties}, $\mathbb{B}_T$ is well-defined as a bounded linear operator on $L^2_e$. Moreover, since $\mathbb{L} : \mathcal{H}_e \to H^2_e$ and $\mathbb{\Lambda}_\nu : H^2_e \to L^2_e$ are  isometries, then  $\mathbb{A}_T : H^2_e \to \mathcal{H}_e$ is a well-defined bounded linear operator. We will need to prove that it is actually an efficient approximate inverse and that it is injective (cf. Theorem \ref{th: radii polynomial}). This will be accomplished in Lemmas \ref{lem : computation of Z1}, \ref{lem : usual term periodic Z1} and \ref{lem : lemma Zu}.

\subsubsection{The case \texorpdfstring{$T=0$}{T=0}}\label{subsec : AT in the case T=0}

In the case $T=0$, the construction derived in \cite{unbounded_domain_cadiot} does not  provide an accurate approximate inverse.
Intuitively, if $N$ is big and if $T>0$, then $ \pi_N DF(U_0) \approx \pi_N L$ since $m_T(\tilde{n}) \to \infty$ as $|n| \to \infty$ (where $\pi_N$ is defined in \eqref{def : projection truncation}). This justifies our choice in \eqref{def : definition of AT periodic} in which we chose $\pi_N A_T = L^{-1} \pi_N$. However, if $T=0$, then  {$\pi_N DF(U_0) \approx \pi_N \left(I_d - \frac{2}{c} \mathbb{U}_0\right){L}$}, where $\mathbb{U}_0$ is the discrete convolution operator (cf. \eqref{def : discrete conv operator}) associated to $U_0$. Consequently, the tail of the operator  { $A_0 : h^2_e \to \mathscr{h}_e$ needs to approximate the inverse of  $\left(I_d - \frac{2}{c} \mathbb{U}_0\right){L}$, instead of $\pi_N L$ when $T>0$}. In particular,  the construction of $A_T$ in \eqref{def : definition of AT periodic} cannot provide an accurate approximation in the case $T=0.$ Consequently, based on the ideas developed in \cite{breden_polynomial_chaos}, we construct ${A}_0$ in such a way that its tail combines a multiplication operator, approximating the inverse of $I_d - \frac{2}{c}\mathbb{U}_0$, composed with $L^{-1}$. 

First, under Assumption \ref{ass : u0 is smaller than}, note that $ -c + 2u_0(x) < -2\epsilon$ for all $x \in \R$. Therefore, the operator $I_d - \frac{2}{c} \mathbb{U}_0 : \ell^2_e \to \ell^2_e$ is invertible and has a bounded inverse. Now, let $e_0 \in \ell^2_e$ be defined as 
\begin{align}\label{def : definition of e0}
    (e_0)_n \bydef \begin{cases}
        1 \text{ if  } n = 0\\
        0 \text{ otherwise}.
    \end{cases}
\end{align}
In particular, notice that $cI_d - \frac{2}{c} \mathbb{U}_0 = \mathbb{e}_0  - \frac{1}{c} \mathbb{U}_0$ is the discrete convolution operator by $e_0 - \frac{2}{c} U_2 \in \ell^1_e.$ Then, we numerically build $W_0 \in \ell^1_e$ such that $W_0 = \pi^N W_0$ and 
\[
W_0*(e_0 - \frac{2}{c}U_0) \approx e_0.
\]
Equivalently, $\mathbb{W}_0*(\mathbb{e}_0 - \frac{2}{c}\mathbb{U}_0) \approx I_d$ ; that is, $\mathbb{W}_0$ approximates the inverse of $I_d- \frac{2}{c}\mathbb{U}_2.$ We are now in a position to describe the construction of $\mA_0$.

Similarly as for the case $T>0$, we first build $A_0^N : h^2_e \to \mathscr{h}_e$, approximating the inverse of $\pi^N DF(U_0) \pi^N $, such that  $A_0^N = \pi^N A_0^N \pi^N$. Then, define $A_0 : h^2_e \to \mathscr{h}_e$ as 
\[
A_0 \bydef L^{-1}\pi_N \mathbb{W}_0 + A_0^N
\]
and $B_0 : \ell^2_e \to \ell^2_e$ as 
\[
B_0 \bydef \pi_N \mathbb{W}_0 + B_0^N
\]
where $B_0^N \bydef L\Lambda_s A_0^N \Lambda_s^{-1}.$ By construction, $A_0$ approximates the inverse of $DF(U_0).$  Then, similarly as in \eqref{def : operator A for T>0}, define $\mathbb{A}_0$ as 
\begin{align}\label{def : operator A for T=0}
    \mathbb{A}_0 \bydef \mathbb{L}^{-1}\mathbb{\Lambda}_\nu^{-1} \mathbb{B}_0 \mathbb{\Lambda}_\nu  ~ \text{ and } ~ \mathbb{B}_0 \bydef \out + \Gamma^\dagger\left(B_0\right).
\end{align}
By construction, $\mathbb{B}_0 : L^2_e \to L^2_e$  and $\mathbb{A}_0 : H^2_e \to \mathcal{H}_e$ are well-defined bounded linear operators.

Note that the only difference in the construction of $B_T$ between the cases $T>0$ and $T=0$ is that $\pi_N B_0 = \pi_N \mathbb{W}_0$ if $T=0$ and $\pi_N B_T = \pi_N$ if $T>0$. However, notice that $\pi_N = \pi_N I_d = \pi_N \mathbb{e}_0$ where $e_0$ is defined in \eqref{def : definition of e0}. Therefore, defining $W_T \in \ell^1_e$ as 
\begin{align}\label{def : definition of WT}
    W_T \bydef \begin{cases}
        W_0 &\text{ if } T=0\\
        e_0 &\text{ if  } T>0,
    \end{cases}
\end{align}
we have \begin{align}\label{def : operator BT periodic}
    B_T = \pi_N \mathbb{W}_T + B^N_T
\end{align}
for all $T \geq 0$, leading to a more compact notation. In particular, we obtain the following result.
\begin{lemma}
    Let $T \geq 0$ and let $\mathbb{B}_T = \mathbb{1}_{\R \setminus \om}  + \Gamma^\dagger\left(\pi_N\mathbb{W}_T + B_T^N\right)$. Then, 
    \begin{align}\label{eq : equality B operator and B fourier}
        \|\mathbb{B}_T\|_2 \leq  \max\left\{1,~  \max\{\|B^N_T\|_2, \|W_T\|_1 \} + \|\pi_N\mathbb{W}_T \pi^N\|_2\right\}.
    \end{align}
\end{lemma}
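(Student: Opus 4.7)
The proof rests on two orthogonal-style decompositions: first on the physical side, splitting $L^2_e$ into functions supported in $\om$ versus in $\R \setminus \om$, and second on the Fourier side, splitting $\ell^2_e$ into the truncation range $\pi^N_e \ell^2_e$ and its complement $\pi_{e,N} \ell^2_e$. The key structural inputs are the isometries of Lemma \ref{lem : gamma and Gamma properties} and the defining property $A_T^N = \pi^N_e A_T^N \pi^N_e$ (hence $B_T^N = \pi^N_e B_T^N \pi^N_e$, since $L$ is diagonal).

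First I would observe that $\gamma_e^\dagger(V)$ has support in $\overline{\om}$ by definition, so for any $u \in L^2_e$ the two summands
\[
\out u \quad \text{and} \quad \Gamma_e^\dagger(\pi_{e,N}\mathbb{W}_T + B_T^N)u = \gamma_e^\dagger(\pi_{e,N}\mathbb{W}_T + B_T^N)\gamma_e(\cha u)
\]
that comprise $\mathbb{B}_T u$ have disjoint supports. The Pythagorean identity combined with the isometries $\sqrt{|\om|}\gamma_e$ and $\tfrac{1}{\sqrt{|\om|}}\gamma_e^\dagger$ from Lemma \ref{lem : gamma and Gamma properties} then yields
\[
\|\mathbb{B}_T u\|_2^2 \leq \|\out u\|_2^2 + \|\pi_{e,N}\mathbb{W}_T + B_T^N\|_2^2\,\|\cha u\|_2^2 \leq \max\{1,\, \|\pi_{e,N}\mathbb{W}_T + B_T^N\|_2^2\}\,\|u\|_2^2,
\]
so $\|\mathbb{B}_T\|_2 \leq \max\{1, \|\pi_{e,N}\mathbb{W}_T + B_T^N\|_2\}$.

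Second I would bound $\|\pi_{e,N}\mathbb{W}_T + B_T^N\|_2$ by splitting $\pi_{e,N}\mathbb{W}_T = \pi_{e,N}\mathbb{W}_T\pi_{e,N} + \pi_{e,N}\mathbb{W}_T\pi^N_e$. Since $B_T^N = \pi^N_e B_T^N \pi^N_e$, the operator $B_T^N + \pi_{e,N}\mathbb{W}_T\pi_{e,N}$ is block-diagonal with respect to the orthogonal decomposition $\ell^2_e = \pi^N_e\ell^2_e \oplus \pi_{e,N}\ell^2_e$, and its norm therefore equals $\max\{\|B_T^N\|_2, \|\pi_{e,N}\mathbb{W}_T\pi_{e,N}\|_2\}$. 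Young's inequality \eqref{eq : youngs inequality} gives $\|\mathbb{W}_T\|_2 \leq \|W_T\|_1$, and since $\pi_{e,N}$ has operator norm $1$,
\[
\|\pi_{e,N}\mathbb{W}_T\pi_{e,N}\|_2 \leq \|\mathbb{W}_T\|_2 \leq \|W_T\|_1.
\]
The triangle inequality then produces
\[
\|\pi_{e,N}\mathbb{W}_T + B_T^N\|_2 \leq \max\{\|B_T^N\|_2, \|W_T\|_1\} + \|\pi_{e,N}\mathbb{W}_T\pi^N_e\|_2,
\]
and combining with the first step gives \eqref{eq : equality B operator and B fourier}.

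The argument is essentially bookkeeping; there is no delicate analytic step. The only point requiring care is the use of block-diagonality in the second paragraph, which would fail without the structural property $B_T^N = \pi^N_e B_T^N \pi^N_e$ inherited from the numerical construction of $A_T^N$.
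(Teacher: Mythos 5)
Your proof is correct and follows essentially the same route as the paper: the only difference is that where the paper cites Lemma 3.3 of the reference for $\|\mathbb{B}_T\|_2 = \max\{1,\|B_T\|_2\}$, you re-derive the (sufficient) inequality directly via disjoint supports and the isometries of Lemma \ref{lem : gamma and Gamma properties}. The second step — splitting off $\pi_{e,N}\mathbb{W}_T\pi^N_e$, using block-diagonality of $B_T^N + \pi_{e,N}\mathbb{W}_T\pi_{e,N}$, and Young's inequality — is exactly the paper's argument.
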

\begin{proof}
    Using Lemma 3.3 in \cite{unbounded_domain_cadiot}, we have
    \begin{align*}
         \|\mathbb{B}_T\|_2 = \max\{1,  \|B_{T}\|_2\}.
    \end{align*}
    Now, notice that 
    \begin{align*}
        \|B_T\|_2 &= \|B^N_T + \pi_N\mathbb{W}_T \pi^N + \pi_N\mathbb{W}_T \pi_N \|_2 \\
        &\leq \|B^N_T + \pi_N\mathbb{W}_T \pi_N\|_2 + \|\pi_N\mathbb{W}_T \pi^N\|_2.
    \end{align*}
    Finally, we conclude the proof using that 
    \begin{align*}
        \|B^N_T + \pi_N\mathbb{W}_T \pi_N \|_2 = \max\{\|B^N_T\|_2, \|\pi_N\mathbb{W}_T \pi_N\|_2 \} \leq \max\{\|B^N_T\|_2, \|W_T\|_1 \}
    \end{align*}
    where we used \eqref{eq : youngs inequality} on the last step.
\end{proof}

\begin{remark}
    If $T>0$, then $W_T = e_0$ (defined in \eqref{def : definition of e0}). Therefore $\|\pi_N\mathbb{W}_T\pi^N\|_2 =0$ and $\|W_T\|_1 = 1$. This implies that $\|\mathbb{B}_T\|_2 \leq \max\{1, \|B^N_T\|_2\}$ if $T>0.$
\end{remark}

\subsubsection{A posteriori regularity of the solution}\label{ssec : regularity of the solution}

In the case $T=0$,  Assumption \ref{ass : u0 is smaller than} turns out to be helpful to obtain the regularity of the solution (cf. Proposition \ref{prop : regularity of the solution}) after using Theorem \ref{th: radii polynomial}. Indeed, we can validate a posteriori the regularity of the solution.
In order to do so, we first expose the following result.
\begin{prop}\label{prop : equivalence infinite norm}
    Let $u \in \mathcal{H}$ then\[
    \|u\|_\infty \leq \frac{1}{4\sqrt{\nu}\min_{\xi \in \R}|l(\xi)|}\|u\|_{\mathcal{H}}.
    \]
\end{prop}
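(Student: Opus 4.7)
The plan is to combine Fourier inversion with a weighted Cauchy--Schwarz, using $l$ itself as the weight, and then to invoke the integral that already appeared in the proof of Lemma \ref{lem : banach algebra}.

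First, I would observe that if $u \in H^l$, then by Plancherel and the definition of $\|\cdot\|_l$ one has $\|l(2\pi\cdot)\hat u\|_2 = \|\mathbb{L}u\|_2 = \|u\|_l < \infty$. Writing $\hat u(\xi) = l(2\pi\xi)^{-1}\bigl(l(2\pi\xi)\hat u(\xi)\bigr)$ and applying Cauchy--Schwarz yields
\[
\|\hat u\|_1 \;\leq\; \left\|\tfrac{1}{l(2\pi\cdot)}\right\|_2 \|l(2\pi\cdot)\hat u\|_2 \;=\; \left\|\tfrac{1}{l(2\pi\cdot)}\right\|_2 \|u\|_l,
\]
provided one first checks that $1/l(2\pi\cdot) \in L^2(\R)$, which is done in the next step. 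In particular $\hat u \in L^1(\R)$, so $u$ is (a.e.\ equal to) a bounded continuous function and Fourier inversion gives the pointwise bound $|u(x)| \leq \|\hat u\|_1$, hence $\|u\|_\infty \leq \|1/l(2\pi\cdot)\|_2 \|u\|_l$.

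It then remains to estimate $\|1/l(2\pi\cdot)\|_2$, which is the core of the proof. I would use the factorization $l(2\pi\xi) = (m_T(2\pi\xi)-c)\,l_\nu(2\pi\xi)$ given in \eqref{def : definition of l} and bound $|m_T(2\pi\xi)-c| \geq \min_{\xi \in \R}|m_T(\xi)-c| > 0$ (the positivity is guaranteed by Assumption \ref{ass : value of c and T}), reducing the estimate to the explicit integral
\[
\int_{\R} \frac{d\xi}{(1+\nu(2\pi\xi)^2)^2} \;=\; \frac{1}{4\sqrt{\nu}},
\]
which is computed by the substitution $y = 2\pi\sqrt{\nu}\,\xi$ and the classical identity $\int_\R (1+y^2)^{-2}dy = \pi/2$. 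This is exactly the computation already carried out in \eqref{eq : computation norm 1/l}, so I would simply invoke it. Combining everything gives the claimed inequality.

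The argument is essentially bookkeeping rather than substantively hard; the only place to be careful is in verifying that the minimum of $|m_T-c|$ is strictly positive (needed both for the $L^2$-integrability of $1/l(2\pi\cdot)$ and for the final bound to be finite), which is already available under Assumption \ref{ass : value of c and T} as noted right after the definition \eqref{def : definition Hl} of $H^l$.
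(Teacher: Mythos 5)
Your proposal is correct in structure and follows essentially the same route as the paper's own proof: the pointwise bound $\|u\|_\infty \leq \|\hat u\|_1$, the weighted Cauchy--Schwarz step already recorded in \eqref{eq : cauchy S lem banach}, and the integral computation \eqref{eq : computation norm 1/l}. One caveat, which applies equally to the paper's own write-up and is not a defect you introduced: \eqref{eq : computation norm 1/l} bounds $\|1/l(2\pi\cdot)\|_2^2$, so the chain actually yields the factor $\bigl(4\sqrt{\nu}\bigr)^{-1/2}\min_{\xi\in\R}|m_T(\xi)-c|^{-1}$ rather than the stated $\bigl(4\sqrt{\nu}\bigr)^{-1}\min_{\xi\in\R}|m_T(\xi)-c|^{-1}$, and the stated constant only follows when $4\sqrt{\nu}\leq 1$, i.e.\ $\nu\leq 1/16$; so "combining everything" gives the inequality with the square-rooted constant, not literally the one claimed.
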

\begin{proof}
    Let $u \in \mathcal{H}$, then notice that $\|u\|_\infty \leq \|\hat{u}\|_1$. Then, using \eqref{eq : cauchy S lem banach}, we have $
    \|\hat{u}\|_1 \leq \|\frac{1}{l}\|_\infty\|\frac{1}{l_\nu(2\pi \cdot)}\|_2 \|u\|_{\mathcal{H}}$. We conclude the proof using \eqref{eq : computation norm 1/l}.
\end{proof}

Now, suppose that, using Theorem \ref{th: radii polynomial}, we were able to prove a solution $\tilde{u}$ to \eqref{eq : whitham stationary}  such that $\tilde{u} \in \overline{B_r(u_0)}$ (for some $r>0$). Then it implies that $\|\tilde{u} - u_0\|_{\mathcal{H}} \leq r$ and therefore
\[
\|\tilde{u} - u_0\|_\infty \leq\frac{r}{4\sqrt{\nu}\min_{\xi \in \R}|l(\xi)|}
\]
using Proposition \ref{prop : equivalence infinite norm}. In particular, if $r$ is small enough so that 
\begin{equation}\label{eq : condition epsilon and r}
    r \leq 4\epsilon\sqrt{\nu}\min_{\xi \in \R}|l(\xi)|, 
\end{equation}
then $\tilde{u}(x) = u_0(x) + \tilde{u}(x) - u_0(x) \leq u_0(x) + \|\tilde{u} - u_0\|_\infty \leq u_0(x) + \epsilon < \frac{c}{2}$
using Assumption \ref{ass : u0 is smaller than}. Therefore, using Proposition \ref{prop : regularity of the solution}, we obtain that $\tilde{u}$ is smooth. In practice, the values for $r$ and $\epsilon$ are explicitly known in the computer-assisted approach, leading to a convenient way to prove the regularity of the solution (cf. Section \ref{sec : proof in the case T=0}).

Now that we derived a strategy to compute $u_0$ and $\mathbb{A}_T$ in Theorem \ref{th: radii polynomial}, it remains to compute the bounds $\mathcal{Y}_0$, $\mathcal{Z}_1$ and $\mathcal{Z}_2.$ We present  the required analysis for such computations in the next section.

\section{Computation of the bounds}\label{sec : computation of the bounds}

The strategy for computing of the bounds $\mathcal{Y}_0$, $\mathcal{Z}_1$ and $\mathcal{Z}_2$ in Theorem \ref{th: radii polynomial} has to differ from the PDE case introduced in \cite{unbounded_domain_cadiot}. Indeed, the fact that \eqref{eq : whitham stationary} possesses a Fourier multiplier operator implies that some of the steps derived in \cite{unbounded_domain_cadiot} have to be modified to match the current set-up. Consequently, we derive in this section a new strategy for the computation of $\mathcal{Y}_0$, $\mathcal{Z}_1$ and $\mathcal{Z}_2$ in the case of a nonlocal equation. In particular, we expose in Lemma \ref{lem : bound Y_0} a computer-assisted approach to control Fourier multiplier operator $\mathbb{M}_T$.

\subsection{Decay of the kernel operators}\label{sec : decay of the kernel operator}

In the computation of the bounds $\mathcal{Y}_0$ and $\mathcal{Z}_1$ presented in Lemmas \ref{lem : bound Y_0} and \ref{lem : computation of Z1} respectively, one needs to control explicitly the exponential decay of the following functions
 \begin{align}\label{def : functions fiT}
 \nonumber
 f_{\mathcal{Y}_0,T}& \bydef \mathcal{F}^{-1}\left(\frac{m_T(2\pi \cdot)}{l_{\nu}(2\pi \cdot)}\right)\\ \nonumber
      f_{0,T}& \bydef \mathcal{F}^{-1}\left(\frac{1}{l(2\pi \cdot)l_\nu(2\pi \cdot)}\right)\\ \nonumber
    f_{1,T}& \bydef \mathcal{F}^{-1}\left(\frac{2\pi \cdot}{l(2\pi \cdot)l_\nu(2\pi \cdot)}\right)\\ 
    f_{2,T}& \bydef \mathcal{F}^{-1}\left(\frac{1}{l(2\pi \cdot)}\right),
 \end{align}
 where we recall that $l_\nu(\xi) = 1+\nu\xi^2$ for all $\xi \in \R$ and $\nu$ is defined in \eqref{def : nu}.  {Note that estimating the exponential decay of the above functions have been achieved in \cite{BRUELL20174232} and \cite{EYCHENNE2023243} for instance. However, the aforementioned references do not provide explicit constants when establishing the estimates. In this section, we use rigorous numerics in order to resolve that problem.}
 Now, note that the above functions are related to some Fourier multiplier operators, which turn out to also be kernel operators. In particular, we have
 \begin{align}\label{eq : definition of fiT with operators}
 \nonumber
      \mathbb{M}_T {\mathbb{\Lambda}_{\nu}}^{-1} u &= f_{\mathcal{Y}_0,T}*u , ~~~~\mathbb{L}^{-1} {\mathbb{\Lambda}_{\nu}}^{-1}u = f_{0,T}*u \\
      \partial_x\mathbb{L}^{-1} {\mathbb{\Lambda}_{\nu}}^{-1}u &= f_{1,T}*u, ~~~~~~~~~~~~
     \mathbb{L}^{-1}u = f_{2,T}*u
 \end{align}
 for all $u \in L^2_e$ and $ {\mathbb{\Lambda}_{\nu}} = I_d - \nu \Delta$.   We first prove that the functions in \eqref{def : functions fiT} are analytic on some strip of the complex plane $S$. Moreover, we introduce some constants $\sigma_0, \sigma_1 >0$ that will be useful later on in Lemma \ref{lem : computation of f}.

\begin{prop}\label{prop : analyticity and value of a}
   Let $\nu$ be defined in \eqref{def : nu}. Then, there exists  $0<a < \min\{\frac{1}{\sqrt{\nu}}, \frac{\pi}{2}\}$  such that $|m_T(z)-c| >0$ for all $z \in S$ where $S \bydef \{z \in \mathbb{C},~ |\text{Im}(z)| \leq a\}$. Moreover, there exists $\sigma_0, \sigma_1 >0$ such that
\begin{align}\label{eq : alpha assumption}
\nonumber
    |l(\xi)|, |l(\xi+ia)| &\geq \sigma_0 \text{ for all } \xi \in \R,\\
   |l(\xi+ia)| &\geq \sigma_1\sqrt{T|\xi|} \text{ for all } |\xi| \geq 1.
 \end{align}
 In particular, $m_T, \frac{1}{l_\nu}$ and $\frac{1}{l}$ are analytic on $S$.
\end{prop}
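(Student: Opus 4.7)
The plan is to first pin down the largest horizontal strip in $\mathbb{C}$ on which the inner expression
\[
g(z) \bydef \frac{\tanh(z)(1 + T z^2)}{z}
\]
stays well away from the branch cut of the principal square root, and then to extract the two lower bounds from continuity and the asymptotic behavior of $m_T$ on the boundary of this strip.

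First I would identify the obstructions to analyticity. The poles of $\tanh$ nearest the real axis lie at $z = \pm i \pi/2$; the zeros of $1 + T z^2$ (when $T>0$) sit at $z = \pm i/\sqrt{T}$; and the factor $1/z$ creates only a removable singularity at $0$, since $\tanh(z)/z \to 1$. The definition \eqref{def : nu} of $\nu$ is arranged precisely so that $\min\{1/\sqrt{\nu}, \pi/2\}$ is the distance to the nearest obstruction: $1/\sqrt{T}$ for $T>0$ and $\pi/2$ for $T=0$. Hence, for any $a$ strictly less than this quantity, $g$ is holomorphic and nonzero on $S$. Since $g$ is strictly positive on $\R$ and continuous, shrinking $a$ if necessary guarantees $g(S) \subset \{\mathrm{Re}(w) > 0\}$, so the principal branch square root is analytic on $g(S)$ and yields an analytic extension of $m_T$ (and hence of $1/l_\nu$ and eventually of $1/(m_T - c)$) to $S$.

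Next I would secure the uniform lower bound $|m_T - c| \geq \sigma_0$ on both $\R$ and the shifted line $\R + ia$. On the real axis, the bound follows from Assumption \ref{ass : value of c and T} together with Lemma \ref{lem : value for c and T}: for $T>0$ one even has $|m_T(\xi) - c|\to\infty$ as $|\xi|\to\infty$, and for $T=0$ the condition $c>1$ combined with $m_0(\xi)\to 0$ forces a positive lower bound at infinity. Uniform continuity of $m_T$ on compact subsets of $S$ transfers this bound to $\R + ia$ on any compact $\xi$-interval, provided $a$ is small. The tail of the shifted line is controlled by step 3, and taking $\sigma_0$ to be the minimum of the resulting infima yields the advertised constant.

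Finally, for the growth bound I would expand
\[
g(\xi + ia) = \tanh(\xi + ia)\cdot \frac{1 + T(\xi + ia)^2}{\xi + ia} = T\xi + O(1) \quad \text{as } |\xi|\to\infty,
\]
using that $\tanh(\xi + ia) \to \pm 1$ according to $\mathrm{sgn}(\xi)$. Taking the principal square root gives $|m_T(\xi + ia)|^2 \sim T|\xi|$, so a constant $\sigma_1 > 0$ and a radius $R \geq 1$ exist with $|m_T(\xi + ia) - c| \geq \sigma_1\sqrt{T|\xi|}$ for $|\xi| \geq R$; the range $1 \leq |\xi| \leq R$ is then absorbed into $\sigma_1$ via the step 2 bound, and for $T = 0$ the inequality is vacuous. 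The main obstacle I anticipate is the legitimization of the principal square root on $S$: ensuring that $g(z)$ stays in the right half-plane requires a continuity argument that controls $g$ near the boundary poles $\pm i\pi/2$ and $\pm i/\sqrt{T}$, and it is here that the strict inequality $a < \min\{1/\sqrt{\nu}, \pi/2\}$ is essential.
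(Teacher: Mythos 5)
Your proposal is correct and takes essentially the same route as the paper: the strip half-width is dictated by the poles of $\tanh$ at $\pm i\pi/2$ and the zeros of $1+Tz^2$ at $\pm i/\sqrt{T}$, the uniform bound $\sigma_0$ follows from positivity of $|m_T-c|$ on $\R$ together with continuity and control of the tails, and $\sigma_1$ comes from the $\sqrt{T|\xi|}$ growth of $m_T$ along $\R+ia$ (with the $T=0$ case vacuous). The only real difference is cosmetic: the paper outsources the analyticity of $\sqrt{\tanh(z)/z}$ on the strip to \cite{EHRNSTROM2019on_whitham_conjecture}, whereas you justify the principal-branch square root directly by keeping $g(S)$ in the right half-plane, which requires exactly the asymptotic control you already invoke in your last step.
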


\begin{proof}
    First, notice that $\xi \to \sqrt{\frac{\tanh(\xi)}{\xi}}$ is analytic on the strip $\{z \in \mathbb{C}, ~~ |\text{Im}(z)| < \frac{\pi}{2}\}$ (see \cite{EHRNSTROM2019on_whitham_conjecture}). Moreover, $\xi \to \frac{1}{(1+\nu\xi^2)}$ is analytic on the strip $S_0 \bydef \{z \in \mathbb{C}, ~~ |\text{Im}(z)| < \frac{1}{\sqrt{\nu}}\}$. Therefore, as $\nu \geq T$, we get that $\xi \to \frac{1}{(1+T\xi^2)}$ is also analytic on $S_0$. Therefore, if $a < \min\{\frac{1}{\sqrt{\nu}}, \frac{\pi}{2}\}$, then  ${m_T}$ and $\frac{1}{l_\nu}$ are analytic on the strip $S$. Moreover, as $|l(\xi)| >0$ for all $\xi \in \R$ under Assumption \ref{ass : value of c and T}, then there exists $0 < a < \min\{\frac{1}{\sqrt{\nu}}, \frac{\pi}{2}\}$ such that $|m_T(z)-c| >0$ for all $z \in S$. Defining such a constant $a$, it yields that $\frac{1}{l}$ is analytic on $S$. This implies that  $m_T, \frac{1}{l_\nu}$ and $\frac{1}{l}$ are analytic on $S$.

Now, we prove \eqref{eq : alpha assumption}. The existence of $\sigma_0$ is a direct consequence of the fact that $|m_T(z)-c| >0$ for all $z \in \{z \in \mathbb{C},~ |\text{Im}(z)| \leq a\}$. Then, if $T>0$, we have $|m_T(z) - c| = \mathcal{O}(\sqrt{T|z|})$ as $|z| \to \infty$. This provides the existence of $\sigma_1.$
\end{proof}

Using the previous proposition , we can prove that the functions defined in \eqref{def : functions fiT} are exponentially decaying to zero at infinity using Cauchy's integral theorem. In particular, we provide in the next lemma explicit constants controlling this exponential decay. The obtained constants are defined thanks to the existence of $a, \sigma_0$ and $\sigma_1$ in Proposition \ref{prop : analyticity and value of a}.

\begin{lemma}\label{lem : computation of f}
Let $\nu$ be defined in \eqref{def : nu} and let $a, \sigma_0, \sigma_1 >0$ be defined in Proposition \ref{prop : analyticity and value of a}. Moreover, if $T>0$, let $\xi_0>0$ be big enough so that 
\begin{align*}
     \frac{1}{2}\sqrt{\tanh(\xi_0)T\xi_0} \geq |c|, ~~~~
     \xi_0 \geq \max\{1, \frac{1}{\sqrt{T}}\}\\
      \frac{2\tanh(\xi_0)\xi_0}{3T} \geq 1, ~~~~
     \frac{1}{2}\left(\frac{1}{C_a}\right)^{\frac{1}{2}}\sqrt{T\xi_0} \geq |c|.
\end{align*}
If $T=0$, assume that $\xi_0 \geq 1$. Then, defining \begin{equation}
    a_0 \bydef \begin{cases}
    1 &\text{ if } T=0\\
    \min\left\{\frac{\pi}{2}, \frac{1}{\sqrt{T}}\right\} &\text{ if } T>0,
\end{cases}
\end{equation}
we have
\begin{align}\label{eq : exponential decay fi}
\nonumber
    |f_{\mathcal{Y}_0,T}(x)| &\leq C_{\mathcal{Y}_0,T}e^{-a_0 |x|}  \\
    |f_{0,T}(x)| &\leq C_{0,T} e^{-a|x|}, ~~
    |f_{1,T}(x)| \leq 
        C_{1,T}e^{-a|x|}\\ \nonumber
    |f_{2,T}(x)| &\leq  C_{2,T,c}\frac{e^{-a|x|}}{\sqrt{|x|}} \text{ if } T>0\\ 
    |f_{2,0}(x) + \frac{1}{c}\delta(x)| &\leq  C_{2,0,c}\frac{e^{-a|x|}}{\sqrt{|x|}} \text{ if } T=0
\end{align}
for all $x \neq 0$ where $\delta$ is the Dirac-delta distribution and 
\begin{align}
{C}_{\mathcal{Y}_0,T} &\bydef  \begin{cases}
    \displaystyle\max_{s >0}\min\left\{\sqrt{s}+\sqrt{2}, \frac{\sqrt{2}}{1-e^{-\pi s}}\right\} &\text{ if } T=0\\
    \displaystyle \displaystyle \max_{s >0}\min\left\{\sqrt{s}+\sqrt{2}, \frac{\sqrt{2}}{1-e^{-\pi s}}\right\}\frac{1}{\sqrt{\pi}T^{\frac{1}{4}}}\left(\frac{2}{\sqrt{1+{a_0}T^{\frac{1}{2}}}}+ \frac{1}{\sqrt{2}}\right) &\text{ if } T>0
\end{cases} \\
    C_{0,T} &\bydef \frac{1}{\pi\sigma_0(1-\nu a^2)} + \frac{1}{\pi\nu\sigma_0} \label{eq : constant C0T}\\
    C_{1,T} & \bydef \begin{cases}
        \frac{1}{2\pi}\left( \frac{2(1+a)}{\sigma_0(1-\nu a^2)}  + \frac{4(1+a)}{\sigma_1\sqrt{T} \nu}\right) & \text{ if } T>0\\
        \frac{1}{2c\nu} + \frac{(1+\sqrt{a})\left(C_a\right)^{\frac{1}{4}}}{\pi c\sigma_0}\left(\frac{1}{1-\nu a^2} + \frac{2}{\nu}\right) & \text{ if } T=0
    \end{cases}\label{eq : constant C1T}\\\nonumber
    K_{1,T,c} & \bydef \begin{cases}
        \frac{2\xi_0}{\pi\sigma_0}+ \frac{2\sqrt{\xi_0}(1+|c|)}{\pi \sigma_0\sqrt{T}} + \frac{2\left(\frac{1}{3T} + \frac{|c|}{4T^{\frac{3}{2}}} + \frac{2c^2}{T}\right)}{\pi\sqrt{\tanh(\xi_0)T}\sqrt{\xi_0}} + \frac{|c|}{T\pi}\left(2  + 3\ln(\xi_0)\right) + \frac{1}{\sqrt{2\pi T}} &\text{ if } T>0\\
       \frac{1}{\pi \min\{1,|c|^3\}\sqrt{\xi_0}\sigma_0}\left(2+4e^{-2\xi_0}\right) + \frac{1}{\pi \sigma_0 |c|} + \frac{2}{\pi c^2}  +  \frac{1}{\pi |c|^3}\left(2+3\ln(\xi_0)\right) + \frac{1}{c^2\sqrt{2\pi}} &\text{ if } T=0
    \end{cases} \\\nonumber
    K_{2,T} & \bydef \begin{cases}
        \frac{C_a\sqrt{\xi_0^2+a^2}}{2\pi\sigma_0^2(1-Ta^2)^2}\left( \frac{1+Ta^2+aC_a}{a^2} +C_aT\sqrt{\xi_0^2+a^2})\right) + \frac{2C_a^{2}}{\pi}\left( \frac{2(1 + T)}{(T|\xi_0|)^{\frac{1}{2}}} + \frac{(2+a)}{2}e^{-2\xi_0}\right) &\text{ if } T>0\\
        \frac{\left(C_a\right)^{\frac{1}{4}}}{\pi}\left( \frac{1}{2\sigma_0^2}(\frac{1}{a^{\frac{3}{2}}} + 2) + \frac{1}{4\sigma_0^2(1-|\cos(2a)|)^2\sqrt{a}}\right) &\text{ if } T=0
    \end{cases} \\
    C_{2,T,c} &\bydef \max\{K_{2,T}, K_{1,T,c}e^{a}\}\label{eq : constant C2T}
\end{align}
where $C_a \bydef \frac{1+|\cos(2a)|}{1-|\cos(2a)|}$.
\end{lemma}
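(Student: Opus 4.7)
The plan is to exploit the analyticity of each symbol on the strip $S$ (established in Proposition \ref{prop : analyticity and value of a}) and deform the contour of integration in the inverse Fourier transform. Writing $f_{i,T}(x) = \int_\R g_i(2\pi\xi) e^{2\pi i x \xi} d\xi$ for the appropriate symbol $g_i$, analyticity on $S$ combined with decay of the integrand at infinity within horizontal strips allows us, by Cauchy's integral theorem, to shift the contour to $\R + i\,\mathrm{sgn}(x)\alpha$ for any $\alpha \in (0, a]$. This produces the exponential prefactor $e^{-\alpha |x|}$, and the decay constant is obtained by bounding $\int_\R |g_i(2\pi\xi + 2\pi i \alpha)| d\xi$ using the lower bounds on $|m_T(\xi + ia) - c|$ and $|l_\nu(\xi + ia)|$ from Proposition \ref{prop : analyticity and value of a}.

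For $f_{0,T}$ and $f_{1,T}$, the symbols are analytic on $S$ and sufficiently integrable thanks to the quadratic decay from $1/l_\nu$, so a direct contour shift to $\R + ia$ yields the bounds $C_{0,T}$ and $C_{1,T}$. Specifically, I would use the estimate $|l_\nu(\xi + ia)| \geq 1 - \nu a^2 + \nu \xi^2$ and then split the $\xi$-integral at a moderate frequency scale, using the uniform bound $\sigma_0$ of Proposition \ref{prop : analyticity and value of a} on the low-frequency piece and the $\sqrt{T|\xi|}$ growth from $\sigma_1$ on the high-frequency piece (for $T>0$). For $f_{\mathcal{Y}_0,T}$, the symbol $m_T/l_\nu$ is analytic on the larger strip $|\mathrm{Im}(z)| < a_0$ (only the poles of $l_\nu$ at $\pm i/\sqrt{\nu}$ and the branch points of $m_T$ near $\pm i \pi/2$ or $\pm i/\sqrt{T}$ matter), so we can shift further; the $\min/\max$ structure of $C_{\mathcal{Y}_0,T}$ reflects a free optimization parameter $s$ controlling how close to the singular line $\mathrm{Im}(z) = a_0$ the contour is pushed.

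The most delicate case is $f_{2,T}$. When $T>0$ the symbol $1/(m_T(2\pi\xi) - c)$ only decays like $1/\sqrt{T|\xi|}$, and when $T=0$ it does not decay at all, tending to $-1/c$ at infinity. For $T>0$, I would split the integral at $\xi_0$: on $|\xi|\leq \xi_0$ the symbol is uniformly bounded by $1/\sigma_0$ and contour shifting gives a clean $e^{-a|x|}$ bound with constant proportional to $\xi_0/\sigma_0$; on $|\xi|\geq \xi_0$, the slower decay forces an integration-by-parts argument that trades an $e^{2\pi i x \xi}$ factor for $1/(2\pi i x)$, and the residual integrand, estimated along $\R + ia$, produces the $1/\sqrt{|x|}$ singularity and the factor $K_{2,T}$. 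For $T=0$, the constant limit $-1/c$ of the symbol at infinity is precisely the Fourier transform of $-\delta/c$, which is why the statement subtracts $-\delta(x)/c$ on the left-hand side; after this subtraction the remaining symbol $1/(m_0(2\pi\xi)-c) + 1/c = m_0(2\pi\xi)/(c(m_0(2\pi\xi)-c))$ tends to zero at infinity and the same contour-shifting and cutoff-splitting argument applies, yielding $C_{2,0,c}$.

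The main obstacle is bookkeeping the explicit constants: each constant is a careful assembly of sharp estimates on the symbol along $\R + ia$, combined (for $f_{2,T}$) with integration-by-parts estimates and cutoff-dependent splittings. In particular, obtaining the precise $\sqrt{\xi_0^2 + a^2}$, $\ln(\xi_0)$, and $e^{-2\xi_0}$ dependencies in $K_{1,T,c}$ and $K_{2,T}$ requires tracking how the poles of $l_\nu$ at $\pm i/\sqrt{\nu}$ and the branch-cut behavior of $\sqrt{\tanh(z)/z}$ interact with the chosen contour, and identifying exactly where the assumptions on $\xi_0$ (such as $\tfrac{1}{2}\sqrt{\tanh(\xi_0)T\xi_0}\geq |c|$) are used to guarantee the lower bound $|m_T(2\pi\xi + 2\pi i a) - c| \geq \tfrac{1}{2}|m_T(2\pi\xi+2\pi i a)|$ beyond the cutoff.
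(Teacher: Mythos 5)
Your plan diverges from the paper's proof at the two points where it matters most, and at both points the argument as you describe it would not deliver the stated bounds. First, for $f_{\mathcal{Y}_0,T}$ you cannot obtain $C_{\mathcal{Y}_0,T}e^{-a_0|x|}$ by pushing the contour towards $\mathrm{Im}(z)=a_0$: the singularities sit exactly on that line (the pole of $1/l_\nu$ at $i/\sqrt{\nu}$, or the branch point of $\tanh$ at $i\pi/2$), so shifting to any height $\alpha<a_0$ gives a prefactor that blows up as $\alpha\to a_0$, and optimizing $\alpha$ in terms of $x$ yields at best $e^{-a_0|x|}$ times an algebraic factor, not a uniform constant. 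The paper instead factors $m_T/l_\nu$ and writes $f_{\mathcal{Y}_0,T}=g_0*g_1$ with $g_0=\mathcal{F}^{-1}(m_0(2\pi\cdot))$ given by the explicit branch-cut representation of Ehrnstr\"om--Wahl\'en and $g_1$ the Bessel kernel $K_0$ (or the explicit exponential kernel of $1/l_\nu$ when $T=0$); each kernel is bounded by $e^{-\mathrm{rate}\,|x|}/\sqrt{|x|}$ and the convolution is estimated directly. In particular the parameter $s$ in $\max_{s>0}\min\{\sqrt{s}+\sqrt{2},\ \sqrt{2}/(1-e^{-\pi s})\}$ is the spatial variable $|x|$, arising from two different estimates of the series defining $g_0$, not a contour position. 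A related integrability problem occurs for $f_{1,0}$: along $\mathbb{R}+ia$ the symbol $2\pi\xi/((m_0-c)l_\nu)$ decays only like $1/|\xi|$, so your proposed bound $\int_{\mathbb{R}}|g(\xi+ia)|\,d\xi$ diverges; the paper first splits off $-\frac{1}{c}\,2\pi\xi/l_\nu$, whose inverse transform is explicit (this is the $\frac{1}{2c\nu}$ term in $C_{1,0}$), and only then shifts the contour on the remainder, whose symbol decays like $|\xi|^{-3/2}$.

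Second, for $f_{2,T}$ (and $f_{2,0}+\frac{1}{c}\delta$) your mechanism for the $1/\sqrt{|x|}$ factor is wrong: one integration by parts trades $e^{i\xi x}$ for $1/|x|$, not $1/\sqrt{|x|}$, and near $x=0$ a bound of order $1/|x|$ is strictly weaker than the claimed $C\,e^{-a|x|}/\sqrt{|x|}$ (the $1/\sqrt{|x|}$ sharpness is essential later, e.g.\ in Lemma \ref{lem : lemma Zu}, where the square $e^{-2a|x|}/|x|$ must remain integrable). The paper splits in the spatial variable, not in frequency: for $|x|\le 1$ it subtracts the explicit tail $1/\sqrt{T|\xi|}$ (whose inverse transform is exactly $1/\sqrt{2\pi T|x|}$, and respectively $1/(c^2\sqrt{|\xi|})$ and $1/(c^3|\xi|)$ when $T=0$) and a $c/(T\xi)$ piece beyond $\xi_0$ handled by cosine-integral estimates, producing $K_{1,T,c}/\sqrt{|x|}$; only for $|x|\ge 1$ does it shift the contour to $\mathbb{R}+ia$ and integrate by parts, giving $K_{2,T}e^{-a|x|}/|x|$, and the two regimes are merged through $C_{2,T,c}=\max\{K_{2,T},K_{1,T,c}e^{a}\}$. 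Your frequency split at $\xi_0$ also leaves unexamined boundary contributions (the vertical sides of the shifted rectangle on $|\xi|\le\xi_0$, and the IBP boundary term at $\xi=\xi_0$), which decay only algebraically in $x$ and so cannot be absorbed into an $e^{-a|x|}$ bound without further work. Without the subtraction device and the case distinction in $|x|$, the claimed inequality for $f_{2,T}$ near the origin is not established.
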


\begin{proof}
    The proof is presented in the Appendix \ref{sec : proof of lemma 4.1 in appendix}.
\end{proof}

Notice, that the constants involved in Lemma \ref{lem : computation of f} depend on the values of $a, \sigma_0$ and $\sigma_1$, which we do not know explicitly. Sharp computation of such constants can be tedious and technically involving. We derive in the Appendix \ref{sec : computation of constants in appendix} a computer-assisted approach to have a pointwise control on the function $m_T$. In particular, the computation of $a, \sigma_0$ and $\sigma_1$ can be obtained thanks to rigorous numerics.   From now on, we consider that explicit values of $a$, $\sigma_0, \sigma_1$ have been obtained thanks to the strategy established in Appendix \ref{sec : computation of constants in appendix}. In particular, we are in a position to expose the computations of the bounds introduce in Theorem \ref{th: radii polynomial}.

\subsection{The bound \texorpdfstring{$\mathcal{Y}_0$}{Y0}}\label{sec : Y0 bound}

We present in this section the computation for the bound $\mathcal{Y}_0$. Since the operator $\mathbb{L}$ is not a linear differential operator, the computation of this bound naturally differs from the one exposed in  \cite{unbounded_domain_cadiot} in the PDE case. Indeed, given $U_0  \in \mathscr{h}_e$ such that $u_0 \bydef \gamma^\dagger\left(U_0\right) \in \mathcal{H}_{e,\om}$, if $\mathbb{L}$ is a linear differential operator with constant coefficients, then $\mathbb{L}u_0 = \gamma^\dagger\left(LU_0\right) \in L^2_e$. However, this is not necessarily the case  if $\mathbb{L}$ is nonlocal. Consequently, we need to estimate how close $\mathbb{L}u_0 \in L^2_e$ is to $\gamma^\dagger\left(LU_0\right)$. The next Lemma \ref{lem : bound Y_0} exposes the details for such a computation.
\begin{lemma}\label{lem : bound Y_0}
Let $a_0>0$ be defined in Lemma \ref{lem : computation of f}. Moreover, define $E_0 \in \ell^2_e$ as 
\begin{align}\label{eq : coefficients of E}
    \left(E_0\right)_n &\bydef \frac{e^{2a_0d}}{d} \frac{a_0(-1)^{n}(1-e^{-4a_0d})}{4a_0^2 + \tilde{n}^2}
\end{align}
for all $n \in \mathbb{Z}$. In particular, $E_0 = \gamma\left(\cha(x) \cosh(2a_0x)\right)$. Moreover, define $C(d) >0$ as 
\begin{align}\label{def : value for C(d)}
    C(d)  \bydef e^{-2a_0d}\left(4d + \frac{4e^{-a_0d}}{a(1-e^{-\frac{3a_0d}{2}})} + \frac{2}{a_0(1-e^{-2a_0d})}\right).
\end{align}
Now,  let $Y_0$  and $\mathcal{Y}_{u}$ be non-negative bounds satisfying 
\begin{align}\label{def : Y definition}
\nonumber
    Y_0 &\geq  \sqrt{2d}\left(\|B^N_T \Lambda_\nu F(U_0)\|_2^2 + \|\pi_N W_T*(\Lambda_\nu F(U_0))\|_2^2\right)^\frac{1}{2}\\ \nonumber
   \mathcal{Y}_{u} &\geq
       2d C_{\mathcal{Y}_0,T}  e^{-a_0d}\left(~\left( {\Lambda_{\nu}}^2 U_0,E_0*( {\Lambda_{\nu}}^2U_0)\right)_2\left(1 +  \|B_T\|_2^2(1 + C(d))\right)~\right)^{\frac{1}{2}}.
\end{align}
Then, defining $\mathcal{Y}_0 >0$ as  
\begin{equation}
    \mathcal{Y}_0 \bydef Y_0 + \mathcal{Y}_u,
\end{equation}
we obtain that $\|\mathbb{A}_T{\mathbb{F}}(u_0)\|_{\mathcal{H}} \leq \mathcal{Y}_0$. 
\end{lemma}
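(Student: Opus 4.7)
Since $\mathbb{L}:H^l_e\to L^2_e$ is an isometry and $\mathbb{A}_T=\mathbb{L}^{-1}\mathbb{B}_T$, I would first reduce the problem to $\|\mathbb{A}_T\mathbb{F}(u_0)\|_l=\|\mathbb{B}_T\mathbb{F}(u_0)\|_2$, and then split $\mathbb{F}(u_0)=\gamma_e^\dagger(F(U_0))+R$ into a ``discrete'' piece exactly represented by the truncated Fourier series and a ``nonlocal truncation error'' $R$. Because $\mathbb{L}_\nu=I_d-\nu\Delta$ is a local differential operator and $u_0\in H^4_{e,\om}$ is supported in $\overline{\om}$, both $\mathbb{G}(u_0)=\mathbb{L}_\nu(u_0^2)$ and $c\mathbb{L}_\nu u_0$ are supported in $\overline{\om}$ and coincide with $\gamma_e^\dagger(G(U_0))$ and $c\gamma_e^\dagger(L_\nu U_0)$ respectively; hence only the nonlocal term contributes to $R$, leaving
\begin{equation*}
R=\mathbb{M}_T w_0-\Gamma_e^\dagger(M_T)w_0,\qquad w_0\bydef\mathbb{L}_\nu u_0.
\end{equation*}

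\textbf{Bounds on the two pieces.} For the discrete piece I would use $\out\gamma_e^\dagger(F(U_0))=0$ and $\Gamma_e^\dagger(B_T)\gamma_e^\dagger(F(U_0))=\gamma_e^\dagger(B_TF(U_0))$ (via $\gamma_e\gamma_e^\dagger=I_d$ on $\ell^2_e$), whence Lemma \ref{lem : gamma and Gamma properties} yields $\|\mathbb{B}_T\gamma_e^\dagger(F(U_0))\|_2=\sqrt{2d}\,\|B_TF(U_0)\|_2$; since $B_T^NF(U_0)\in\mathrm{Range}(\pi^N_e)$ and $\pi_{e,N}\mathbb{W}_TF(U_0)\in\mathrm{Range}(\pi_{e,N})$ are orthogonal in $\ell^2_e$, the Pythagorean identity produces exactly $Y_0$. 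For the error $R$, the key identity $\mathbb{M}_Tu=f_{\mathcal{Y}_0,T}*\mathbb{L}_\nu u$ from \eqref{eq : definition of fiT with operators} gives $\mathbb{M}_Tw_0=f_{\mathcal{Y}_0,T}*(\mathbb{L}_\nu^2 u_0)$, a rapidly decaying function by Lemma \ref{lem : computation of f} and the compact support of $\mathbb{L}_\nu^2u_0$. Poisson summation, combined with matching Fourier coefficients against $M_TL_\nu U_0$, then yields $\Gamma_e^\dagger(M_T)w_0(x)=\sum_{k\in\mathbb{Z}}(\mathbb{M}_Tw_0)(x+2dk)$ on $\om$. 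Writing $R=R_1+R_2$ with $R_1=\cha R$ and $R_2=\out R$, one obtains $R_1(x)=-\sum_{k\ne 0}(f_{\mathcal{Y}_0,T}*\mathbb{L}_\nu^2u_0)(x+2dk)$ on $\om$ and $R_2=\out\mathbb{M}_Tw_0$ outside. Since $\gamma_e(R_2)=0$ one has $\mathbb{B}_TR_2=R_2$, while $\mathbb{B}_TR_1=\gamma_e^\dagger(B_T\gamma_eR_1)$ is supported in $\overline{\om}$; orthogonality of supports yields $\|\mathbb{B}_TR\|_2^2\le\|R_2\|_2^2+\|B_T\|_2^2\|R_1\|_2^2$. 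Inserting the pointwise bound $|f_{\mathcal{Y}_0,T}|\le C_{\mathcal{Y}_0,T}e^{-a_0|\cdot|}$ and a weighted Cauchy--Schwarz inequality with weight $\cosh(2a_0y)$ controls $\|R_2\|_2^2$; for $\|R_1\|_2^2$ the geometric series identity $\sum_{k\ne 0}e^{-a_0|x-y+2dk|}=\frac{2e^{-2a_0d}}{1-e^{-2a_0d}}\cosh(a_0(x-y))$ together with $\cosh(a_0(x-y))\le\tfrac12(\cosh(2a_0x)+\cosh(2a_0y))$ (then Cauchy--Schwarz and integration over $\om$) produces the explicit prefactor $C(d)$. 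Finally, Parseval's identity converts the $\cosh$-weighted integral of $|\mathbb{L}_\nu^2u_0|^2$ into $2d(L_\nu^2U_0,E_0*L_\nu^2U_0)_2$ since $E_0=\gamma_e(\cha\cosh(2a_0\cdot))$, and summing the two contributions yields $\mathcal{Y}_0=Y_0+\mathcal{Y}_u$.

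\textbf{Main obstacle.} The central technical step is the Poisson summation identifying $\Gamma_e^\dagger(M_T)w_0$ on $\om$ with the full periodization of $\mathbb{M}_Tw_0$, even though the symbol $m_T$ itself is not integrable on $\R$: this is made legitimate by factoring through $\mathbb{L}_\nu^{-1}$, so that the effective convolution kernel $f_{\mathcal{Y}_0,T}$ is a genuine, exponentially decaying $L^1$ function by Lemma \ref{lem : computation of f}. The compactness of $\mathrm{supp}(\mathbb{L}_\nu^2u_0)\subset\overline{\om}$ then generates the crucial exponential gain $e^{-a_0d}$, and the subsequent $\cosh$-weighted bookkeeping of the Cauchy--Schwarz inequalities applied to the sum over $k\ne 0$ is precisely what produces the constant $C(d)$ and the quadratic form $(L_\nu^2U_0,E_0*L_\nu^2U_0)_2$ appearing in the claim.
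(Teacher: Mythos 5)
Your proposal is correct and follows essentially the same route as the paper: reduce via the isometry to $\|\mathbb{B}_T\mathbb{F}(u_0)\|_2$, split off the exactly-periodic part $\gamma_e^\dagger(F(U_0))$ (giving $Y_0$ through Lemma \ref{lem : gamma and Gamma properties} and orthogonality of the $\pi^N_e$/$\pi_{e,N}$ ranges), and bound the remaining nonlocal error by factoring through $\mathbb{L}_\nu^{-1}$ so that $h_0=\mathbb{L}_\nu^2u_0$ is hit by the exponentially decaying kernel $f_{\mathcal{Y}_0,T}$, then splitting with $\out$ and $\Gamma^\dagger_e(B_T)$ to get the $E_0$-quadratic form and the $C(d)$ factor. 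The only difference is that where the paper invokes Theorem 3.9 and Lemma 6.5 of \cite{unbounded_domain_cadiot} for the kernel-periodization estimates, you re-derive them directly via Poisson summation and the $\cosh$-weighted Cauchy--Schwarz bookkeeping, which is the same argument carried out in place.
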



\begin{proof}
By definition of the norm on $\mathcal{H}_e$, we have that $\|u\|_{\mathcal{H}} = \|\mathbb{L}\mathbb{\Lambda}_\nu u\|_2$ for all $u \in \mathcal{H}_e$. This implies
\[
 {\|\mathbb{A}_T\mathbb{F}(u_0)\|_{\mathcal{H}} = \|\mathbb{L}\mln\mathbb{A}_T\mathbb{F}(u_0)\|_2 = \|\mathbb{B}_T \mln{\mathbb{F}}(u_0)\|_2}
\]
by definition of $\mathbb{B}_T$ in Section \ref{ssec : approximate inverse}. Now, since $u_0 = \gamma^\dagger(U_0) \in H^4_{e,\om}(\R)$ by construction in Section \ref{ssec : approximate solution}, we have that $\mathbb{G}(u_0) = \gamma^\dagger\left(G(U_0)\right)$. Moreover, we have 
\begin{align*}
   \|\mathbb{B}_T \mln\mathbb{F}(u_0)\|_2 &= \|\mathbb{B}_T(\mathbb{L} {\mathbb{\Lambda}_{\nu}} u_0 + \mln\mathbb{G}(u_0))\|_2\\
    &\leq \|\mathbb{B}_T\left( \mathbb{M}_T  {\mathbb{\Lambda}_{\nu}} - \Gamma^\dagger\left(M_T\right)  {\mathbb{\Lambda}_{\nu}} \right)u_0\|_2 + \|\mathbb{B}_T( (\Gamma^\dagger\left(M_T\right)-c) {\mathbb{\Lambda}_{\nu}} u_0 + \mln\mathbb{G}(u_0))\|_2.
\end{align*}
Now, using Lemma \ref{lem : gamma and Gamma properties} we get
\begin{align*}
    \|\mathbb{B}_T( (\Gamma^\dagger\left(M_T\right)-c) {\mathbb{\Lambda}_{\nu}} u_0 + \mathbb{G}(u_0))\|_2 &= \sqrt{|\om|} \|B_T \Lambda_\nu F(U_0)\|_2\\
    &=  \sqrt{2d} \left( \|B^N_T \Lambda_\nu F(U_0)\|_2^2 + \|\pi_N W_T* (\Lambda_\nu F(U_0))\|_2^2\right)^{\frac{1}{1}} \leq Y_0
\end{align*}
as $B_{T} = B^N_T + \pi_N W_T$ by construction in \eqref{def : operator BT periodic}.

Then, using that $\mathbb{B}_T = \out + \Gamma^\dagger\left(B_T\right)$ where $\Gamma^\dagger\left(B_T\right) = \cha \Gamma^\dagger\left(B_T\right) \cha $, we get
\begin{align}\label{ex : example in Y0}
\nonumber
     &\|\mathbb{B}_T\left( \mathbb{M}_T  {\mathbb{\Lambda}_{\nu}} - \Gamma^\dagger\left(M_T\right)  {\mathbb{\Lambda}_{\nu}} \right)u_0\|_2^2\\
     \leq ~& \|\out\left( \mathbb{M}_T  {\mathbb{\Lambda}_{\nu}} - \Gamma^\dagger\left(M_T\right)  {\mathbb{\Lambda}_{\nu}} \right)u_0\|_2^2 + \|\Gamma^\dagger\left({B}_{T}\right)\|_2^2\|\cha \left( \mathbb{M}_T  {\mathbb{\Lambda}_{\nu}} - \Gamma^\dagger\left(M_T\right)  {\mathbb{\Lambda}_{\nu}} \right)u_0\|_2^2\\
     = ~&\|\out\left( \mathbb{M}_T  {\mathbb{\Lambda}_{\nu}}^{-1} - \Gamma^\dagger\left(M_T\right)  {\mathbb{\Lambda}_{\nu}}^{-1} \right)h_0\|_2^2 + \|B_T\|_2^2\|\cha \left( \mathbb{M}_T  {\mathbb{\Lambda}_{\nu}}^{-1} - \Gamma^\dagger\left(M_T\right)  {\mathbb{\Lambda}_{\nu}}^{-1} \right)h_0\|_2^2
\end{align}
where we define $h_0 \bydef  {\mathbb{\Lambda}_{\nu}}^2 u_0$ and where we used Lemma \ref{lem : gamma and Gamma properties}. Notice that $h_0 \in L^2_{e,\om}$ as $u_0 \in H^4_{e,\om}(\R)$ by construction.  Then, recall that \begin{align*}
    \mathbb{M}_T  {\mathbb{\Lambda}_{\nu}}^{-1} h_0 = f_{\mathcal{Y}_0,T}*h_0
\end{align*}
by definition of $f_{\mathcal{Y}_0,T}$ in \eqref{eq : definition of fiT with operators}. Now, since $u_0 = \gamma^\dagger(U_0) \in H^4_{e,\om}(\R)$ by construction, then $$h_0~=~\gamma^\dagger( {\Lambda_{\nu}}^2U_0)~\in~L^2_{e,\om}(\R).$$ Therefore, letting $u = \cha$ and using Theorem 3.9 in \cite{unbounded_domain_cadiot}, we obtain that
\begin{align*}
   \|\out\left( \mathbb{M}_T  {\mathbb{\Lambda}_{\nu}}^{-1} - \Gamma^\dagger\left(M_T\right)  {\mathbb{\Lambda}_{\nu}}^{-1} \right)h_0\|_2^2 &=  \|\out \mathbb{M}_T  {\mathbb{\Lambda}_{\nu}}^{-1}h_0 u\|_2^2\\
   &\leq   C_{\mathcal{Y}_0,T}^2 |\om|  e^{-2a_0d}\left( {\Lambda_{\nu}}^2 U_0,E_0*( {\Lambda_{\nu}}^2U_0)\right)_2 \|\cha\|_2^2\\
   &= C_{\mathcal{Y}_0,T}^2 4d^2 e^{-2a_0d}\left( {\Lambda_{\nu}}^2 U_0,E_0*( {\Lambda_{\nu}}^2U_0)\right)_2 \bydef \left(\mathcal{Y}_{u,1}\right)^2,
\end{align*}
where we used that $\out \Gamma^\dagger(M_T) =0$ by definition of $\Gamma^\dagger$ in \eqref{def : Gamma and Gamma dagger}.
In particular, the computation of the Fourier coefficients $E_0$ of $\cosh(2a_0x)$ on $\om$ was also derived in Theorem 3.9 in \cite{unbounded_domain_cadiot}.
Now, let us focus on the term $ \|\cha\left( \mathbb{M}_T  {\mathbb{\Lambda}_{\nu}}^{-1} - \Gamma^\dagger\left(M_T\right)  {\mathbb{\Lambda}_{\nu}}^{-1} \right)h_0\|_2.$ Letting $u \bydef \cha$, then
the proof of Theorem 3.9 in \cite{unbounded_domain_cadiot} provides that
\begin{align*}
    &\|\cha\left( \mathbb{M}_T  {\mathbb{\Lambda}_{\nu}}^{-1} - \Gamma^\dagger\left(M_T\right)  {\mathbb{\Lambda}_{\nu}}^{-1} \right)h_0\|_2^2 \\
    \leq &\left(\mathcal{Y}_{u,1}\right)^2 + C_{\mathcal{Y}_0,T}^2 \sum_{ k \in \mathbb{Z}\setminus\{0\}}\int_{\om}\int_{\om} |h_0(x)h_0(z)|\left(\int_{\mathbb{R}\setminus (\om \cup (\om +2dk))} e^{-a_0|y-x|}e^{-a_0|y-2dk-z|}dy\right) dz dx. 
\end{align*}
Moreover, the above can be written as 
\begin{align}\label{eq : proof Yu2 first step}
    &\|\cha\left( \mathbb{M}_T  {\mathbb{\Lambda}_{\nu}}^{-1} - \Gamma^\dagger\left(M_T\right)  {\mathbb{\Lambda}_{\nu}}^{-1} \right)h_0\|_2^2 \\
    \leq &\left(\mathcal{Y}_{u,1}\right)^2 + 2C_{\mathcal{Y}_0,T}^2 \sum_{ k =1}^\infty e^{-2adk}\int_{\om}\int_{\om} |h_0(x)h_0(z)|\left(\int_{\mathbb{R}\setminus (\om \cup (\om +2dk))} \hspace{-1cm} e^{-a_0|y-x|}\cosh(a_0(y-z))dy\right) dz dx. 
\end{align}
Finally, using the proof of Lemma 6.5 in \cite{unbounded_domain_cadiot}, we readily obtain
\begin{align*}
     \|\cha (\mathbb{M}_T {\mathbb{\Lambda}_{\nu}}^{-1}h_0 - \mathbb{M}_{\om}  {\mathbb{\Lambda}_{\nu}}^{-1} h_0)\|_2 \leq 2d C_{\mathcal{Y}_0,T}  e^{-a_0d}\left(~\left( {\Lambda_{\nu}}^2 U_0,E_0*( {\Lambda_{\nu}}^2U_0)\right)_2  \|B_T\|_2^2(1 + C(d))~\right)^{\frac{1}{2}},
\end{align*}
which concludes the proof.
\end{proof}

\subsection{The bound \boldmath\texorpdfstring{$\mathcal{Z}_2$}{Z2}\unboldmath}

The computation of the bound $\mathcal{Z}_2$ is obtained thanks to  Lemma \ref{lem : banach algebra} (under which products in $\mathcal{H} \times \mathcal{H} \to L^2$ are well-defined). We present its computation in the next lemma
\begin{lemma}\label{lem : bound Z_2}
Let $r>0$ and let $\kappa_T$ satisfying \eqref{def : definition of kappa}. Moreover, let $\mathcal{Z}_2 >0$ be such that 
\begin{equation}
    \mathcal{Z}_2 \geq  2\kappa_T \max\left\{1,~  \max\{\|B^N_T\|_2, \|W_T\|_1 \} + \|\pi_N\mathbb{W}_T \pi^N\|_2\right\},
\end{equation}
then $ \|\mathbb{A}_T\left({D}\mathbb{F}(v) - D\mathbb{F}(u_0)\right)\|_{\mathcal{H}} \leq \mathcal{Z}_2r$ for all $v \in \overline{B_r(u_0)} \subset \mathcal{H}_e.$
\end{lemma}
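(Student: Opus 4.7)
The plan is to reduce the operator-norm estimate to the Banach algebra bound of Lemma \ref{lem : banach algebra} together with the $L^2$-operator bound on $\mathbb{B}_T$ established in equation \eqref{eq : equality B operator and B fourier}. First, I would unpack the derivative. Since $\mathbb{F}(u) = \mathbb{L}u + \mathbb{G}(u)$ with $\mathbb{G}(u) = \mathbb{L}_\nu u^2$, one checks that $D\mathbb{F}(u)h = \mathbb{L} h + 2\mathbb{L}_\nu(uh)$, so the linear part cancels in the difference and
\[
\bigl(D\mathbb{F}(v) - D\mathbb{F}(u_0)\bigr)h = 2\,\mathbb{L}_\nu\!\bigl((v-u_0)h\bigr)
\]
for every $h \in H^l_e$.

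Next, I would transfer the $H^l_e$-operator norm to an $L^2_e$-operator norm using the isometry $\|u\|_l = \|\mathbb{L}u\|_2$ together with $\mathbb{A}_T = \mathbb{L}^{-1}\mathbb{B}_T$ from \eqref{def : operator A for T>0}--\eqref{def : operator A for T=0}. Explicitly, for arbitrary $h \in H^l_e$,
\[
\bigl\|\mathbb{A}_T\bigl(D\mathbb{F}(v)-D\mathbb{F}(u_0)\bigr)h\bigr\|_l = \bigl\|\mathbb{B}_T\cdot 2\mathbb{L}_\nu\bigl((v-u_0)h\bigr)\bigr\|_2 \le 2\|\mathbb{B}_T\|_2\,\bigl\|\mathbb{L}_\nu\bigl((v-u_0)h\bigr)\bigr\|_2.
\]

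Now I would invoke Lemma \ref{lem : banach algebra}, which gives $\|\mathbb{L}_\nu((v-u_0)h)\|_2 \le \kappa_T\|v-u_0\|_l\|h\|_l$, and use $v \in \overline{B_r(u_0)}$ to bound $\|v-u_0\|_l \le r$. Finally, applying the estimate \eqref{eq : equality B operator and B fourier} for $\|\mathbb{B}_T\|_2$, I would conclude
\[
\bigl\|\mathbb{A}_T\bigl(D\mathbb{F}(v)-D\mathbb{F}(u_0)\bigr)\bigr\|_l \le 2\kappa_T\max\!\left\{1,\ \max\{\|B^N_T\|_2,\|W_T\|_1\} + \|\pi_{e,N}\mathbb{W}_T\pi^N_e\|_2\right\} r \le \mathcal{Z}_2\,r.
\]

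There is no real obstacle: the proof is essentially a one-line assembly of three prior ingredients (the formula for $D\mathbb{G}$, the isometric identification via $\mathbb{L}$, and the algebra estimate). The only point needing a little care is ensuring that one uses the operator-norm bound on $\mathbb{B}_T$ that already accounts for both the case $T>0$ (where $W_T=e_0$) and the case $T=0$ (where the off-diagonal block $\pi_{e,N}\mathbb{W}_T\pi^N_e$ is present), but this is exactly what \eqref{eq : equality B operator and B fourier} packages into the unified formula appearing in the statement of $\mathcal{Z}_2$.
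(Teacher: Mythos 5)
Your proposal is correct and follows essentially the same route as the paper's proof: rewrite $D\mathbb{F}(v)-D\mathbb{F}(u_0)$ as $2\mathbb{L}_\nu \mathbb{w}$ with $w=v-u_0$, pass to the $L^2$ norm via $\mathbb{A}_T=\mathbb{L}^{-1}\mathbb{B}_T$ and the isometry $\|u\|_l=\|\mathbb{L}u\|_2$, apply Lemma \ref{lem : banach algebra} with $\|w\|_l\le r$, and finish with the bound \eqref{eq : equality B operator and B fourier} on $\|\mathbb{B}_T\|_2$. No gaps.
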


\begin{proof}
Let $v \in \overline{B_r(u_0)}$, then observe that because $\|u\|_{\mathcal{H}} = \|\mathbb{L}u\|_2$ for all $u \in \mathcal{H}_e$, we have
\begin{align*}
     \|\mathbb{A}_T\left({D}\mathbb{F}(v) - D\mathbb{F}(u_0)\right)\|_{\mathcal{H}}  &= \|\mathbb{L}\mln\mathbb{A}_T\left({D}\mathbb{F}(v) - D\mathbb{F}(u_0)\right)\|_{\mathcal{H},L^2}\\
   & = \|\mathbb{B}_T \mln\left({D}\mathbb{G}(v) - D\mathbb{G}(u_0)\right)\|_{\mathcal{H},L^2},
\end{align*}
where we also used that  ${D}\mathbb{F}(v) - D\mathbb{F}(u_0) = D\mathbb{G}(v) -  D\mathbb{G}(u_0)$. 

Now let  $w \bydef v-u_0 \in \overline{B_r(0)} \subset \mathcal{H}_e$ (in particular $\|w\|_{\mathcal{H}} \leq r$). Then we have
\[
D\mathbb{G}(v) - D\mathbb{G}(u_0) = 2  (\mathbb{v}-\mathbb{u}_0) = 2   \mathbb{w}.
\]
Moreover, the above yields
\[
\|\mathbb{B}_T\mln\left({D}\mathbb{G}(v) - D\mathbb{G}(u_0)\right)\|_{l,2} \leq  2 \|\mathbb{B}_T\|_{2}\|  {\mathbb{\Lambda}_{\nu}}\mathbb{w}\|_{l,2}.
\]
Now, given $u \in \mathcal{H}_e$, we have 
\begin{align*}
    \|  {\mathbb{\Lambda}_{\nu}}\mathbb{w}u\|_{2} = \| {\mathbb{\Lambda}_{\nu}}(wu)\|_2 \leq \kappa_T \|w\|_{\mathcal{H}} \|u\|_{\mathcal{H}}
\end{align*}
using Lemma \ref{lem : banach algebra}. We conclude the proof using \eqref{eq : equality B operator and B fourier}.
\end{proof}

\subsection{The bound \boldmath\texorpdfstring{$\mathcal{Z}_1$}{Z1}\unboldmath}\label{sec : bound Z1}

The bound $\mathcal{Z}_1$ is essential in our computer-assisted approach as it controls the accuracy of the approximate inverse $\mathbb{A}_T$. Indeed, $\mathcal{Z}_1$ satisfies 
\begin{align*}
    \|I_d - \mathbb{A}_T D\mathbb{F}(u_0)\|_{\mathcal{H}} \leq \mathcal{Z}_1.
\end{align*}
In particular, as demonstrated in the following lemma, having $\mathcal{Z}_1 <1$ implies the invertibility of $\mathbb{A}_T$ (which is required in Theorem \ref{th: radii polynomial}). We recall some of the results from \cite{unbounded_domain_cadiot} (see Section 3) and provide an explicit computation for $\mathcal{Z}_1$ in the case of the  {cgWE }.
\begin{lemma}\label{lem : computation of Z1}
Let $Z_1$ and $\mathcal{Z}_{u,i}^k$ $(i \in \{1, 2\}$, $k \in \{0, 1, 2\})$ be satisfying the following
\begin{align}
Z_1  \geq  \|I_d - &A_TDF(U_0)\|_{\mathcal{H}} \label{def : Z1 periodic}\\ \nonumber
\mathcal{Z}_{u,1}^{(0)} \bydef    2\nu\left\|\mathbb{1}_{\mathbb{R}\setminus \om}\mathbb{L}^{-1} \mathbb{u}_0''\right\|_2  ~~ &\text{ and } ~~   \mathcal{Z}_{u,2}^{(0)} \bydef 2\nu\left\|\cha\left(\mathbb{L}^{-1}-\Gamma^\dagger\left(L^{-1}\right)\right) \mathbb{u}_0''\right\|_2\\  \nonumber
\mathcal{Z}_{u,1}^{(1)} \bydef   4\nu\left\|\mathbb{1}_{\mathbb{R}\setminus \om} \partial_x \mathbb{L}^{-1}\mathbb{u}_0'\right\|_2  ~~ &\text{ and } ~~  \mathcal{Z}_{u,2}^{(1)} \bydef 4\nu\left\| \cha\left(\partial_x \mathbb{L}^{-1}-\Gamma^\dagger\left(\partial_x L^{-1}\right)\right)\mathbb{u}_0'\right\|_2 \label{def : Zu different components} \\ 
\mathcal{Z}_{u,1}^{(2)} \bydef 2\left\|\mathbb{1}_{\mathbb{R}\setminus \om} \mathbb{L}^{-1} \mathbb{u}_0\right\|_2 ~~ &\text{ and } ~~ \mathcal{Z}_{u,2}^{(2)} \bydef 2\left\| \cha \left(\mathbb{L}^{-1}-\Gamma^\dagger\left(L^{-1}\right)\right) \mathbb{u}_0\right\|_2
    \end{align}
Moreover let $\mathcal{Z}_1$ and $\mathcal{Z}_u$ be bounds satisfying 
\begin{align}\label{def : Zu bound}
    \mathcal{Z}_u \geq \|\mathbb{B}_T\|_2\sum_{k=0}^2 \sqrt{(\mathcal{Z}^k_{u,1})^2 + (\mathcal{Z}^k_{u,2})^2} ~~ \text{ and } ~~
    \mathcal{Z}_1 \geq Z_1 + \mathcal{Z}_u,
\end{align}
then $ \|I_d - {\mathbb{A}_T}D\mathbb{F}(u_0)\|_{\mathcal{H}} \leq \mathcal{Z}_1.$ Moreover, if in addition $\mathcal{Z}_1<1$, then both $\mathbb{A}_T : H^2_e \to \mathcal{H}_e$ and $D \mathbb{F}(u_0): \mathcal{H}_e \to H^2_e$ have a bounded  inverse. In particular,
\begin{align}\label{eq : norm on the inverse}
    \|D\mathbb{F}(u_0)^{-1}\|_{H^2,\mathcal{H}} \leq \frac{\|\mathbb{A}_T\|_{H^2,\mathcal{H}}}{1-\mathcal{Z}_1}.
\end{align}
Finally, if $\tilde{u} \in \mathcal{H}_e$ is a solution of \eqref{eq : f(u)=0 on He} obtained thanks to Theorem \eqref{th: radii polynomial}, then $D\mathbb{F}(\tilde{u}) : \mathcal{H}_e \to H^2_e$ has a bounded inverse as well.
\end{lemma}

\begin{proof}
First, notice that 
\begin{align}\label{eq : dev of DG on unbounded}
     \mln D\mathbb{G}(u_0) \mln^{-1} \mathbb{L}^{-1}u =  2 u_0 \mathbb{L}^{-1} u - 4\nu \partial_x u_0 \left(\partial_x\mathbb{L}^{-1} \mln^{-1}u\right) -2\nu \partial_x^2u_0  \mathbb{L}^{-1} \mln^{-1}u
\end{align}
for all $u \in L^2_e.$ Then, the proof of $ \|I_d - {\mathbb{A}_T}D\mathbb{F}(u_0)\|_{\mathcal{H}} \leq \mathcal{Z}_1$ is obtained using a similar proof as the one of Theorem 3.5 in \cite{unbounded_domain_cadiot}. Now, we prove that $\mathbb{A}_T : H^2_e \to \mathcal{H}_e$ and $D \mathbb{F}(u_0): \mathcal{H}_e \to H^2_e$ have a bounded inverse.

First notice that if $\mathcal{Z}_1<1$, then $\mathbb{A}_TD\mathbb{F}(u_0) : \mathcal{H}_e \to \mathcal{H}_e$ has a bounded inverse (using a Neumann series argument on $I_d = I_d - {\mathbb{A}_T}D\mathbb{F}(u_0) + {\mathbb{A}_T}D\mathbb{F}(u_0)$. In particular, this implies that $\mathbb{A}_T$ is surjective. Now, recall that 
\[
\mathbb{A}_T = \mathbb{L}^{-1}\mathbb{B}_T = \mathbb{L}^{-1}\left(\Gamma^\dagger\left(B_T\right) + \mathbb{1}_{\R\setminus \om}\right).
\]
Therefore, using Lemma \ref{lem : gamma and Gamma properties}, $\mathbb{A}_T$ is invertible if an only if $\mathbb{B}_{T} : \ell^2_e \to \ell^2_e$ is invertible. Then, recall that  $B_{T} = B^N_T + \pi_N \mathbb{W}_T : \ell^2_e \to \ell^2_e$, which  is surjective as $\mathbb{A}_T : H^2_e \to \mathcal{H}_e$ is surjective. Therefore $B^N_T : \pi^N \ell^2_e \to \pi^N \ell^2_e$ is surjective, hence invertible as it is finite dimentional. This also implies that $\pi_N \mathbb{W}_T : \ell^2_e \to \pi_N \ell^2_e$ is surjective.  Let $U \in \ell^2_e$ such that $B_{T}U = 0$. Then $\pi^NU = 0$ as $B^N_T$ is invertible and $B_{T}U = \pi_N W_T*(\pi_N U) = \pi_N\mathbb{W}_T\pi_N U = 0.$ Now, $\pi_N \mathbb{W}_T \pi_N : \pi_N \ell^2_e \to \pi_N \ell^2_e$ is surjective and symmetric, therefore it is also injective. This implies that $U=0$.

Consequently $\mathbb{A}_T : H^2_e \to \mathcal{H}_e$ is invertible and thus has a bounded  inverse (as a continuous operator between Hilbert spaces). Since $\mathbb{A}_TD\mathbb{F}(u_0) : \mathcal{H}_e \to \mathcal{H}_e$ has a bounded  inverse, so does $D\mathbb{F}(u_0) : \mathcal{H}_e \to H^2_e.$ The proof of  \eqref{eq : norm on the inverse} is given in Theorem 3.5 in \cite{unbounded_domain_cadiot}. 

Now, to prove that $D\mathbb{F}(\tilde{u}) : \mathcal{H}_e \to H^2_e$ also has a bounded inverse, notice that 
\begin{align*}
    I_d = I_d - \mathbb{A}_TD\mathbb{F}(\tilde{u}) + \mathbb{A}_TD\mathbb{F}(\tilde{u}).
\end{align*}
But,
\begin{align*}
    \|I_d - \mathbb{A}_TD\mathbb{F}(\tilde{u})\|_{\mathcal{H}} \leq  \|\mathbb{A}_T\left(D\mathbb{F}(\tilde{u})-D\mathbb{F}(u_0)\right)\|_{\mathcal{H}} + \|I_d - \mathbb{A}_TD\mathbb{F}(u_0)\|_{\mathcal{H}} \leq \mathcal{Z}_{2} r + \mathcal{Z}_1 
\end{align*}
using \eqref{eq: definition Y0 Z1 Z2}. Therefore, using \eqref{condition radii polynomial}, we get
\begin{align*}
     \mathcal{Z}_{2} r + \mathcal{Z}_1 < 1 - \frac{\mathcal{Y}_0}{r} < 1.
\end{align*}
Using a Neumann series argument and the fact that $\mathbb{A}_T : H^2_e \to \mathcal{H}_e$ has a bounded inverse, we get that $D\mathbb{F}(\tilde{u}): \mathcal{H}_e \to H^2_e$ has a bounded inverse as well.
\end{proof}


Using the previous lemma, if $\mathcal{Z}_1<1$ then we ensure that the operator $\mathbb{A}_T$ is invertible, hence injective. Consequently, Theorem \ref{th: radii polynomial} can be applied using $\mathbb{A}_T$ if we manage to prove that $\mathcal{Z}_1<1$.
In order to obtain an explicit upper bound for $\mathcal{Z}_1$, we need to compute an upper bound for $Z_1$ and $\mathcal{Z}_u$ satisfying \eqref{def : Z1 periodic} and \eqref{def : Zu bound} respectively.
$\mathcal{Z}_{u}$  comes from the unboundedness part of the problem. We will see in Lemma \ref{lem : lemma Zu} that $\mathcal{Z}_{u}$ is exponentially decaying with the size of $\om$, given by $d.$
 $Z_1$ is the usual term one has to compute during the proof of a periodic solution using the Radii-Polynomial approach (see \cite{van2021spontaneous} for instance). The next Lemma \ref{lem : usual term periodic Z1} provides the details for such an analysis. In particular, it is fully determined by vector and matrix norm computations.

\subsubsection{Computation of \boldmath\texorpdfstring{$Z_1$}{Z1}\unboldmath}\label{ssec : first component Z1}

The bound $Z_1$ controls the accuracy of $A_T$ as an approximate inverse of $DF(U_0)$. Therefore, this bound depends on the quality of the numerics as well as the decay in the Fourier coefficients. We present the computation of $Z_1$ in the next lemma.

\begin{lemma}\label{lem : usual term periodic Z1}
 {Let $B^N_T, W_T$ be defined in Section \ref{ssec : approximate inverse} and  let the} bounds $Z_{1,i}$ ($i \in \{1,2,3,4\})$  satisfy
 {
\begin{align}\label{def : values of Z1i}
\nonumber
    Z_{1,1} &\geq  \bigg(\|\pi^N - B_T^N \Lambda_\nu DF(U_0)\Lambda_\nu^{-1}L^{-1}\pi^{N}\|_2^2 + \|(\pi^{3N}-\pi^N)\mathbb{W}_T \Lambda_\nu DF(U_0)\Lambda_\nu^{-1} L^{-1}\pi^{N}\|_2^2\bigg)^{\frac{1}{2}}\\ \nonumber
    Z_{1,2} &\geq \|\pi^NB_T^N \Lambda_\nu DG(U_0)\Lambda_\nu^{-1} L^{-1}(\pi^{2N}-\pi^N)\|_2\\ \nonumber
    Z_{1,3} &\geq \frac{2\nu}{l_{T,0}}\|W_T*(\partial_x^2 U_0)\|_1 + \frac{4\nu}{l_{T,1}}\|W_T*(\partial_xU_0)\|_{\ell^1} + \frac{2}{l_{T,2}}\|W_T*U_0\|_1\\ 
    Z_{1,4} &\geq \begin{cases}
        0 &\text{ if } T>0\\
        \|e_0 - W_0*(e_0 - \frac{1}{c}V_2)\|_1 &\text{ if } T=0.
    \end{cases}
\end{align}
}
where  
\begin{align*}
  l_{T,0} &\bydef \displaystyle\min_{|n| > N} |l(\tilde{n})|, ~~
  l_{T,1} \bydef \displaystyle\min_{|n| > N} \frac{|l(\tilde{n})|}{2\pi|\tilde{n}|}\\
 l_{T,2} &\bydef \begin{cases}
        \displaystyle\min_{|n| > N} |m_T(\tilde{n})-c| &\text{ if } T>0\\
        \displaystyle\min_{|n| > N} \frac{c|m_0(\tilde{n})-c|}{m_0(\tilde{n})} &\text{ if } T=0.
    \end{cases}
\end{align*}
Then defining $Z_1 \bydef \left( Z_{1,1}^2 + Z_{1,2}^2 + (Z_{1,3} + Z_{1,4})^2   \right)^{\frac{1}{2}}$, we have $Z_1 \geq \|I_d - A_TDF(U_0)\|_{\mathcal{H}}.$
\end{lemma}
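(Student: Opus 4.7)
The plan is to convert $\|I_d-A_TDF(U_0)\|_l$ into an $\ell^2_e$ operator norm on $I_d - B_T DF(U_0)L^{-1}$, then decompose this operator into four blocks via the projections $\pi^N_e$ and $\pi_{e,N}$ so that each block corresponds to one of the stated contributions. Concretely, from $A_T = L^{-1}\pi_{e,N}\mathbb{W}_T + A_T^N$ one gets $LA_T = B_T$, and since $L:X^l_e \to \ell^2_e$ is an isometry,
\begin{equation*}
\|I_d - A_T DF(U_0)\|_l = \|L(I_d - A_T DF(U_0))L^{-1}\|_2 = \|I_d - B_T DF(U_0) L^{-1}\|_2.
\end{equation*}
Writing $M := B_T DF(U_0)L^{-1}$ and applying $I_d = \pi^N_e + \pi_{e,N}$ on both sides, the orthogonality of $\pi^N_e\ell^2_e$ and $\pi_{e,N}\ell^2_e$ together with Cauchy--Schwarz yields $\|I_d - M\|_2^2 \le \sum_{P,Q\in\{\pi^N_e,\pi_{e,N}\}}\|P(I_d-M)Q\|_2^2$.

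The next step is to compute each block using $B_T = \pi_{e,N}\mathbb{W}_T + B_T^N$, $B_T^N = \pi^N_e B_T^N \pi^N_e$, and $DF(U_0) = L + DG(U_0)$. The block $\pi^N_e(I_d-M)\pi^N_e$ equals $\pi^N_e - B_T^N DF(U_0)L^{-1}\pi^N_e$, matching the first piece of $Z_{1,1}$. For $\pi_{e,N}(I_d-M)\pi^N_e = -\pi_{e,N}\mathbb{W}_T DF(U_0)L^{-1}\pi^N_e$ I use the finite bandwidths: $W_T$ and the $V_i$ are in $\pi^N_e$, so the image has support in $\{n\le 3N\}$, yielding $(\pi^{3N}_e-\pi^N_e)\mathbb{W}_T DF(U_0)L^{-1}\pi^N_e$ and the second piece of $Z_{1,1}$. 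For $\pi^N_e(I_d-M)\pi_{e,N} = -B_T^N DG(U_0)L^{-1}\pi_{e,N}$ (the $L L^{-1}$ part vanishes on orthogonal projections), the same bandwidth argument shows only the band $(\pi^{2N}_e - \pi^N_e)$ of the input contributes, giving $Z_{1,2}$.

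The main obstacle is the block $\pi_{e,N}(I_d - M)\pi_{e,N}$. Writing $DG(U_0)L^{-1} = \mathbb{V}_2(M_T - cI_d)^{-1} + (\mathbb{V}_0 + \mathbb{V}_1\partial)L^{-1}$ (using $L = (M_T-cI_d)L_\nu$), this block becomes
\begin{equation*}
\pi_{e,N}V - \pi_{e,N}\mathbb{W}_T V - \pi_{e,N}\mathbb{W}_T\bigl[\mathbb{V}_2(M_T-cI_d)^{-1} + (\mathbb{V}_0+\mathbb{V}_1\partial)L^{-1}\bigr]V.
\end{equation*}
For $T>0$ we have $\mathbb{W}_T = I_d$, the $\pi_{e,N}-\pi_{e,N}\mathbb{W}_T V$ cancellation is exact, and the three remaining terms are estimated by Young's inequality using the diagonal bounds $l_{T,0}, l_{T,1}, l_{T,2}$, producing $Z_{1,3}$ with $Z_{1,4}=0$. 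For $T=0$, the key algebraic manipulation is
\begin{equation*}
I_d + \mathbb{V}_2(M_0-cI_d)^{-1} = \bigl(I_d - \tfrac{1}{c}\mathbb{V}_2\bigr) + \mathbb{V}_2\cdot\tfrac{M_0}{c(M_0-cI_d)},
\end{equation*}
which lets one split the block into a residual $\pi_{e,N}(I_d - \mathbb{W}_0(I_d - \tfrac{1}{c}\mathbb{V}_2))V$, controlled by $\|e_0-W_0*(e_0-\tfrac{1}{c}V_2)\|_1 = Z_{1,4}$, plus a term involving the symbol $\frac{m_0}{c(m_0-c)}$, whose supremum on $|n|>N$ is exactly $1/l_{T,2}$ by the $T=0$ definition, giving the $\|W_0*V_2\|_1/l_{T,2}$ contribution in $Z_{1,3}$. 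The $\mathbb{V}_0$ and $\mathbb{V}_1\partial$ terms are handled identically to the $T>0$ case via Young's inequality.

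Combining the block estimates gives $\|\pi^N_e(I_d-M)\pi^N_e\|_2^2 + \|\pi_{e,N}(I_d-M)\pi^N_e\|_2^2 \le Z_{1,1}^2$, $\|\pi^N_e(I_d-M)\pi_{e,N}\|_2^2 \le Z_{1,2}^2$, and $\|\pi_{e,N}(I_d-M)\pi_{e,N}\|_2^2 \le (Z_{1,3}+Z_{1,4})^2$, so summing yields $\|I_d - A_T DF(U_0)\|_l^2 \le Z_1^2$ as claimed. The main technical hurdle is the $T=0$ case of the fourth block, where one must identify the correct approximate-inverse rewrite that aligns with the $Z_{1,4}$ and $l_{T,2}$ definitions baked into the lemma.
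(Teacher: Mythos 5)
Your proposal is correct and follows essentially the same route as the paper's proof: reduce to $\|I_d - B_T DF(U_0)L^{-1}\|_2$ by isometry, split into blocks with $\pi^N_e$ and $\pi_{e,N}$ (the paper splits columns first and then rows, which is equivalent to your four-block decomposition), use the bandwidth of $W_T$ and the $V_i$ to localize to $\pi^{3N}_e$ and $\pi^{2N}_e$, and bound the tail block with Young's inequality and the tail constants $l_{T,i}$, with the same $T=0$ rewrite $I_d + \mathbb{V}_2(M_0-cI_d)^{-1} = (I_d - \tfrac{1}{c}\mathbb{V}_2) + \mathbb{V}_2\tilde{M}$ isolating the $Z_{1,4}$ residual. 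The only point you gloss over, the passage through $\iota,\iota^\dagger$ for the odd term $v_1 u'$ leading to the $\|\gamma(\gamma^\dagger_e(W_T))*V_1\|_{\ell^1}$ norm, is a bookkeeping detail already encoded in the lemma statement and does not constitute a gap.
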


\begin{proof}
First, notice that 
\begin{align}\label{eq : Z1 periodic first step}
\nonumber
    \|I_d - A_TDF(U_0)\|_{\mathcal{H}}^2 &= \|I_d - B_T \Lambda_\nu DF(U_0)\Lambda_\nu^{-1} L^{-1}\|_2^2\\
    \leq  \|\pi^N - &B_T\Lambda_\nu DF(U_0)\Lambda_\nu^{-1} L^{-1}\pi^N\|_2^2 + \|\pi_N - B_T\Lambda_\nu DF(U_0)\Lambda_\nu^{-1} L^{-1}\pi_N\|_2^2. 
\end{align}
Then, we have $DF(U_0)\pi^N = \pi^{2N}DF(U_0)\pi^N$ as $U_0 = \pi^N U_0$. Moreover, $B_T\pi^{2N} = \pi^{3N}B_T\pi^{2N}$ as $W_T = \pi^N W_T$ by construction. Therefore, we get $B_TDF(U_0)\pi^N = \pi^{3N}B_T DF(U_0)\pi^N$ and  $$ \|\pi^N - B_T\Lambda_\nu DF(U_0)\Lambda_\nu^{-1} L^{-1}\pi^N\|_2 =  \|\pi^N - \pi^{3N}\Lambda_\nu B_TDF(U_0)\Lambda_\nu^{-1} L^{-1}\pi^{N}\|_2.$$ 
Moreover, we have 
\begin{align*}
    &\|\pi^N - \pi^{3N}B_T\Lambda_\nu DF(U_0)\Lambda_\nu^{-1} L^{-1}\pi^{N}\|_2^2 \\
    &\leq 
    \|\pi^N - B_T^N\Lambda_\nu DF(U_0)\Lambda_\nu^{-1} L^{-1}\pi^{N}\|_2^2 + \|(\pi^{3N}-\pi^N)\mathbb{W}_T\Lambda_\nu DF(U_0)\Lambda_\nu^{-1} L^{-1}\pi^{N}\|_2^2 \leq Z_{1,1}^2
\end{align*}
using that $B_T = B_T^N + \pi_N \mathbb{W}_T.$ 
Let us now consider the term $\|\pi_N - B_TDF(U_0)L^{-1}\pi_N\|_2$ in \eqref{eq : Z1 periodic first step}. We have
\begin{align*}
     &\|\pi_N - B_T\Lambda_\nu DF(U_0)\Lambda_\nu^{-1}L^{-1}\pi_N\|_2^2 \\
     &\leq  \|\pi^NB_T^N(I_d + \Lambda_\nu DG(U_0)\Lambda_\nu ^{-1}L^{-1})\pi_N\|_2^2 + \|\pi_N - \pi_N\mathbb{W}_T(I_d + DG(U_0)L^{-1})\pi_N\|_2^2\\
    &= \|\pi^NB_T^NDG(U_0)L^{-1}\pi_N\|_2^2 + \|\pi_N - \pi_N\mathbb{W}_T(I_d + \Lambda_\nu DG(U_0)\Lambda_\nu^{-1} L^{-1})\pi_N\|_2^2\\
    &\leq Z_{1,2}^2 + \|\pi_N - \pi_N\mathbb{W}_T(I_d + \Lambda_\nu DG(U_0)\Lambda_\nu^{-1} L^{-1})\pi_N\|_2^2.
\end{align*}
Then, using \eqref{eq : dev of DG on unbounded}, we have 
\begin{align}\label{eq : development of DG(U0)}
   \Lambda_\nu DG(U_0)\Lambda_\nu^{-1} L^{-1}U = 2U_0*(L^{-1}U) - 4\nu(\partial_xU_0)*(\partial_x L^{-1}\Lambda_\nu^{-1} U) - 2(\partial_x^2U_0)*(L^{-1}\Lambda_\nu^{-1}U)
\end{align}
for all $U \in \ell^2_e.$ 
Suppose first that $T>0$, then
\begin{align*}
    \|\pi_N - \pi_N\mathbb{W}_T(I_d + \Lambda_\nu DG(U_0) \Lambda_\nu^{-1} L^{-1})\pi_N\|_2 = \|\pi_N\mathbb{W}_T\Lambda_\nu DG(U_0)\Lambda_\nu^{-1} L^{-1}\pi_N\|_2
\end{align*}
as $\mathbb{W}_T = I_d$ if $T>0$ (cf. \eqref{def : definition of WT}). 
Then, combining \eqref{eq : youngs inequality} with \eqref{eq : development of DG(U0)}, we get
\begin{align*}
    \|\pi_N \mathbb{W}_T DG(U_0)L^{-1}\pi_N\|_2  \leq  \frac{2\nu}{l_{T,0}}\|W_T*(\partial_x^2 U_0)\|_1 + \frac{4\nu}{l_{T,1}}\|W_T*(\partial_xU_0)\|_{\ell^1} + \frac{2}{l_{T,2}}\|W_T*U_0\|_1 \leq Z_{1,3}.
\end{align*}

Let us now focus on the case $T=0$.  Using that $DG(U_0) = 2 \mathbb{U}_0$, we have 
\begin{align*}
    &\|\pi_N - \pi_N \mathbb{W}_0(I_d + \Lambda_\nu DG(U_0)\Lambda_\nu^{-1}L^{-1})\pi_N\|_2 \\
    &\leq \|\pi_N - \pi_N\mathbb{W}_0(I_d - \Lambda_\nu\frac{2}{c}\mathbb{U}_0)\pi_N\|_2 + 2\|\pi_N \mathbb{W}_0\big( {\Lambda_{\nu}} \mathbb{U}_0 \Lambda_\nu^{-1}L^{-1} + \frac{1}{c}\mathbb{U}_0   \big)\pi_N\|_2.
\end{align*}
Now, using \eqref{eq : youngs inequality}, we have
\[
\|\pi_N - \pi_N\mathbb{W}_0(I_d - \frac{2}{c}\mathbb{U}_0)\pi_N\|_2 \leq \|I_d - \mathbb{W}_0(I_d - \frac{2}{c}\mathbb{U}_0)\|_2 \leq \|e_0 - W_0*(e_0 - \frac{2}{c}U_0)\|_1 \leq Z_{1,4}.
\]
Let $\widetilde{M} \bydef (M_0-cI_d)^{-1} + \frac{1}{c}I_d$. In particular, $\widetilde{M}$ is an infinite diagonal matrix with entries $\left(\frac{1}{m_0(\tilde{n})-c} + \frac{1}{c}\right)_n = \left(\frac{m_0(\tilde{n})}{c(m_0(\tilde{n})-c)}\right)_n$ on the diagonal. Then, it follows that 
\[
\|\pi_N\tilde{M}\|_2 \leq \frac{1}{l_{0,2}}.
\]
Moreover, using \eqref{eq : development of DG(U0)} we have
\[
2\left( {\Lambda_{\nu}} \mathbb{U}_0L^{-1} + \frac{1}{c}\mathbb{U}_0\right)U  = 2{U}_0*(\tilde{M}U) - 4 \nu  (\partial_x U_0)*(\partial_x L^{-1}\Lambda_\nu^{-1}U)  -2\nu (\partial_x^2U_0)*(L^{-1}\Lambda_\nu^{-1}U).
\]
Then, similarly as what was achieved in the case $T>0$, it implies that
\[
2\|\pi_N \mathbb{W}_0\big(  {\Lambda_{\nu}} \mathbb{U}_0L^{-1} + \frac{1}{c}\mathbb{U}_0   \big)\pi_N\|_2 \leq \frac{2\nu}{l_{0,0}}\|W_T*(\partial_x^2 U_0)\|_1 + \frac{4\nu}{l_{0,1}}\|W_T*(\partial_xU_0)\|_{\ell^1} + \frac{2}{l_{0,2}}\|W_T*U_0\|_1 \leq Z_{1,3}
\] 
\end{proof}

\begin{remark}
    Notice that in the case $T=0$, the term $Z_{1,4}$ in \eqref{def : values of Z1i} controls that $\mathbb{W}_0$ is a good approximate inverse for $ I_d - \frac{2}{c}\mathbb{U}_0.$ In particular, a Neumann series argument shows that both $I_d - \frac{2}{c}\mathbb{U}_0$ and  $\mathbb{W}_0$ have a bounded inverse (from $\ell^2_e$ to $\ell^2_e$)  if $Z_{1,4} <  1$. 
\end{remark}

\subsubsection{Computation of \boldmath\texorpdfstring{$\mathcal{Z}_{u}$}{Zu}\unboldmath}\label{ssec : computation Z11}

Similarly as for the bound $\mathcal{Y}_u$ presented in Lemma \ref{lem : bound Y_0}, the bound $\mathcal{Z}_u$ controls the approximation of convolution operators with exponential decay by their periodic counterparts. In particular, the computation of $\mathcal{Z}_u$ is based on the constants obtained in Lemma \ref{lem : computation of f}.

\begin{lemma}\label{lem : lemma Zu}
Let $C_{0,T}, C_{1,T}, C_{2,T,c}$ and $a>0$ be defined in Lemma \ref{lem : computation of f}. Moreover, let $C(d)$ be defined in \eqref{def : value for C(d)} and let $C_1(d)$ be defined as 
\begin{align}
    C_1(d) \bydef\left(\frac{2\sqrt{\pi}e^{-2ad}}{\sqrt{4ad}(1-e^{-2ad})} + \frac{4e^{-2ad}}{1-e^{-2ad}} \right).
\end{align}
Finally, define $E = (E_n)_{n \in \mathbb{Z}} \in \ell^2_e$  as 
\begin{align}\label{def : definition of sequence E}
    E_{n} \bydef \frac{e^{2ad}}{d} \frac{a(-1)^{n}(1-e^{-4ad})}{4a^2 + (2\pi\tilde{n})^2} 
\end{align}
for all $n \in \mathbb{Z}$.
In particular, $\gamma^\dagger(E)(x)  = \cha(x) \cosh(2ax)$ for all $x \in \R$. 
Then, letting $\mathcal{Z}_{u,j}^{(i)}$ be defined in Lemma \ref{lem : computation of Z1}, we have
\begin{align}
 \hspace{-0.5cm}\left(\mathcal{Z}_{u,1}^{(0)}\right)^2 \hspace{-0.1cm} + \hspace{-0.1cm} \left(\mathcal{Z}_{u,2}^{(0)}\right)^2 &\leq 4\nu^2|\om|C_{0,T}^2e^{-2ad}\left(\partial_x^2U_0,E*(\partial_x^2U_0)\right)_{2} \left(\frac{2}{a} + C(d)\right)  \label{eq : upper bound for Zu first}  \\
\hspace{-0.5cm}\left(\mathcal{Z}_{u,1}^{(1)}\right)^2 \hspace{-0.1cm} + \hspace{-0.1cm} \left(\mathcal{Z}_{u,2}^{(1)}\right)^2 &\leq 16\nu^2|\om|C_{1,T}^2e^{-2ad}\left(\partial_xU_0,E*(\partial_xU_0)\right)_{2} \left(\frac{2}{a} + C(d)\right)  \label{eq : upper bound for Zu second}\\
 \hspace{-0.5cm}\left(\mathcal{Z}_{u,1}^{(2)}\right)^2 \hspace{-0.1cm} + \hspace{-0.1cm} \left(\mathcal{Z}_{u,2}^{(2)}\right)^2 &\leq  {4|\om| C_{2,T,c}^2e^{-2ad}}\left(U_0,E*U_0\right)_2\left( \frac{2}{a} + C_1(d)\right) + 32C_{2,T,c}^2\ln(2)\int_{d-1}^d|u_0'|^2. \label{eq : upper bound for Zu third}
\end{align}
\end{lemma}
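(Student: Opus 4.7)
The three estimates \eqref{eq : upper bound for Zu first}--\eqref{eq : upper bound for Zu third} share a common structure: for each $i \in \{0,1,2\}$ the relevant operator is, by \eqref{eq : definition of fiT with operators}, convolution with the kernel $f_{i,T}$ of \eqref{def : functions fiT}, whose exponential decay rate $a$ and amplitude $C_{i,T}$ (or $C_{2,T,c}$) have just been quantified in Lemma \ref{lem : computation of f}. I plan to handle the three cases in parallel by mimicking the argument that was already used for $\mathcal{Y}_u$ in the proof of Lemma \ref{lem : bound Y_0}, namely the mechanism of Theorem 3.9 and Lemma 6.5 in \cite{unbounded_domain_cadiot}, now applied with $f_{0,T}$, $f_{1,T}$ and $f_{2,T}$ (and with $\mathbb{v}_0, \mathbb{v}_1, \mathbb{v}_2$ playing the role of $\mathbb{L}_\nu^2 u_0$). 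Because $v_i$ has support in $\overline{\om}$, taking operator norms via Cauchy-Schwarz in the test function $u \in L^2_e$ reduces each $\mathcal{Z}_{u,j}^{(i)}$ to a double integral of the exponential tail against $|v_i(x) v_i(z)|$.

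For $i=0$, I would write $(\mathbb{L}^{-1} \mathbb{v}_0 u)(y) = \int_\om f_{0,T}(y-x) v_0(x) u(x)\, dx$, bound $|f_{0,T}(y-x)|$ by $C_{0,T} e^{-a|y-x|}$, and then integrate the resulting kernel of $e^{-a|y-x|}e^{-a|y-z|}$ against $|v_0(x) v_0(z)|$ over $y \in \R\setminus\om$. Recognising the sequence $E$ in \eqref{def : definition of sequence E} as exactly $\gamma_e(\cha(x)\cosh(2ax))$ turns the $(x,z)$-integral into $e^{-2ad}(V_0, E*V_0)_2$ via Plancherel on $\om$, while the $y$-integration produces the geometric factor $\tfrac{2}{a}$ for the $\mathcal{Z}_{u,1}^{(0)}$ piece and the sum $\sum_{k\ne 0} e^{-2adk}$, collapsed into $C(d)$ by \eqref{def : value for C(d)}, for the periodization error $\mathcal{Z}_{u,2}^{(0)}$. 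The case $i=1$ is essentially the same calculation with $f_{1,T}$ and $C_{1,T}$ in place of $f_{0,T}$ and $C_{0,T}$, except that $v_1$ is odd and its Fourier representation is full-sided, which is why the bound involves $E_{full}$ and the $\ell^2$ inner product $(V_1, E_{full}*V_1)_{\ell^2}$ rather than the cosine form.

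The principal obstacle is $i=2$. Lemma \ref{lem : computation of f} only gives $|f_{2,T}(x)| \leq C_{2,T,c} e^{-a|x|}/\sqrt{|x|}$ for $T>0$, and in the case $T=0$ the kernel additionally carries a Dirac mass $-\tfrac{1}{c}\delta$. My plan is to split the convolution into the contribution from $|y-x| \geq 1$, where the clean exponential estimate produces the term involving $(V_2, E*V_2)_2$ but with $C_1(d)$ replacing $C(d)$ to absorb the extra $1/\sqrt{|y-x|}$ weight in both the tail integration and the geometric sum over translates, and the contribution from $|y-x| < 1$, which contributes to $\mathcal{Z}_{u,1}^{(2)}$ only when $x$ lies in the boundary layer $[d-1,d]$ (elsewhere the singularity sits inside $\om$ and is removed by $\out$) and to $\mathcal{Z}_{u,2}^{(2)}$ only when $x$ lies near $\pm d$ (the periodized kernel reproduces the singular part exactly on the interior). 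On this boundary layer I would integrate by parts in $x$ once, transferring a derivative onto $v_2$ and replacing the $1/\sqrt{|y-x|}$ singularity by a logarithmic factor whose $L^2$ norm over $(y,x) \in \{|y-x|<1\}$ produces the constant $\ln(2)$; after Cauchy-Schwarz this yields exactly the extra term $8 C_{2,T,c}^2 \ln(2) \int_{d-1}^d |v_2'|^2$. Finally, for $T=0$ the Dirac component $-\tfrac{1}{c}\delta$ contributes only the local multiplication operator $-\tfrac{1}{c}\mathbb{v}_2$, which is supported in $\om$ and therefore annihilated by $\out$ and reproduced exactly by its Fourier series on $\om$; it disappears from both $\mathcal{Z}_{u,1}^{(2)}$ and $\mathcal{Z}_{u,2}^{(2)}$, and one can then treat the smooth remainder $f_{2,0} + \tfrac{1}{c}\delta$ by the same argument as in the $T>0$ case.
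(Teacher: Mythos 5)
Your proposal follows essentially the same route as the paper: the cases $i=0,1$ are delegated to the estimates of Theorem 3.9/Lemma 6.5 of \cite{unbounded_domain_cadiot}, and the case $i=2$ is handled via the decay of $f_{2,T}$ from Lemma \ref{lem : computation of f}, Cauchy--Schwarz in the test function, the identification of $E$, $E_{full}$ with the Fourier coefficients of $\cha\cosh(2a\cdot)$, the sum over translates producing $C_1(d)$, the boundary-layer term tamed through $v_2(\pm d)=0$ and $\int_{d-1}^d|v_2'|^2$, and, for $T=0$, the Dirac mass $-\tfrac{1}{c}\delta$ cancelling against its periodization so that only the kernel $f_{2,0}+\tfrac{1}{c}\delta$ remains. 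The only (cosmetic) difference is the bookkeeping of the near-boundary singularity: rather than integrating by parts, the paper squares the kernel, integrates in $y$ to produce $\ln(d+1-x)-\ln(d-x)$, and bounds $\sup_{(d-1,d)}v_2^2\le\int_{d-1}^d|v_2'|^2$, which is where the constant $\ln 2$ actually comes from.
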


\begin{proof}
First, since $|u_0''|$ and $|u_0'|$ are both even function, notice that the proof of \eqref{eq : upper bound for Zu first} and \eqref{eq : upper bound for Zu second} can be found in \cite{unbounded_domain_cadiot} (cf. proof of Lemma 6.5). 
Therefore it remains to treat $\mathcal{Z}_{u,i}^{(2)}$ ($i \in \{1,2\}$).

Let us first suppose that $T>0$. Then, let $u \in L^2_e$ such that $\|u\|_2 = 1$ and let us denote $v\bydef 2u_0 u.$ By construction, $v \in L^2_e$ and supp$(v) \subset \overline{\om}$. We want to estimate
$
\|\mathbb{1}_{\mathbb{R}\setminus \om} \mathbb{L}^{-1} v\|_2.
$\\
Let us first suppose that $T>0$. Then, using Lemma \ref{lem : computation of f} we obtain
\begin{align*}
    \|\mathbb{1}_{\mathbb{R} \setminus \Omega_0}\mathbb{L}^{-1} v\|_2^2
   & = \int_{\mathbb{R} \setminus \om} \left(\int_\om f_{2,T}(y-x)v(x) dx \right)^2dy\\
   & \leq C_{2,T,c}^2 \int_{\mathbb{R} \setminus \om} \left(\int_\om \frac{e^{-a|y-x|}}{\sqrt{|y-x|}}|v(x)| dx \right)^2dy.
\end{align*}
Now using Cauchy-Schwarz inequality and $\|u\|_2=1$, we get
\begin{align}\label{eq : cauchy S for E2}
\nonumber
     \int_{\R \setminus \om} \left(\int_\om \frac{e^{-a(y-x)}}{\sqrt{y-x}}|v(x)| dx\right)^2dy
    & =   4\int_{\R \setminus \om} \left(\int_\om\frac{e^{-a(y-x)}}{\sqrt{y-x}}|u_0(x)u(x)| dx\right)^2dy\\ \nonumber
   & \leq  4\int_\om |u(x)|^2 \int_{\R \setminus \om} \int_\om \frac{e^{-2a|y-x|}}{|y-x|} u_0(x)^2dydx\\
   & \leq 4\int_\om u_0(x)^2\int_{\R \setminus \om} \frac{e^{-2a|y-x|}}{|y-x|} dydx.
\end{align}

But now notice that if $x \in \om$ and $|x| \leq d-1$, then
\begin{align}\label{eq : computation exponential decay 2}
     \int_{\R \setminus \om} \frac{e^{-2a|y-x|}}{|y-x|}dy
     = \int_{d}^\infty\frac{e^{-2a(y-x)}}{y-x}dy + \int_{-\infty}^{-d}\frac{e^{2a(y-x)}}{x-y}dy
     \leq  \frac{e^{-2ad}\cosh(2ax)}{a}.
\end{align}

In addition, if $x \in \om$ and $d- 1 < x \leq d$, then
\begin{align}\label{eq : computation exponential decay 3}
 \nonumber
      \int_{\R \setminus \om} \frac{e^{-2a|y-x|}}{|y-x|}dy
     \leq  & \int_{d+1}^\infty e^{-2a(y-x)}dy + \int_{d}^{d+1}\frac{1}{y-x}dy + \int_{-\infty}^{-d} \frac{e^{2a(y-x)}}{2d-1} \\
     \leq &  \frac{e^{-2ad}\cosh(2ax)}{a} + \ln(d+1-x)-\ln(d-x)
\end{align}
since $d \geq 1.$
Similarly, if $x \in \om$ and $-d+1 < x \leq -d$, then
\begin{align}\label{eq : computation exponential decay 4}
      \int_{\R \setminus \om} \frac{e^{-2a|y-x|}}{|y-x|}dy
     \leq &  \frac{e^{-2ad}\cosh(2ax)}{a} + \ln(d+1-|x|)-\ln(d-|x|).
\end{align}
Therefore, combining \eqref{eq : cauchy S for E2}, \eqref{eq : computation exponential decay 2}, \eqref{eq : computation exponential decay 3} and \eqref{eq : computation exponential decay 4}, we get
\begin{align*}
   \int_{\R \setminus \om} \left(\int_\om\frac{e^{-a(y-x)}}{\sqrt{y-x}}|v(x)| dx\right)^2dy &\leq  \frac{4e^{-ad}}{a}\int_{-d}^{d} u_0(x)^2\cosh(2ax)dx\\
    & ~~ + 4\int_{d-1\leq |x| \leq d} u_0(x)^2\left(\ln(d+1-|x|)-\ln(d-|x|)\right)dx.
\end{align*}
Then, notice that
\begin{align*}
   \int_{d-1}^{d} u_0(x)^2\left( \ln(d+1-x)-\ln(d-x)\right)dx
   &\leq 2\ln(2)\sup_{x \in (d-1,d)}|u_0(x)^2|.
\end{align*}
Let $x \in (d-1,d)$, then since $u_0(d) =0$ as supp$(u_0) \subset \om$ and $u_0$ is smooth, we have
\begin{align*}
    |u_0(x)| \leq \int_{x}^d|u_0'(t)|dt \leq \sqrt{d-x}\left(\int_{d-1}^d|u_0'(t)|^2dt\right)^{\frac{1}{2}} \leq \left(\int_{d-1}^d|u_0'(t)|^2dt\right)^{\frac{1}{2}}.
\end{align*}
Therefore, using that $u_0$ is even,  we get
\begin{align*}
    \int_{d-1 \leq |x| \leq d} u_0(x)^2\left(\ln(d+1-x)-\ln(d-x)\right)dx 
    \leq 4\ln(2)\int_{d-1}^d|u_0'(t)|^2dt.
\end{align*}
Now, using Parseval's identity, we have
\begin{align*}
   \int_{\om} u_0(x)^2\cosh(2ax)dx &= (u_0,u_0\cosh(2ax))_2 = |\om|(U_0,E*U_0)_2.
\end{align*}
This implies that 
\begin{align}\label{eq : Zu2 in lemma}
     \left(\mathcal{Z}_{u,1}^{(2)}\right)^2  &\leq 4C_{2,T,c}^2\left(\frac{|\om|e^{-2ad}}{a}(U_0,E*U_0)_2 + 4\ln(2)\int_{d-1}^d|u_0'|^2\right).
\end{align}
 Let us now focus on $\mathcal{Z}_{u,2}^{(2)}$. Recall that $u \in L^2_e$ such that $\|u\|_2 = 1$ and $v \bydef  v_2  u$, then using the proof of Theorem 3.9 in \cite{unbounded_domain_cadiot} and the parity of $v$, we have
\begin{align}\label{eq : Z1_dirac_comb}
\nonumber
   &~~~~\left\|\cha  \left(\mathbb{L}^{-1}-\Gamma\left((M_T-cI_d)^{-1}\right)\right) v \right\|_2^2 \\ \nonumber
    &= \sum_{n \in \mathbb{Z}} \int_{\mathbb{R}\setminus (\om \cup (\om +2dn))} \mathbb{L}^{-1} v(y) \mathbb{L}^{-1}  v(y-2dn)   dy\\
    & \leq (\mathcal{Z}_{u,1}^{(2)})^2 + 2\sum_{n =1}^\infty \int_{\mathbb{R}\setminus (\om \cup (\om +2dn))} \left|\mathbb{L}^{-1} v(y) \mathbb{L}^{-1}  v(y-2dn)\right|   dy.
\end{align}
Then, using Lemma \ref{lem : computation of f} we get
\begin{align}\label{eq : proof Zu22}
       & ~~~~\|\cha  (\mathbb{L}^{-1}-\Gamma^\dagger\left(L^{-1}\right)) v \|_2^2 \\ \nonumber
       &\leq (\mathcal{Z}_{u,1}^{(2)})^2 + 2C_{2,T,c}^2\sum_{k =1}^{\infty} \int_{\mathbb{R}\setminus \left(\om\bigcup (\om+2dk)\right)} \int_{-d}^d \int_{-d}^d \frac{e^{-a|y-x|}}{\sqrt{|y-x|}}|v(x)| \frac{e^{-a|2kd+z-y|}}{\sqrt{|2kd+z-y|}} |v(z)| dx dz dy.
  \end{align}
   Let $k \in \mathbb{N}$ and let $x, z \in \om$, then denote $$I_k(x,z) \bydef  \displaystyle\int_{\mathbb{R}\setminus \left(\om\bigcup (\om+2dk)\right)}\frac{e^{-a|y-x|}}{\sqrt{|y-x|}}\frac{e^{-a|2kd+z-y|}}{\sqrt{|2kd+z-y|}}dy.$$ By definition, we have
  \begin{align*}
      I_k(x,z) 
      &= I_{k,1}(x,z) + I_{k,2}(x,z) + I_{k,3}(x,z)
  \end{align*}
  where
  \begin{align*}
      I_{k,1}(x,z) &\bydef \int_{-\infty}^{-d}\frac{e^{-a|y-x|}}{\sqrt{|y-x|}}\frac{e^{-a|2kd+z-y|}}{\sqrt{|2kd+z-y|}}dy \\
      I_{k,2}(x,z) &\bydef \int_{d}^{(2k-1)d}\frac{e^{-a|y-x|}}{\sqrt{|y-x|}}\frac{e^{-a|2kd+z-y|}}{\sqrt{|2kd+z-y|}}dy\\
      I_{k,3}(x,z) &\bydef \int_{(2k+1)d}^{\infty}\frac{e^{-a|y-x|}}{\sqrt{|y-x|}}\frac{e^{-a|2kd+z-y|}}{\sqrt{|2kd+z-y|}}dy.
  \end{align*}
  Now, an upper bound for each term $I_{k,i}$ can easily be computed. In particular, straightfoward computations lead to
  \begin{align}\label{computation of I1}
  \nonumber
      I_{k,1}(x,z)  =  \int_{-\infty}^{-d}\frac{e^{-a(x-y)}}{\sqrt{x-y}}\frac{e^{-a(2kd+z-y)}}{\sqrt{2kd+z-y}}dy 
      &\leq \frac{e^{-a(2kd +x+z)}}{\sqrt{2kd}}\int_{-\infty}^{-d}\frac{e^{2ay}}{\sqrt{-d-y}}dy\\
      & \leq \frac{\sqrt{\pi}e^{-a(2(k+1)d +x+z)}}{\sqrt{4ad}}
  \end{align}
  as $k\geq 1$ and $x, z \in (-d,d).$ Similarly,
   \begin{align}\label{computation of I3}
   \nonumber
      I_{k,3}(x,z) & =  \int_{(2k+1)d}^{\infty}\frac{e^{-a|x-y|}}{\sqrt{|x-y|}}\frac{e^{-a|2kd+z-y|}}{\sqrt{|2kd+z-y|}}dy  \\\nonumber
      & =  \int_{-\infty}^{-d}\frac{e^{-a|x+y-2kd|}}{\sqrt{|x+y-2kd|}}\frac{e^{-a|z+y|}}{\sqrt{|z+y|}}dy \\
      &\leq \frac{\sqrt{\pi}e^{-a(2(k+1)d -x-z)}}{\sqrt{4ad}}
  \end{align}
  using the change of variable $y \to 2kd-y$ and using \eqref{computation of I1}. Finally, notice that $I_{k,2}=0$ if $k=1$. If $k >1$, then
  \begin{align}\label{computation of I2}
  \nonumber
       I_{k,2}(x,z) & =  \int_{d}^{(2k-1)d}\frac{e^{-a|x-y|}}{\sqrt{|x-y|}}\frac{e^{-a|2kd+z-y|}}{\sqrt{|2kd+z-y|}}dy  \\\nonumber
      & =  \int_{d}^{(2k-1)d}\frac{e^{-a(y-x)}}{\sqrt{y-x}}\frac{e^{-a(2kd+z-y)}}{\sqrt{2kd+z-y}}dy \\\nonumber
      & =  e^{-a(2kd+z-x)}\int_{d}^{(2k-1)d}\frac{1}{\sqrt{y-x}}\frac{1}{\sqrt{2kd+z-y}}dy \\
       =   ~e^{-a(2kd+z-x)}&\left(\int_{d}^{kd}\frac{1}{\sqrt{y-x}}\frac{1}{\sqrt{2kd+z-y}}dy  +  \int_{kd}^{(2k-1)d}\frac{1}{\sqrt{y-x}}\frac{1}{\sqrt{2kd+z-y}}dy \right).
  \end{align}
 Moreover, notice that 
  \begin{align}\label{eq : beta1}
  \nonumber
      \int_{d}^{kd}\frac{1}{\sqrt{y-x}}\frac{1}{\sqrt{2kd+z-y}}dy &\leq \frac{1}{\sqrt{(2k-1)d - kd}}\int_{d}^{kd}\frac{1}{\sqrt{y-d}}dy\\
      &= \frac{2\sqrt{kd-d}}{\sqrt{(2k-1)d - kd}} = 2.
  \end{align}
  Similarly,
    \begin{align}\label{eq : beta2}
      \int_{kd}^{(2k-1)d}\frac{1}{\sqrt{y-x}}\frac{1}{\sqrt{2kd+z-y}}dy \leq  \frac{2\sqrt{(2k-1)d - kd}}{\sqrt{kd -d}}=2.
  \end{align}
  Therefore, combining \eqref{computation of I2}, \eqref{eq : beta1} and \eqref{eq : beta2}, we get
  \begin{align}\label{eq : equality I2}
      I_{k,2}(x,y) \leq 4e^{-a(2kd+z-x)}
  \end{align}
  for all $k >1$. Furthermore, combining \eqref{computation of I1}, \eqref{eq : equality I2} and \eqref{computation of I3}, it yields
  \begin{align}\label{eq : computation Ik last step}
      \sum_{k=1}^\infty I_k(x,y) &=  \sum_{k=1}^\infty I_{k,1}(x,y) +  \sum_{k=2}^\infty I_{k,2}(x,y)+  \sum_{k=1}^\infty I_{k,3}(x,y)\\
      & = \frac{\sqrt{\pi}e^{-4ad}}{\sqrt{4ad}(1-e^{-2ad})}e^{-a(x+z)} + \frac{4e^{-4ad}}{1-e^{-2ad}}e^{-a(z-x)} + \frac{\sqrt{\pi}e^{-4ad}}{\sqrt{4ad}(1-e^{-2ad})}e^{a(x+z)}.
  \end{align}
  Consequently, combining \eqref{eq : proof Zu22} and \eqref{eq : computation Ik last step}, we obtain
  \begin{align*}
      \quad &\|\cha  (\mathbb{L}^{-1}-\Gamma^\dagger\left(L^{-1}\right)) v \|_2^2  \\
      &\leq (\mathcal{Z}_{u,1}^{(2)})^2 + 2C_{2,T,c}^2\left(\frac{2\sqrt{\pi}e^{-4ad}}{\sqrt{4ad}(1-e^{-2ad})} + \frac{4e^{-4ad}}{1-e^{-2ad}} \right)\int_{-d}^d\int_{-d}^d |v(x)|e^{ax}|v(z)|e^{az}dxdz\\
      & = (\mathcal{Z}_{u,1}^{(2)})^2 + C_{2,T,c}^2C_1(d)e^{-2ad}\left(\int_{-d}^d |v(x)|e^{ax}dx\right)^2.
  \end{align*}
Now, recall that $v = v_2u$. Then, using Cauchy-Schwarz inequality and Parseval's identity we get
  \begin{align*}
      \|\cha  (\mathbb{L}^{-1}-\Gamma^\dagger\left(L^{-1}\right)) v \|_2^2 
       &\leq (\mathcal{Z}_{u,1}^{(2)})^2 +4 C_{2,T,c}^2{C_1(d)}e^{-2ad}\int_{-d}^d |u_0(x)|^2e^{2ax}dx\\
      & = (\mathcal{Z}_{u,1}^{(2)})^2 + 4|\om|C_{2,T,c}^2C_1(d)e^{-2ad}(U_0,E*U_0)_2.
  \end{align*}
  
This concludes the proof for the case $T>0$. Now, assume that $T=0$. Then, notice that, given $u \in L^2_{e,\om}$, we have
\begin{align*}
    \Gamma^\dagger\left(L^{-1}\right) u &=   \frac{1}{c}u - \left(\frac{1}{c}I_d +\Gamma^\dagger\left(L^{-1}\right)\right)u\\
&= \frac{1}{c}u - \left(\Gamma^\dagger\left(L^{-1} + \frac{1}{c}I_d\right)\right)u
\end{align*}
by definition of $\Gamma$ in \eqref{def : Gamma and Gamma dagger}.
Therefore, since $v_2 \in \mathcal{H}_{e,\om}$, this implies that 
{
\begin{align*}
    \left\|\left(\mathbb{L}^{-1}-\Gamma^\dagger\left(L^{-1}\right)\right) \mathbb{u}_0 \right\|_2 = \left\|\left(\mathbb{L}^{-1} + \frac{1}{c}I_d -\Gamma^\dagger\left(L^{-1} + \frac{1}{c}I_d\right)\right) \mathbb{u}_0 \right\|_2.
\end{align*}}
Moreover, $\left(\mathbb{M}_0-cI_d\right)^{-1}u + \frac{1}{c}u = (f_{2,0} + \frac{1}{c}\delta)*u$ for all $u \in L^2$ by definition of $f_{2,0}$ in \eqref{eq : definition of fiT with operators}. Therefore, using Lemma \ref{lem : computation of f}, the proof in the case $T=0$ can be  derived similarly as the case $T>0$ presented above.
\end{proof}

\begin{remark}
    We derived in Section \ref{sec : computation of the bounds} explicit computations of the required bounds $\mathcal{Y}_0$, $\mathcal{Z}_1$ and $\mathcal{Z}_2$. Notice that the only part that specifically depends on the  {cgWE} itself is the computation of the constants in Lemma \ref{lem : computation of f}. The rest of the analysis can easily be generalized to a large class of nonlocal equations. We further discuss this generalization in the conclusion \ref{conclusion}. 
\end{remark}

\subsection{Proof of existence of solitary waves}

We present four examples of computer-assisted proofs of even  {solitary waves}, one in the case $T = T_1  \bydef 0$ (cf. Theorem \ref{th : proof whitham}), which we denote $\tilde{u}_1 \in H^\infty(\R)$, and three in the case $T>0$ (cf. Theorem \ref{th: : proof capillary whitham} with $T_2 \bydef 0.25$, $T_3 \bydef 0.5$, $T_4 \bydef 3$), denoted $\tilde{u}_2, \tilde{u}_3, \tilde{u}_4 \in H^\infty(\R)$. \\
Using the strategy derived in Section \ref{ssec : approximate solution}, we start by computing an approximate solution $u_{0,i} \in H^4_{e,\om}(\R)$ given by its Fourier series representation $U_{0,i} \in X^4_e$ ($i \in \{1,2,3,4\}$). Then, using the results of Section \ref{ssec : approximate inverse}, we construct an approximate inverse $\mathbb{A}_{T_i} : H^2_e \to \mathcal{H}_e$  for $D{\mathbb{F}}(u_{0,i})$. In the case of the  {WE} $(T=0)$, we verify that Assumption \ref{ass : u0 is smaller than} is satisfied.
Finally, Section \ref{sec : computation of the bounds} allows to compute explicit bounds $\mathcal{Y}_0$, $\mathcal{Z}_1$ and $\mathcal{Z}_2$ using rigorous numerics. In particular, we are able to verify \eqref{condition radii polynomial} in Theorem \ref{th: radii polynomial} and obtain a proof of existence.   {Furthermore, using Theorem 3.17 in \cite{sh_cadiot}, we are able to prove that the solitary wave is the limit of a branch of periodic solutions, letting the period tends to infinity. As mentioned in the introduction, this phenomenon has been deeply studied in the literature and underlines the strong relationship that exists between solitary waves and their periodic counterparts. Note that Theorem 3.17 in \cite{sh_cadiot} provides a constructive proof of the branch.}   The algorithmic details are presented in \cite{julia_cadiot}. 

\subsubsection{Case \texorpdfstring{$T=0$}{T=0}}\label{sec : proof in the case T=0}

We present in this section a (constructive) proof of existence of a solitary wave in the WE , that is for $T =  0$.  {In particular, the obtained wave is part of a branch for which we provide the existence in Section \ref{sec : continuation}.} We fix $c = 1.1$ and, using the strategy established in Section \ref{ssec : approximate solution}, we build an approximate solution $u_{0,1} \in H^4_{e,\om}(\R)$ via its Fourier series $U_{0,1} \in X^4_e$ on $\om = (-50,50)$ such that $U_{0,1} = \pi^N U_{0,1}$ where $N = 800.$  The approximate solution is represented in Figure \ref{fig : whitham} below. 
In particular, choosing $0<\epsilon < \frac{c}{2} - \|U_{0,1}\|_1$, we have
\begin{align*}
    u_0(x) + \epsilon \leq \|U_{0,1}\|_1 + \epsilon < \frac{c}{2}
\end{align*}
for all $x \in \R$. Moreover, using rigorous numerics we can prove that we can choose $\epsilon = 0.39$. This implies that Assumption \ref{ass : u0 is smaller than} is satisfied and the analysis derived in Section \ref{ssec : approximate inverse} and Section \ref{sec : computation of the bounds} is applicable. We apply Theorem \ref{th: radii polynomial} and  obtain the following result.
 \begin{figure}[H]
\centering
 \begin{minipage}[H]{0.9\linewidth}
 
  \centering\epsfig{figure=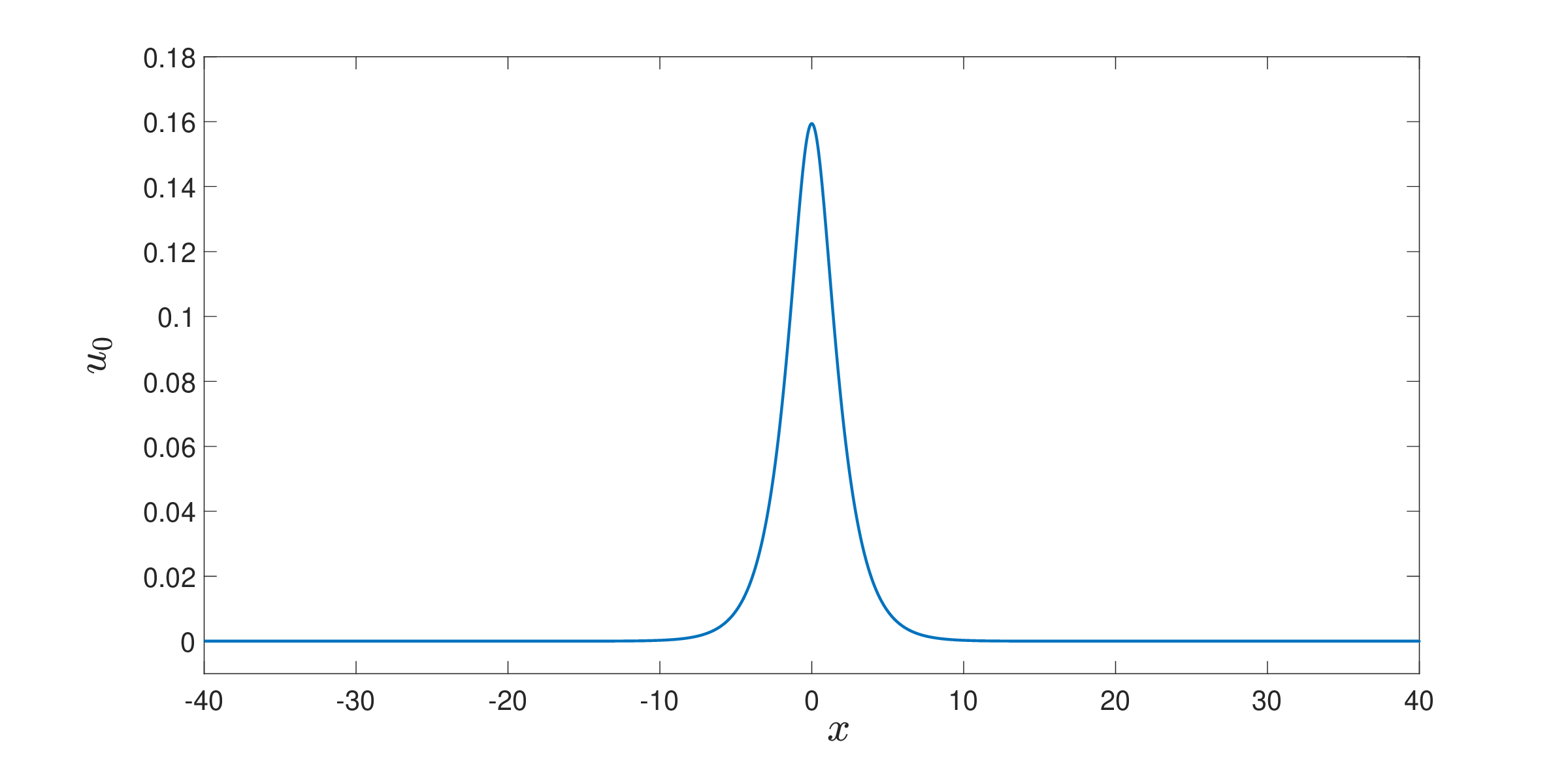,width=
  \linewidth}
  \caption{Numerical approximation $u_{0,1}$ for the Whitham equation.}
  \label{fig : whitham}
 \end{minipage} 
 \end{figure}
\begin{theorem}\label{th : proof whitham}(Proof of a  {solitary wave} in the Whitham equation)\\
Let $r_{0,1} \bydef 5.72\times 10^{-9}$, then there exists a unique even solution $\tilde{u}_1$ to \eqref{eq : whitham stationary} in $\overline{B_{r_{0,1}}(u_{0,1})} \subset \mathcal{H}_e$ for $T=0$ and $c = 1.1$.  { In addition, there exists a smooth curve 
\[
\left\{\tilde{u}_1(q) : q \in [d,\infty]\right\} \subset C^\infty(\R)
\]
such that $\tilde{u}_1(q)$ is a periodic solution to \eqref{eq : whitham stationary} with period $2q$. In particular, $\tilde{u}_1(\infty) = \tilde{u}_1$.}
\end{theorem}
\begin{proof}
    The proof is a direct application of Theorem \ref{th: radii polynomial}. In particular, we obtain that $\mathcal{Y}_0 \bydef 5.24 \times 10^{-9}$, $\mathcal{Z}_1 \bydef 0.078$ and $\mathcal{Z}_2 \bydef 1990$ satisfy \eqref{eq: definition Y0 Z1 Z2}. Then, one can prove that $r_{0,1}$ satisfies \eqref{condition radii polynomial}, leading to the proof of existence of $\tilde{u}_1$.
    Moreover, using that $\epsilon = 0.39$ and that $\min_{\xi \in \R} |m_0(\xi)-c| = c-1 = 0.1$, we prove that $\epsilon$ and $r_{0,1}$ satisfy \eqref{eq : condition epsilon and r}.  This provides the regularity of $\tilde{u}_1$ (cf. Section \ref{ssec : regularity of the solution}).  {The branch of periodic solutions is obtained thanks to Theorem 3.17 in \cite{sh_cadiot}.}
\end{proof}

\subsubsection{Case \texorpdfstring{$T>0$}{T>0}}

Similarly as the previous section, we fix $c = 0.8$, a value for $T>0$, and we construct an approximate solution $u_{0}$ using Section \ref{ssec : approximate solution}.  {Specifically, we choose $T_2 = 0.25$, $T_3 = 0.5$ and $T_4 = 3$. The value $T = \frac{1}{3}$ is known to be a critical value in the dynamics of the cgWE, leading, for instance, to the existence of so-called ``generalized solitary waves" (cf. \cite{bif_diagram_ehrnstrom, johnson_generalized,  REMONATO201751}). In particular, the regime $0 < T < \frac{1}{3}$ does not allow to readily use a KdV approximation for the supcritical solitary waves (as detailed in \cite{johnson_generalized}). Using the analysis derived in Section \ref{sec : computation of the bounds}, we provide in  Theorem \ref{th: : proof capillary whitham}  existence proofs
 on both sides of the critical value ($0.25< \frac{1}{3} < 0.5)$.  Furthermore, we obtain a proof for a large Bond number ($T=3$), underlying a strong dispersive effect of the surface tension.} The approximate solutions are represented in Figure \ref{fig : capillary} below.  
 \begin{figure}[h!]
  \centering
  \begin{minipage}{.52\textwidth}
   \centering
  \includegraphics[clip,width=1\textwidth]{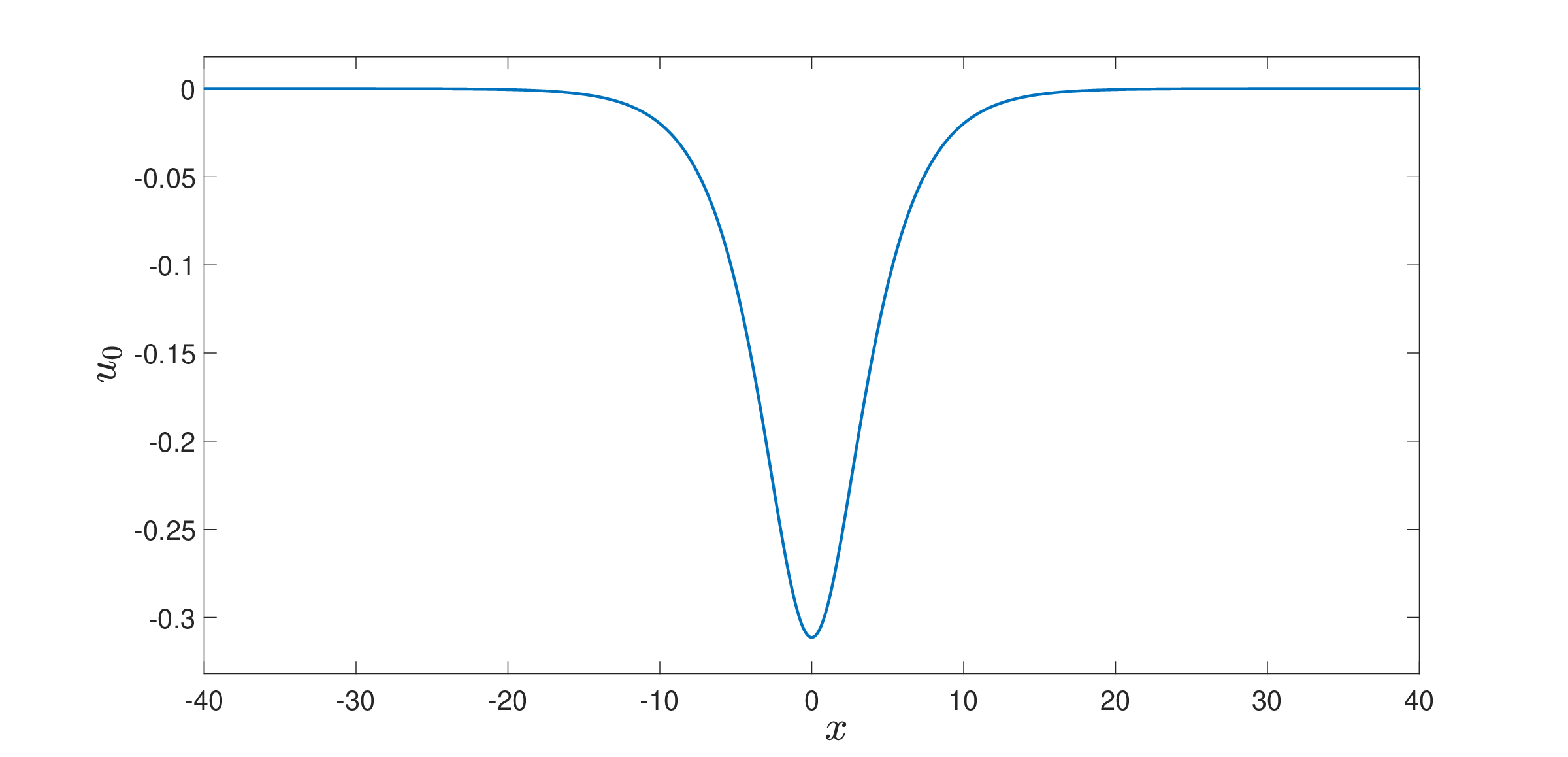}
  \end{minipage}%
  \begin{minipage}{.52\textwidth}
    \centering
   \includegraphics[clip,width=1\textwidth]{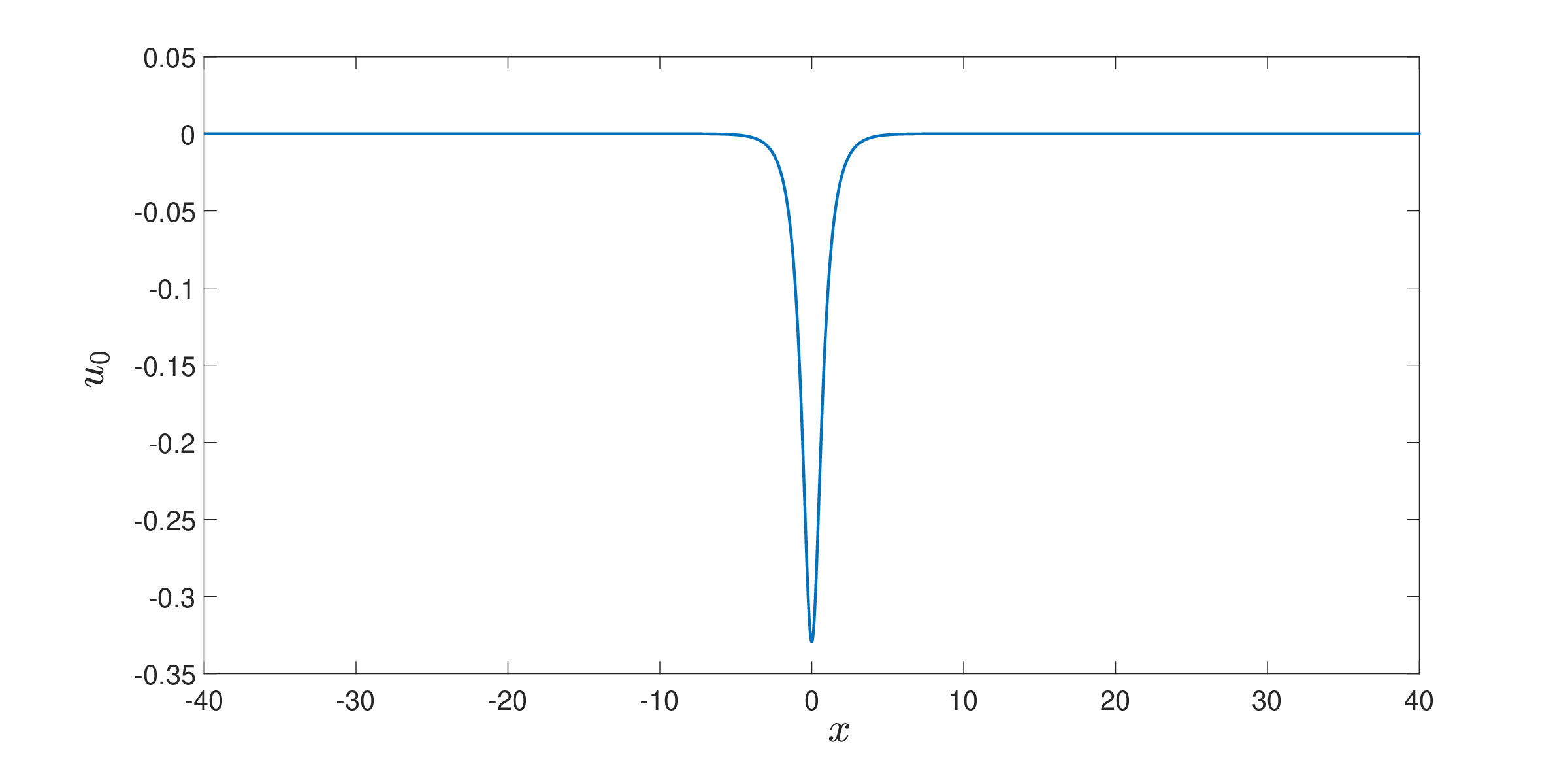}
  \end{minipage}
  \begin{minipage}{.9\textwidth}
    \centering
   \includegraphics[clip,width=1\textwidth]{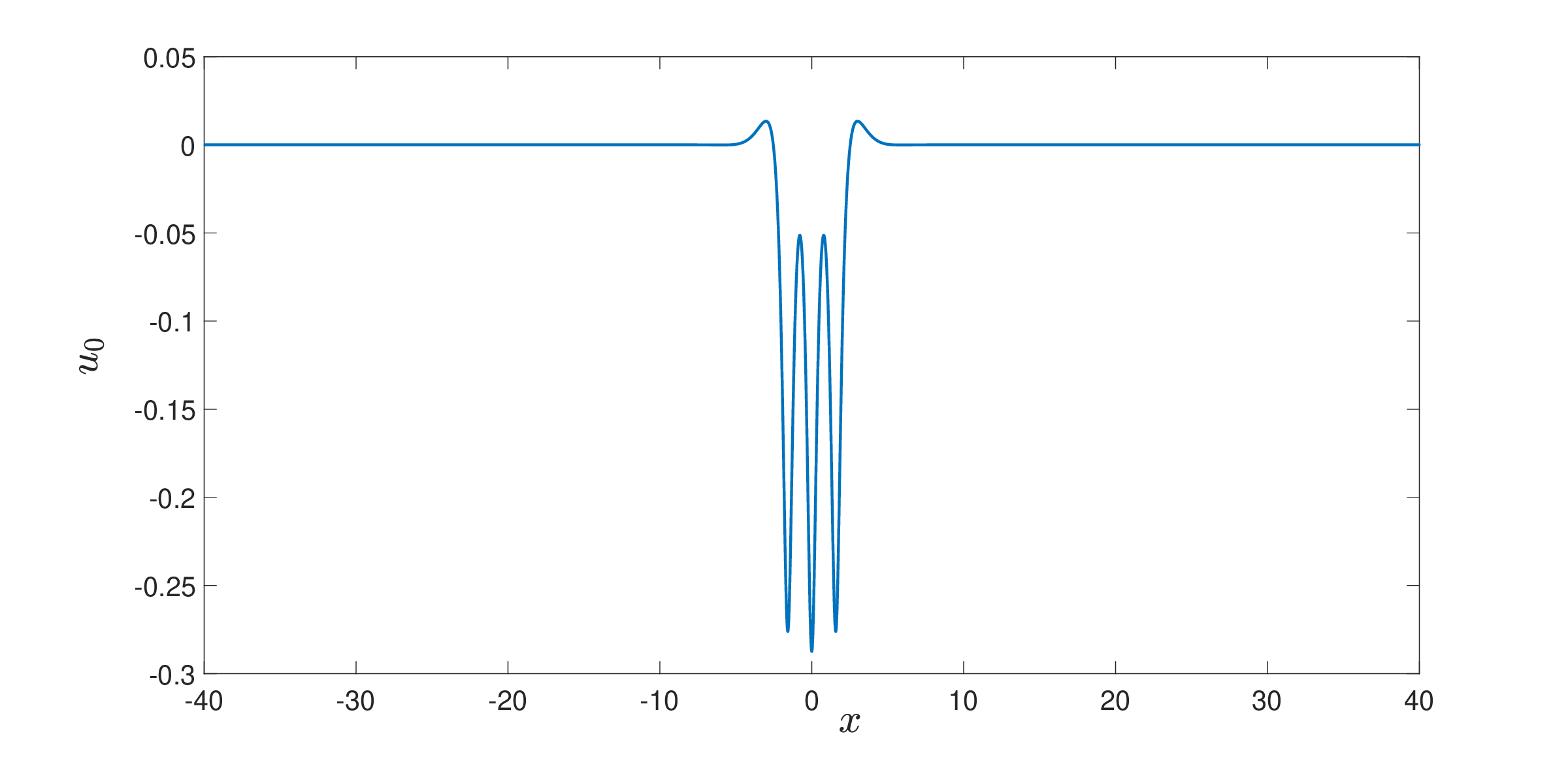}
  \end{minipage}
  \caption{Numerical approximations for the cgWE for the case $T=3$ (top-left), $T=0.5$ (top-right) and $T=0.25$ (bottom).}
    \label{fig : capillary}
  \end{figure}





\begin{theorem}\label{th: : proof capillary whitham}(Proof of   {solitary waves} in the capillary-gravity Whitham equation)\\
 {Let $r_{0,2} \bydef 5.3 \times 10^{-6}$, $r_{0,3} \bydef 8.7\times 10^{-9}$, $r_{0,4} \bydef 9.7\times 10^{-7}$, then there exists a unique even solution $\tilde{u}_i$ to \eqref{eq : whitham stationary} in $\overline{B_{r_{0,i}}(u_{0,i})} \subset \mathcal{H}_e$ for $T_i$ ($i \in {2,3,4}$) and $c=0.8$.}  { In addition, there exists a smooth curve 
\[
\left\{\tilde{u}_i(q) : q \in [d,\infty]\right\} \subset C^\infty(\R)
\]
such that $\tilde{u}_i(q)$ is a periodic solution to \eqref{eq : whitham stationary} with period $2q$. In particular, $\tilde{u}_i(\infty) = \tilde{u}_i$.}
\end{theorem}
\begin{proof}
 {For each $i \in \{2,3,4\}$, similarly as for the proof of Theorem \ref{th : proof whitham}, we are able to compute bounds  $\mathcal{Y}_0 $, $\mathcal{Z}_1$ and $\mathcal{Z}_2$   satisfying  \eqref{eq: definition Y0 Z1 Z2}.  Moreover, $r_{0,i}$ satisfies \eqref{condition radii polynomial}, leading to the proof of existence of $\tilde{u}_i$.} The regularity is obtained thanks to Proposition \ref{prop : regularity of the solution}  {and the branch of periodic solutions is established thanks to Theorem 3.17 in \cite{sh_cadiot}.}
\end{proof}

 {

\subsection{Proof of existence of a branch of solitary waves in the Whitham equation}\label{sec : continuation}

In this section we present a computer-assisted method for performing a rigorous continuation in the parameter $c$ in the Whitham equation. 
Indeed, the bounds obtained in Section \ref{sec : computation of the bounds} allows to set-up a Newton-Kantorovish approach to verify the existence of a branch of solutions thanks to the uniform contraction mapping theorem. Such a branch of solitary waves has been deeply studied and various existence results have recently been obtained (cf. \cite{arnesen_maximization_technique, CHEN_global_bifurcation, ehrnstrom_direct, truong_global_whitham} for instance). In particular, along the branch, the solution $u$ satisfies the inequality $u \leq \frac{c}{2}$ and at the end of the branch, there exists a velocity for which $u(0) = \frac{c}{2}$ (considering that $u$ is even). The later is a cusped solution of the Whitham equation and has been proven to have a $|x|^\frac{1}{2}$ singularity at the origin (cf. \cite{cusped_behavior_ehsnstrom}). In this section, we present a  computer-assisted proof for a part of such a  branch between $c = 1.05$ to $c = 1.21$. In particular, we obtain a uniform and explicit control the solutions with high accuracy.

More specifically, we base our approach on the setting developed in \cite{breden_polynomial_chaos,henot_marchal}. We use a finite expansion in the velocity $c$ of our approximate solution and approximate inverse in Chebyshev polynomials. Then, using the uniform contraction mapping theorem, we prove the existence of a branch of solitary waves, parametrized by $c$. In this section, since our zero finding problem \eqref{eq : whitham stationary} depends on $c$, we write $\mathbb{F}_c$ instead of $\mathbb{F}$ to emphasize the dependency on $c$.

Given an initial velocity $c_0 > 1$, a final velocity $c_1 > c_0$ and  $N_{cheb} \in \mathbb{N}$, we first compute a numerical approximate branch of solutions between $c_0$ and $c_1$ given by 
\begin{align*}
    v(c) \bydef  v_0 + 2 \sum_{n=1}^{N_{cheb}} v_n T_n\left(\frac{2(c-c_1)}{c_1-c_0} +1 \right) 
\end{align*}
where $T_n : [-1,1] \to \R$ is the $n$-th Chebyshev polynomial of the first kind, where $v_n \in H^4_{\om,e}$ for all $n \in \{0, \dots, N_{cheb}\}$. In particular, each $v_n$ follows the construction of Section \ref{ssec : approximate solution} and $v_n = \gamma^\dagger(V_n)$, where $V_n = \pi^N V_n$. That is $v_n$ has a representation as a vector of Fourier coefficients.

Now, let $\mathbb{L}_0 \bydef \mathbb{M}_0- \frac{1.05+1.21}{2} I_d$ and let $\mathcal{H}_{con}$ be the associated Hilbert space, as defined in \eqref{def : definition Hl}. Specifically,  $\mathcal{H}_{con}$ is associated to the following norm
\begin{align*}
    \|u\|_{\mathcal{H}_{con}} \bydef \|\mathbb{\Lambda}_\nu\mathbb{L}_0u\|_2
\end{align*}
for all $u \in \mathcal{H}_{con}$.
Now, we compute an approximate inverse $\mathbb{A}(c) : H^2_e \to \mathcal{H}_{con}$ for $D\mathbb{F}(v(c))$ as
\begin{align}\label{eq : approx inverse continuation}
  \mathbb{A}(c)  \bydef  (\mathbb{M}_0- cI_d)^{-1} \mathbb{\Lambda}_\nu^{-1}  \mathbb{B}(c) \mathbb{\Lambda}_\nu, \text{ where } ~~ \mathbb{B}(c) \bydef  \mathbb{B}_0 + 2 \sum_{n=1}^{N_{cheb}} \mathbb{B}_n T_n\left(\frac{2(c-c_1)}{c_1-c_0} +1 \right) 
\end{align}
and $\mathbb{B}_n : L^2_e \to L^2_e$ is a bounded linear operator. In particular, each $\mathbb{B}_n$ is constructed similarly as the operator $\mathbb{B}_T$ for $T=0$ in Section \ref{subsec : AT in the case T=0}. Specifically,
\[
\mathbb{B}_n = \mathbb{1}_{\R \setminus \om} + \gamma^\dagger(B_n), \text{ where } B_n = \pi_N \mathbb{W}_n + B^N_n
\]
for some sequence $W_n \in \ell^1_e$. Furthermore, $W_n$ is chosen so that 
\begin{align}
    \left(\gamma^\dagger(W_0) + 2 \sum_{n=1}^{N_{cheb}} \gamma^\dagger(W_n) T_n\left(\frac{2(c-c_1)}{c_1-c_0} +1 \right) \right)*(e_0 - \frac{2}{c}v(c)) \approx \cha  
\end{align}
for all $c \in [c_0,c_1]$. In other terms, the function $\gamma^\dagger(W_0) + 2 \sum_{n=1}^{N_{cheb}} \gamma^\dagger(W_n) T_n\left(\frac{2(c-c_1)}{c_1-c_0} +1 \right)$ is an approximate inverse of the function $e_0 - \frac{2}{c}v(c)$ on $\om.$ In practice we verify  that $v(c)$ satisfies Assumption \ref{ass : value of c and T} for each $c \in [c_0,c_1]$ in order to make sense of the construction of the coefficients $(W_n)$.

 Then, the following theorem, which is based on the uniform contraction mapping theorem, provides a sufficient condition for the existence of a branch of solitary waves between $c_0$ and $c_1$.
\begin{theorem}\label{th : radii polynomial continuation}
   Let $\mathbb{A}(c) : H^2_e \to \mathcal{H}_{con}$ be an injective bounded linear operator and let $\mathcal{Y}_0, \mathcal{Z}_1$ and $\mathcal{Z}_2$ be non-negative constants such that
  \begin{align}\label{eq: definition Y0 Z1 Z2 continuation}
  \nonumber
   \sup_{c \in [c_0,c_1]} \|\mathbb{A}(c){\mathbb{F}}_c(v(c))\|_{\mathcal{H}_{con}} &\leq \mathcal{Y}_0\\
    \sup_{c \in [c_0,c_1]} \|I_d - \mathbb{A}(c)D{\mathbb{F}_c}(v(c))\|_{\mathcal{H}_{con}} &\leq \mathcal{Z}_1\\\nonumber
    \sup_{c \in [c_0,c_1]} \|\mathbb{A}(c)\left({D}{\mathbb{F}_c}(v(c)) - D{\mathbb{F}_c}(v(c) + w)\right)\|_{\mathcal{H}_{con}} &\leq \mathcal{Z}_2r, ~~ \text{for all } w \in \overline{B_r(0)}.
\end{align}  
If there exists $r>0$ such that
\begin{equation}\label{eq : condition contraction continuation}
    \frac{1}{2}\mathcal{Z}_2r^2 - (1-\mathcal{Z}_1)r + \mathcal{Y}_0 < 0 \text{ and } \mathcal{Z}_1 + \mathcal{Z}_2 r < 1
 \end{equation}
then for every $c \in [c_0,c_1]$, there exists a unique $\tilde{u}(c) \in \overline{B_r(v(c))} \subset \mathcal{H}_{con}$ solving \eqref{eq : f(u)=0 on He}. Moreover the function $c \mapsto \tilde{u}(c)$ is of class $C^\infty.$
\end{theorem}

\begin{proof}
     The proof can be found in \cite{cont_global_bif_diag, cont_equilibria_pde,  cont_suspension_bridge} for instance. In particular, the regularity of the branch of solutions is provided by the smoothness of the fixed point operator $\mathbb{T}_c(u) \bydef u - \mathbb{A}(c) \mathbb{F}_c(u)$, which is smooth in $c$ since $\mathbb{A}(c) \mathbb{F}_c$ have a finite expansion in Chebyshev polynomials.
\end{proof}

The bounds in the previous theorem can be computed explicitly thanks to the analysis developed in Section \ref{sec : computation of the bounds}. In particular, we use that if $v(c) = v_0 +  2\sum_{n=1}^\infty v_n T_n\left(\frac{2(c-c_1)}{c_1-c_0} +1 \right) \in \mathcal{H}_{con}$, then
\begin{align*}
    \sup_{c \in [c_0,c_1]} \|v(c)\|_{\mathcal{H}_{con}} \leq \|v_0\|_{\mathcal{H}_{con}} +  2\sum_{n=1}^\infty \|v_n\|_{\mathcal{H}_{con}}. 
\end{align*}
Similarly, given a bounded linear operator $\mathbb{B}(c) \bydef  \mathbb{B}_0 + 2 \sum_{n=1}^{\infty} \mathbb{B}_n T_n\left(\frac{2(c-c_1)}{c_1-c_0} +1 \right) $, we have 
\begin{align*}
    \sup_{c \in [c_0,c_1]} \|\mathbb{B}(c)\|_{\mathcal{H}_{con}} \leq \|\mathbb{B}_0\|_{\mathcal{H}_{con}} +  2\sum_{n=1}^\infty \|\mathbb{B}_n\|_{\mathcal{H}_{con}}. 
\end{align*}
Consequently, since $v(c)$, $\mathbb{A}(c)$ and $\mathbb{F}(c)$ have a finite expansion in Chebyshev polynomials (for the variable $c$),  the above inequalities combined with the analysis of Section \ref{sec : computation of the bounds} allow to compute the bounds of Theorem \ref{th : radii polynomial continuation}.

Numerically, we start at the approximate solution in Fourier coefficients obtained in Section \ref{sec : proof in the case T=0} and we use a parameter continuation to construct a finite number of approximate solutions $V_0(c_k) \in \mathscr{h}_e$ at the Chebyshev nodes 
\[
c_k \bydef \frac{c_0+c_1}{2} + \frac{c_1-c_0}{2}\cos\left(\frac{(2k+1)\pi}{2N_{cheb}}\right)
\]
for $k = 0, \dots, N_{cheb}-1.$  Then, we use an FFT to obtain the coefficients $V_n$ such that $V(c) = V_0 + 2 \sum_{n=1}^{N_{cheb}} V_n T_n\left(\frac{2(c-c_1)}{c_1-c_0} +1 \right)$ is our approximate branch in the Fourier coefficients. In particular, each $V_n$ has a function representation with a zero trace, using the projection defined in \eqref{eq : projection in h^k_0}. Then, our approximate branch $v(c) \in H^4_{\om,e}$ is defined as $v(c) \bydef \gamma^\dagger(V(c)).$ Similarly, computing an approximate inverse $\mathbb{A}(c_k)$ at each Chebychev node allows to obtain the continuum of approximate inverses $\mathbb{A}(c)$ for all $c \in [c_0,c_1]$ as in \eqref{eq : approx inverse continuation}. 

The rigorous FFT and inverse FFT functions are implemented in the package RadiiPolynomial.jl \cite{julia_olivier} and are based on the IntervalArithmetic.jl package \cite{julia_interval}.
Using rigorous computation for the bounds from Theorem \ref{th : radii polynomial continuation} in \cite{julia_cadiot}, we obtain a proof for the following theorem.

\begin{theorem}\label{th : proof of a branch}
    For every $c \in [1.05, 1.21]$, there exists a smooth even solution $\tilde{u}(c) \in H^\infty(\R)$ to \eqref{eq : whitham stationary} and the function $c \mapsto \tilde{u}(c)$ is continuous. Furthermore, $\underset{c \in [1.05,1.21]}{\sup}\|\tilde{u}(c) - v(c)\|_{\mathcal{H}_{con}} \leq 3.2\times 10^{-4} $.
\end{theorem}

\begin{proof}
    The rigorous computation of the bounds of Theorem \ref{th : radii polynomial continuation} is presented in the code \cite{julia_cadiot}.
In practice, we cut the interval $[1.05, 1.21]$ between five subintervals $[c_{i},c_{i+1}]$ ($i = 0, \dots, 4$), with $1.05 = c_0 < c_1 < \dots < c_4 < c_5 = 1.21$, and apply Theorem \ref{th : radii polynomial continuation} on each of them. In particular, we prove that there exists $0< r_{min}^{(i)}< r_{max}^{(i)} $ such that, given approximate branches $v_i(c)$, we obtain the existence of five branches of solutions $\tilde{u}_i(c)$ ($i = 0, \dots, 4$) defined on $[c_i,c_{i+1}]$ respectively, with $\tilde{u}_i(c)$ being the unique solution to \eqref{eq : whitham stationary} in $\overline{B_r(v_i(c))}$ for all $r \in [r_{min}^{(i)}, r_{max}^{(i)}]$.

To ensure continuity of the branch on $[1.05, 1.21]$, we need to prove that $\tilde{u}_i(c_{i+1}) = \tilde{u}_{i+1}(c_{i+1})$ for all $i \in \{0, \dots, 3\}$.  This is achieved using the uniqueness of $\tilde{u}_i(c)$ in $\overline{B_r(v_i(c))}$ for all $r \in [r_{min}^{(i)}, r_{max}^{(i)}]$. Indeed, we verify using rigorous numerics that 
\[
\overline{B_{r_{min}^{(i)}}(v_i(c_{i+1}))} \subset \overline{B_{r_{max}^{(i+1)}}(v_{i+1}(c_{i+1}))}
\]
for all $i = 0, \dots, 3.$ The uniqueness of each $\tilde{u}_i$ provides the desired proof.  

Now, choosing $r = \max_{i= 0, \dots, 4} r_{min}^{(i)}$, we  demonstrate that $r \leq 3.2 \times 10^{-4}$ and obtain a uniform control of the branch of solutions on $[1.05, 1.21].$
    We prove the smoothness of each function $\tilde{u}(c)$ using Proposition \ref{prop : regularity of the solution} and  by verifying that 
    \begin{align*}
        \|\tilde{u}(c)\|_\infty < \frac{c}{2}
    \end{align*}
    for each $c \in [1.05, 1.21]$ thanks to the analysis developed in Section \ref{ssec : regularity of the solution}.
\end{proof}

  \begin{figure}[h!]
  \centering
  \begin{minipage}{.52\textwidth}
   \centering
  \includegraphics[clip,width=1\textwidth]{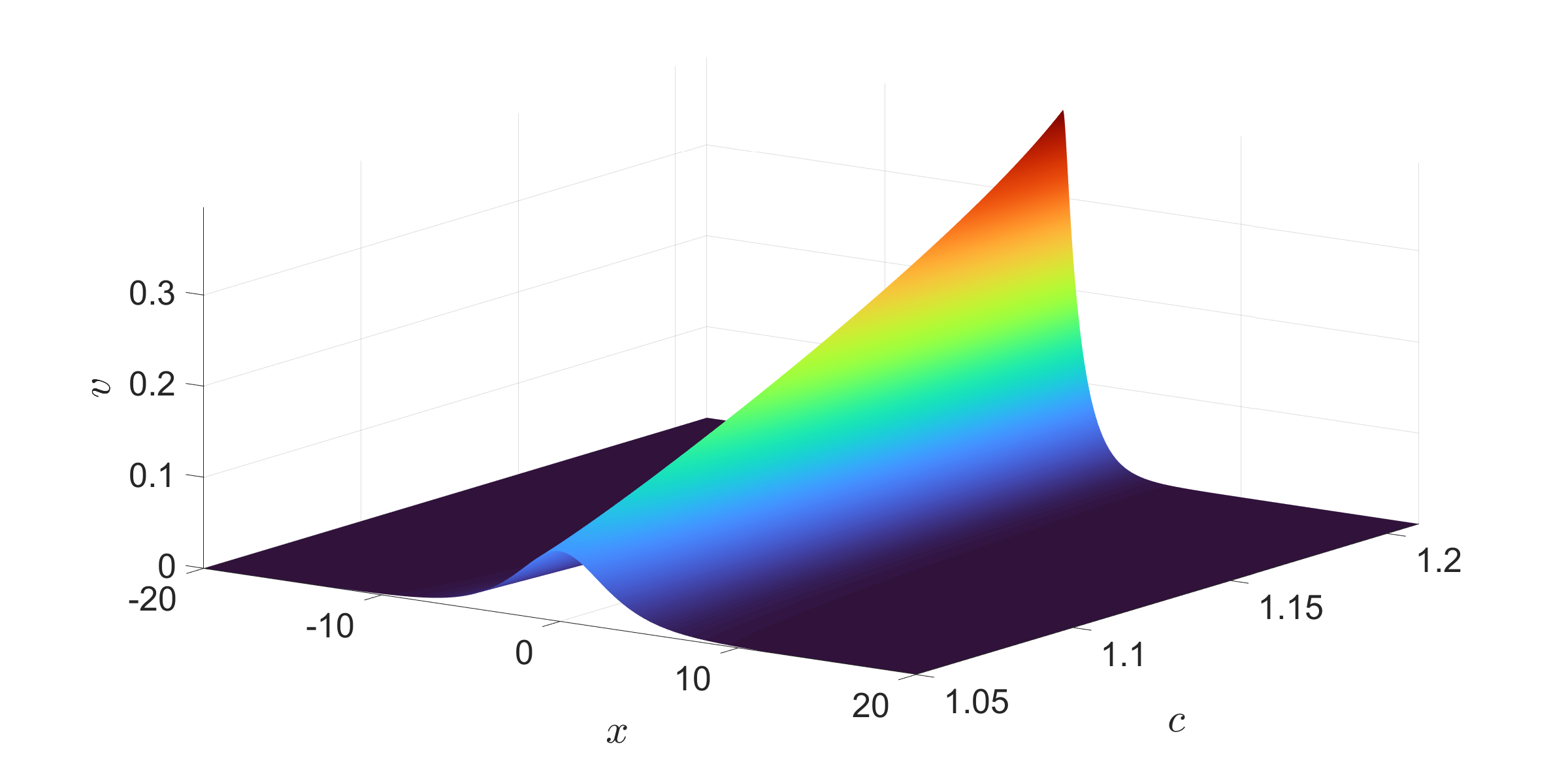}
  \end{minipage}%
  \begin{minipage}{.52\textwidth}
    \centering
   \includegraphics[clip,width=1\textwidth]{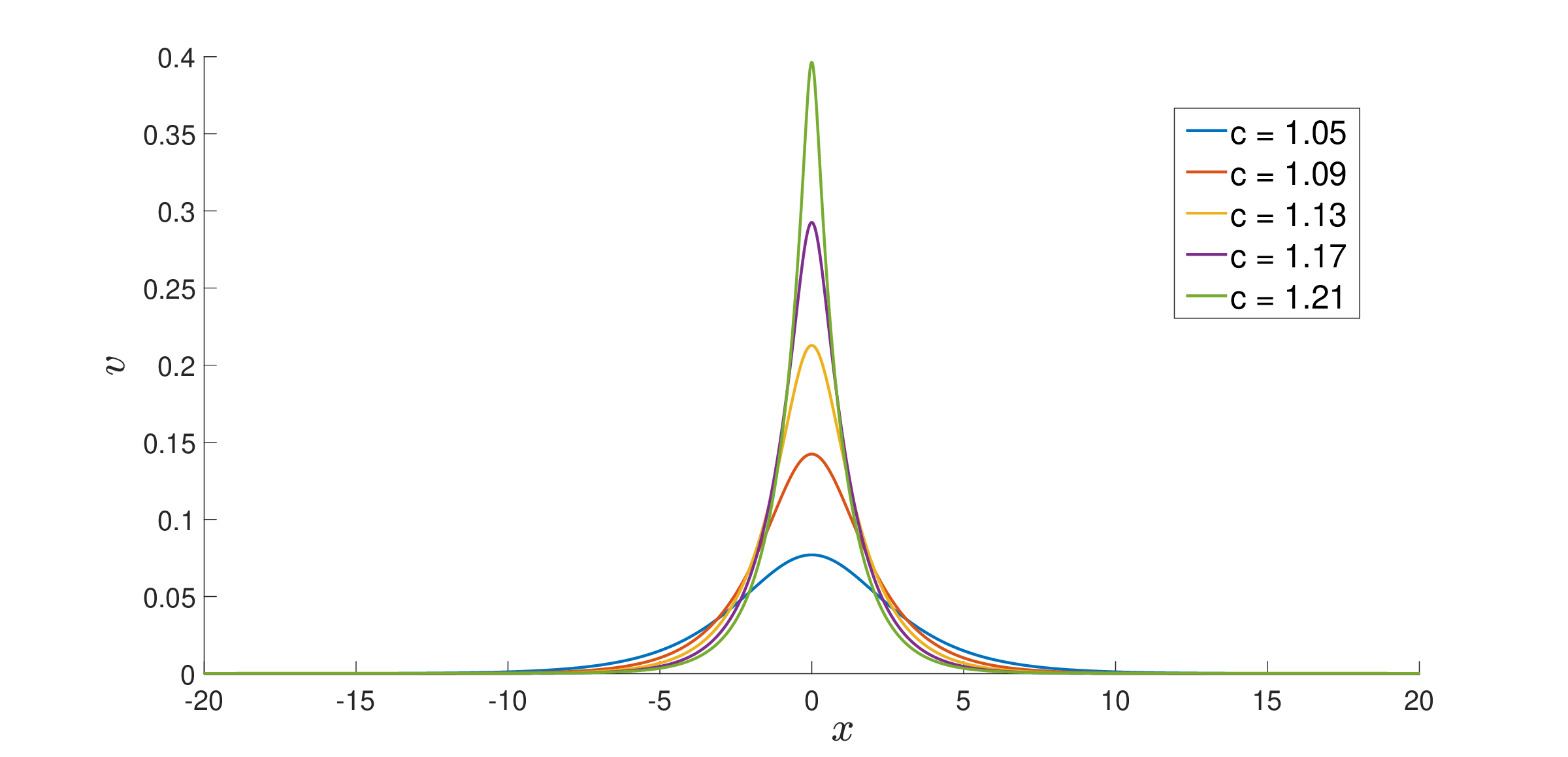}
  \end{minipage}
  \caption{Visualization of the branch of solitary waves corresponding to Theorem~\ref{th : proof of a branch} (left) and  sample of solitary waves for specific values of the velocity $c$ (right).}
    \label{fig : continuation}
  \end{figure}

As mentioned earlier, the branch of solitary waves proven in the above theorem  has been intensively studied. In particular, the branch is known to display a cusped wave for a specific value of velocity $c^*$ for which $u(0) = \frac{c^*}{2}$. For the part of the branch presented above, we consider velocities strictly smaller than the critical one $c^*$ and avoid the singular behavior of the cusp. Numerically, the critical value seems to be in between $1.22$ and $1.23$. Approximating solitary waves with Fourier series becomes more and more difficult as one approaches $c^*$.  In order to get closer to the cusping behavior and actually provide an existence proof of the singular wave, one would have to use a different basis of approximation than Fourier series. For instance, following the framework established in \cite{javier_convexity_highest}, one could use the Clausen functions to improve the approximation.  Finally, note that since the proof of Theorem \ref{th : proof of a branch} is obtain thanks to the contraction mapping theorem, we obtain that the branch is locally unique (in the class of even smooth functions). Consequently, there is no possible branching out between $c = 1.05$ and $c = 1.21$. One could then think about gluing the bifurcation analysis at $c \approx 1$ and the cusping phenomenon at $c \approx 1.23$ to the above branch in order to obtain a full understanding of the (whole) branch.

}

\section{Spectral stability}\label{sec : stability}

In this section, using the analysis derived in Section 5 in \cite{unbounded_domain_cadiot}, we establish a method to prove the spectral stability of solitary wave solutions to \eqref{eq : original Whitham}.  The approach is again computer-assisted and relies on the analysis of Sections \ref{sec : computer-assisted approach} and \ref{sec : computation of the bounds}.

We first derive sufficient conditions under which the spectral stability of solitary wave solutions is achieved. In particular, these conditions involve  requirements on the spectrum of the linearization around the solitary wave. Having such conditions in mind,  we derive a general computer-assisted approach to control the spetrum of the linearization. Combining these results, we are able to conclude about the stability of the solutions obtained in Theorems \ref{th : proof whitham} and \ref{th: : proof capillary whitham}. 

In this section, we do not restrict the operators to even functions anymore but instead use a subscript ``e", if necessary, to establish a restriction to the even symmetry.

\subsection{Conditions for stability}\label{sec : condition for stability}

 Let us fix $c$ and $T$ satisfying Assumption \ref{ass : value of c and T} and let $\tilde{u} \in H^\infty(\R)$ be a solution to \eqref{eq : whitham stationary} obtained thanks to Theorem \ref{th: radii polynomial}. In particular, assume that 
 \begin{align}\label{def : solution stability}
     \tilde{u} \in \overline{B_{r_0}(u_0)} \subset \mathcal{H}_e
 \end{align}
 for some $u_0 \in H^4_e$ constructed as in Section \ref{ssec : approximate solution} and $r_0>0.$ Moreover, $u_0$ satisfies Assumption \ref{ass : u0 is smaller than} and it has a representation as a sequence of Fourier coefficients given by $U_0 \in X^4_e$ and $U_0 = \pi^N U_0$. If $T=0$,  assume that 
 \begin{align}
     \|U_0\|_1 + \frac{r_0}{4\sqrt{\nu} \sigma_0} < \frac{c}{2}.
 \end{align}
 Using the reasoning of Section \ref{ssec : regularity of the solution}, this implies that
 \begin{align}\label{condition solution smooth}
     \|\tilde{u}\|_\infty \leq \|U_0\|_1 + \frac{r_0}{4\sqrt{\nu} \sigma_0} < \frac{c}{2} ~~ \text{ and } ~~  \|\tilde{u} - u_0\|_\infty \leq \frac{r_0}{4\sqrt{\nu}\sigma_0}
 \end{align}
 which justifies the fact that $\tilde{u} \in H^\infty(\R)$ (cf. Proposition \ref{prop : regularity of the solution}). The inequalities in \eqref{condition solution smooth} were actually proven in Theorem \ref{th : proof whitham} for the solution $\tilde{u}_1$.
 Given this construction of $\tilde{u}$, we derive conditions under which $\tilde{u}$ is spectrally stable. 
 
 {
In this section, we first redefine $\mathbb{F}$, $\mathbb{L}$ and $\mathbb{G}$ as follows
\begin{align*}
    \mathbb{F}(u) \bydef \begin{cases}
      -\mathbb{M}_0u + cu - u^2  &\text{ if } T=0\\
        \mathbb{M}_Tu - cu + u^2 &\text{ if } T>0
    \end{cases}, ~  \mathbb{L} \bydef \begin{cases}
         -\mathbb{M}_0 + cI_d  &\text{ if } T=0\\
        \mathbb{M}_T - cI_d &\text{ if } T>0
    \end{cases}
    ~ \text{ and } ~ \mathbb{G}(u) \bydef \begin{cases}
      - u^2  &\text{ if } T=0\\
        u^2 &\text{ if } T>0.
    \end{cases}
\end{align*}
Note that the above redefinitions allow to obtain a positive linear part in $\mathbb{F}$. Consequently, in both the cases $T=0$ and $T>0$, we can focus our attention on the negative spectrum (as detailed in Section \eqref{sec : kdv eigen} below). Similarly, the associated operators $F$, $L$ and $G$ on Fourier coefficients are redefined correspondingly.

Moreover, we redefine the  Hilbert space $\mathcal{H}$ choosing a different regularity $\mathbb{\Lambda}_0 =I_d$ instead of $\mln$, that is the norm on $\mathcal{H}$ becomes
\begin{align*}
   \|u\|_{\mathcal{H}} = \|\mathbb{L}u\|_2.
\end{align*}
In particular, because $\tilde{u} \in L^\infty(\R)$, we obtain  that 
\[
D\mathbb{F}(\tilde{u}) : \mathcal{H} \to L^2
\]
is a bounded linear operator. Then, we use a subscript $``e"$ to specify the restriction of $\mathbb{F}$ to even functions. For instance
\begin{align}\label{eq : even restriction}
    D{\mathbb{F}}_e(\tilde{u}) : \mathcal{H}_e \to L^2_e
\end{align}
is the restriction of $D{\mathbb{F}}(\tilde{u})$ to even functions. 
  In fact, using Lemma \ref{lem : computation of Z1} and the fact that $\tilde{u}$ has been obtained thanks to Theorem \ref{th: radii polynomial}, 
  we obtain that $D\mathbb{F}_e(\tilde{u}) : \mathcal{H}_e \to L^2$ has a 
  bounded inverse.  Moreover, the analysis derived in Section 3 of 
  \cite{Stefanov2018SmallAT} is applicable and we can use it to study the spectral stability of $\tilde{u}$. In particular, the reasoning of Sections 
 3.2 and 3.3 of \cite{Stefanov2018SmallAT} is readily applicable to $\partial_x D\mathbb{F}(\tilde{u})$ and we resume the obtained results in
 the following lemma. Note that a more general approach is available at 
 \cite{index_theorem_Lin_Chongchun}.  }

  {
\begin{lemma}\label{lem : result on eigenvalues}
Assume that 

{\em (P1)} $D{\mathbb{F}}(\tilde{u})$ has a simple negative eigenvalue $\lambda^{-}$\\
{\em (P2)} $D{\mathbb{F}}(\tilde{u})$ has no negative eigenvalue other than $\lambda^{-}$\\
{\em (P3)} 0 is a simple eigenvalue of $D{\mathbb{F}}(\tilde{u})$.

If 
\begin{align}\label{eq : condition for stability}
         \left(\tilde{u}, D\mathbb{F}_e(\tilde{u})^{-1}\tilde{u}\right)_2 < 0, 
\end{align}
then $\tilde{u}$ is (spectrally) stable.
\end{lemma}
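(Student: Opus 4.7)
The plan is to deduce the spectral stability of $\tilde{u}$ from the Grillakis--Shatah--Strauss/Lin--Zeng instability index theory (\cite{index_theorem_Lin_Chongchun}), as adapted to Whitham-type equations in Sections~3.2--3.3 of \cite{Stefanov2018SmallAT}. Thanks to the sign choice made in \eqref{def : new operators}, the linear part $\widetilde{\mathbb{L}}$ has positive essential spectrum in both cases $T=0$ and $T>0$, so their framework applies directly; spectral stability then amounts to proving that the linearized flow $\partial_x D\widetilde{\mathbb{F}}(\tilde{u})$ has no eigenvalue with positive real part.

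Set $\mathcal{L} \bydef D\widetilde{\mathbb{F}}(\tilde{u})$. This operator is self-adjoint on $L^2(\R)$ and commutes with the parity reflection $\mathcal{P}: u(x)\mapsto u(-x)$, so it preserves the splitting $L^2 = L^2_e \oplus L^2_o$. Differentiating \eqref{eq : whitham stationary} in $x$ gives $\tilde{u}'\in \ker\mathcal{L}$, and (P3) yields $\ker \mathcal{L} = \text{span}\{\tilde{u}'\}$, which is odd. In particular $\mathcal{L}_e \bydef D\widetilde{\mathbb{F}}_e(\tilde{u})$ is invertible. From (P1)--(P2) combined with the fact that the eigenfunction of a simple eigenvalue of $\mathcal{L}$ can be chosen to be even (otherwise $\varphi \pm \mathcal{P}\varphi$ would yield a second linearly independent eigenvector), one obtains $n(\mathcal{L}_e) = 1$ and $n(\mathcal{L}_o) = 0$.

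Next I would apply the index formula, which in our setting reads
\begin{equation*}
k_{\text{unst}} \;=\; n(\mathcal{L}_e) \;-\; n_{-}(D),
\end{equation*}
where $k_{\text{unst}}$ is the number of eigenvalues of $\partial_x \mathcal{L}$ with positive real part and $n_{-}(D)$ is the number of negative eigenvalues of the $1\times 1$ constraint matrix $D$ associated with the momentum conservation law of \eqref{eq : original Whitham}. Differentiating the profile equation $\widetilde{F}(\tilde{u}_c)=0$ with respect to the wave speed $c$ gives $\mathcal{L}\,\partial_c \tilde{u}_c = \pm \tilde{u}_c$ (with a sign determined by \eqref{def : new operators}), so $\partial_c \tilde{u}_c = \pm \mathcal{L}_e^{-1}\tilde{u}$ (the derivative being necessarily even), and $D$ is the scalar $\pm(\tilde{u},\mathcal{L}_e^{-1}\tilde{u})_2$, proportional to the Vakhitov--Kolokolov quantity $\tfrac{1}{2}\tfrac{d}{dc}\|\tilde{u}_c\|_2^2$. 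Under the hypothesis $(\tilde{u},\mathcal{L}_e^{-1}\tilde{u})_2 < 0$ one has $n_{-}(D)=1$, hence $k_{\text{unst}} = 1-1 = 0$, and $\tilde{u}$ is spectrally stable.

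The delicate step is verifying that the abstract hypotheses of the instability index theorem are indeed met in this nonlocal, even-symmetric setting: one must confirm that $(\partial_x, \mathcal{L})$ forms an admissible Hamiltonian pair, that $\mathcal{L}$ has at most finitely many negative eigenvalues with the essential spectrum bounded away from zero (both consequences of Assumption~\ref{ass : value of c and T} and the smoothness of $\tilde{u}$ provided by Proposition~\ref{prop : regularity of the solution}), and that translation is the only continuous symmetry so the constraint matrix is genuinely one-dimensional. Since Stefanov--Wright carried out precisely this verification for the Whitham class and nowhere used smallness of the amplitude, their argument should carry over verbatim, reducing the proof to the algebraic index count above.
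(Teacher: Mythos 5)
Your proposal takes essentially the same route as the paper: the paper proves this lemma simply by invoking Sections 3.2--3.3 of Stefanov--Wright (and the more general Lin--Zeng index theorem), and the Hamiltonian index count you spell out, $k_{\mathrm{unst}} = n(\mathcal{L}) - n_-(D)$ with $n(\mathcal{L})=1$ from (P1)--(P3) and $n_-(D)=1$ from \eqref{eq : condition for stability}, is exactly the content of that citation. One small imprecision: simplicity of $\lambda^-$ only forces its eigenfunction to be even \emph{or} odd, so your claim $n(\mathcal{L}_e)=1$, $n(\mathcal{L}_o)=0$ is not justified as written; this is harmless because the index theorem is applied on the full space (parity is not preserved by $\partial_x\mathcal{L}$ anyway), and if you do want evenness it follows from $\bigl(D\widetilde{\mathbb{F}}(\tilde{u})\tilde{u},\tilde{u}\bigr)_2 = -\bigl(\widetilde{\mathbb{L}}\tilde{u},\tilde{u}\bigr)_2 < 0$, which shows the even sector already carries the negative direction.
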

}
 {
Using the previous lemma, the spectral stability of $\tilde{u}$ requires controlling the negative part of the spectrum of $D\mathbb{F}(\tilde{u})$. Consequently, we recall in the next section some tools from \cite{unbounded_domain_cadiot} in order to study eigenvalue problems. Then, condition \eqref{eq : condition for stability} involve the negativity of the so-called Vakhitov-Kolokolov quantity. Controlling such a quantity is a complex task when one does not possess an explicit control on $\tilde{u}$ and $D\mathbb{F}_e(\tilde{u})^{-1}$. Using the analysis derived in Section \ref{sec : computation of the bounds} combined with Proposition 6.14 in \cite{unbounded_domain_cadiot}, we provide a computer-assisted approach to verify that condition \eqref{eq : condition for stability} is satisfied. We adapt Proposition 6.14 in \cite{unbounded_domain_cadiot} to \eqref{eq : whitham stationary} and obtain the following result.

\begin{prop}\label{prop:stable}
Let $\tilde{u} \in \overline{B_{r}(u_0)}$ be a solution to \eqref{eq : whitham stationary} obtained thanks to Theorem \ref{th: radii polynomial}. Then, let $V_0 \bydef A_TU_0,$ where $A_T$ is defined in Section \ref{ssec : approximate inverse}, and let $V = (V_n)_n \in X^4_0$ be the projection of $V_0$ in the kernel of $\mathcal{T}$ (using the construction of Section \ref{ssec : approximate solution}). Finally, let $\epsilon$ be a constant satisfying
\[
\epsilon \geq  \|D\mathbb{F}_e(\tilde{u})^{-1}\|_{2}\| u_0 - D\mathbb{F}(u_0) \gamma^\dagger(V)\|_2 + \frac{r}{2\sqrt{\nu} \sigma_0}|\Omega_0|^{\frac{1}{2}}\|D\mathbb{F}_e(\tilde{u})^{-1}\|_{2}  \|V\|_{2}.
\]
If there exists $\tau<0$ such that
\[
    |\Omega_0| \sum_{n \in \mathbb{Z}}(U_0)_n\overline{V_n} + \epsilon |\Omega_0|^{\frac{1}{2}}\|U_0\|_2 + 2\|D\mathbb{F}_e(\tilde{u})^{-1}\|_{2}(|\Omega_0|^{\frac{1}{2}}\|U_0\|_2 + r)r \leq \tau
\]
then
\[
    \int_{\mathbb{R}} \tilde{u} D\mathbb{F}_e(\tilde{u})^{-1}\tilde{u} < \tau.
\]
\end{prop}

\begin{proof}
    The proof is obtained using the proof of Proposition 6.14 in \cite{unbounded_domain_cadiot} combined with the fact that 
    \[
    \|D\mathbb{G}(\tilde{u})\gamma^\dagger(V) - D\mathbb{G}(u_0)\gamma^\dagger(V)\|_2 \leq 2 \|\tilde{u}-u_0\|_\infty \|\gamma^\dagger(V)\|_2 \leq \frac{r}{2\sqrt{\nu}\sigma_0} |\Omega_0|^\frac{1}{2} \|\tilde{u}-u_0\|_\infty \|V\|_2,
    \]
    where we used \eqref{condition solution smooth} for the last step.
\end{proof}

 {\begin{remark}
   A value for $\epsilon$ in the previous proposition can be obtained thanks to rigorous numerics. Indeed,  the quantity $\| u_0 - D\mathbb{F}(u_0) \gamma^\dagger(V)\|_2$ can be bounded following similar steps as the ones used for the computation of the bound $\mathcal{Y}_0$ in Lemma \ref{lem : bound Y_0}. Moreover,  an upper bound for $\|D\mathbb{F}_e(\tilde{u})^{-1}\|_{2}$  can be  obtained combining \eqref{condition solution smooth} and Lemma \ref{lem : computation of Z1}. Such computations are implemented in the code \cite{julia_cadiot}.
\end{remark}}

}

\subsection{Proof of eigencouples of \boldmath\texorpdfstring{$D\mathbb{F}(\tilde{u})$}{DF(u)}\unboldmath}\label{sec : kdv eigen}

First, notice that $D\mathbb{F}(\tilde{u})$ only possesses real eigenvalues as it is  self-adjoint on $L^2$. 
Then, similarly as what was achieved in Section 5 in \cite{unbounded_domain_cadiot}, the goal is to set-up a zero finding problem to prove eigencouples of $D\mathbb{F}(\tilde{u}).$ Following Lemma \ref{lem : result on eigenvalues}, it is enough to study the non-positive part of the spectrum of $D\mathbb{F}(\tilde{u})$ in order to conclude about stability. 
In particular, we prove that the non-positive part of the spectrum of $D\mathbb{F}(\tilde{u})$ only contains eigenvalues with finite multiplicity.  {Before presenting the proof, notice that, given $\lambda < \sigma_0$, we have
\begin{align}\label{eq : min of m}
     |m_{T}(\xi)-c| - \lambda &\geq  \sigma_0-\lambda >0
\end{align}
for all $\xi \in \R$. Therefore, for all $\lambda < \sigma_0$, we can define the positive linear operator $\mathbb{L}_{\lambda}$ as follows
\begin{align}\label{def : L lambda unbounded}
    \mathbb{L}_{\lambda} \bydef  \mathbb{L} - \lambda I_d 
\end{align}
and define the associated Hilbert space $\mathcal{H}_\lambda$ as in \eqref{def : definition Hl} with  norm $\|u\|_{\mathcal{H}_\lambda} \bydef \|\mathbb{L}_\lambda u\|_2$ for all $u \in \mathcal{H}_\lambda$. Using \eqref{eq : min of m}, we obtain that $\mathbb{L}_\lambda : \mathcal{H}_\lambda \to L^2$ is an isometric isomorphism. }

\begin{lemma}\label{lem : spectrum is eigenvalues}
Let $\sigma_0$ be defined in Lemma \ref{prop : analyticity and value of a} and let $\lambda_{\max}>0$ be defined as 
\begin{align}\label{def : lambda_max}
    \lambda_{\max} \bydef  \begin{cases}
         \min\left\{\sigma_0, ~c -2\left(\|U_0\|_1 + \frac{r_0}{4\sqrt{\nu}\sigma_0}\right)\right\} &\text{ if } T=0\\
          \sigma_0  &\text{ if } T>0.
    \end{cases}
\end{align}
    Let $\lambda < \lambda_{\max}$ be such that $D\mathbb{F}(\tilde{u}) - \lambda I_d$ is non-invertible. Then $\lambda$ is an eigenvalue of $D\mathbb{F}(\tilde{u})$ with finite multiplicity.
\end{lemma}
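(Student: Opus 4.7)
The plan is to apply the Fredholm alternative to $D\widetilde{\mathbb{F}}(\tilde{u}) - \lambda I_d$ after factoring out invertible operators to expose an identity-plus-compact structure; non-invertibility of the identity-plus-compact factor then forces a finite-dimensional kernel, which is exactly the conclusion we want. Recall that $D\widetilde{\mathbb{F}}(\tilde{u}) = \widetilde{\mathbb{L}} + 2\tilde{\mathbb{u}}$ when $T>0$ and $D\widetilde{\mathbb{F}}(\tilde{u}) = \widetilde{\mathbb{L}} - 2\tilde{\mathbb{u}}$ when $T=0$.

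For $T>0$, the condition $\lambda < \lambda_{\max} = \sigma_0 \leq \min_{\xi}(m_T(\xi)-c)$ makes the symbol $m_T(2\pi\xi) - c - \lambda$ of $\widetilde{\mathbb{L}} - \lambda I_d$ uniformly bounded away from zero, and since $m_T(\xi) \sim \sqrt{T|\xi|}$ at infinity, $(\widetilde{\mathbb{L}} - \lambda I_d)^{-1} : L^2 \to H^{\frac{1}{2}}(\R)$ is bounded. The natural factorization
\begin{align*}
D\widetilde{\mathbb{F}}(\tilde{u}) - \lambda I_d = (\widetilde{\mathbb{L}} - \lambda I_d)\bigl(I_d + 2(\widetilde{\mathbb{L}} - \lambda I_d)^{-1}\tilde{\mathbb{u}}\bigr)
\end{align*}
then combines with the compactness of $(\widetilde{\mathbb{L}} - \lambda I_d)^{-1}\tilde{\mathbb{u}}$ on $L^2$, established exactly as in Lemma \ref{lem : compactness of operator} (the decay of $\tilde{u}$ at infinity combined with the $H^{1/2}$-smoothing of the resolvent), to yield an $I_d + $ compact factor; Fredholm theory then gives finite-dimensional kernel, and invertibility of $\widetilde{\mathbb{L}} - \lambda I_d$ transports this kernel to that of $D\widetilde{\mathbb{F}}(\tilde{u}) - \lambda I_d$.

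The case $T=0$ is more delicate because $m_0(\xi) \to 0$ at infinity, so the symbol of $(\widetilde{\mathbb{L}} - \lambda I_d)^{-1}$ tends to the nonzero constant $\tfrac{1}{c-\lambda}$ and the above factorization does not produce a compact perturbation. Following the spirit of Lemma \ref{lem : compactness of operator}, I would decompose $(\widetilde{\mathbb{L}} - \lambda I_d)^{-1} = \tfrac{1}{c-\lambda}I_d + \mathcal{L}_{0,\lambda}$, where $\mathcal{L}_{0,\lambda}$ is the Fourier multiplier with symbol $m_0(2\pi\xi)/\bigl[(c-\lambda)(c-\lambda - m_0(2\pi\xi))\bigr] = O(|\xi|^{-\frac{1}{2}})$, making $\mathcal{L}_{0,\lambda}\tilde{\mathbb{u}}$ compact on $L^2$ by the reasoning of Lemma \ref{lem : compactness of operator}. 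The key observation is then
\begin{align*}
D\widetilde{\mathbb{F}}(\tilde{u}) - \lambda I_d = (\widetilde{\mathbb{L}} - \lambda I_d)\Bigl(I_d - \tfrac{2}{c-\lambda}\tilde{\mathbb{u}}\Bigr)(I_d - \mathbb{K}_\lambda),
\end{align*}
with $\mathbb{K}_\lambda = 2\bigl(I_d - \tfrac{2}{c-\lambda}\tilde{\mathbb{u}}\bigr)^{-1}\mathcal{L}_{0,\lambda}\tilde{\mathbb{u}}$ compact. The multiplication factor $I_d - \tfrac{2}{c-\lambda}\tilde{\mathbb{u}}$ is invertible on $L^2$ precisely because the second term in the definition \eqref{def : lambda_max} of $\lambda_{\max}$, combined with \eqref{condition solution smooth}, guarantees $c - \lambda - 2\tilde{u}(x) > 0$ uniformly in $x$. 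Applying Fredholm theory to the last factor yields the conclusion.

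The main obstacle is genuinely the $T=0$ case: without any smoothing from $\widetilde{\mathbb{L}}^{-1}$, compactness has to be manufactured by isolating the constant part of the resolvent's symbol, and Assumption \ref{ass : u0 is smaller than} is exactly what allows the residual multiplication-operator piece to be peeled off invertibly before the Fredholm step. The rest amounts to standard Fredholm bookkeeping: in both cases the eigenspace of $D\widetilde{\mathbb{F}}(\tilde{u})$ at $\lambda$ is the image, under the invertible prefactors, of the finite-dimensional kernel of the $I_d + $ compact factor, hence finite-dimensional.
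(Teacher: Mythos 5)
Your proposal is correct and follows essentially the same route as the paper: factor out the invertible $\widetilde{\mathbb{L}}-\lambda I_d$ (and, for $T=0$, additionally the multiplication operator $I_d-\tfrac{2}{c-\lambda}\tilde{\mathbb{u}}$, invertible thanks to \eqref{condition solution smooth} and the definition of $\lambda_{\max}$), use the compactness argument of Lemma \ref{lem : compactness of operator} on the remaining identity-plus-compact factor, and conclude by the Fredholm alternative. Your $\mathcal{L}_{0,\lambda}$ is exactly the paper's $\mathbb{L}_\lambda^{-1}-\tfrac{1}{c-\lambda}I_d$, so the $T=0$ decomposition coincides with the one in the paper's proof.
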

\begin{proof}
First, since $\lambda < \lambda_{\max} \leq \sigma_0$, we obtain that $\mathbb{L}_{\lambda} = {\mathbb{L}} - \lambda I_d$ is a positive definite linear operator and is invertible. Suppose that $T>0$. Then 
    \begin{align*}
        D\mathbb{F}(\tilde{u}) - \lambda I_d = \mathbb{L}_\lambda + 2 \tilde{\mathbb{u}} = \mathbb{L}_\lambda\left(I_d + 2\mathbb{L}_\lambda^{-1} \tilde{\mathbb{u}}\right).
    \end{align*}
Moreover, using the proof of Lemma 3.1 in \cite{unbounded_domain_cadiot}, we obtain that $2\mathbb{L}_\lambda^{-1} \tilde{\mathbb{u}} : L^2 \to L^2$ is compact and therefore $D\mathbb{F}(\tilde{u}) - \lambda I_d$ is a Fredholm operator. We conclude the  proof for the case $T>0$ using the Fredholm alternative. Now, if $T=0$, then 
 \begin{align*}
        D\mathbb{F}(\tilde{u}) - \lambda I_d = \mathbb{L}_\lambda\left(I_d - 2\mathbb{L}_\lambda^{-1} \tilde{\mathbb{u}}\right) = \mathbb{L}_\lambda(I_d - \frac{2}{c-\lambda}\tilde{\mathbb{u}}) \left(I_d - 2(I_d - \frac{2}{c-\lambda}\tilde{\mathbb{u}})^{-1}\left(\mathbb{L}_\lambda^{-1} - \frac{1}{c-\lambda} I_d \right)\tilde{\mathbb{u}}\right).
    \end{align*}
    Now, using \eqref{condition solution smooth}, we have that 
    \begin{align*}
        \tilde{u} \leq \|U_0\|_1 + \frac{r_0}{4\sqrt{\nu}\sigma_0} \leq \frac{c-\lambda_{\max}}{2} < \frac{c-\lambda}{2}.
    \end{align*}
    as $\lambda < \lambda_{\max}$. Therefore, since $\tilde{u}$ is smooth, we have that $(I_d - \frac{2}{c-\lambda}\tilde{\mathbb{u}})^{-1} : L^2 \to L^2$ is bounded. Moreover, using again the proof of Lemma 3.1 in \cite{unbounded_domain_cadiot}, we obtain that $\left(\mathbb{L}_\lambda^{-1} - \frac{1}{c-\lambda} I_d \right)\tilde{\mathbb{u}} : L^2 \to L^2$ is compact, which implies that 
    $2(I_d - \frac{2}{c-\lambda}\tilde{\mathbb{u}})^{-1}\left(\mathbb{L}_\lambda^{-1} - \frac{1}{c-\lambda} I_d \right)\tilde{\mathbb{u}} : L^2 \to L^2$ is compact as well. We obtain that $D\mathbb{F}(\tilde{u}) - \lambda I_d$ is a Fredholm operator, which concludes the proof.
\end{proof}
Using the above lemma combined with Lemma \ref{lem : result on eigenvalues}, we obtain that we can study the stability of $\tilde{u}$ by controlling the non-positive eigenvalues of $D\mathbb{F}(\tilde{u})$. Consequently, we  expose a computer-assisted strategy to prove the existence of an eigencouple $(\tilde{\lambda},\tilde{\psi})$, given an approximation $(\lambda_0, \psi_0)$. 

 Let $\psi_0 \in \mathcal{H}_\om$ be an approximate eigenvector of $D\mathbb{F}(\tilde{u})$
associated to an approximate eigenvalue $\lambda_0 \leq 0$. Moreover, using the construction presented in Section \ref{ssec : approximate solution}, we assume that $\psi_0$ is determined numerically and that there exists $\Psi_0 \in X^4$ such that
\begin{align*}
    \psi_0 = \gamma^\dagger(\Psi_0) \in H^4_{\om} \text{ with }~~ \Psi_0 = \pi^N \Psi_0,
\end{align*}
where  $\pi^N$ is defined in \eqref{def : projection truncation}.
 Moreover, denote by $H_1 \bydef \mathbb{R} \times \mathcal{H}_{\lambda_0}$ and $H_2 \bydef \mathbb{R} \times L^2$ the Hilbert spaces endowed respectively with the norms
\[
    \|(\nu,u)\|_{H_1} \bydef \left( |\nu|^2 + \|u\|_{\mathcal{H}_{\lambda_0}}^2\right)^{\frac{1}{2}} ~~ \text{ and } ~~
    \|(\nu,u)\|_{H_2} \bydef \left( |\nu|^2 + \|u\|_2^2\right)^{\frac{1}{2}}.
\]
In particular, we have
\begin{align}\label{def : x0}
    x_0 \bydef (\lambda_0,\psi_0) \in H_1
\end{align}
by construction.
Denote $x = (\nu,u) \in H_1$ and define the augmented zero finding problem $\overline{\mathbb{F}} : H_1 \to H_2$ as 
\begin{equation}\label{eq : f(u) = 0 on H1}
    \overline{\mathbb{F}}(x) \bydef \begin{pmatrix}(\psi_0- u,\psi_0)_2\\
    D\mathbb{F}(\tilde{u}) u - \nu u\end{pmatrix} =0
\end{equation}
for all $x = (\nu,u) \in H_1$. Zeros of $\overline{\mathbb{F}}$ are equivalently eigencouples of $D\mathbb{F}(\tilde{u}).$ Moreover, similarly as in \cite{unbounded_domain_cadiot}, we define $\oF_0 : H_1 \to H_2$ as 
\begin{align*}
    \overline{\mathbb{F}}_0(x) \bydef \begin{pmatrix}(\psi_0-u,\psi_0)_2\\
    D\mathbb{F}({u}_0) u - \nu u \end{pmatrix}.
\end{align*}
Note that the analysis derived in \cite{unbounded_domain_cadiot} does not readily allow to investigate $D\oF (x_0)$ as $\tilde{u}$ does not necessarily have a support contained on $\om$. However, the theory applies to $D\oF_0 (x_0)$. Moreover, using that $\|\tilde{u}- u_{0}\|_{\mathcal{H}} \leq r_0$ by assumption, we can build an approximate inverse for  $D\oF_0 (x_0)$ and use the control on $\|\tilde{u}- u_{0}\|_{\mathcal{H}}$ to obtain rigorous estimates for $D\oF (x_0)$ (cf. Section 5 in \cite{unbounded_domain_cadiot}).

Now, the map $\oF_0$ has a periodic correspondence on $\om$. Indeed, given $\lambda \leq 0 $, define $L_{\lambda}$ as 
\begin{align}\label{def : L lambda periodic}
    L_\lambda \bydef {L} - \lambda I_d.
\end{align}
Then, define $\mathscr{h}_{\lambda_0}$ to be the following Hilbert space defined  as
\begin{align}
    \mathscr{h}_{\lambda} \bydef \left\{U \in \ell^2(\mathbb{Z}), ~~ \|U\|_{\mathscr{h}_{\lambda}} \bydef \|L_{\lambda}U\|_2 < \infty\right\}.
\end{align}
Moreover, define the Hilbert spaces $X_1 \bydef \mathbb{R}\times \mathscr{h}_{\lambda_0}$ and $X_2 \bydef \mathbb{R} \times \ell^2$  associated to their norms
\[
    \|(\nu,U)\|_{X_1} \bydef \left( |\nu|^2 + |\om|\|U\|_{\mathscr{h}_{\lambda_0}}^2\right)^{\frac{1}{2}}  ~~ \text{and} ~~
    \|(\nu,U)\|_{X_2} \bydef \left( |\nu|^2 + |\om|\|U\|_2^2\right)^{\frac{1}{2}}.
\]
 Finally, we  define a corresponding operator $\overline{F} : X_1 \to X_2$ as 
 \begin{align*}
    \overline{F}(\nu,U) \bydef \begin{pmatrix}|\om|(\Psi_0- U,\Psi_0)_2\\
   D{F}(U_0)U - \nu  U\end{pmatrix}.   
\end{align*}

  At this point, we require the construction of an approximate inverse $D\oF_0(x_0) : H_1 \to H_2$. Let $\oL : H_1 \to H_2$ be defined as follows
 \begin{align*}
     \oL \bydef \begin{pmatrix}
         1&0\\
         0& \mathbb{L}_{\lambda_0}
     \end{pmatrix}.
 \end{align*}
 Then, by construction $\oL$ is an isometric isomorphism between $H_1$ and $H_2$. Moreover, denote $\overline{\mathbb{1}}_\om : H_2 \to H_2$, $\overline{\pi}^N : X_2 \to X_2$ and $\overline{\pi}_N : X_2 \to X_2$ the projection operators defined as 
 \begin{align*}
     \overline{\mathbb{1}}_\om \bydef \begin{pmatrix}
         1&0\\
         0 & \cha 
     \end{pmatrix}, ~~ \overline{\pi}^N \bydef \begin{pmatrix}
         1&0\\
         0 & \pi^N
     \end{pmatrix} \text{ and }~ \overline{\pi}_N \bydef \begin{pmatrix}
         0&0\\
         0 & \pi_N 
     \end{pmatrix}.
 \end{align*}
 Now, we build an approximate inverse for $D\oF_0(x_0) : H_1 \to H_2$ following the construction of Section \ref{ssec : approximate inverse}. Indeed, we build a linear operator $\overline{B}_T \bydef \overline{B}^N_T +  \overline{\pi}_N \begin{pmatrix}
     0&0\\
     0 & \mathbb{W}_T
 \end{pmatrix} : X_2 \to X_2$ such that $\overline{B}^N_T = \overline{\pi}^N \overline{B}^N_T\overline{\pi}^N$ and  define $\overline{A}_T : X_1 \to X_2$ as
 \begin{align}\label{operator AT extra equation}
     \overline{A}_T \bydef \begin{pmatrix}
     1&0\\
     0& L_{\lambda_0}^{-1}
 \end{pmatrix}\overline{B}_T.
 \end{align}
Intuitively, $\overline{A}_T$  is an approximation of the inverse of $ D\overline{F}(\lambda_0,\Psi_0)$. Moreover, define $W_T = e_0$ (where $e_0$ is given in \eqref{def : definition of e0}) for all $T>0$ and $W_0 = \pi^N W_0 \in \ell^1$. $W_0$ is a sequence chosen such that $W_0 = \pi^N W_0$ and $$W_0*(e_0 - \frac{2}{c - \lambda_0}U_{0} ) \approx e_0.$$
Moreover, denote $\overline{B}_{T}^N \bydef \begin{pmatrix}
    b_{1,1}& B_{1,2}^*\\
    B_{2,1}& B_{2,2}
\end{pmatrix}$ 
where $b_{1,1} \in \R$, $B_{2,2} = \pi^N B_{2,2}\pi^N$ and $B_{1,2}, B_{2,1} \in \ell^2$ are such that $B_{1,2} = \pi^N B_{1,2}$ and $B_{2,1} = \pi^N B_{2,1}$.

Let us denote $\mathcal{B}_{\om}(H_2)$ the following set
\begin{align*}
    \mathcal{B}_{\om}(H_2) \bydef \{\oB \in \mathcal{B}(H_2), ~ \oB = \overline{\mathbb{1}}_\om \oB \overline{\mathbb{1}}_\om \}
\end{align*}
where $\mathcal{B}(H_2)$ is the set of linear bounded operators on $H_2$. Moreover, define $H_{2,\om}$ as 
\[
H_{2,\om} \bydef \{x \in H_2, ~ x = \overline{\mathbb{1}}_\om x\}.
\]
Recall the definition of the following maps introduced in \cite{unbounded_domain_cadiot} $\overline{\gamma} : H_2 \to X_2$, $\overline{\gamma}^\dagger : X_2 \to H_2$, $\overline{\Gamma} : \mathcal{B}(H_2) \to \mathcal{B}(X_2)$ and $\overline{\Gamma}^\dagger : \mathcal{B}(X_2) \to \mathcal{B}(H_2)$
\begin{align*}
     \og \bydef \begin{pmatrix}
        1 & 0\\
        0 & \gamma
    \end{pmatrix}
    ~~ &\text{ and } ~~ \ogd \bydef \begin{pmatrix}
        1 & 0\\
        0 & \gamma^\dagger
    \end{pmatrix}\\
     \overline{\Gamma}\left(\overline{\mathbb{B}}\right) \bydef \og \overline{\mathbb{B}} \ogd
    ~~ &\text{ and } ~~ \overline{\Gamma}^\dagger\left(\overline{B}\right) \bydef \ogd \overline{B} \og
\end{align*}
for all $\overline{\mathbb{B}} \in \mathcal{B}(H_2)$ and $\overline{B} \in X_2$. 
Then, we recall Lemma 5.1 in \cite{unbounded_domain_cadiot}, which provides a one to one relationship between bounded linear operators on $X_2$ and the ones in $\mathcal{B}_{\om}(H_2)$.
 \begin{lemma}\label{prop : operator link L^2 extra equation}
 The map $\og : H_{2,\om} \to X_2$ (respectively $\overline{\Gamma} : \mathcal{B}_\om(H_2) \to \mathcal{B}(X_2)$) is an isometric 
    isomorphism whose inverse is $\ogd : X_2 \to H_{2,\om}$ 
    (respectively $\overline{\Gamma}^\dagger : \mathcal{B}(X_2) \to \mathcal{B}_\om(H_2)$). In particular,
    \begin{align*}
        \|(\nu,u)\|_{H_2} = \|(\nu,U)\|_{X_2} ~~ \text{ and } ~~ \|\overline{\B}_\om\|_{H_2} = \|\overline{B}\|_{X_2} 
    \end{align*}
    for all $(\nu,u) \in H_{2,\om}$ and all $\overline{\B}_\om \in \mathcal{B}_\om(H_2)$, where $(\nu,U) \bydef \og\left(\nu,u\right)$ and $\overline{B} \bydef \overline{\Gamma}\left(\overline{\B}_\om\right)$.
\end{lemma}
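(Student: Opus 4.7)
The plan is to reduce the claim to the scalar version established in Lemma \ref{lem : gamma and Gamma properties} by exploiting the block-diagonal structure of $\og$ and $\ogd$. Since the first component of $\og$ acts as the identity on $\R$ and the second acts as $\gamma$ on the function part, and since the norms on $H_2$ and $X_2$ are weighted Pythagorean combinations of the scalar and function norms, the isometry statement should follow immediately from Parseval, together with the weight $\sqrt{|\om|}$ built into the definitions of $\|\cdot\|_{H_2}$ and $\|\cdot\|_{X_2}$.

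First, I would verify the isomorphism $\og : H_{2,\om} \to X_2$. Given $(\nu, u) \in H_{2,\om}$, we have $u \in L^2_\om$ so $U \bydef \gamma(u) \in \ell^2$ is well defined by Lemma \ref{lem : gamma and Gamma properties}, and $\og(\nu,u) = (\nu, U) \in X_2$. Conversely, given $(\nu, U) \in X_2$, we have $\ogd(\nu, U) = (\nu, \gamma^\dagger(U)) \in H_{2,\om}$ since $\mathrm{supp}(\gamma^\dagger(U)) \subset \overline{\om}$. The identities $\og \ogd = I$ and $\ogd \og = I$ on $X_2$ and $H_{2,\om}$ respectively then follow from the corresponding identities for $\gamma$ and $\gamma^\dagger$ on the function component (noting again Lemma \ref{lem : gamma and Gamma properties}). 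The norm equality
\[
\|(\nu,u)\|_{H_2}^2 = |\nu|^2 + \|u\|_2^2 = |\nu|^2 + |\om|\|U\|_2^2 = \|(\nu,U)\|_{X_2}^2
\]
is immediate from the isometry $\sqrt{|\om|}\gamma : L^2_\om \to \ell^2$.

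For the operator statement, I would write an arbitrary $\oB_\om \in \mathcal{B}_\om(H_2)$ in block form
\[
\oB_\om = \begin{pmatrix} b_{1,1} & \langle \cdot, \phi \rangle \\ \psi & \mathbb{B}_{2,2}\cha \end{pmatrix}
\]
with $b_{1,1} \in \R$, $\phi, \psi \in L^2_\om$ and $\mathbb{B}_{2,2} \in \mathcal{B}_\om(L^2)$ (using that $\oB_\om = \overline{\mathbb{1}}_\om \oB_\om \overline{\mathbb{1}}_\om$ forces the function part to have support in $\overline{\om}$ on both the input and output sides). A direct computation then gives
\[
\overline{\Gamma}(\oB_\om) = \og \oB_\om \ogd = \begin{pmatrix} b_{1,1} & \langle \cdot, \gamma(\phi) \rangle_{X_2\text{-pairing}} \\ \gamma(\psi) & \Gamma(\mathbb{B}_{2,2}) \end{pmatrix},
\]
which lies in $\mathcal{B}(X_2)$ by Lemma \ref{lem : gamma and Gamma properties}, and the converse construction $\overline{\Gamma}^\dagger = \ogd \cdot \og$ lands back in $\mathcal{B}_\om(H_2)$ and is inverse to $\overline{\Gamma}$ by composition with the previously established isomorphism $\og$.

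The isometry of operator norms is then a consequence of the isometry on vectors: for any $x \in H_{2,\om}$ with $X \bydef \og x$,
\[
\|\oB_\om x\|_{H_2} = \|\og \oB_\om \ogd \og x\|_{X_2} = \|\overline{\Gamma}(\oB_\om) X\|_{X_2}
\]
since $\og \oB_\om x \in X_2$ has the same $X_2$ norm as $\oB_\om x \in H_{2,\om}$ has $H_2$ norm. Taking the supremum over $\|x\|_{H_2} = \|X\|_{X_2} = 1$ yields $\|\oB_\om\|_{H_2} = \|\overline{B}\|_{X_2}$. The only point requiring mild care is that $\oB_\om \in \mathcal{B}_\om(H_2)$ is uniquely determined by its action on $H_{2,\om}$ (its restriction to the orthogonal complement of $H_{2,\om}$ is zero by the $\overline{\mathbb{1}}_\om$-sandwich), so the supremum over $H_2$ equals the supremum over $H_{2,\om}$. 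There is no substantive obstacle here; this is bookkeeping that leverages Lemma \ref{lem : gamma and Gamma properties} blockwise.
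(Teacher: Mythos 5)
Your argument is correct. Note that the paper itself does not prove this statement: it is recalled as Lemma 5.1 of \cite{unbounded_domain_cadiot}, so there is no in-paper proof to compare against. Your route — reducing everything blockwise to Lemma \ref{lem : gamma and Gamma properties}, using that $\ogd\og=\overline{\mathbb{1}}_\om$ on $H_2$ and $\og\ogd=I_d$ on $X_2$, and observing that the sandwich condition $\oB_\om=\overline{\mathbb{1}}_\om\oB_\om\overline{\mathbb{1}}_\om$ makes the operator norm over $H_2$ coincide with the norm of the restriction to $H_{2,\om}$ — is exactly the natural extension of the scalar lemma to the augmented setting, and the $|\om|$ weight in $\|\cdot\|_{X_2}$ is what makes the vector isometry exact. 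Two cosmetic remarks: the explicit block decomposition of $\oB_\om$ is not actually needed for your final norm argument (which only uses the vector isometry and the sandwich property), and if you do keep it, the $(1,2)$ entry of $\og\oB_\om\ogd$ is the functional $U\mapsto |\om|\,(U,\gamma(\phi))_{\ell^2}$, i.e.\ the factor $|\om|$ from Parseval on $\om$ must be tracked when you call it the ``$X_2$-pairing''; since your operator-norm identity bypasses this computation, it does not affect the validity of the proof.
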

 Using the previous lemma, we define $\oB_{T,\om} \bydef \overline{\Gamma}^\dagger(\overline{B}_T) \in \mathcal{B}_\om(H_2)$  and we have
 \begin{align*}
    \oB_{T,\om} = \begin{pmatrix}
            b_{1,1}  & {b}_{1,2}^*\\
             {b}_{2,1} & \mathbb{B}_{2,2}
        \end{pmatrix},
 \end{align*}
 where $b_{1,2} = \gamma^\dagger(B_{1,2})$, $b_{2,1} = \gamma^\dagger(B_{2,1})$ and $\mathbb{B}_{2,2} = \Gamma^\dagger(B_{2,2}).$
 Finally, we define
  \begin{align}\label{def : def of B extra}
    \oB_T \bydef \begin{pmatrix}
     0&0\\
     0&\out
 \end{pmatrix} + \oB_{T,\om} : H_2 \to H_2
 \end{align}
 and $\oA_T \bydef \oL^{-1} \oB_{T} : H_2 \to H_1$, which approximates the inverse of $D\oF(x_0).$ 
Using the analysis derived in Section \ref{sec : computation of the bounds}, we have all the necessary results to apply Theorem 4.6 from \cite{unbounded_domain_cadiot} to \eqref{eq : f(u) = 0 on H1}.
\begin{theorem}\label{th : radii eigenvalue}
 Let $\sigma_0>0$ be defined in \eqref{eq : alpha assumption} and let $\mathbb{\Psi}_0 : \ell^2 \to \ell^2$ be the discrete convolution operator associated to $\Psi_0$ (cf. \eqref{def : discrete conv operator}). Moreover, let us define
    \begin{align*}
        C_{\lambda,T} \bydef \begin{cases}
            C_{2,T,c-\lambda} &\text{ if } T=0\\
            C_{2,T,c+\lambda} &\text{ if } T>0\\
        \end{cases}
    \end{align*}
    where $C_{2,T,c}$ is given in \eqref{eq : constant C2T}. Recall the sequence $E \in \ell^2_e$ defined in Lemma \ref{lem : lemma Zu}. Then, given $v \in H^4_{\om}(\R)$ and $\lambda \leq 0$, define $\mathcal{Z}_{u,\lambda}(v)$ as 
\begin{align}\label{def : Zulambda in lemma}
   \mathcal{Z}_{u,\lambda}(v) \bydef  \left(2dC_{\lambda,T}^2 {e^{-2ad}}\left(\gamma(v),E_{full}*\gamma(v)\right)_{\ell^2}\left( \frac{2}{a} + C_1(d)\right) + 8C_{\lambda,T}^2\ln(2)\int_{d-1}^d|v'|^2\right)^{\frac{1}{2}}.
\end{align}
    Moreover, let $\mathcal{Y}_u, \mathcal{Y}_0, \mathcal{Z}_u, {\mathcal{Z}}_1$ and ${\mathcal{Z}}_2$ be non-negative constants such that
  \begin{align}\label{eq: definition Y0 Z1 Z2 eigen}
  \nonumber
  \mathcal{Y}_{u} &\geq
       2d C_{\mathcal{Y}_0,T}  e^{-a_0d}\left(~\left( {\Lambda_{\nu}} \Psi_0,E_{0,full}*( {\Lambda_{\nu}} \Psi_0)\right)_{\ell^2} \left( 1 + (1+C(d))\|\overline{{B}}_T\|_{X_2}^2 \right)~\right)^{\frac{1}{2}}\\
    {\mathcal{Y}}_0 &\geq \sqrt{2d}  \left\|\overline{B}_T \overline{F}(\lambda_0, \Psi_0)\right\|_{X_2} + \mathcal{Y}_u + \frac{2r_{0}}{\sigma_0}\max\left\{\frac{\|B_{1,2}*\Psi_0\|_2}{\sqrt{2d}}, ~ \|B_{2,2}\|_2\|{\Psi}_0\|_1\right\}\\
       \mathcal{Z}_1 &\geq \left\|I_d - \overline{A}_T D\overline{F}({\lambda_0},\Psi_0)\right\|_{X_1} + \max\{2\mathcal{Z}_{u,\lambda_0}(u_0), \sqrt{2d}\mathcal{Z}_{u,\lambda_0}(\psi_0) \} \left\|\oB_T\right\|_{H_2}  +  \frac{r_{0} \left\|\oB_T\right\|_{H_2}}{4\sqrt{\nu}\sigma_0 (\sigma_0-\lambda_0)}\\\nonumber
    \mathcal{Z}_2 &\geq \left\|\oB_T\right\|_{H_2}
\end{align}  
If there exists $r>0$ such that
\begin{equation}\label{condition radii polynomial eig}
    \mathcal{Z}_2r^2 - (1-\mathcal{Z}_1)r + \mathcal{Y}_0 < 0,
 \end{equation}
then there exists an eigenpair $(\tilde{\lambda},\tilde{\psi})$ of $D\mathbb{F}(\tilde{u})$ in $ \overline{B_r(x_0)} \subset H_1$, where $x_0$ is defined in \eqref{def : x0}.   In particular,
if $1 - \mathcal{Z}_1 - \frac{r \|\overline{\mathbb{B}}_T\|_2}{(\sigma_0-\lambda_0)^2}>0$ and if $\lambda_0 + r < \lambda_{\max}$, where $\lambda_{\max}$ is defined in \eqref{def : lambda_max}, then defining $R>0$ as
\begin{align}
    R \bydef  \frac{(\sigma_0-\lambda_0)\left(1 - \mathcal{Z}_1 - \frac{r \|\overline{\mathbb{B}}_T\|_2}{(\sigma_0-\lambda_0)^2}\right)}{\|\overline{\mathbb{B}}_T\|_2},
\end{align}
we obtain that $\tilde{\lambda}$ is simple and it is the only eigenvalue of $D\mathbb{F}(\tilde{u})$ in $( \lambda_0-R,{\lambda}_0+R) \cap (\lambda_0-R,\lambda_{max})$.
\end{theorem}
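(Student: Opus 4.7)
The plan is to apply the abstract Newton-Kantorovich theorem, Theorem 4.6 of \cite{unbounded_domain_cadiot}, to the augmented map $\overline{\mathbb{F}} : H_1 \to H_2$ defined in \eqref{eq : f(u) = 0 on H1} with $\overline{\mathbb{A}}_T = \oL^{-1}\oB_T$ as approximate inverse of $D\overline{\mathbb{F}}(x_0)$. Since zeros of $\overline{\mathbb{F}}$ are exactly (normalized) eigencouples of $D\widetilde{\mathbb{F}}(\tilde{u})$, the existence of $(\tilde\lambda,\tilde\psi)$ reduces to verifying the $\mathcal{Y}_0$, $\mathcal{Z}_1$, $\mathcal{Z}_2$ bounds in \eqref{eq: definition Y0 Z1 Z2 eigen}. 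The key additional difficulty compared with the analysis of Section \ref{sec : computer-assisted approach} is that $\tilde{u}$ is not known explicitly and must be replaced by $u_0$ at the cost of a correction controlled by $r_0$ via the inclusion $\tilde u\in\overline{B_{r_0}(u_0)}$. The simplicity and isolation of $\tilde\lambda$ will then follow from a perturbation argument on $D\overline{\mathbb{F}}$ in the $\lambda$-variable.

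For $\mathcal{Y}_0$, since $\oL$ is an isometric isomorphism between $H_1$ and $H_2$, we have $\|\overline{\mathbb{A}}_T \overline{\mathbb{F}}(x_0)\|_{H_1} = \|\overline{\mathbb{B}}_T \overline{\mathbb{F}}(x_0)\|_{H_2}$. Writing $\overline{\mathbb{F}}(x_0) = \overline{\mathbb{F}}_0(x_0) + (\overline{\mathbb{F}}(x_0)-\overline{\mathbb{F}}_0(x_0))$, the second component is $(0, 2(\tilde u - u_0)\psi_0) \in H_2$. Since $\psi_0$ is supported in $\om$ and $\oB_T$ has the block structure $(b_{1,1}, b_{1,2}^*; b_{2,1}, \mathbb{B}_{2,2}) + (0,0;0,\out)$, applying $\oB_T$ to this correction splits into contributions through $B_{1,2}$ and $B_{2,2}$ paired with $\Psi_0$, bounded by Proposition \ref{prop : equivalence infinite norm} and the assumption $\|\tilde u - u_0\|_l \leq r_0$. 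This gives the term $\frac{2r_0}{\sigma_0}\max\{\|B_{1,2}*\Psi_0\|_2/\sqrt{2d},\|B_{2,2}\|_2\|\Psi_0\|_1\}$. For $\overline{\mathbb{B}}_T\overline{\mathbb{F}}_0(x_0)$, since $u_0,\psi_0$ are supported in $\om$, the only obstruction to a clean Fourier series representation is the leak of $\mathbb{M}_T\psi_0$ out of $\om$; this is estimated exactly as in Lemma \ref{lem : bound Y_0} using $C_{\mathcal{Y}_0,T}$ and yields $\mathcal{Y}_u$. The remaining compactly supported part transfers through $\og$ and Lemma \ref{prop : operator link L^2 extra equation} to $\sqrt{2d}\|\overline{B}_T\overline{F}(\lambda_0,\Psi_0)\|_{X_2}$.

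For $\mathcal{Z}_1$, split $I_d - \overline{\mathbb{A}}_T D\overline{\mathbb{F}}(x_0) = (I_d - \overline{\mathbb{A}}_T D\overline{\mathbb{F}}_0(x_0)) + \overline{\mathbb{A}}_T(D\overline{\mathbb{F}}_0(x_0)-D\overline{\mathbb{F}}(x_0))$. The first piece transfers to Fourier series, producing $\|I_d - \overline{A}_T D\overline{F}(\lambda_0,\Psi_0)\|_{X_1}$, plus periodization errors due to $\mathbb{M}_T$ acting on both $\psi_0$ and $u_0$ (the latter through the multiplication appearing in $D\widetilde{\mathbb{F}}_0$). These tail errors are bounded following Lemma \ref{lem : lemma Zu}, leading to the factor $\max\{2\mathcal{Z}_{u,\lambda_0}(u_0),\sqrt{2d}\mathcal{Z}_{u,\lambda_0}(\psi_0)\}\|\oB_T\|_{H_2}$. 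The second piece equals $\overline{\mathbb{A}}_T(0, -2(\tilde u - u_0)\cdot)$; its operator norm on $H_1$ is bounded by $\|\oB_T\|_{H_2}$ times $\|(\tilde u - u_0)\|_\infty$ times a factor coming from $H^l_{\lambda_0}\hookrightarrow L^2$, which by Proposition \ref{prop : equivalence infinite norm} (adapted to $\lambda_0$) and $\|\tilde u-u_0\|_l\leq r_0$ gives the last term $\frac{r_0\|\oB_T\|_{H_2}}{4\sqrt{\nu}\sigma_0(\sigma_0-\lambda_0)}$. The bound $\mathcal{Z}_2 \geq \|\oB_T\|_{H_2}$ is then immediate from the bilinear structure $D\overline{\mathbb{F}}(x)-D\overline{\mathbb{F}}(x_0):(\mu,v)\mapsto (0,-(\nu-\nu_0)v-\mu(u-u_0))$, whose norm is at most $\|x-x_0\|_{H_1}$ after using $\|v\|_2\leq \|v\|_{H^l_{\lambda_0}}/(\sigma_0-\lambda_0)$.

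The main obstacle is the combined treatment of the tail bound $\mathcal{Z}_{u,\lambda_0}$ — requiring Lemma \ref{lem : computation of f} with $c$ replaced by $c\pm\lambda_0$ and yielding the constant $C_{\lambda,T}$ — and the propagation of $r_0$ through the unknown $\tilde u$. Once the three bounds are established, Theorem 4.6 of \cite{unbounded_domain_cadiot} yields the eigenpair in $\overline{B_r(x_0)}$. For simplicity: the invertibility of $D\overline{\mathbb{F}}(\tilde\lambda,\tilde\psi)$ (which follows from the Newton–Kantorovich argument, as in Lemma \ref{lem : computation of Z1}) forces $\dim\ker(D\widetilde{\mathbb{F}}(\tilde u)-\tilde\lambda I_d)=1$, since any element of that kernel $L^2$-orthogonal to $\psi_0$ would generate a nontrivial kernel element $(0,w)$ of $D\overline{\mathbb{F}}(\tilde\lambda,\tilde\psi)$. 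Self-adjointness of $D\widetilde{\mathbb{F}}(\tilde u)$ on $L^2$ upgrades this to algebraic simplicity. For isolation in $(\lambda_0-R,\lambda_0+R)\cap(-\infty,\lambda_{\max})$, observe that for $|\lambda-\lambda_0|<R$, the operator $D\widetilde{\mathbb{F}}(\tilde u)-\lambda I_d$ differs from the one at $\lambda_0$ by $(\lambda_0-\lambda)I_d$; composing with $\oB_T$ and using $\|I_d - \oA_T D\overline{\mathbb{F}}(\lambda_0,\tilde u)\|_{H_1}\leq\mathcal{Z}_1 + \frac{r\|\oB_T\|_2}{(\sigma_0-\lambda_0)^2}$ (the second term accounting for $\tilde u$ replacing $u_0$ via the radius $r$), a Neumann series argument shows that $D\overline{\mathbb{F}}(\lambda,\tilde\psi)$ is invertible except at $\lambda=\tilde\lambda$, whence by Lemma \ref{lem : spectrum is eigenvalues} no eigenvalue of $D\widetilde{\mathbb{F}}(\tilde u)$ other than $\tilde\lambda$ lies in the interval. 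The exact form of $R$ is calibrated so that this Neumann series converges uniformly.
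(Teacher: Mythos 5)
Your existence part follows the paper's proof essentially step by step: apply Theorem 4.6 of \cite{unbounded_domain_cadiot} to the augmented map $\oF$ with $\oA_T=\oL^{-1}\oB_T$, split off the correction $2(u_0-\tilde u)\psi_0$ controlled by $r_0$ and Proposition \ref{prop : equivalence infinite norm} (giving the $\frac{2r_0}{\sigma_0}\max\{\cdot\}$ term in $\mathcal{Y}_0$ and the $\frac{r_0\|\oB_T\|_{H_2}}{4\sqrt\nu\sigma_0(\sigma_0-\lambda_0)}$ term in $\mathcal{Z}_1$), transfer the compactly supported parts to Fourier series via Lemma \ref{prop : operator link L^2 extra equation}, and bound the periodization tails with the shifted constants $C_{\lambda_0,T}$ as in Lemmas \ref{lem : bound Y_0} and \ref{lem : lemma Zu}; the $\mathcal{Z}_2$ bound is the same direct estimate. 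Your simplicity argument (invertibility of $D\oF(\tilde\lambda,\tilde\psi)$, plus the observation that a kernel of $D\widetilde{\mathbb{F}}(\tilde u)-\tilde\lambda I_d$ of dimension $\ge 2$ contains a nonzero $w$ with $(w,\psi_0)_2=0$, so $(0,w)$ would lie in the kernel of the augmented linearization) is a legitimate variant; the paper instead extracts simplicity and isolation simultaneously from one quantitative estimate.

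The isolation claim is where you have a genuine gap. You argue that $D\oF(\lambda,\tilde\psi)$ is invertible for $\lambda\neq\tilde\lambda$ in the interval, ``whence'' no other eigenvalue exists there. That implication does not hold: if $\mu\neq\tilde\lambda$ is an eigenvalue of $D\widetilde{\mathbb{F}}(\tilde u)$ with eigenvector $w$, self-adjointness gives $(w,\tilde\psi)_2=0$, \emph{not} $(w,\psi_0)_2=0$, and the first row of $D\oF(\lambda,\cdot)$ is always $-(\cdot,\psi_0)_2$. Hence $D\oF(\mu,\tilde\psi)(0,w)=\bigl(-(w,\psi_0)_2,\,0\bigr)$ need not vanish, so invertibility of the augmented operator at $\lambda=\mu$ produces no contradiction; mere invertibility away from $\tilde\lambda$ says nothing about the spectrum of $D\widetilde{\mathbb{F}}(\tilde u)$. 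The paper closes exactly this hole by replacing the first row with $-\tilde\psi^*$, i.e.\ working with $\mathcal{A}\bydef\begin{pmatrix}0&-\tilde\psi^*\\-\tilde\psi& D\widetilde{\mathbb{F}}(\tilde u)-\lambda_0 I_d\end{pmatrix}$, proving the perturbation bound $\|\mathcal{A}-D\oF(\tilde\lambda,\tilde\psi)\|_{H_1,H_2}\le \frac{r}{(\sigma_0-\lambda_0)^2}$, deducing the quantitative lower bound $\|\mathcal{A}(\nu,u)\|_{H_2}\ge\frac{R}{\sigma_0-\lambda_0}\|(\nu,u)\|_{H_1}$ by a Neumann series, and only then evaluating at $(0,w)$ (for which the first component now vanishes, since $w\perp\tilde\psi$) together with $\|(0,w)\|_{H_1}\ge(\sigma_0-\lambda_0)\|w\|_2$ to get $|\mu-\lambda_0|\ge R$; this is also where the explicit value of $R$ comes from, which your sketch leaves uncalibrated. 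A minor related slip: you attribute the extra term $\frac{r\|\oB_T\|_2}{(\sigma_0-\lambda_0)^2}$ to ``$\tilde u$ replacing $u_0$'', but that replacement is already inside $\mathcal{Z}_1$ (through $r_0$); the extra term accounts for replacing the numerical pair $(\lambda_0,\psi_0)$ by the proven pair $(\tilde\lambda,\tilde\psi)$ within the radius $r$. With the argument re-centred on $\tilde\psi$ and made quantitative as above, your proof would match the statement.
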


\begin{proof}
    Our goal is to apply Theorem 4.6 from \cite{unbounded_domain_cadiot} to \eqref{eq : f(u) = 0 on H1}. Recall that $\oA_T \bydef \oL^{-1} \oB_T : H_2 \to H_1$, which is a bounded linear operator. In particular, we need to compute upper bounds $\mathcal{Y}_0$, $\mathcal{Z}_1$ and $\mathcal{Z}_2$ such that 
    \begin{align*}
        \|\oA \oF(x_0)\|_{H_1} &\leq  \mathcal{Y}_0\\
        \|I_d - \oA D\oF(x_0)\|_{H_1} &\leq  \mathcal{Z}_1\\
        \|\oA \left(D\oF(x_0) - D\oF(x)\right)\|_{H_1}& \leq  \mathcal{Z}_2 \|x_0-x\|_{H_1}
    \end{align*}
    for all $x \in H_1.$ First, using \eqref{eq : min of m}, notice that $\|u\|_2 \leq \frac{1}{\sigma_0}\|u\|_{\mathcal{H}}$ for all $u \in \mathcal{H}.$ Then, using Lemma 5.6 in \cite{unbounded_domain_cadiot} and $\oB_T = \oL \oA_T$, we have
    \begin{align*}
       \|\oA_T \oF(x_0)\|_{H_1}  &\leq \|\oA_T \oF_0(x_0)\|_{H_1} + \left\|\oA_T\begin{pmatrix}
       0\\
           D\mathbb{G}(u_0)\psi_0 - D\mathbb{G}(\tilde{u})\psi_0
       \end{pmatrix}\right\|_{H_1}\\
       &\leq \|\oB_T \oF_0(x_0)\|_{H_2} + 2\left\|\begin{pmatrix}
           0 & (b_{1,2}\psi_0)^*\\
           0 & \mathbb{B}_{2,2}\mathbb{\psi}_0 
       \end{pmatrix}\right\|_{H_2}
       \|u_0-\tilde{u}\|_2\\
       & \leq \|\oB_T \oF_0(x_0)\|_{H_2} + \frac{2r_{0}}{\sigma_0}\left\|\begin{pmatrix}
           0 & (b_{1,2}\psi_0)^*\\
           0 & \mathbb{B}_{2,2}\mathbb{\psi}_0 
       \end{pmatrix}\right\|_{H_2}
    \end{align*}
    where we used that $\|u\|_2 \leq \frac{1}{\sigma_0}\|\mathbb{L}u\|_2$ and where we abuse notation in the above by considering $\psi_0$ as its associated multiplication operator (cf. \eqref{eq : multiplication operator L2}).
    Now, recall that $b_{1,2} = \gamma^\dagger(B_{12})$ and $\psi_0 = \gamma^{\dagger}(\Psi_0)$. This implies that $B_{1,2}*\Psi_0 = \gamma(b_{12}\psi_0)$ and, using Lemma \ref{prop : operator link L^2 extra equation}, we get 
    \begin{align*}
        \left\|\begin{pmatrix}
           0 & (b_{1,2}\psi_0)^*\\
           0 & \mathbb{B}_{2,2}\mathbb{\psi}_0 
       \end{pmatrix}\right\|_{H_2} = \left\|\begin{pmatrix}
           0 & ({B}_{1,2}*\Psi_0)^*\\
           0 & B_{2,2}\mathbb{\Psi}_0 
       \end{pmatrix}\right\|_{X_2}.
    \end{align*}
    Now, given $U \in \ell^2$, we have 
    \begin{align*}
        |(B_{1,2}*\Psi_0,U)_2| \leq \|B_{1,2}*\Psi_0\|_2\|U\|_2
    \end{align*}
    by Cauchy-Schwarz inequality. Moreover,
    \begin{align}
        \|B_{2,2}\mathbb{\Psi}_0U\|_2 \leq \|B_{2,2}\|_2\|{\Psi}_0*U\|_2  \leq \|B_{2,2}\|_2\|{\Psi}_0\|_1\|U\|_2 
    \end{align}
    where we used \eqref{eq : youngs inequality} for the last step. This implies that 
    \begin{align*}
         \left\|\begin{pmatrix}
           0 & ({B}_{1,2}*\Psi_0)^*\\
           0 & B_{2,2}\mathbb{\Psi}_0 
       \end{pmatrix}\right\|_{X_2} \leq \max\left\{\frac{\|B_{1,2}*\Psi_0\|_2}{\sqrt{2d}}, ~ \|B_{2,2}\|_2\|{\Psi}_0\|_1\right\}.
    \end{align*}
    Then, notice that 
    \begin{align*}
        \oF_0(x_0) = \begin{pmatrix}
            0\\
            {\mathbb{L}} \psi_0 +  D{\mathbb{G}}(u_0) \psi_0 - \lambda_0 \psi_0
        \end{pmatrix} = \begin{pmatrix}
            0\\
            \Gamma^\dagger(\tilde{L}) \psi_0 +  D{\mathbb{G}}(u_0) \psi_0 - \lambda_0 \psi_0
        \end{pmatrix} + \begin{pmatrix}
            0\\
           {\mathbb{L}}\psi_0 - \Gamma^\dagger(\tilde{L}) \psi_0
        \end{pmatrix}.
    \end{align*}
    Now, observing that 
    \begin{align*}
        \left\| {\mathbb{L}}\psi_0 - \Gamma^\dagger(\tilde{L}) \psi_0\right\|_2 = \left\| \left({\mathbb{L}} {\mathbb{\Lambda}_{\nu}}^{-1} - \Gamma^\dagger(\tilde{L}) {\mathbb{\Lambda}_{\nu}}^{-1}\right)  {\mathbb{\Lambda}_{\nu}}\psi_0\right\|_2
    \end{align*}
    and using the proof of Lemma \ref{sec : Y0 bound}, we obtain 
    \begin{align*}
        \left\|\overline{\mathbb{B}_T} \begin{pmatrix}
            0\\
           {\mathbb{L}}\psi_0 - \Gamma^\dagger(\tilde{L}) \psi_0
        \end{pmatrix}\right\|_2 \leq \mathcal{Y}_u.
    \end{align*}
    Moreover, since $\psi_0 = \gamma^\dagger\left(\Psi_0\right) \in H^4_{\om}(\R)$, we obtain
    \[
    \Gamma^\dagger(\tilde{L}) \psi_0 +  D{\mathbb{G}}(u_0) \psi_0 - \lambda_0 \psi_0 = \gamma^\dagger\left(\tilde{L} \psi_0 D{\mathbb{G}}(U_0) \Psi_0 - \lambda_0 \Psi_0 \right).  
    \]
    Consequently, using Lemma \ref{lem : gamma and Gamma properties}, we get
    \begin{align*}
        \|\oB_T \oF_0(x_0)\|_{H_2} 
            \leq \|\overline{B} \overline{F}(\lambda_0, \Psi_0)\|_{X_2} + \mathcal{Y}_u.
    \end{align*}
    This proves that $\mathcal{Y}_0 \geq \|\oA_T \oF(x_0)\|_{H_1}$. Now, let $x = (\nu,u) \in H_1$, we have
    \begin{align*}
        \left\|\oA_T \left(D\oF(x_0) - D\oF(x)\right)\right\|_{H_1} = \left\|\oA_T \begin{pmatrix}
            0 &0\\
            u-\psi_0  & (\nu-\lambda_0)I_d 
        \end{pmatrix} \right\|_{H_1} &\leq \|\oA_T\|_{H_2,H_1} \|x-x_0\|_{H_1} \\
        &= \|\oB_T\|_{H_2} \|x-x_0\|_{H_1}.
    \end{align*}
This proves that $\mathcal{Z}_2 \|x-x_0\|_{H_1} \geq  \|\oA_T \left(D\oF(x_0) - D\oF(x)\right)\|_{H_2}$ for all $x \in H_1.$ Finally, we consider the bound $\mathcal{Z}_1$. Using Lemma 5.6 in \cite{unbounded_domain_cadiot}, we have
\begin{align*}
    \|I_d - \oA_T D\oF(x_0)\|_{H_1}  &\leq \|I_d - \oA_T D\oF_0(x_0)\|_{H_1} + \|\oB_T\|_{H_2} \left\|\left(D{\mathbb{G}}(u_0) - D{\mathbb{G}}(\tilde{u})\right)\mathbb{L}_{\lambda_0}^{-1}\right\|_{2} \\
    & \leq \|I_d - \oA_T D\oF_0(x_0)\|_{H_1} + \frac{1}{\sigma_0-\lambda_0}\|\oB_T\|_{H_2} \|u_0 - \tilde{u}\|_\infty\\
    &\leq \|I_d - \oA_T D\oF_0(x_0)\|_{H_1} +  \frac{r_{0}}{4\sqrt{\nu}\sigma_0 (\sigma_0-\lambda_0)}\|\oB_T\|_{H_2},
\end{align*}
where we used Proposition \ref{prop : equivalence infinite norm}  for the last step. Now, notice that
\begin{align*}
    D\overline{\mathbb{F}}_0(x_0) = \begin{pmatrix}
        0 & - \psi_0^*\\
        -\psi_0 & D\mathbb{F}(u_0) - \lambda_0 I_d
    \end{pmatrix}.
\end{align*}
Using a similar reasoning as the one used in Theorem 5.2 in \cite{unbounded_domain_cadiot}, we get
\begin{align*}
    \|I_d - \oA_T D\oF_0(x_0)\|_{H_1} \leq  \|I_d - \overline{A} D\overline{F}(\lambda_0,\Psi_0)\|_{X_1} + \left\|\begin{pmatrix}
        \left({\mathbb{L}}_{\lambda_0}^{-1}\psi_0 - \Gamma^\dagger\left({L}_{\lambda_0}^{-1}\right)\psi_0\right)^*\\
        2\left(\mathbb{L}_{\lambda_0}^{-1} - \Gamma^\dagger\left({L}_{\lambda_0}^{-1}\right)\right)\mathbb{u}_0
    \end{pmatrix}\right\|_2 \|\oB_T\|_{H_2} .
\end{align*}
Then, focusing on the second term of the right-hand side of the above inequality we get
\begin{align*}
    \left\|\begin{pmatrix}
        \left({\mathbb{L}}_{\lambda_0}^{-1}\psi_0 - \Gamma^\dagger\left({L}_{\lambda_0}^{-1}\right)\psi_0\right)^*\\
        2\left(\mathbb{L}_{\lambda_0}^{-1} - \Gamma^\dagger\left({L}_{\lambda_0}^{-1}\right)\right)\mathbb{u}_0
    \end{pmatrix}\right\|_2 \leq \max\left\{\|{\mathbb{L}}_{\lambda_0}^{-1}\psi_0 - \Gamma^\dagger\left({L}_{\lambda_0}^{-1}\right)\psi_0\|_2, ~ 2\|\left(\mathbb{L}_{\lambda_0}^{-1} - \Gamma^\dagger\left({L}_{\lambda_0}^{-1}\right)\right)\mathbb{u}_0\|_2\right\}.
\end{align*}
But now, notice that $2\left\|\left({\mathbb{L}}_{\lambda_0}^{-1} - \Gamma^\dagger\left({L}_{\lambda_0}^{-1}\right)\right)\mathbb{u}_0\right\|_2$ has already been investigated in Lemmas \ref{lem : computation of Z1} and \ref{lem : lemma Zu} in the case $\lambda_0 = 0$ and corresponds to the quantity $\left(\left(\mathcal{Z}_{u,1}^{(2)}\right)^2 + \left(\mathcal{Z}_{u,2}^{(2)}\right)^2\right)^{\frac{1}{2}}$. If $T>0$, we also have $\mathbb{L}_{\lambda_0} = \mathbb{M}_T - c I_d - \lambda_0 I_d = \mathbb{M}_T - (c+\lambda_0) I_d$. Moreover, since $\lambda_0 \leq 0$, $ \xi \to \frac{1}{m_T(\xi) - (c+\lambda_0)}$ has a bigger domain of analyticity than $\frac{1}{l}$. Consequently, Lemmas \ref{prop : analyticity and value of a},  \ref{lem : computation of Z1} and \ref{lem : lemma Zu} are applicable if we replace $c$ by $c+\lambda_0$. The same reasoning applies in the case $T=0$ when replacing $c$ by $c-\lambda_0$.\\
 Recalling that $v_2 = 2u_0$ by definition, we obtain that
\[
2\left\|\left({\mathbb{L}}^{-1}_{\lambda_0} - \Gamma^\dagger\left({L}^{-1}_{\lambda_0}\right)\right)\mathbb{u}_0\right\|_2 \leq 2\mathcal{Z}_{u,\lambda_0}(u_0).
\]
Now, we study the term $\|{\mathbb{L}}_{\lambda_0}^{-1}\psi_0 - \Gamma^\dagger\left({L}_{\lambda_0}^{-1}\right)\psi_0\|_2$. Denoting $u = \cha$, notice that we have 
\begin{align*}
    \|{\mathbb{L}}_{\lambda_0}^{-1}\psi_0 - \Gamma^\dagger\left({L}_{\lambda_0}^{-1}\right)\psi_0\|_2 = \|\left({\mathbb{L}}_{\lambda_0}^{-1} - \Gamma^\dagger\left({L}_{\lambda_0}^{-1}\right)\right)\psi_0 u \|_2 &\leq \|\left({\mathbb{L}}_{\lambda_0}^{-1} - \Gamma^\dagger\left({L}_{\lambda_0}^{-1}\right)\right)\psi_{0,op}\|_2 \|\cha\|_2\\
    &=  \sqrt{2d}\|\left({\mathbb{L}}_{\lambda_0}^{-1} - \Gamma^\dagger\left({L}_{\lambda_0}^{-1}\right)\right)\psi_{0,op}\|_2 
\end{align*}
where $\psi_{0,op}$ is the multiplication operator associated to $\psi_0.$ Therefore, using a similar reasoning as for the term $\left\|\left({\mathbb{L}}^{-1}_{\lambda_0} - \Gamma^\dagger\left({L}^{-1}_{\lambda_0}\right)\right)\mathbb{u}_0\right\|_2$,  this implies that 
\begin{align*}
    \|{\mathbb{L}}_{\lambda_0}^{-1}\psi_0 - \Gamma^\dagger\left({L}_{\lambda_0}^{-1}\right)\psi_0\|_2  \leq \sqrt{2d} \mathcal{Z}_{u,\lambda_0}(\psi_0).
\end{align*}
Consequently, if \eqref{condition radii polynomial eig} is satisfied for some $r>0$, then Theorem 4.6 from \cite{unbounded_domain_cadiot} is applicable to \eqref{eq : f(u) = 0 on H1}.
 In particular, there exists an eigenpair $(\tilde{\lambda},\tilde{\psi})$ of $D\mathbb{F}(\tilde{u})$ in $ \overline{B_r(x_0)} \subset H_1$.

 Now, assume that $1 - \mathcal{Z}_1 - \frac{r \|\overline{\mathbb{B}}_T\|_2}{(\sigma_0-\lambda_0)^2}>0$ and $\lambda_0 +r < \lambda_{\max}$. In particular, this implies that 
 \[
\tilde{\lambda} \leq \lambda_0 + r < \lambda_{\max}
 \]
 since $(\tilde{\lambda},\tilde{\psi}) \in \overline{B_r(x_0)} \subset H_1$.
 We prove that $\tilde{\lambda}$ is simple and that it is the only eigenvalue of $D\mathbb{F}(\tilde{u})$ in $( \lambda_0-R,{\lambda}_0+R) \cap (\lambda_0-R,\lambda_{\max})$. Using Lemma \ref{lem : spectrum is eigenvalues}, we know that  spectrum of $D\mathbb{F}(\tilde{u})$ below $\lambda_{\max}$ consists of eigenvalues of finite-multiplicity. Moreover, since  $D\mathbb{F}(\tilde{u})$ is self-adjoint in $L^2$, if $\tilde{\lambda}$ is not simple or if there exists another eigenvalue in $( \lambda_0-R,{\lambda}_0+R) \cap (\lambda_0-R,\lambda_{\max})$, then it implies that there exists $\mu \in ( \lambda_0-R,{\lambda}_0+R) \cap (\lambda_0-R,\lambda_{\max})$ and $w \in \mathcal{H}_{\lambda_0}, w \neq 0$ such that 
 \[
D\mathbb{F}(\tilde{u}) w = \mu w.
 \]
 In particular, 
 \begin{equation}\label{eq : orthogonality cdtt}
     (w,\tilde{\psi})_2 = 0
 \end{equation}
 since $D\mathbb{F}(\tilde{u})$ is self-adjoint in $L^2$.
 Let $\mathcal{A}$ be given by
 \begin{align}
     \mathcal{A} \bydef \begin{pmatrix}
         0 & -\tilde{\psi}^*\\
        -\tilde{\psi} &  D\mathbb{F}(\tilde{u}) - {\lambda_0}I_d
     \end{pmatrix}.
 \end{align}
 Then,
 \begin{align*}
    \left\| \mathcal{A}\begin{pmatrix}
         \nu\\
         u
     \end{pmatrix} - D\overline{\mathbb{F}}(\tilde{\lambda},\tilde{\psi})\begin{pmatrix}
         \nu\\
         u
     \end{pmatrix}\right\|_{H_2} = \left\|\begin{pmatrix}
         (u,\psi_0-\tilde{\psi})_2\\
        (\tilde{\lambda}-\lambda_0)u
     \end{pmatrix}\right\|_{H_2} \leq \|u\|_2 \max\left\{\|\psi_0 - \tilde{\psi}\|_2, ~ |\lambda_0-\tilde{\lambda}|\right\}
 \end{align*}
for all $(\nu,u) \in H_1$. Now, using that $\|u\|_2 \leq \frac{1}{\sigma_0 - \lambda_0}\|u\|_{\mathcal{H}_{\lambda_0}} \leq \frac{1}{\sigma_0 - \lambda_0}\|(\nu,u)\|_{H_1}$, we obtain that
\begin{align*}
     \left\| \mathcal{A} - D\overline{\mathbb{F}}(\tilde{\lambda},\tilde{\psi})\right\|_{H_1,H_2} \leq \frac{r}{(\sigma_0-\lambda_0)^2}
\end{align*}
since $\|(\lambda_0,\psi_0)-(\tilde{\lambda},\tilde{\psi})\|_{H_1} \leq r$. In particular, using a Neumann series argument on $I_d - \overline{\mathbb{A}} \mathcal{A}$, since $\frac{r\|\overline{\mathbb{B}}_T\|_2}{(\sigma_0-\lambda_0)^2} + \mathcal{Z}_1 <1$, we obtain that $\mathcal{A} : H_1 \to H_2$ has a bounded inverse and
\begin{align}
    \|\mathcal{A}^{-1}\|_{H_2,H_1}\leq \frac{\|\overline{\mathbb{B}}_T\|_2}{1 - \mathcal{Z}_1 - \frac{r \|\overline{\mathbb{B}}_T\|_2}{(\sigma_0-\lambda_0)^2}} \bydef \frac{\sigma_0-\lambda_0}{R}.
\end{align}
 Now, notice that this implies that 
 \begin{align}
     \left\|\mathcal{A}\begin{pmatrix}
         \nu\\
         u
     \end{pmatrix}\right\|_{H_2} \geq \frac{R}{\sigma_0-\lambda_0} \|(\nu,u)\|_{H_1}
 \end{align}
 for all $(\nu,u) \in H_1.$ In particular, using \eqref{eq : orthogonality cdtt} and the above inequality, we get
 \begin{align*}
     \left\|\mathcal{A}\begin{pmatrix}
         0\\
         w
     \end{pmatrix}\right\|_{H_2} = \left\|\begin{pmatrix}
         0\\
         (\mu-{\lambda}_0)w
     \end{pmatrix}\right\|_{H_2} = |\mu-{\lambda}_0| \|w\|_2  \geq  \frac{R}{\sigma_0-\lambda_0} \|(0,w)\|_{H_1} \geq R \|(0,w)\|_{H_2}. 
 \end{align*}
 This implies that $\tilde{\lambda}$ is simple and it is the unique eigenvalue of $D\mathbb{F}(\tilde{u})$ in $( \lambda_0-R,{\lambda}_0+R) \cap (\lambda_0-R,\lambda_{max}).$
\end{proof}

\begin{remark}\label{rem : eigenvalue zero}
    It is well-known that zero is an eigenvalue of $D\mathbb{F}(\tilde{u})$ associated to the eigenvector $\tilde{u}'$. Now, if using Theorem \ref{th : radii eigenvalue}, one can prove that there exists a simple eigenvalue $\tilde{\lambda}_0$ of $D\mathbb{F}(\tilde{u})$ such that $\tilde{\lambda}_0$ is the only eigenvalue  in $(-R_0,0]$ for some $R_0>0$, then this implies that $\tilde{\lambda}_0 = 0$ by uniqueness. Therefore, Theorem \ref{th : radii eigenvalue} allows to prove that zero is a simple eigenvalue of $D\mathbb{F}(\tilde{u})$.
\end{remark}

In practice, Theorem \ref{th : radii eigenvalue} allows to verify that (P1) and (P3) from Lemma \ref{lem : result on eigenvalues} are satisfied. In other terms, we can prove that $D\mathbb{F}(\tilde{u})$ has a simple negative eigenvalue $\lambda^{-}$ and that zero is a simple eigenvalue of $D\mathbb{F}(\tilde{u})$. 

Consequently, we now assume that (using Theorem \ref{th : radii eigenvalue}) we were able to prove that there exists $\lambda_0^- <0$ and $R_1>0$ such that 
\begin{align*}
    \lambda^{-} \in (\lambda_0^- - R_1, \lambda_0^- + R_1)
\end{align*}
and $\lambda^{-}$ is the only eigenvalue of $D\mathbb{F}(\tilde{u})$ in $(\lambda_0^- - R_1, \lambda_0^- + R_1)$. Moreover, $\lambda^-$ is simple. Similarly, using Remark \ref{rem : eigenvalue zero}, suppose that we were able to prove the existence of $R_0 >0$ such that zero is the only eigenvalue of $D\mathbb{F}(\tilde{u})$ in $(-R_0,0]$. 

Our goal is to set-up a strategy to prove that $D\mathbb{F}(\tilde{u})$ has no negative eigenvalue other than $\lambda^{-}$, that is, prove the remaining condition (P2) of Lemma \ref{lem : result on eigenvalues}. 
We first state Lemma \ref{lem : minimal value eig}, which provides a lower bound for the spectrum of $D\mathbb{F}(\tilde{u}).$

 \begin{lemma}\label{lem : minimal value eig}
    Let $\sigma_0$ be given in \eqref{eq : alpha assumption} and let $\lambda_{min}$ be defined as
     \begin{align}\label{def : lambda_min}
         \lambda_{min} \bydef \sigma_0  - 2\|u_0\|_1 - \frac{r_{0}}{4\sqrt{\nu}\sigma_0}.
     \end{align}
If $D\mathbb{F}(\tilde{u}) - \lambda I_d$ is injective for all $\lambda \in [\lambda_{min}, \lambda_0^--R_1]\bigcup [\lambda_0^-+R_1,-R_{0}]$, then $\lambda^{-}$ is the only negative eigenvalue of $D\mathbb{F}(\tilde{u})$.
 \end{lemma}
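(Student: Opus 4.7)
The plan is to cover the negative half-line $(-\infty, 0)$ by five disjoint subsets on each of which either $D\widetilde{\mathbb{F}}(\tilde{u}) - \lambda I_d$ is injective, or $\lambda^-$ is already known to be the unique eigenvalue of $D\widetilde{\mathbb{F}}(\tilde{u})$. I would partition $(-\infty,0)$ into $(-\infty, \lambda_{min})$, $[\lambda_{min}, \lambda_0^- - R_1]$, $(\lambda_0^- - R_1, \lambda_0^- + R_1)$, $[\lambda_0^- + R_1, -R_0]$ and $(-R_0, 0)$. Injectivity on the second and fourth intervals is exactly the standing hypothesis of the lemma; on the third, the preceding application of Theorem \ref{th : radii eigenvalue} already yields that $\lambda^-$ is the unique eigenvalue of $D\widetilde{\mathbb{F}}(\tilde{u})$ there; and on the fifth, the construction of $R_0$ together with Remark \ref{rem : eigenvalue zero} ensures that $0$ is the only eigenvalue in $(-R_0,0]$, so $D\widetilde{\mathbb{F}}(\tilde{u})$ has no negative eigenvalue in $(-R_0,0)$.

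The only interval that requires a genuine argument is $(-\infty, \lambda_{min})$, where I plan to show positivity of the quadratic form. For any $v \in H^l \setminus \{0\}$ and any $\lambda < \lambda_{min}$,
\begin{align*}
((D\widetilde{\mathbb{F}}(\tilde{u}) - \lambda I_d) v, v)_2 &\geq (\widetilde{\mathbb{L}} v, v)_2 - 2|(\tilde{u}v,v)_2| - \lambda \|v\|_2^2 \\
&\geq (\sigma_0 - 2\|\tilde{u}\|_\infty - \lambda)\|v\|_2^2,
\end{align*}
where the first inequality absorbs the sign difference between the $T=0$ and $T>0$ cases of \eqref{def : new operators}, and the second uses that $\widetilde{\mathbb{L}}$ is a Fourier multiplier whose symbol is bounded below by $\sigma_0$ (Proposition \ref{prop : analyticity and value of a}) together with Plancherel's identity. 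The pointwise bound $\|\tilde{u}\|_\infty \leq \|u_0\|_\infty + \|\tilde{u}-u_0\|_\infty \leq \|U_0\|_1 + \frac{r_0}{4\sqrt{\nu}\sigma_0}$ then follows from the cosine-series representation $u_0 = \gamma_e^\dagger(U_0)$ and Proposition \ref{prop : equivalence infinite norm}, giving $((D\widetilde{\mathbb{F}}(\tilde{u}) - \lambda I_d) v, v)_2 \geq (\lambda_{min}-\lambda)\|v\|_2^2 > 0$, hence injectivity of $D\widetilde{\mathbb{F}}(\tilde{u}) - \lambda I_d$.

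Finally, since $D\widetilde{\mathbb{F}}(\tilde{u})$ is self-adjoint on $L^2$, trivial kernel is equivalent to $\lambda$ not being an eigenvalue, so gluing the five cases yields that $\lambda^-$ is the only negative eigenvalue of $D\widetilde{\mathbb{F}}(\tilde{u})$. I do not anticipate any substantive obstacle: the argument is just a routine coercivity estimate on top of the standing injectivity hypothesis. The only bookkeeping point is to use the same spectral lower bound $\sigma_0$ on $\widetilde{\mathbb{L}}$ in both the $T=0$ and $T>0$ cases, which is valid by Proposition \ref{prop : analyticity and value of a}.
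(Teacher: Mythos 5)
Your proof is correct and follows essentially the same route as the paper: the paper likewise shows via the quadratic-form estimate $(D\widetilde{\mathbb{F}}(\tilde{u})u,u)_2 \geq \bigl(\sigma_0 - 2\|U_0\|_1 - \tfrac{r_0}{4\sqrt{\nu}\sigma_0}\bigr)\|u\|_2^2$ (using Proposition \ref{prop : equivalence infinite norm} and $\|u_0\|_\infty \leq \|U_0\|_1$) that no eigenvalue can lie below $\lambda_{min}$, and then combines this with the injectivity hypothesis and the standing uniqueness statements on $(\lambda_0^- - R_1,\lambda_0^- + R_1)$ and $(-R_0,0]$. Your explicit partition of $(-\infty,0)$ is just a more detailed bookkeeping of the same argument.
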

 \begin{proof}
     Suppose that there exists $\lambda<0$ and $u \in L^2$ such that  $D\mathbb{F}(\tilde{u})u = \lambda u$, then 
\begin{align*}
    \lambda \|u\|_2 =(D\mathbb{F}(\tilde{u})u,u)_2 &\geq \min_{\xi \in \R}|m_{T}(\xi)-c| \|u\|_2 - 2\|\tilde{u}u\|_2 \\
    &\geq  \sigma_0 \|u\|_2 - \left(2\|u_0\|_\infty + \frac{r_{0}}{4\sqrt{\nu}\min_{\xi \in \R}|m_{T}(\xi)-c|}\right)\|u\|_2\\
    &\geq  \sigma_0 \|u\|_2 - \left(2\|U_0\|_1 + \frac{r_{0}}{4\sqrt{\nu}\sigma_0}\right)\|u\|_2
\end{align*}
using  Proposition \ref{prop : equivalence infinite norm}. 
Therefore, $\lambda \geq \displaystyle \lambda_{min}$. Moreover, we conclude the proof using that $0$ and $\lambda^-$ are the only eigenvalues in $(-R_0,0]$ and $(\lambda_0^--R_1,\lambda_0^-+R_1)$ by assumption. 
 \end{proof}

Using the previous lemma, we need to prove that $D\mathbb{F}(\tilde{u}) - \lambda I_d$ is injective for all $\lambda \in [\lambda_{min}, \lambda_0^--R_1]\bigcup [\lambda_0^-+R_1,-R_{0}]$, where $\lambda_{min}$ is given in \eqref{def : lambda_min}. Now, the next lemma provides the injectivity of $D\mathbb{F}(\tilde{u}) - \lambda I_d$ for a range of values of $\lambda$, provided that $D\mathbb{F}(\tilde{u}) - \lambda^* I_d$ is injective for some fixed $\lambda^*.$

\begin{lemma}\label{lem : neumann series}
    Let $\lambda^*  \leq 0$ and suppose that there exists $\mathcal{C}>0$ such that  $\|D\mathbb{F}(\tilde{u})u - \lambda^* u\|_{2}  \geq \mathcal{C}\|u\|_2$ for all $u \in \mathcal{H}$. Then $D\mathbb{F}(\tilde{u}) - \lambda I_d$  is injective for all $\lambda \in (\lambda^*-\mathcal{C},\lambda^*+\mathcal{C})$.
\end{lemma}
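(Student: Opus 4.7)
The plan is a direct perturbation argument of Neumann-series type, exploiting the lower bound on $D\widetilde{\mathbb{F}}(\tilde{u})-\lambda^* I_d$ as a resolvent-style estimate.

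First, I would fix $\lambda \in (\lambda^*-\mathcal{C},\lambda^*+\mathcal{C})$ and let $u \in H^l$ satisfy $\bigl(D\widetilde{\mathbb{F}}(\tilde{u})-\lambda I_d\bigr)u = 0$. I would then rewrite
\begin{align*}
    D\widetilde{\mathbb{F}}(\tilde{u})u - \lambda^* u = \bigl(D\widetilde{\mathbb{F}}(\tilde{u})u - \lambda u\bigr) + (\lambda-\lambda^*)u = (\lambda-\lambda^*)u.
\end{align*}
Taking the $L^2$ norm on both sides and applying the hypothesis $\|D\widetilde{\mathbb{F}}(\tilde{u})u - \lambda^* u\|_2 \geq \mathcal{C}\|u\|_2$ yields
\begin{align*}
    \mathcal{C}\|u\|_2 \leq \|D\widetilde{\mathbb{F}}(\tilde{u})u-\lambda^* u\|_2 = |\lambda-\lambda^*|\,\|u\|_2.
\end{align*}
Since $|\lambda-\lambda^*|<\mathcal{C}$, the only way both inequalities hold is $\|u\|_2=0$, i.e. $u=0$. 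This establishes injectivity of $D\widetilde{\mathbb{F}}(\tilde{u})-\lambda I_d$ on $H^l$.

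There is essentially no obstacle here: the lemma is the abstract fact that a coercive bound $\|(\mathbb{B}-\lambda^* I_d)u\|_2 \geq \mathcal{C}\|u\|_2$ propagates to injectivity of $\mathbb{B}-\lambda I_d$ in the open disk of radius $\mathcal{C}$ around $\lambda^*$, which is the continuity of the resolvent set. The only mild subtlety to flag is that the hypothesis is required for every $u\in H^l$ (not just for $u$ in the domain of the operator after the shift), but since $H^l$ is the common domain of both $D\widetilde{\mathbb{F}}(\tilde{u})-\lambda^* I_d$ and $D\widetilde{\mathbb{F}}(\tilde{u})-\lambda I_d$ as maps into $L^2$, no issue arises. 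In the computer-assisted context of Section~\ref{sec : stability}, this lemma will later be paired with an explicit estimate on $\mathcal{C}$ (derived, e.g., from the $\mathcal{Z}_1$-type bounds applied at $\lambda^*$) to cover the intervals $[\lambda_{\min},\lambda_0^- - R_1]\cup[\lambda_0^- + R_1,-R_0]$ by finitely many overlapping disks of radius $\mathcal{C}$.
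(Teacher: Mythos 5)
Your proof is correct and follows essentially the same perturbation idea as the paper; the paper simply applies the reverse triangle inequality to an arbitrary $u\in H^l$ to get the quantitative bound $\|D\widetilde{\mathbb{F}}(\tilde{u})u-\lambda u\|_2\geq(\mathcal{C}-|\lambda-\lambda^*|)\|u\|_2$, whereas you specialize to a kernel element, which yields the same injectivity conclusion.
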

\begin{proof}
    Let $u \in \mathcal{H}$, then
    \begin{align*}
        \|D\mathbb{F}(\tilde{u})u - \lambda u\|_2 \geq \|D\mathbb{F}(\tilde{u})u - \lambda^* u\|_2 - |\lambda-\lambda^*|\|u\|_2 \geq \left(\mathcal{C} - |\lambda-\lambda^*|\right)\|u\|_2.
    \end{align*}
    This proves the lemma. 
\end{proof}

Using the previous lemma combined with Lemma \ref{lem : minimal value eig}, we can prove the injectivity of $D\mathbb{F}(\tilde{u})-\lambda I_d$ for all $\lambda \in [\lambda_{min}, \lambda_0^--R_1]\bigcup [\lambda_0^-+R_1,-R_{0}]$ by proving that $D\mathbb{F}(\tilde{u})-\lambda I_d$ is invertible for a finite number of values $\lambda^*$. Moreover, having access to an upper bound for the norm of the inverse of $D\mathbb{F}(\tilde{u})-\lambda^* I_d$, we obtain a value for $\mathcal{C}$ is the previous lemma. To do so, we want to use Lemma \ref{lem : computation of Z1}. Indeed,  Lemma \ref{lem : computation of Z1} provides the invertibility of $D\mathbb{F}(\tilde{u})-\lambda^* I_d$, by constructing an approximate inverse, as well as an upper bound for the norm of the inverse. 

\begin{lemma}\label{lem : Z bound for only eig}
    Let $\lambda \leq 0$, then recall that
    \begin{align*}
        C_{\lambda,T} \bydef \begin{cases}
            C_{2,T,c-\lambda} &\text{ if } T=0\\
            C_{2,T,c+\lambda} &\text{ if } T>0\\
        \end{cases}
    \end{align*}
    where $C_{2,T,c}$ is given in \eqref{eq : constant C2T}.
Now, recalling $E$ defined in Lemma \ref{lem : lemma Zu}, we let $\mathcal{Z}_{\lambda,u}>0$ be a bound satisfying
    \begin{align*}
     (\mathcal{Z}_{\lambda,u})^2 &\geq  4C_{\lambda,T}^2 {|\om|e^{-2ad}}\left(U_0,E*U_0\right)_2\left( \frac{2}{a} + C_1(d)\right) + 32C_{\lambda,T}^2\ln(2)\int_{d-1}^d|u_0'|^2.
\end{align*}
  Then,
    \begin{align}\label{eq : definition of Zlambda}
    2\left\|\left(\Gamma^\dagger\left(L_\lambda^{-1}\right) - \mathbb{L}_\lambda^{-1}\right)\mathbb{u}_0\right\|_2 \leq \mathcal{Z}_{\lambda,u},
    \end{align}
    where $\mathbb{L}_\lambda$ and $L_\lambda$ are defined in \eqref{def : L lambda unbounded} and \eqref{def : L lambda periodic} respectively. Now let $\mathbb{B}_T : L^2 \to L^2$ be a bounded linear operator defined as 
\begin{align*}
    \mathbb{B}_T \bydef \Gamma^\dagger\left(B_T^N + \pi_N \mathbb{W}_T\right),
\end{align*}
where $B^N_T : \ell^2 \to \ell^2$ is a bounded linear operator such that  $B^N_T = \pi^N B^N_T \pi^N$ and where $W_T \in \ell^1$ such that $W_T = \pi^N W_T$. In particular, we choose $\mathbb{W}_T = I_d$ if $T>0$.  Moreover, define the following bounded linear operators \[\mathbb{A}_T \bydef \mathbb{L}_\lambda^{-1} \mathbb{B}_T : L^2 \to \mathcal{H}_\lambda ~~ \text{ and } ~~ A_T \bydef {L}_\lambda^{-1}(B^N_T + \pi_N \mathbb{W}_T) : \ell^2 \to \mathscr{h}_\lambda.\]
Then, \begin{align*}
    \|D\mathbb{F}(\tilde{u})u - \lambda u\|_2 \geq \left(\frac{1- \|\mathbb{B}_T\|_2\mathcal{Z}_{\lambda,u}-Z_{\lambda,1}}{\|\mathbb{B}_T\|_2} (\sigma_0 - \lambda) -  \frac{r_{0}}{4\sqrt{\nu}\sigma_0}\right)\|u\|_2
\end{align*}
for all $u \in \mathcal{H}_\lambda$ where  $Z_{\lambda,1}$ satisfies
\begin{align*}
    Z_{\lambda,1} &\geq \left\|I_d - A_T \left(D{F}(U_0)-\lambda I_d\right) \right\|_{\mathscr{h}_\lambda}.
\end{align*}
\end{lemma}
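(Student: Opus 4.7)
My plan is to establish the two assertions of the lemma sequentially: first the auxiliary estimate \eqref{eq : definition of Zlambda}, and then the main lower bound via a Newton-Kantorovich style argument applied to the shifted operator $D\widetilde{\mathbb{F}}(\tilde u)-\lambda I_d$.

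For \eqref{eq : definition of Zlambda}, I will replicate the geometric argument that controls $(\mathcal{Z}_{u,1}^{(2)})^2+(\mathcal{Z}_{u,2}^{(2)})^2$ in the proof of Lemma \ref{lem : lemma Zu}, with $\widetilde{\mathbb{L}}$ replaced by $\mathbb{L}_\lambda$. Since $\lambda\le 0$, the symbol of $\mathbb{L}_\lambda$ equals $m_T(\xi)-(c+\lambda)$ if $T>0$ and $(c-\lambda)-m_0(\xi)$ if $T=0$; in either case the shifted constant $c\mp\lambda$ still satisfies Assumption \ref{ass : value of c and T}, so Proposition \ref{prop : analyticity and value of a} and Lemma \ref{lem : computation of f} apply verbatim with this shifted constant, yielding the exponential decay constant $C_{\lambda,T}=C_{2,T,c\mp\lambda}$ for the kernel of $\mathbb{L}_\lambda^{-1}$. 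Splitting $\|(\Gamma_e^\dagger(L_\lambda^{-1})-\mathbb{L}_\lambda^{-1})\mathbb{u}_0\|_2^2$ into the contributions from $\out$ and $\cha$, estimating the resulting double integrals via the geometric series over the translates $\om+2dk$ exactly as in Lemma \ref{lem : lemma Zu}, and using $v_2=2u_0$ to absorb the prefactor of $2$ into $\mathcal{Z}_{\lambda,u}$, then yields \eqref{eq : definition of Zlambda}.

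For the main inequality I will exploit that $\mathbb{A}_T=\mathbb{L}_\lambda^{-1}\mathbb{B}_T$, so $\mathbb{L}_\lambda\mathbb{A}_T=\mathbb{B}_T$ and $\|\mathbb{A}_T\|_{L^2,H^l_\lambda}=\|\mathbb{B}_T\|_2$. Given $u\in H^l_\lambda$, I add and subtract to obtain the decomposition
\[
u = \mathbb{A}_T\bigl(D\widetilde{\mathbb{F}}(\tilde u)-\lambda I_d\bigr)u + \bigl(I_d-\mathbb{A}_T(D\widetilde{\mathbb{F}}(u_0)-\lambda I_d)\bigr)u - \mathbb{A}_T\bigl(D\widetilde{\mathbb{G}}(\tilde u)-D\widetilde{\mathbb{G}}(u_0)\bigr)u.
\]
Taking $H^l_\lambda$-norms, the last term is controlled by $\|\mathbb{B}_T\|_2\|D\widetilde{\mathbb{G}}(\tilde u)-D\widetilde{\mathbb{G}}(u_0)\|_{L^2}\|u\|_2$; since $D\widetilde{\mathbb{G}}(w)$ is multiplication by $\pm 2w$, Proposition \ref{prop : equivalence infinite norm} together with $\|\tilde u-u_0\|_l\le r_0$ yields a contribution of the form $\|\mathbb{B}_T\|_2 r_0/(\sqrt{\nu}\sigma_0)\|u\|_2$. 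A rearrangement using $\|u\|_{H^l_\lambda}\ge(\sigma_0-\lambda)\|u\|_2$ then produces the desired inequality, provided the middle term is bounded by $(Z_{\lambda,1}+\|\mathbb{B}_T\|_2\mathcal{Z}_{\lambda,u})\|u\|_{H^l_\lambda}$.

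This middle estimate is the crux of the argument. Conjugating by $\mathbb{L}_\lambda$, its $H^l_\lambda$-norm equals the $L^2$-norm of $I_d-\mathbb{B}_T-\mathbb{B}_T D\widetilde{\mathbb{G}}(u_0)\mathbb{L}_\lambda^{-1}$. Writing $\mathbb{u}_0\mathbb{L}_\lambda^{-1}=\mathbb{u}_0\Gamma^\dagger(L_\lambda^{-1})+\mathbb{u}_0(\mathbb{L}_\lambda^{-1}-\Gamma^\dagger(L_\lambda^{-1}))$ and appealing to the $L^2$ self-adjointness of $\mathbb{u}_0$, $\mathbb{L}_\lambda^{-1}$ and $\Gamma^\dagger(L_\lambda^{-1})$, the second piece reduces to $\|(\mathbb{L}_\lambda^{-1}-\Gamma^\dagger(L_\lambda^{-1}))\mathbb{u}_0\|_2$, which is controlled by $\mathcal{Z}_{\lambda,u}$ via \eqref{eq : definition of Zlambda}, and therefore contributes at most $\|\mathbb{B}_T\|_2\mathcal{Z}_{\lambda,u}$. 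The remaining purely periodic operator, thanks to $\mathbb{B}_T=\cha\mathbb{B}_T\cha$ and Lemma \ref{lem : gamma and Gamma properties}, is isometrically identified with $I_d-A_T(D\widetilde{F}(U_0)-\lambda I_d)$ on $X^l_\lambda$, whose norm is bounded by $Z_{\lambda,1}$ by hypothesis. This last identification, which transcribes verbatim the splitting used in the proof of Lemma \ref{lem : computation of Z1} but in the non-even $\lambda$-shifted setting, is the main technical obstacle; once it is in place, the conclusion follows by direct algebraic rearrangement.
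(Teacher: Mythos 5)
Your overall architecture coincides with the paper's: reduce $D\widetilde{\mathbb{F}}(\tilde u)-\lambda I_d$ to $D\widetilde{\mathbb{F}}(u_0)-\lambda I_d$ at the cost of an $r_0/(\sqrt{\nu}\sigma_0)$-type term via Proposition \ref{prop : equivalence infinite norm}, run an approximate-inverse (Neumann series) lower bound, split the defect into the periodic part $Z_{\lambda,1}$ plus the periodization error controlled by $\mathcal{Z}_{\lambda,u}$, and finish with $\|u\|_{H^l_\lambda}\ge(\sigma_0-\lambda)\|u\|_2$; likewise, obtaining \eqref{eq : definition of Zlambda} by rerunning Lemma \ref{lem : lemma Zu} with $c$ replaced by $c\mp\lambda$ is exactly what the paper does, and your adjoint argument for exchanging $\mathbb{u}_0(\mathbb{L}_\lambda^{-1}-\Gamma^\dagger(L_\lambda^{-1}))$ with $(\Gamma^\dagger(L_\lambda^{-1})-\mathbb{L}_\lambda^{-1})\mathbb{u}_0$ is legitimate.

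The gap is precisely at the step you call the crux. If, as you assume, $\mathbb{B}_T=\cha\mathbb{B}_T\cha$ (which is what the displayed definition $\mathbb{B}_T=\Gamma^\dagger(B_T^N+\pi_N\mathbb{W}_T)$ literally gives), then the conjugated defect $I_d-\mathbb{B}_T-\mathbb{B}_T D\widetilde{\mathbb{G}}(u_0)\mathbb{L}_\lambda^{-1}$ is \emph{not} isometrically identified with $I_d-A_T(D\widetilde{F}(U_0)-\lambda I_d)$: it splits as $\out$ plus an $\om$-localized block, so its $L^2$ operator norm is $\max\{1,\cdot\}$. Concretely, for $v$ supported at large distance from $\om$ one has $\mathbb{B}_T v=0$ and $\mathbb{B}_T\mathbb{u}_0\mathbb{L}_\lambda^{-1}v$ exponentially small, so the defect returns essentially $v$ and its norm is $\ge 1-o(1)$; hence it cannot be bounded by $Z_{\lambda,1}+\|\mathbb{B}_T\|_2\mathcal{Z}_{\lambda,u}$ exactly in the regime where the lemma has content (that sum $<1$), and your middle estimate fails. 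The repair — which is what the paper's appeal to Lemma \ref{lem : computation of Z1} implicitly carries — is to take $\mathbb{B}_T=\out+\Gamma^\dagger(B_T^N+\pi_N\mathbb{W}_T)$, i.e.\ $\mathbb{A}_T$ acts as $\mathbb{L}_\lambda^{-1}$ away from $\om$. Then $I_d-\mathbb{B}_T=\cha-\Gamma^\dagger(B_T)$ is $\om$-localized, $\out D\widetilde{\mathbb{G}}(u_0)=0$ because $u_0$ vanishes outside $\overline{\om}$, and the conjugated defect equals $\Gamma^\dagger\bigl(I_d-B_T(I_d+D\widetilde{G}(U_0)L_\lambda^{-1})\bigr)\mp\Gamma^\dagger(B_T)\,2\mathbb{u}_0\bigl(\mathbb{L}_\lambda^{-1}-\Gamma^\dagger(L_\lambda^{-1})\bigr)$, at which point Lemma \ref{lem : gamma and Gamma properties}, your adjoint trick and your final rearrangement do give the stated bound (with $\|\mathbb{A}_T\|_{2,H^l_\lambda}=\|\mathbb{B}_T\|_2$). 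Aside from this, your bookkeeping of the $\tilde u\to u_0$ term ("of the form $r_0/(\sqrt{\nu}\sigma_0)$") is loose by a factor of $2$ relative to the constant $r_0/(4\sqrt{\nu}\sigma_0)$ in the statement, but this matches the same looseness in the paper's own proof and is not the substantive issue.
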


\begin{proof}
Let $\lambda \leq 0$, recall that $\mathbb{L}_\lambda$ is invertible and we can define the Hilbert space $\mathcal{H}_\lambda$ associated to its norm $\|u\|_{\mathcal{H}} = \|\mathbb{L}_\lambda u \|_2$ for all $u \in \mathcal{H}_\lambda$. 
    First, using Proposition \ref{prop : equivalence infinite norm} and \eqref{eq : min of m}, notice that 
    \begin{align}\label{eq : step 1 estimation norm}
        \|D\mathbb{F}(\tilde{u})u - \lambda u\|_2 &\geq   \|D\mathbb{F}(u_0)u - \lambda u\|_2 - 2\|(\tilde{u}-u_0)u\|_2\\
       &\geq \|D\mathbb{F}(u_0)u - \lambda u\|_2-  \frac{r_{0}}{4\sqrt{\nu}\sigma_0}\|u\|_2
    \end{align}
    for all $u \in L^2.$ Let $u \in \mathcal{H}_\lambda$, then 
    \begin{align*}
        \|u\|_{l} &= \left\|u - \mathbb{A}_T \left(D\mathbb{F}(u_0)-\lambda I_d \right)u + \mathbb{A}_T\left(D\mathbb{F}(u_0)-\lambda I_d \right)u\right\|_2\\
        &\leq \left\|I_d - \mathbb{A}_T \left(D\mathbb{F}(u_0)-\lambda I_d \right)\right\|_{\mathcal{H}} \|u\|_{\mathcal{H}}  + \|\mathbb{A}_T\|_{2,l} \left\|\left(D\mathbb{F}(u_0)-\lambda I_d \right)u\right\|_2.
    \end{align*}
    In particular, this implies that 
    \begin{align*}
        \left\|\left(D\mathbb{F}(u_0)-\lambda I_d \right)u\right\|_2 &\geq \frac{1-\left\|I_d - \mathbb{A}_T \left(D\mathbb{F}(u_0)-\lambda I_d \right)\right\|_{\mathcal{H}} }{\|\mathbb{A}_T\|_{2,l}}\|u\|_{\mathcal{H}}\\
        &\geq \frac{1- \|\mathbb{B}\|_2 \|\left(\Gamma^\dagger(L_\lambda^{-1}) - \mathbb{L}_\lambda^{-1}\right)\mathbb{u}_0\|_2 - \left\|I_d - A_T\left(D\tilde{F}(U_0)-\lambda I_d\right)\right\|_{\mathcal{H}}}{\|\mathbb{B}_T\|_2} \|u\|_{\mathcal{H}}
    \end{align*}
    using Lemma \ref{lem : computation of Z1}. Now, using \eqref{eq : min of m}, notice that
    \begin{align*}
        \|u\|_{\mathcal{H}_\lambda} \geq (\sigma_0-\lambda)\|u\|_2
    \end{align*}
    for all $u \in \mathcal{H}_\lambda.$ Finally, the proof of \eqref{eq : definition of Zlambda} is a direct consequence of Lemma \ref{lem : lemma Zu} where $c$ is replaced by $c-\lambda$ if $T=0$ and by $c+\lambda$ if $T>0$.  
\end{proof}

\begin{remark}
    In practice, the bound $Z_{\lambda,1} $ introduced in the previous lemma can be computed in the same manner as the bound $Z_1$ in Lemma \ref{lem : usual term periodic Z1}. We expose its numerical computation at \cite{julia_cadiot}.
\end{remark}

Now that we can compute an upper bound for the norm of the inverse of $D\mathbb{F}(\tilde{u})-\lambda^* I_d$ (for a given $\lambda^*$), we possess all the computer-assisted tools to control the negative spectrum of $D\mathbb{F}(\tilde{u})$. This control allows, potentially, to conclude about the spectral stability of a proven solitary wave.

\subsection{Computer-assisted proof of stability}

Combining the results of the previous section with Lemma \ref{lem : result on eigenvalues} and  {Proposition \ref{prop:stable}}, we obtain a computer-assisted method to prove the spectral stability of solutions to \eqref{eq : original Whitham}. We apply this approach to the obtained solutions $\tilde{u}_1, \tilde{u}_2, \tilde{u}_3$ and $\tilde{u}_4$ (cf. Theorems \ref{th : proof whitham} and \ref{th: : proof capillary whitham}) to prove their stability. The numerical details are available at \cite{julia_cadiot}.

  \begin{theorem}\label{th : existence eigenvalues}
     Let $\tilde{u}_i$ ($i \in \{1,2,3,4\}$) be a solution to \eqref{eq : whitham stationary} obtained in either Theorem \ref{th : proof whitham} or Theorem \ref{th: : proof capillary whitham}. Then, $D\mathbb{F}(\tilde{u}_i)$ has a simple negative eigenvalue $\lambda^{-}_i$, which is its only negative eigenvalue. Moreover, $D\mathbb{F}(\tilde{u}_i)$ has a zero eigenvalue which is also simple. 
 \end{theorem}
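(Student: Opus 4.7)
The plan is to verify all three conditions (P1), (P2), (P3) of Lemma \ref{lem : result on eigenvalues} by means of the computer-assisted tools developed in Sections \ref{sec : kdv eigen}, then to combine this with Theorem \ref{th : negativity of quantity} (which gives condition \eqref{eq : condition for stability} for free) in order to conclude. In practice, for each $i \in \{1,2\}$ we will need three ingredients: (a) a constructive proof of the simple negative eigenpair $(\tilde{\lambda}^-_i,\tilde{\psi}^-_i)$, (b) a constructive proof that zero is a simple eigenvalue, and (c) a rigorous exclusion of any other non-positive eigenvalue.

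For (a), I would first compute numerically an approximate eigenpair $(\lambda_0^-, \psi_0^-)$ of $D\widetilde{\mathbb{F}}(u_{0,i})$ associated to the smallest eigenvalue. As indicated in Section \ref{sec : kdv eigen}, we project $\psi_0^-$ onto the kernel of the appropriate trace operator so that its periodic representation $\Psi_0^- \in X^4$ yields $\psi_0^- \in H^4_\om(\R)$. Then I would construct the approximate inverse $\overline{A}_T$ in \eqref{operator AT extra equation}, compute the bounds $\mathcal{Y}_0,\mathcal{Z}_1,\mathcal{Z}_2$ of \eqref{eq: definition Y0 Z1 Z2 eigen}, and verify the radii polynomial inequality \eqref{condition radii polynomial eig} with some $r>0$. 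Theorem \ref{th : radii eigenvalue} then produces a unique eigenpair in $\overline{B_r(\lambda_0^-,\psi_0^-)}\subset H_1$, and the uniqueness statement with radius $R_1$ guarantees that $\tilde{\lambda}^-_i$ is simple and is the \emph{only} eigenvalue of $D\widetilde{\mathbb{F}}(\tilde{u}_i)$ in the interval $(\lambda_0^- - R_1, \lambda_0^- + R_1)$. For (b), I would apply the same machinery at $\lambda_0 = 0$ using an approximate eigenvector $\psi_0^0 \approx \tilde{u}_i'$; Remark \ref{rem : eigenvalue zero} then identifies the proven eigenvalue with $0$ itself, yielding simplicity and isolation in an interval $(-R_0,0]$.

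For (c), I would invoke Lemma \ref{lem : minimal value eig}: it suffices to prove that $D\widetilde{\mathbb{F}}(\tilde{u}_i) - \lambda I_d$ is injective for every $\lambda$ in the compact set
\begin{equation*}
    K \bydef [\lambda_{\min}, \lambda_0^- - R_1] \cup [\lambda_0^- + R_1, -R_0],
\end{equation*}
where $\lambda_{\min}$ is defined in \eqref{def : lambda_min}. The idea is to cover $K$ by finitely many open intervals $(\lambda^*_j - \mathcal{C}_j, \lambda^*_j + \mathcal{C}_j)$ produced by Lemma \ref{lem : neumann series}. For each sample point $\lambda^*_j$, Lemma \ref{lem : Z bound for only eig} supplies the constant
\begin{equation*}
    \mathcal{C}_j \bydef \frac{1 - \|\mathbb{B}_T\|_2 \mathcal{Z}_{\lambda^*_j,u} - Z_{\lambda^*_j,1}}{\|\mathbb{B}_T\|_2}(\sigma_0 - \lambda^*_j) - \frac{r_{0}}{4\sqrt{\nu}\sigma_0},
\end{equation*}
provided the numerator is positive, which is the key verification. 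I would then choose a finite grid $\{\lambda^*_j\}_{j=1}^J$ such that consecutive balls overlap and whose union contains $K$; this reduces the problem to a finite number of rigorous numerical computations, each handled by interval arithmetic via \cite{julia_cadiot}.

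The main obstacle will be step (c): the constant $\mathcal{C}_j$ shrinks when $\lambda^*_j$ approaches $\sigma_0$ from below only mildly, but more critically $Z_{\lambda^*_j,1}$ and $\mathcal{Z}_{\lambda^*_j,u}$ must remain controllably small along the whole range $K$, which requires rebuilding the approximate inverse $A_T(\lambda^*_j)$ at each sample and ensuring the grid is fine enough for overlap. A secondary subtlety is that in the case $T=0$ the construction of $A_T(\lambda^*_j)$ requires verifying the analogue of Assumption \ref{ass : u0 is smaller than} with $c$ replaced by $c-\lambda^*_j$, but since $\lambda^*_j \leq 0$ this only strengthens the margin, so no new difficulty arises. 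Once (a), (b), (c) are established, Lemma \ref{lem : result on eigenvalues} combined with Theorem \ref{th : negativity of quantity} yields spectral stability of $\tilde{u}_i$, which completes the theorem.
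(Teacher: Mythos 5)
Your proposal is correct and follows essentially the same route as the paper: Theorem \ref{th : radii eigenvalue} for the simple negative eigenpair, the same machinery at $\lambda_0=0$ together with Remark \ref{rem : eigenvalue zero} for the simplicity of the zero eigenvalue, and the combination of Lemmas \ref{lem : minimal value eig}, \ref{lem : neumann series} and \ref{lem : Z bound for only eig} with a finite grid of sample points $\lambda^*$ verified by interval arithmetic to exclude any other negative eigenvalue. The only cosmetic difference is that you also fold in the stability conclusion, which in the paper is deferred to Theorem \ref{th : stability of solution}.
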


 \begin{proof}
     The proof of the simple eigenvalue $\lambda^{-}_i$ is obtained thanks to Theorem \ref{th : radii eigenvalue}. Similarly, the simplicity of the zero-eigenvalue is obtained thanks to Remark \ref{rem : eigenvalue zero}. 

      The rest of the proof is obtained via rigorous numerics in \cite{julia_cadiot}. Indeed, combining Lemmas \ref{lem : minimal value eig}, \ref{lem : neumann series} and \ref{lem : Z bound for only eig}, we can prove that $D\mathbb{F}(\tilde{u}_i) - \lambda I_d$ is injective for all $\lambda \in [\lambda_{min}, \lambda_i-R_i]\bigcup [\lambda_i+R_i,-R_{i,0}]$ by rigorously computing a constant $\mathcal{C}$ for a finite number of $\lambda^*\leq 0$ (using the notations of Lemma \ref{lem : neumann series}).
 \end{proof}

  {Now, for each $\tilde{u}_i$, we prove that condition \eqref{eq : condition for stability} is satisfied, that is the Vakhitov-Kolokolov quantity is negative. This is achieved rigorously on the computer thanks to  Proposition\ref{prop:stable}. Then, using Lemma \ref{lem : result on eigenvalues}, we obtain the spectral stability of each $\tilde{u}_i$.} 
\begin{theorem}\label{th : stability of solution}
     The solutions $\tilde{u}_1, \tilde{u}_2, \tilde{u}_3$ and $\tilde{u}_4$ obtained in Theorems \ref{th : proof whitham} and \ref{th: : proof capillary whitham} are spectrally stable. 
 \end{theorem}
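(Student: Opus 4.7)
The plan is to verify that the three spectral hypotheses (P1), (P2), (P3) of Lemma \ref{lem : result on eigenvalues}, together with the negativity condition \eqref{eq : condition for stability}, are satisfied for each $\tilde{u}_i$ ($i \in \{1,2\}$), and then invoke that lemma directly to conclude spectral stability.

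First, I would apply Theorem \ref{th : existence eigenvalues}, which is the principal ingredient obtained from the computer-assisted machinery of Section \ref{sec : kdv eigen}. That theorem already asserts that $D\widetilde{\mathbb{F}}(\tilde{u}_i)$ admits a simple negative eigenvalue $\lambda^{-}_i$, which is its unique negative eigenvalue, and that $0$ is a simple eigenvalue. These three statements are exactly conditions (P1), (P2) and (P3) required by Lemma \ref{lem : result on eigenvalues}. Thus the spectral hypotheses are discharged without any further work, modulo trusting the explicit numerical verification that produced Theorem \ref{th : existence eigenvalues}.

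Next, I would verify the remaining assumption \eqref{eq : condition for stability} by appealing to Theorem \ref{th : negativity of quantity}. The hypotheses of that theorem require that $\tilde{u}_i$ be a zero of $\widetilde{\mathbb{F}}$ obtained via Theorem \ref{th: radii polynomial}, living in a ball $\overline{B_{r_{0,i}}(u_{0,i})} \subset H^l_e$ around an approximate solution constructed as in Section \ref{ssec : approximate solution}, and, in the case $T=0$, that $\|U_{0,1}\|_1 + \frac{r_{0,1}}{4\sqrt{\nu}\sigma_0} < c/2$ (which guarantees $\tilde{u}_1 < c/2$ pointwise). For $\tilde{u}_2$ ($T=0.5$), no pointwise condition is needed. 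For $\tilde{u}_1$ ($T=0$), the inequality $\|U_{0,1}\|_1 + \epsilon < c/2$ with $\epsilon = 0.39$ was already established in Theorem \ref{th : proof whitham} (via rigorous numerics) and the radius $r_{0,1} = 5.72\times 10^{-9}$ is small enough that $\frac{r_{0,1}}{4\sqrt{\nu}\sigma_0} \leq \epsilon$, so the pointwise bound transfers to $\tilde{u}_1$. Hence Theorem \ref{th : negativity of quantity} applies and yields $(\tilde{u}_i, D\widetilde{\mathbb{F}}_e(\tilde{u}_i)^{-1}\tilde{u}_i)_2 < 0$ for both $i$.

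With (P1), (P2), (P3) and \eqref{eq : condition for stability} in hand, Lemma \ref{lem : result on eigenvalues} immediately gives the spectral stability of $\tilde{u}_1$ and $\tilde{u}_2$, completing the proof. The main obstacle is not in this final assembly, which is essentially a one-line combination, but rather in Theorem \ref{th : existence eigenvalues}: the rigorous exclusion of negative eigenvalues away from the proven $\lambda^{-}_i$ and $0$ requires covering the compact interval $[\lambda_{min},\,\lambda_0^{-}-R_1]\cup[\lambda_0^{-}+R_1,-R_0]$ by finitely many disks on which Lemma \ref{lem : neumann series} is applicable, each of which demands a rigorous evaluation of the bound $\mathcal{C}$ provided by Lemma \ref{lem : Z bound for only eig}. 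That step is carried out numerically in \cite{julia_cadiot} and is the genuinely computational part of the argument; the present theorem only packages its output into the stability conclusion.
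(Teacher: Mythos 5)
Your proposal is correct and follows essentially the same route as the paper: Theorem \ref{th : existence eigenvalues} supplies (P1)--(P3), Theorem \ref{th : negativity of quantity} (whose pointwise hypothesis for $T=0$ is indeed discharged by the bound $\|U_{0,1}\|_1 + \frac{r_{0,1}}{4\sqrt{\nu}\sigma_0} < \frac{c}{2}$ already verified in Theorem \ref{th : proof whitham}) supplies \eqref{eq : condition for stability}, and Lemma \ref{lem : result on eigenvalues} then yields spectral stability. This is precisely the one-line assembly the paper performs, with the computational burden residing, as you note, in the eigenvalue enclosure and exclusion arguments behind Theorem \ref{th : existence eigenvalues}.
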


\begin{remark}
    If one could prove the well-posedness of initial value problems  {in an energy space (that is in a Sobolev space matching the regularity of the Hamiltonian)}  with initial data in a neighborhood of $\tilde{u}_i$ ($i \in \{1,2\}$), then one could conclude about orbital stability (see discussions in \cite{Ehrnstrom_existence_stability_solitary} or \cite{Stefanov2018SmallAT} for instance). 
\end{remark}

\section{Conclusion}\label{conclusion}
In this article we presented a new computer-assisted method to study solitary waves in the Whitham and capillary-gravity Whitham equations. Moreover, the approach is general enough to handle proofs of existence of solitary waves as well as eigenvalue problems. In particular, we were able to prove constructively, with high accuracy, the existence of a solitary wave and its spectral stability in both the cases $T=0$ and $T>0$.

Similarly as what is presented in \cite{unbounded_domain_cadiot}, the method established in this paper can be generalized to a large class of nonlocal equations defined on $\R^n$ ($n \in \mathbb{N}$). Indeed, we can consider a nonlocal equation of the form
\begin{equation}\label{zero finding conclusion}
   \mathbb{L}u + \mathbb{G}(u) =  f 
\end{equation}
where $f : \R^n \to \R$ is a function in $L^2(\R^n)$. Then, $\mathbb{L}$ has to be a Fourier multiplier operator associated to its symbol $l : \R^n \to \mathbb{C}$. In particular, we require that there exists $\sigma_0>0$ such that
\begin{align*}
    |l(\xi)| > \sigma_0 >0
\end{align*}
for  all $\xi \in \R^n$ and that there exists $a>0$ such that $\frac{1}{l}$ is analytic on $S^n$ where $S \bydef \{z \in \mathbb{C}, ~ |Im(z)| < a\}$. This assumption allows one to define $\mathbb{L}^{-1}$ as a bounded linear operator, as well as the Hilbert space $\mathcal{H}$. The analyticity of $\frac{1}{l}$ allows one to derive an exponential decay for its inverse Fourier transform (as illustrated in Lemma \ref{lem : computation of f}). Moreover, using the notations of \cite{unbounded_domain_cadiot}, 
the non-linear operator $\mathbb{G}$ has to be of the form 
\begin{equation}\label{def: G and j}
     \mathbb{G}(u) \bydef \displaystyle\sum_{i = 2}^{N_{\mathbb{G}}}\mathbb{G}_i(u) 
\end{equation}
where $N_\mathbb{G} \in \mathbb{N}$ and  each $\mathbb{G}_i(u)$  can be decomposed as follows
\[
\mathbb{G}_i(u) \bydef \sum_{k \in J_i} (\mathbb{G}^1_{i,k}u) \cdots (\mathbb{G}^i_{i,k}u)
\]
where $J_i \subset \mathbb{N}$ is a finite set of indices, and where $\mathbb{G}^p_{i,k}$ is a Fourier multiplier operator for all $1\leq p \leq i$ and $k \in J_i$. In the case of the  {cgWE }, $\mathbb{G}(u) = \mathbb{G}_2(u) = (\mathbb{G}^1_{2,1}u)(\mathbb{G}^2_{2,1}u)$, where $\mathbb{G}^1_{2,1} = \mathbb{G}^2_{2,1} = I_d$.  In particular, each  Fourier multiplier operator $ \mathbb{G}^p_{i,k}$ has a symbol that we denote $g^p_{i,k}: \R^n \to \mathbb{C}.$  Then, we require each $g^p_{i,k}$ to be analytic on $S^n$ and
\begin{align*}
    \frac{|g^p_{i,k}(\xi)|}{|l(\xi)|} \to C_{i,k,p}
\end{align*}
as $|\xi| \to \infty$, where $C_{i,k,p}$ is a non-negative constant.  {Note that this set-up has recently been investigated in \cite{marstrander} on the real line ($n=1$) and existence proofs of solitary waves were obtained. One could use the above set-up to study existence of solutions in higher dimensional problems, such as the Kadomtsev-Petviashvili equation (cf. \cite{Ehrnström_2018}).}

Under these assumptions, the  analysis presented in Sections \ref{sec : computer-assisted approach}, \ref{sec : computation of the bounds} and \ref{sec : stability} is applicable to \eqref{zero finding conclusion}. In the case to case scenario, one has to compute the exponential decay associated to each $\frac{g^p_{i,k}}{l}$ (cf. Lemma \ref{lem : computation of f}) and the rest of the analysis of the present paper can easily be re-used. Note that if $C_{i,k,p} =0$ for each $1\leq i\leq N_{\mathbb{G}}$, $1\leq p \leq i$ and each $k \in J_i$, then the required analysis for the construction of the approximate inverse $\mathbb{A}$ is similar to the one required for a  semi-linear PDE. This is illustrated by the case $T>0$ in the  {cgWE }. However, if there exists a constant $C_{i,k,p} \neq 0$, then one has to follow the analysis derived in Section \ref{subsec : AT in the case T=0}. In particular, assumptions on the approximate solution might be needed (cf. Assumption \ref{ass : u0 is smaller than} for instance). This point is illustrated in   the case $T=0$ in this paper.

\section{Appendix}\label{sec : appendix}

\subsection{Proof of Lemma \ref{lem : computation of f}}\label{sec : proof of lemma 4.1 in appendix}

We present in this section the proof of Lemma \ref{lem : computation of f}. In particular, we provide the explicit computations of the constants defined in \eqref{eq : constant C0T}. First recall that we need to study the following functions
 \begin{align*}
 f_{\mathcal{Y}_0,T}& \bydef \mathcal{F}^{-1}\left(\frac{m_T(2\pi \cdot)}{l_\nu(2\pi \cdot)}\right)\\ 
      f_{0,T}& \bydef \mathcal{F}^{-1}\left(\frac{1}{l(2\pi \cdot)l_\nu(2\pi \cdot)}\right)\\ 
    f_{1,T}& \bydef \mathcal{F}^{-1}\left(\frac{2\pi \cdot}{l(2\pi \cdot)l_\nu(2\pi \cdot)}\right)\\ 
    f_{2,T}& \bydef \mathcal{F}^{-1}\left(\frac{1}{l(2\pi \cdot)}\right).
 \end{align*}
Then, using Proposition \ref{prop : analyticity and value of a}, we know that there exists  $0<a < \min\{\frac{1}{\sqrt{\nu}}, \frac{\pi}{2}\}$  such that $|m_T(z)-c| >0$ for all $z \in S$ where $S \bydef \{z \in \mathbb{C},~ |\text{Im}(z)| \leq a\}$. Moreover, there exists $\sigma_0, \sigma_1 >0$ such that
\begin{align*}
    |l(\xi)|, |l(\xi+ia)| &\geq \sigma_0 \text{ for all } \xi \in \R,\\
   |l(\xi+ia)| &\geq \sigma_1\sqrt{T|\xi|} \text{ for all } |\xi| \geq 1.
 \end{align*}
 In particular, $m_T, \frac{1}{l_\nu}$ and $\frac{1}{l}$ are analytic on $S$. Having these results in mind, we present the proof of the lemma.
\begin{proof}
Suppose that $T>0$, then using that $m_T(\xi) = m_0(\xi)\sqrt{1+T\xi^2}$ and that $\nu = T$ (cf. \eqref{def : nu}), we have
\begin{align*}
    f_{\mathcal{Y}_0,T} &=  \mathcal{F}^{-1}\left(\frac{m_T(2\pi \cdot)}{l_\nu(2\pi \cdot)}\right)\\
    &=   \mathcal{F}^{-1}\left(m_0(2\pi \cdot)\right)*\mathcal{F}^{-1}\left(\frac{1}{\sqrt{1+\nu(2\pi \cdot)^2}}\right).
\end{align*}
Let $x > 0$, then using \cite{EHRNSTROM2019on_whitham_conjecture}, we have
\begin{align*}
    g_0(x) \bydef \mathcal{F}^{-1}\left(m_0(2\pi \cdot)\right)(x) = \frac{1}{\pi} \sum_{n=1}^\infty \int_{n\pi - \frac{\pi}{2}}^{n\pi}e^{-xs}\sqrt{\frac{|\tan(s)|}{s}}ds.
\end{align*}
Moreover, using \cite{watson1995treatise} we have 
\begin{align*}
    g_1(x) \bydef \mathcal{F}^{-1}\left(\frac{1}{\sqrt{1+T(2\pi \cdot)^2}}\right)(x) = \frac{1}{\pi \sqrt{T}}K_0\left(\frac{x}{\sqrt{T}}\right) = \frac{1}{\pi \sqrt{T}}\int_{1}^\infty \frac{e^{-\frac{x}{\sqrt{T}}s}}{\sqrt{s^2-1}}ds
\end{align*}
where $K_0$ is the modified Bessel function of the second kind.

Now, given $n \in \mathbb{N}$, notice that $s \to \sqrt{\frac{|\tan(s)|(s-n\pi + \frac{\pi}{2})}{s}}$ is decreasing on $[n\pi - \frac{\pi}{2}, n\pi]$. In particular, 
\begin{equation}\label{ineq : tan function}
    \sqrt{\frac{|\tan(s)|(s-n\pi +\frac{\pi}{2})}{s}} \leq \sqrt{\frac{1}{n\pi - \frac{\pi}{2}}}
\end{equation}
for all $s \in [n\pi -\frac{\pi}{2}, n\pi].$ Using the proof of Corollary 2.26 from \cite{EHRNSTROM2019on_whitham_conjecture}, we have
\begin{align*}
    \frac{1}{\pi}\int_{n\pi -\frac{\pi}{2}}^{n\pi}e^{-sx}\sqrt{\frac{|\tan(s)|}{s}}ds  &=  \frac{1}{\pi}\int_{n\pi -\frac{\pi}{2}}^{n\pi}\frac{e^{-sx}\sqrt{\frac{|\tan(s)|(s-n\pi + \frac{\pi}{2})}{s}}}{\sqrt{s-n\pi + \frac{\pi}{2}}}ds\\
    &\leq  \frac{1}{\pi}\sqrt{\frac{1}{n\pi-\frac{\pi}{2}}}\int_{n\pi -\frac{\pi}{2}}^{n\pi}\frac{e^{-sx}}{\sqrt{s-n\pi + \frac{\pi}{2}}}ds\\
    &= \frac{1}{\pi}\sqrt{\frac{1}{n\pi-\frac{\pi}{2}}}\frac{e^{-(n\pi-\frac{\pi}{2})x}}{\sqrt{x}}\int_0^{\frac{\pi}{2}x} \frac{e^{-t}}{\sqrt{t}}dt.
\end{align*}
 Since $t \to \sqrt{\frac{1}{t\pi-\frac{\pi}{2}}}e^{-(t\pi-\frac{\pi}{2})x}$ is decreasing an positive, one can prove using integral estimates that
\begin{align}\label{ineq : integral estimates}
    \sum_{n=1}^\infty \sqrt{\frac{1}{n\pi-\frac{\pi}{2}}}e^{-(n\pi-\frac{\pi}{2})x} 
    & \leq e^{-\frac{\pi x}{2}} \left(\sqrt{\frac{2}{\pi}}  + \frac{1}{\sqrt{\pi x}}\right).
\end{align}
Moreover, 
\begin{align}\label{ineq : series estimates}
    \sum_{n=1}^\infty \sqrt{\frac{1}{n\pi-\frac{\pi}{2}}}e^{-(n\pi-\frac{\pi}{2})x}  \leq  \sqrt{\frac{2}{\pi}} \sum_{n=1}^\infty e^{-(n\pi-\frac{\pi}{2})x} = \sqrt{\frac{2}{\pi}}\frac{e^{-\frac{\pi x}{2}}}{1-e^{-\pi x}}.
\end{align}
In addition, observe that 
\begin{align}\label{ineq : last term}
    \int_0^{\frac{\pi}{2}x} \frac{e^{-t}}{\sqrt{t}}dt \leq \min\left\{\sqrt{2\pi x},~  \sqrt{\pi}\right\}.
\end{align}
Therefore, combining \eqref{ineq : integral estimates}, \eqref{ineq : series estimates} and \eqref{ineq : last term}, it yields
\begin{align}\label{eg : g0 ineq in proof}
     g_0(x) \leq \min \left\{\frac{1}{\pi}e^{-\frac{\pi}{2}|x|}(2 + \frac{\sqrt{2}}{\sqrt{|x|}}),~ \frac{1}{\pi}\sqrt{\frac{2}{|x|}}\frac{e^{-\frac{\pi |x|}{2}}}{1-e^{-\pi |x|}}\right\}  \leq  \frac{{C}_{\mathcal{Y}_0,0}}{\pi} \frac{e^{-\frac{\pi |x|}{2}}}{\sqrt{|x|}}
\end{align}
for all $x \neq 0$, where we used the parity of $g_0$ and where 
\[
{C}_{\mathcal{Y}_0,0} \bydef \max_{s >0}\min\left\{\sqrt{s}+\sqrt{2}, \frac{\sqrt{2}}{1-e^{-\pi s}}\right\}.
\]
Let $x \neq 0$, then
\begin{align}\label{ineq : g1}
    g_1(x) = \frac{1}{\pi \sqrt{T}}\int_{1}^\infty\frac{e^{-\frac{s}{\sqrt{T}}|x|}}{\sqrt{s^2-1}}ds =  \frac{e^{-\frac{|x|}{\sqrt{T}}}}{\pi \sqrt{T}} \int_{0}^\infty\frac{e^{-\frac{s}{\sqrt{T}}|x|}}{\sqrt{s(s+2)}}ds \leq \frac{e^{-\frac{|x|}{\sqrt{T}}}}{\sqrt{2\pi T^{\frac{1}{2}}|x|}}.
\end{align}
 Now that $g_0$ and $g_1$ have been estimated, we can estimate $f_{\mathcal{Y}_0,T} = g_0*g_1.$ Let $y >0$, then
\begin{align}
    |f_{\mathcal{Y}_0,T}(y)| &\leq \frac{\mathcal{C}}{\pi \sqrt{T}} \int_\R \frac{e^{-\frac{\pi |x|}{2}}}{\sqrt{|x|}} \int_{1}^\infty \frac{e^{-\frac{s}{\sqrt{T}}|y-x|}}{\sqrt{s^2-1}}dsdx\\
    &=  \frac{\mathcal{C}}{\pi \sqrt{T}} \int_{1}^\infty \int_{-\infty}^0 \frac{e^{\frac{\pi x}{2}}}{\sqrt{-x}} \frac{e^{-\frac{s}{\sqrt{T}}(y-x)}}{\sqrt{s^2-1}}dsdx 
    + \frac{\mathcal{C}}{\pi \sqrt{T}} \int_{1}^\infty \int_{0}^y \frac{e^{-\frac{\pi x}{2}}}{\sqrt{x}}  \frac{e^{-\frac{s}{\sqrt{T}}(y-x)}}{\sqrt{s^2-1}}dsdx\\
    &~~~+ \frac{\mathcal{C}}{\pi \sqrt{T}} \int_{1}^\infty \int_{y}^\infty \frac{e^{-\frac{\pi x}{2}}}{\sqrt{x}}  \frac{e^{\frac{s}{\sqrt{T}}(y-x)}}{\sqrt{s^2-1}}dsdx.
\end{align}
Denoting $a_0 \bydef \min\{\frac{\pi}{2},~\frac{1}{\sqrt{T}}\}$, notice that
\begin{align*}
   \int_{1}^\infty \int_{-\infty}^0 \frac{e^{\frac{\pi x}{2}}}{\sqrt{-x}} \frac{e^{-\frac{s}{\sqrt{T}}(y-x)}}{\sqrt{s^2-1}}dsdx = \int_1^\infty \frac{\sqrt{\pi}}{\sqrt{\frac{\pi}{2} + \frac{s}{\sqrt{T}}}} \frac{e^{\frac{-sy}{\sqrt{T}}}}{\sqrt{s^2-1}}ds \leq  e^{-a_0y}\int_1^\infty\frac{\sqrt{\pi}}{\sqrt{\frac{\pi}{2} + \frac{s}{\sqrt{T}}}} \frac{1}{\sqrt{s^2-1}}ds.
\end{align*}
Then, we have 
\begin{align*}
    \int_1^\infty\frac{\sqrt{\pi}}{\sqrt{\frac{\pi}{2} + \frac{s}{\sqrt{T}}}} \frac{1}{\sqrt{s^2-1}}ds \leq T^{\frac{1}{4}}\sqrt{\pi}\int_1^\infty\frac{1}{\sqrt{s-1}(s+a_0\sqrt{T}) }ds = \frac{T^{\frac{1}{4}}\pi^{\frac{3}{2}}}{\sqrt{1+{a_0}T^{\frac{1}{2}}}}.
\end{align*}
Similarly,
\begin{align*}
   \int_{1}^\infty \int_{y}^\infty \frac{e^{-\frac{\pi x}{2}}}{\sqrt{x}}  \frac{e^{\frac{s}{\sqrt{T}}(y-x)}}{\sqrt{s^2-1}}dsdx &=  e^{-\frac{\pi y}{2}}\int_{1}^\infty \int_{0}^\infty \frac{e^{-\frac{\pi x}{2}}}{\sqrt{x+y}}  \frac{e^{-\frac{s}{\sqrt{T}}x}}{\sqrt{s^2-1}}dsdx\\
   &\leq   e^{-\frac{\pi y}{2}}\int_1^\infty\frac{\sqrt{\pi}}{\sqrt{\frac{\pi}{2} + \frac{s}{\sqrt{T}}}} \frac{1}{\sqrt{s^2-1}}ds\\
   & \leq \frac{T^{\frac{1}{4}}\pi^{\frac{3}{2}}}{\sqrt{1+{a_0}T^{\frac{1}{2}}}} e^{-a_0y}.
\end{align*}
Finally, using \eqref{ineq : g1}, we get
\begin{align*}
    \frac{1}{\pi \sqrt{T}}\int_{1}^\infty \int_{0}^y \frac{e^{-\frac{\pi x}{2}}}{\sqrt{x}}  \frac{e^{-\frac{s}{\sqrt{T}}(y-x)}}{\sqrt{s^2-1}}dsdx &\leq \frac{1}{\sqrt{2\pi T^{\frac{1}{2}}}}\int_{0}^y \frac{e^{-\frac{\pi x}{2}}}{\sqrt{x}}\frac{e^{-\frac{(y-x)}{\sqrt{T}}}}{\sqrt{y-x}}dx \\
    &\leq  \frac{1}{\sqrt{2\pi T^{\frac{1}{2}}}}e^{-\min\{\frac{\pi}{2}, \frac{1}{\sqrt{T}}\}y} \int_{0}^{y} \frac{1}{\sqrt{x}}\frac{1}{\sqrt{y-x}}dx \\
    &= \frac{\sqrt{\pi}}{\sqrt{2 T^{\frac{1}{2}}}}  e^{-a_0y}. 
\end{align*}
Using the parity of $f_{\mathcal{Y}_0,T}$, this concludes the proof for $f_{\mathcal{Y}_0,T}$ when $T>0$. If $T=0$, then 
\begin{align*}
    f_{\mathcal{Y}_0,0} &=  \mathcal{F}^{-1}\left(\frac{m_0(2\pi \cdot)}{l_\nu(2\pi \cdot)}\right)\\
    &=   \mathcal{F}^{-1}\left(m_0(2\pi \cdot)\right)*\mathcal{F}^{-1}\left(\frac{1}{{1+\nu(2\pi \cdot)^2}}\right)
\end{align*}
where $\nu = \frac{4}{\pi^2}$ (cf. \eqref{def : nu}) if $T=0$. Now, using that 
\begin{align*}
    \mathcal{F}^{-1}\left(\frac{1}{{1+\nu(2\pi \cdot)^2}}\right)(x) = \frac{\pi}{4}{e^{-\frac{\pi}{2}|x|}}
\end{align*}
for all $x \in \R$, we get
\begin{align}
    |f_{\mathcal{Y}_0,0}(y)| &\leq \frac{{C}_{\mathcal{Y}_0,0}}{4} \int_\R \frac{e^{-\frac{\pi |x|}{2}}}{\sqrt{|x|}} e^{-\frac{\pi}{2}|x-y|} dx
\end{align}
for all $y \in \R,$ where we used \eqref{eg : g0 ineq in proof}. Let $0 \leq  y$, then
\begin{align}
   \int_\R \frac{e^{-\frac{\pi |x|}{2}}}{\sqrt{|x|}} e^{-\frac{\pi}{2}|x-y|} dx &\leq e^{-\frac{\pi}{2}y}\int_{-\infty}^0 \frac{e^{{\pi x}}}{\sqrt{-x}} dx + \int_0^y \frac{e^{-\frac{\pi y}{2}}}{\sqrt{x}}  dx + e^{\frac{\pi}{2}y}\int_y^{\infty} \frac{e^{-{\pi x}}}{\sqrt{x}} dx\\
   &=e^{-\frac{\pi}{2}y} + 2\sqrt{y}e^{-\frac{\pi}{2}y} + e^{-\frac{\pi}{2}y}\int_0^{\infty} \frac{e^{-{\pi x}}}{\sqrt{x+y}} dx\\
   & \leq 2e^{-\frac{\pi}{2}y}\left(1 + \sqrt{y}\right).
\end{align}
Now, using that $e^{-(\frac{\pi}{2}-1)y}\left(1 + \sqrt{y}\right) \leq 2$ for all $y \geq 0$, we obtain that
\begin{align*}
     \int_\R \frac{e^{-\frac{\pi |x|}{2}}}{\sqrt{|x|}} e^{-\frac{\pi}{2}|x-y|} dx \leq 4 e^{-y}.
\end{align*}
Therefore, using the parity of $f_{\mathcal{Y}_0,0}$ we obtain that
\begin{align*}
    \left|f_{\mathcal{Y}_0,0}(x)\right| &\leq  C_{\mathcal{Y}_0,0}e^{-|x|}
\end{align*}
for all $x \in \R$. This finishes the proof for $f_{\mathcal{Y}_0,0}.$

Let us now take care of $f_{i,T}$ ($i \in \{0,1,2\}$). The proof is based on Cauchy's theorem and on the results obtained in Proposition \ref{prop : analyticity and value of a}. Notice first that 
\begin{align*}
    |\text{tanh}(\xi+ia)|^2 = \frac{|\cosh(2\xi) - \cos(2a)|}{|\cosh(2\xi) + \cos(2a)|} = \frac{|1 - \frac{\cos(2a)}{\cosh(2\xi)}|}{|1 + \frac{\cos(2a)}{\cosh(2\xi)}|}
\end{align*}
therefore
\begin{align}\label{ineq : comput f1}
  \frac{1}{C_a} = \frac{1-|\cos(2a)|}{1+|\cos(2a)|}  \leq |\text{tanh}(\xi+ia)|^2 \leq \frac{1+|\cos(2a)|}{1-|\cos(2a)|} =C_a
\end{align}
for all $\xi \in \R$. Moreover, given $T \geq 0$, we have 
\begin{equation}\label{ineq : comput f2}
    |1+T(\xi+ia)^2|^2 = \left(1+T(\xi^2-a^2)\right)^2 + 4T\xi^2a^2. 
\end{equation}
Now, let $x \geq 0$, then using Proposition \ref{prop : analyticity and value of a}, Cauchy's theorem is applicable to $\frac{1}{(l)l_\nu}$ on $S$. Consequently
\begin{align*}
    f_{0,T}(x) = \frac{ e^{-ax}}{2\pi} \int_{\mathbb{R}}\frac{1}{(m_T(\xi+ia)-c)l_\nu(\xi+ia)}e^{i\xi x}d\xi,
\end{align*}
which implies that
\begin{align*}
    |f_{0,T}(x)| \leq \frac{ e^{-ax}}{2\pi \sigma_0} \int_{\mathbb{R}}\frac{1}{|l_\nu(\xi+ia)|}d\xi
\end{align*}
using \eqref{eq : alpha assumption}. But now, using \eqref{ineq : comput f2}, we have 
\begin{align*}
     \int_{\mathbb{R}}\frac{1}{|l_\nu(\xi+ia)|}d\xi &=  \int_{\mathbb{R}}\frac{1}{\left(\left(1+\nu(\xi^2-a^2)\right)^2 + 4\nu\xi^2a^2\right)^{\frac{1}{2}}}d\xi\\
     &\leq \int_{\mathbb{R}}\frac{1}{\left((1-\nu a^2)^2 + \nu^2\xi^4\right)^{\frac{1}{2}}}d\xi\\
     &\leq  \frac{2}{1-\nu a^2} + \frac{2}{\nu} = 2\pi \sigma_0 C_{0,T}.
\end{align*}
This concludes the proof for $f_{0,T}$. Then, switching to $f_{1,T}$ and using a similar analysis, we get
\begin{align*}
    f_{1,T}(x) = \frac{ e^{-ax}}{2\pi} \int_{\mathbb{R}}\frac{\xi+ia}{(m_T(\xi+ia)-c)l_\nu(\xi+ia)}e^{i\xi x}d\xi.
\end{align*}
If $T>0$, using \eqref{eq : alpha assumption}, we get
\begin{align*}
    |f_{1,T}(x)| &\leq \frac{ e^{-ax}}{2\pi} \int_{\mathbb{R}}\frac{|\xi|+a}{|l(\xi)|\left(\left(1+\nu(\xi^2-a^2)\right)^2 + 4\nu\xi^2a^2\right)^{\frac{1}{2}}}d\xi\\
    &\leq \frac{ e^{-ax}}{2\pi}\left( \int_{|\xi| \leq 1}\frac{|\xi|+a}{\sigma_0(1-\nu a^2)}d\xi  + \int_{|\xi| \geq  1}\frac{|\xi|+a}{\sigma_1\sqrt{T} \nu |\xi|^{\frac{5}{2}}} d\xi\right)\\
     &\leq \frac{ e^{-ax}}{2\pi}\left( \frac{2(1+a)}{\sigma_0(1-\nu a^2)}  + \frac{4(1+a)}{\sigma_1\sqrt{T} \nu}\right) = C_{1,T} e^{-ax}.
\end{align*}
If $T = 0$, then notice that
\begin{align*}
    \frac{1}{(l(\xi))l_\nu(\xi)} = -\frac{1}{cl_\nu(\xi)} + \frac{1}{l_{\nu}(\xi)}\left(\frac{1}{m_0(\xi)-c} + \frac{1}{c}\right) = -\frac{1}{cl_\nu(\xi)} + \frac{1}{cl_{\nu}(\xi)}\left(\frac{m_0(\xi)}{m_0(\xi)-c}\right).
\end{align*}
Therefore, we obtain 
\begin{align*}
    f_{1,T} &= -\frac{1}{c}\mathcal{F}^{-1}\left(\frac{2\pi \xi}{l_\nu(2\pi \xi)}\right) + \mathcal{F}^{-1}\left( \frac{2\pi \xi }{cl_{\nu}(2\pi\xi)}\left(\frac{m_0(2\pi\xi)}{m_0(2\pi\xi)-c}\right)\right).
\end{align*}
But using \cite{poularikas2018transforms}, we know that
\begin{align}\label{fourier transform of lnu}
    \mathcal{F}^{-1}\left(\frac{2\pi i \xi}{l_\nu(2\pi \xi)}\right) = \mathcal{F}^{-1}\left(\frac{2\pi i \xi}{1 + \nu(2\pi\xi)^2}\right)  = -\text{sign}(x)\frac{e^{-\frac{|x|}{\sqrt{\nu}}}}{2\nu}.
\end{align}
Moreover, using \eqref{ineq : comput f1}, we have 
\begin{equation}\label{step m0 for f1}
   |m_0(\xi+ia)| \leq \left(C_a\right)^{\frac{1}{4}} \frac{1}{(\xi^2 + a^2)^{\frac{1}{4}}}. 
\end{equation}
Therefore, combining \eqref{eq : alpha assumption} and \eqref{step m0 for f1}, we obtain
\begin{align*}
    \frac{ |\xi + ia| }{|cl_{\nu}(\xi+ia)|}\left|\frac{m_0(\xi+ia)}{m_0(\xi+ia)-c}\right| \leq \frac{(\xi^2 + a^2)^{\frac{1}{4}}}{|c|\sigma_0} \frac{\left(C_a\right)^{\frac{1}{4}}}{\left(\left(1+\nu(\xi^2-a^2)\right)^2 + 4\nu\xi^2a^2\right)^{\frac{1}{2}}}.
\end{align*}
Similarly as above, Cauchy's theorem combined with \eqref{fourier transform of lnu} yields
\begin{align*}
    |f_{1,0}(x)| &\leq \frac{e^{-\frac{|x|}{\sqrt{\nu}}}}{2|c|\nu} +  \frac{e^{-a|x|}}{2\pi}\int_\R \frac{(\xi^2 + a^2)^{\frac{1}{4}}}{|c|\sigma_0}\frac{\left(C_a\right)^{\frac{1}{4}}}{\left(\left(1+\nu(\xi^2-a^2)\right)^2 + 4\nu\xi^2a^2\right)^{\frac{1}{2}}} d\xi\\
    &\leq \frac{e^{-\frac{|x|}{\sqrt{\nu}}}}{2|c|\nu} +  \frac{e^{-a|x|}}{\pi}\int_0^1 \frac{1+\sqrt{a}}{|c|\sigma_0} \frac{\left(C_a\right)^{\frac{1}{4}}}{1-\nu a^2} d\xi + \frac{e^{-a|x|}}{\pi}\int_1^\infty \frac{1+\sqrt{a}}{|c|\sigma_0} \frac{\left(C_a\right)^{\frac{1}{4}}}{\nu\xi^{\frac{3}{2}}} d\xi\\
   & = \frac{e^{-\frac{|x|}{\sqrt{\nu}}}}{2|c|\nu} +  \frac{e^{-a|x|}}{\pi} \frac{1+\sqrt{a}}{|c|\sigma_0} \frac{\left(C_a\right)^{\frac{1}{4}}}{1-\nu a^2}  + \frac{2e^{-a|x|}}{\pi} \frac{1+\sqrt{a}}{|c|\sigma_0} \frac{\left(C_a\right)^{\frac{1}{4}}}{\nu}.
\end{align*}
We conclude the proof for $f_{1,0}$ noticing that $\frac{1}{\sqrt{\nu}}\geq a$ by assumption on $a$.

Finally, let us focus on $f_{2,T}$. Let $T>0$ and  $\xi > 0$, then
\begin{align*}
    \frac{1}{l(\xi)} = \frac{1}{\sqrt{T|\xi|}} + \frac{1}{l(\xi)} - \frac{1}{\sqrt{T|\xi|}}.
\end{align*}
First, notice that given $x \neq 0$, we have
\begin{equation}\label{eq : fourier transform sqrt}
     \mathcal{F}^{-1}\left(\frac{1}{\sqrt{T|\xi|}}\right)(x) = \frac{1}{\sqrt{2\pi T|x|}}.
\end{equation}
Then, 
\begin{align*}
    \frac{1}{\sqrt{T|\xi|}} - \frac{1}{l(\xi)}   &= \frac{1}{\sqrt{T\xi }}\left(\frac{\sqrt{\frac{\tanh(\xi)(1+T\xi^2)}{\xi}} - c - \sqrt{T\xi}}{\sqrt{\frac{\tanh(\xi)}{\xi}(1+T\xi^2)}-c} \right)\\
    &=\frac{\sqrt{\tanh(\xi) + \frac{\tanh(\xi)}{T\xi^2}} - \frac{c}{\sqrt{T\xi}} - 1}{\sqrt{\frac{\tanh(\xi)}{\xi}(1+T\xi^2)}-c}.
\end{align*}
Therefore, using that $|\tanh(\xi)| \leq 1$ and $|\tanh(\xi)| \leq |\xi|$ for all $\xi \in \R$ combined with \eqref{eq : alpha assumption}, we get
\begin{align}\label{square root f2}
    \bigg|\frac{1}{\sqrt{T|\xi|}} - \frac{1}{l(\xi)}\bigg| 
    &\leq  \frac{1}{\sigma_0}\left(2 + \frac{1+|c|}{\sqrt{T|\xi|}}\right).
\end{align}
 
Let $g : \R^+ \to \R$ be defined as  $g(\xi) = \tanh(\xi) + \frac{\tanh(\xi)}{T\xi^2}$ for all $\xi \in \R^+$. Then notice that
\[
g'(\xi) = 1-\tanh(\xi)^2 + \frac{1-\tanh(\xi)^2}{T\xi^2} - 2\frac{\tanh(\xi)}{T\xi^3}
\]
and $g'(\xi) \leq 2 e^{-2\xi} - 2 \frac{\tanh(\xi)}{T\xi^3} \leq 2e^{-2\xi}\left( 1 -  \frac{\tanh(\xi_0)e^{2\xi}}{T\xi^3}\right)$ for all $\xi \geq \xi_0$. Then using that $e^{2\xi} \geq \frac{2}{3}\xi^4$ for all $\xi \geq 0$, we get
$g'(\xi) \leq 2e^{-2\xi}\left( 1 -  \frac{2\tanh(\xi_0)\xi}{3T}\right)$. Therefore, choosing $\xi_0$ such that $1 \leq \frac{2\tanh(\xi_0)\xi_0}{3T}$ we obtain that $g'(\xi) \leq 0$ for all $\xi \geq \xi_0$ and so $g$ is decreasing for all $\xi \geq \xi_0$. In particular,
\[
\sqrt{\tanh(\xi) + \frac{\tanh(\xi)}{T\xi^2}} -1 \geq  1 -1 =0
\]
for all $\xi \geq \xi_0$. Moreover, this implies that
\begin{equation}\label{ineq : tanh increasing tata}
    \left|\sqrt{\tanh(\xi) + \frac{\tanh(\xi)}{T\xi^2}} -1\right| = \sqrt{\tanh(\xi) + \frac{\tanh(\xi)}{T\xi^2}} -1 \leq \sqrt{1 + \frac{1}{T\xi^2}} -1 \leq \frac{1}{2T\xi^2}
\end{equation}
for all $\xi \geq \xi_0$ as $\xi_0 \geq \frac{1}{\sqrt{T}}$.  
 
Let $\xi \geq \xi_0$, then noticing that $\frac{c}{T\xi}\left(l(\xi)\right) = \frac{c}{\sqrt{T\xi}}\left(\sqrt{\tanh(\xi) + \frac{\tanh(\xi)}{T\xi^2}}-\frac{c}{\sqrt{T\xi}}\right)$ and using \eqref{ineq : tanh increasing tata}, we get
\begin{align*}
      \bigg|\frac{1}{\sqrt{T|\xi|}} - \frac{1}{l(\xi)} + \frac{c}{T\xi}\bigg| 
    & = \bigg| \frac{\sqrt{\tanh(\xi) + \frac{\tanh(\xi)}{T\xi^2}} -1 +\frac{c}{\sqrt{T\xi}}\left(\sqrt{\tanh(\xi) + \frac{\tanh(\xi)}{T\xi^2}}-\frac{c}{\sqrt{T\xi}}-1\right) }{\sqrt{\tanh(\xi)(\frac{1+T\xi^2}{\xi})}-c}\bigg|\\
    & = \bigg| \frac{\left(\sqrt{\tanh(\xi) + \frac{\tanh(\xi)}{T\xi^2}} -1\right)\left(1 + \frac{c}{\sqrt{T\xi}}\right) -\frac{c^2}{T\xi} }{\sqrt{\tanh(\xi)(\frac{1+T\xi^2}{\xi})}-c}\bigg|\\
    &  \leq  \frac{ \frac{1}{2T\xi^2}\left(1+\frac{|c|}{\sqrt{T\xi}}\right)  +\frac{c^2}{{T\xi}} }{\sqrt{\tanh(\xi)(\frac{1+T\xi^2}{\xi})}-c}.
\end{align*}

Now, for all $\xi \geq \xi_0$,
\begin{align}\label{ineq : estimation at infinity 3 terms}
    \sqrt{\tanh(\xi)(\frac{1+T\xi^2}{\xi})}-c \geq \sqrt{\tanh(\xi_0)T\xi} - c \geq \frac{1}{2}\sqrt{\tanh(\xi_0)T\xi}
\end{align}
as $\frac{1}{2}\sqrt{\tanh(\xi_0)T\xi_0} \geq c$ by assumption on $\xi_0.$
Therefore,
\begin{align*}
    \bigg|\frac{1}{\sqrt{T|\xi|}} - \frac{1}{l(\xi)} + \frac{c}{T\xi}\bigg|  \leq \frac{2}{\sqrt{\tanh(\xi_0)T}}\left(\frac{1}{2T\xi^{\frac{5}{2}}} + \frac{|c|}{2T^{\frac{3}{2}}\xi^{3}} + \frac{c^2}{T\xi^{\frac{3}{2}}}\right)
\end{align*}
for all $\xi \geq \xi_0$. We are now set up to compute the inverse Fourier transform of $f_{2,T}$. Let $x>0$ then, using the parity of $f_{2,T}$, we have
\begin{align}\label{computation fourier transform f2}
\nonumber
    &~~~~\mathcal{F}^{-1}(f_{2,T}-\frac{1}{\sqrt{T|\xi|}})(x) \\ \nonumber
    &= \frac{1}{2\pi}\int_\mathbb{R}(\frac{1}{l(\xi)}-\frac{1}{\sqrt{T|\xi|}})e^{-i\xi x}d\xi\\ \nonumber
    & = \frac{1}{\pi}\int_0^\infty (\frac{1}{l(\xi)}-\frac{1}{\sqrt{T\xi}})\cos(\xi x)d\xi\\ \nonumber
    & = \frac{1}{\pi}\int_0^{\xi_0}(\frac{1}{l(\xi)}-\frac{1}{\sqrt{T\xi}})\cos(\xi x)d\xi + \frac{1}{\pi}\int_{\xi_0}^\infty (\frac{1}{l(\xi)}-\frac{1}{\sqrt{T\xi}}+ \frac{c}{T\xi})\cos(\xi x)d\xi\\
    &~~~~-   \frac{1}{\pi}\int_{\xi_0}^\infty  \frac{c}{T\xi}\cos(\xi x)d\xi .
\end{align}

The last term of \eqref{computation fourier transform f2} is a cosine integral. We can simplify the integral as follows
\begin{align*}
    \int_{\xi_0}^\infty  \frac{c}{T\xi}\cos(\xi x)d\xi &= \frac{c}{T}\int_{\xi_0x}^\infty  \frac{\cos(\xi)}{\xi}d\xi \\
     &= \frac{c}{T}\left(-C_{Euler} - \ln(\xi_0x) + \int^{\xi_0x}_0  \frac{1-\cos(\xi)}{\xi}d\xi\right),
\end{align*}
where $C_{Euler}$ is the  Euler–Mascheroni constant.
Now suppose that $\xi_0 x >1$, then we obtain 
\begin{align*}
    \left|\int_{\xi_0}^\infty  \frac{c}{T\xi}\cos(\xi x)d\xi\right| &\leq  \frac{|c|}{T}\left(C_{Euler} + \ln(\xi_0) + \int^{1}_0  \frac{|1-\cos(\xi)|}{\xi}d\xi + \int^{\xi_0 x}_1  \frac{|1-\cos(\xi)|}{\xi}d\xi\right)\\
    &\leq  \frac{|c|}{T}\left(C_{Euler} + \ln(\xi_0) + \int^{1}_0  \frac{\xi^2}{2\xi}d\xi + \int^{\xi_0 x}_1  \frac{2}{\xi}d\xi\right)\\
     &\leq  \frac{|c|}{T}\left(C_{Euler}  + \frac{1}{4} + 3\ln(\xi_0)\right)\\
\end{align*}
as $0<x \leq 1$ and $\xi_0 \geq 1$ by assumption. Similarly, if $0<\xi_0 x \leq 1$ we obtain that
\begin{align*}
    |\int_{\xi_0}^\infty  \frac{c}{T\xi}\cos(\xi x)d\xi| 
     &\leq  \frac{|c|}{T}\left(C_{Euler} + |\ln(\xi_0 x)| + \frac{1}{4}  \right) \leq \frac{|c|}{T}\left(C_{Euler} + |\ln(x)| + \frac{1}{4}  \right).
\end{align*}
Combining both cases, for all $0<x\leq 1$ we obtain that
\begin{align}\label{cosine integral computation}
\nonumber
    |\int_{\xi_0}^\infty  \frac{c}{T\xi}\cos(\xi x)d\xi| 
     &\leq  \frac{|c|}{T}\left(C_{Euler} + |\ln(x)| + \frac{1}{4} + 3\ln(\xi_0) \right)\\
     &\leq \frac{|c|}{T}\left(1+ \frac{1}{\sqrt{x}}  + 3\ln(\xi_0)\right) \leq \frac{|c|}{T\sqrt{|x|}}\left(2+3\ln(\xi_0)\right)
\end{align}
as $C_{Euler} + \frac{1}{4} \leq 1.$ Then, equation \eqref{ineq : estimation at infinity 3 terms} yields
\begin{align}\label{second term in the f2 comput}
\nonumber
     \frac{1}{\pi}\left|\int_{\xi_0}^\infty (\frac{1}{l(\xi)}-\frac{1}{\sqrt{T\xi}}+ \frac{c}{T\xi})\cos(\xi x)d\xi \right|&\leq  \frac{1}{\pi}\int_{\xi_0}^\infty \frac{2}{\sqrt{\tanh(\xi_0)T}}\left(\frac{1}{2T\xi^{\frac{5}{2}}} + \frac{|c|}{2T^{\frac{3}{2}}\xi^{3}} + \frac{c^2}{T\xi^{\frac{3}{2}}}\right)\\
     & \leq \frac{2}{\pi\sqrt{\tanh(\xi_0)T}\sqrt{\xi_0}}\left(\frac{1}{3T} + \frac{|c|}{4T^{\frac{3}{2}}} + \frac{2c^2}{T}\right).
\end{align}
Moreover, using \eqref{square root f2} we get
\begin{align}\label{first term in the f2 comput}
    \frac{1}{\pi}\left|\int_0^{\xi_0}(\frac{1}{l(\xi)}-\frac{1}{\sqrt{T\xi}})\cos(\xi x)d\xi\right| \leq \frac{1}{\pi}\int_0^{\xi_0}\frac{1}{\sigma_0}\left(2 + \frac{1+|c|}{\sqrt{T|\xi|}}\right) \leq \frac{2\xi_0}{\pi\sigma_0}+ \frac{2\sqrt{\xi_0}(1+|c|)}{\pi \sigma_0\sqrt{T}}.
\end{align}

Finally, combining \eqref{cosine integral computation}, \eqref{second term in the f2 comput} and \eqref{first term in the f2 comput}, we obtain that
\begin{align}\label{eq : fourier transform f minus sqrt}
    \bigg|\mathcal{F}^{-1}(f_{2,T}-\frac{1}{\sqrt{T|\xi|}})(x)\bigg|
    & \leq \frac{\tilde{K}_{1,T}}{\sqrt{|x|}}
\end{align}
for all  $0<x\leq 1$ where 
\begin{align*}
    \tilde{K}_{1,T} \bydef \frac{2\xi_0}{\pi\sigma_0}+ \frac{2\sqrt{\xi_0}(1+c)}{\pi \sigma_0\sqrt{T}} + \frac{2}{\pi\sqrt{\tanh(\xi_0)T}\sqrt{\xi_0}}\left(\frac{1}{3T} + \frac{|c|}{4T^{\frac{3}{2}}} + \frac{2c^2}{T}\right) + \frac{|c|}{T\pi}\left(2  + 3\ln(\xi_0)\right).
\end{align*}

Consequently, combining \eqref{eq : fourier transform sqrt} and \eqref{eq : fourier transform f minus sqrt} with the parity of $f_{2,T}$, we get
\begin{align*}
    \bigg|\mathcal{F}^{-1}(f_{2,T})(x)\bigg| \leq \frac{\tilde{K}_{2,T} + \frac{1}{\sqrt{2\pi T}}}{\sqrt{|x|}} = \frac{K_{1,T,c}}{\sqrt{|x|}}
\end{align*}
for all $0 < |x| \leq 1$. 

Let $x >1$, then using Cauchy's theorem,
\begin{align*}
    \mathcal{F}^{-1}(f_{2,T})(x) = \frac{e^{-ax}}{2\pi}\int_{\mathbb{R}}e^{ix\xi}f_{2,T}(\xi+ia)d\xi.
\end{align*}
Now using integration by parts we obtain
\begin{align*}
    |\mathcal{F}^{-1}(f_{2,T})(x)| \leq \frac{e^{-ax}}{2\pi|x|}\int_\mathbb{R}|f_{2,T}'(\xi+ia)|d\xi .
\end{align*}

Defining $u$ as  $u(\xi) \bydef \frac{\tanh(\xi)(1+T\xi^2)}{\xi}$ for all $\xi \in \R$ and letting $z \bydef {\xi + ia}$, we get
\begin{align*}
    f_{2,T}'(\xi) &=\frac{u'(z)}{2\sqrt{u(z)}(\sqrt{u(z)}-c)^2}
    \\&=\frac{1}{2\sqrt{u(z)}(\sqrt{u(z)}-c)^2}\frac{(1-Tz^2)\tanh(z) - z\text{sech}(z)^2(1+Tz^2)}{z^2}.
\end{align*}

First, assume that $\xi \geq \xi_0$, then
\begin{align}\label{eq : estimation sech}
    |\text{sech}(z)| = \frac{|e^{\xi-ia} + e^{-\xi+ia}|}{|2\cosh(2\xi)+2\cos(2a)|}
    = \frac{1}{2\cosh(2\xi)}\frac{|e^{\xi-ia} + e^{-\xi+ia}|}{|1-\frac{|\cos(2a)|}{\cosh(2\xi)}|} 
    \leq C_a{e^{-|\xi|}}. 
\end{align}

Moreover using \eqref{ineq : comput f1}, we get
\begin{align}\label{ineq : u estimation below}
    |u(z)|^2 = \frac{|\tanh(z)^2(1+Tz^2)^2|}{|z|^2} \geq \frac{1}{C_a}\frac{|1+Tz^2|^2}{|z^2|}.
\end{align}

Then,
\begin{align}
\nonumber
    |1+Tz^2|^2 &= (T(\xi^2 - a^2)+1)^2 + 4T^2\xi^2a^2\\ \nonumber
    &= T^2(\xi^2-a^2)^2 +2T(\xi^2-a^2) +1 + 4T^2\xi^2a^2\\ \nonumber
    & = T^2\xi^4 -2T^2a^2\xi^2 + T^2a^4 + 2T\xi^2 - 2Ta^2 + 1 + 4T^2\xi^2a^2\\ \nonumber
    & = T^2\xi^4 + \xi^2\left(2T +  2T^2a^2 \right) + T^2a^4 - 2Ta^2 + 1\\ \nonumber
   & = T^2\xi^4 + \xi^2\left(2T +  2T^2a^2 \right) + (Ta^2-1)^2\\
   & \geq T^2\xi^4 \label{eq : estimation last step}.
\end{align}

Therefore, combining \eqref{ineq : u estimation below}  and \eqref{eq : estimation last step}, we obtain  
\begin{align}\label{eq : estimation of u without squared}
    |u(z)| \geq \frac{1}{\sqrt{C_a}}T|\xi|.
\end{align}
Now, using that $\frac{1}{2}\left(\frac{1}{C_a}\right)^{\frac{1}{4}}\sqrt{T\xi}-|c| \geq \frac{1}{2}\left(\frac{1}{C_a}\right)^{\frac{1}{4}}\sqrt{T\xi_0}-|c| \geq 0$ by assumption on $\xi_0$, we get
\begin{align}\label{eq : last step estimation u}
    |\sqrt{|u(z)|}-c| \geq \left(\frac{1}{C_a}\right)^{\frac{1}{4}}\frac{\sqrt{T|\xi|}}{2}.
\end{align}

Therefore, combining \eqref{ineq : comput f1},  \eqref{eq : estimation sech}, \eqref{eq : estimation of u without squared} and \eqref{eq : last step estimation u} and using that $C_a \geq 1$, it yields 
\begin{align*}
    |f_{2,T}'(z)| &\leq 2C_a^2\left(\frac{1}{T|\xi|}\right)^{\frac{3}{2}}\left( (1 + T) + (1+ T|z|)e^{-2|\xi|}\right)
\end{align*}
for all $\xi \geq \xi_0$ as $|z| = |\xi + ia| \geq 1$ (using that $\xi \geq \xi_0 \geq 1$).  But then notice that
\begin{align*}
    \frac{1+T|z|}{(T|\xi|)^{\frac{3}{2}}} \leq \frac{1+ T|\xi| + aT}{(T|\xi|)^{\frac{3}{2}}}  \leq \frac{1+ aT}{(T|\xi_0|)^{\frac{3}{2}}} + \frac{1}{(T|\xi_0|)^{\frac{1}{2}}} \leq 2+a
\end{align*}
as $\xi_0 \geq \max\{1,\frac{1}{\sqrt{T}}\}$
therefore 
\begin{align}\label{eq : f2 computation at infinity}
  |f_{2,T}'(z)| &\leq 2C_a^{2}\left( \frac{(1 + T)}{(T|\xi|)^{\frac{3}{2}}} + (2+a){e^{-2|\xi|}}\right)
\end{align}
for all $|\xi| \geq \xi_0$ using the parity of $f_{2,T}.$

Now suppose that $|\xi| \leq \xi_0$ and let $z = \xi + ia$, then using \eqref{ineq : u estimation below} and \eqref{eq : estimation last step}, we get
\begin{align}\label{estimation from below z small}
    |u(z)|^2 \geq \frac{1}{C_a}\frac{(1-Ta^2)^2}{\xi_0^2+a^2}.
\end{align}
Therefore, combining \eqref{eq : alpha assumption}, \eqref{ineq : comput f1}, \eqref{eq : estimation sech} and \eqref{estimation from below z small}, we obtain
\begin{align}\label{f2 computation at zer0}
    |f_{2,T}'(z)| &\leq \frac{C_a\sqrt{\xi_0^2+a^2}}{2\sigma_0^2(1-Ta^2)^2}\left( (\frac{1}{a^2}+T) + C_ae^{-2|\xi|}(\frac{1}{a}+T\sqrt{\xi_0^2+a^2})\right)
\end{align}
for all $|\xi| \leq \xi_0$. Hence, combining \eqref{eq : f2 computation at infinity} and \eqref{f2 computation at zer0}, we get
\begin{align*}
   &~~~~\frac{1}{2\pi}\int_\mathbb{R}|f_{2,T}'(\xi+ia)|d\xi\\
   &= \frac{1}{2\pi}\int_{|\xi| \leq \xi_0}|f_{2,T}'(\xi+ia)|d\xi + \frac{1}{2\pi}\int_{|\xi| \geq \xi_0}|f_{2,T}'(\xi+ia)|d\xi \\
   &\leq \frac{C_a\sqrt{\xi_0^2+a^2}}{2\pi\sigma_0^2(1-Ta^2)^2}\left( (\frac{1}{a^2}+T) + C_a(\frac{1}{a}+T\sqrt{\xi_0^2+a^2})\right) + \frac{2C_a^{2}}{\pi}\left( \frac{2(1 + T)}{(T|\xi_0|)^{\frac{1}{2}}} + \frac{(2+a)}{2}{e^{-2|\xi_0|}}\right) \\
   &= K_{2,T}
\end{align*}
 To conclude the proof for $f_{2,T}$ ($T>0$), recall that 
we obtained $|f_{2,T}(x)| \leq \frac{K_{1,T,c}}{\sqrt{|x|}}$ for all $|x| \leq 1$ and $|f_{2,T}(x)| \leq \frac{K_{2,T}e^{-a|x|}}{|x|}$ for all $|x| \geq 1$. Therefore, it implies that 
\begin{equation}\label{ineq with square root and 1/x}
  |f_{2,T}(x)| \leq \max\left\{ K_{2,T}, K_{1,T,c}e^{a}\right\}\frac{e^{-a|x|}}{\sqrt{|x|}}   
\end{equation}
for all $x \in \R.$

Finally, we consider the case $T=0$. Observe that
\begin{align*}
    \frac{1}{m_0(\xi)-c} = -\frac{1}{c} + \frac{1}{m_0(\xi)-c} + \frac{1}{c}.
\end{align*}
Then, we have that $\mathcal{F}^{-1}(\frac{1}{c})(x) = \frac{1}{c}\delta(x)$ where $\delta$ is the Dirac-delta function. Now,
let us denote $h(\xi) \bydef \frac{1}{m_0(\xi)-c} + \frac{1}{c}$. Moreover, let $0 < |x| \leq 1$ and let $\xi >0$, then
\begin{align*}
    h(\xi) = -\frac{1}{c^2\sqrt{|\xi|}} +  h(\xi) + \frac{1}{c^2\sqrt{|\xi|}}
\end{align*}
and notice that 
\begin{align}
    \mathcal{F}^{-1}\left(-\frac{1}{c^2\sqrt{|\xi|}}\right)(x) = -\frac{1}{c^2\sqrt{2\pi|x|}}. 
\end{align}
Moreover, we have
\begin{align}\label{estimation h at zero}
    \left|h(\xi) + \frac{1}{c^2\sqrt{|\xi|}}\right|  \leq \frac{m_0(\xi)}{|c||m_0(\xi)-c|} +  \frac{1}{c^2\sqrt{|\xi|}} \leq \frac{1}{\sigma_0|c|} + \frac{1}{c^2\sqrt{|\xi|}}
\end{align}
and
\begin{align*}
    h(\xi) + \frac{1}{c^2\sqrt{|\xi|}} + \frac{1}{c^3|\xi|} &= \frac{\sqrt{\tanh(\xi)-1}}{|c|\sqrt{|\xi|}(m_0(\xi)-c)} + \frac{m_0(\xi)}{c^2\sqrt{|\xi|}(m_0(\xi)-c)}  + \frac{1}{c^3|\xi|}\\
    &= \frac{\sqrt{\tanh(\xi)-1}}{|c|\sqrt{|\xi|}(m_0(\xi)-c)} + \frac{m_0(\xi)c^3|\xi| + c^2\sqrt{|\xi|}m_0(\xi) - c^3\sqrt{|\xi|}}{c^5|\xi|^{\frac{3}{2}}(m_0(\xi)-c)} \\
    &= \frac{\sqrt{\tanh(\xi)-1}}{|c|\sqrt{|\xi|}(m_0(\xi)-c)} + \frac{\sqrt{\tanh(\xi)}-1}{c^2|\xi|(m_0(\xi)-c)} +     \frac{m_0(\xi)}{c^3|\xi|(m_0(\xi)-c)}.
\end{align*}
Notice that $|\tanh(\xi)-1| \leq 2e^{-2\xi}$ for all $\xi \geq 0$, therefore we have
\begin{align}\label{estimation h at infinity}
\nonumber
   \left|h(\xi) + \frac{1}{c^2\sqrt{|\xi|}} + \frac{1}{c^3|\xi|}\right| &\leq  e^{-2\xi}\left(\frac{2}{|c|\sqrt{\xi_0}\sigma_0} + \frac{2}{c^2\xi_0\sigma_0}\right) + \frac{1}{|\xi|^{\frac{3}{2}}c^3\sigma_0} \\
   &\leq \frac{1}{\min\{|c|^3,1\}\sigma_0}\left( \frac{1}{|\xi|^{\frac{3}{2}}} + \frac{4e^{-2\xi}}{\sqrt{\xi_0}} \right) 
\end{align}
for all $\xi \geq \xi_0$ as $\xi_0 \geq 1$.

Therefore, combining \eqref{estimation h at zero} and \eqref{estimation h at infinity}, and using the above reasoning for the case $T>0$ (starting at \eqref{computation fourier transform f2}), we get
\begin{align*}
    \left|\mathcal{F}^{-1}(h)(x)\right|= \left|f_{2,0}(x)+\frac{1}{c}\delta(x)\right| \leq \frac{\tilde{K}_{1,0} + \frac{1}{c^2\sqrt{2\pi}}}{\sqrt{|x|}} = \frac{K_{1,0}}{\sqrt{|x|}}
\end{align*}
for all $0 < x\leq 1$ where
\[
\tilde{K}_{1,0} \bydef \frac{1}{\pi \min\{1,|c|^3\}\sqrt{\xi_0}\sigma_0}\left(2+4e^{-2\xi_0}\right) + \frac{1}{\pi \sigma_0 |c|} + \frac{2}{\pi c^2}  +  \frac{1}{\pi |c|^3}\left(2+3\ln(\xi_0)\right).
\]

Now, let $\xi \geq 0$ and denote $z = ia + \xi$, then 
\begin{align*}
    f_{2,0}'(z) = \frac{-m_0'(z)}{(m_0(z)-c)^2} = \frac{\tanh(z)-z\text{sech}(z)^2}{2m_0(z)z^2(m_0(z)-c)^2} = \frac{\sqrt{\tanh(z)}}{2z^{\frac{3}{2}}(m_0(z)-c)^2} - \frac{\text{sech}(z)^2}{2m_0(z)z(m_0(z)-c)^2}.
\end{align*}
Therefore, using \eqref{ineq : comput f1} and \eqref{eq : estimation sech}, we get
\begin{align}\label{estimation f2 for T=0}
    |f_{2,0}'(z)| \leq \left(C_a\right)^{\frac{1}{4}}\left(\frac{1}{2\sigma_0^2|\xi^2+a^2|^{\frac{3}{4}}} + \frac{e^{-2|\xi|}}{2\sigma_0^2(1-|\cos(2a)|)^2(a^2+\xi^2)^{\frac{1}{4}}}\right).
\end{align}
Finally, using Cauchy's theorem and \eqref{estimation f2 for T=0}, we obtain
\begin{align*}
    \left|\mathcal{F}^{-1}(f_{2,0})(x)\right| &\leq \frac{e^{-a|x|}}{\pi|x|} \int_0^\infty |f_{2,0}'(\xi+ia)|d\xi\\
    &\leq \frac{e^{-a|x|}}{\pi|x|}\left(C_a\right)^{\frac{1}{4}}\left( \frac{1}{2\sigma_0^2}(\frac{1}{a^{\frac{3}{2}}} + 2) + \frac{1}{4\sigma_0^2(1-|\cos(2a)|)^2\sqrt{a}}\right)\\
    &= \frac{K_{2,0}e^{-a|x|}}{|x|}
\end{align*}
for all $|x| \geq 1$. We conclude the proof using \eqref{ineq with square root and 1/x}.
\end{proof}

\subsection{Computation of \texorpdfstring{$a$}{a}, \texorpdfstring{$\sigma_0$}{a1} and \texorpdfstring{$\sigma_1$}{a2}}\label{sec : computation of constants in appendix}

In this subsection we provide the details of the rigorous computations of the constants $a, \sigma_0$ and $\sigma_1$, which are required in the analysis developed in Lemma \ref{lem : computation of f}. Our first goal is to obtain some $0 < a < \min\{\frac{1}{\sqrt{\nu}}, \frac{\pi}{2}\}$ and $\sigma_0 >0$ such that 
\begin{align}\label{ineq : requirement on a}
\nonumber
    |m_T(\xi + ia)-c|,  |l(\xi)|&\geq \sigma_0 ~~\text{for all}~~ \xi \in \R\\
    |m_T(z)-c| &> 0 ~~\text{for all}~~ z \in S
\end{align}
where $S = \{z \in \mathbb C, ~~ |\text{Im}(z)| \leq a\}$. The analysis of the constant $\sigma_1$ will be presented later on in this section. The code for the computation of the constants $a, \sigma_0$ and $\sigma_1$ is available at \cite{julia_cadiot}.

In particular, following Lemma \ref{prop : analyticity and value of a}, we choose $0 < a < \min\left\{\frac{1}{\sqrt{\nu}}, \frac{\pi}{2}\right\}$. Moreover, if $T=0$, then $c>1$ by Assumption \ref{ass : value of c and T}. This implies that  $|c - m_0(\xi)| = c - m_0(\xi)\geq c$ for all $\xi \in \R$. Consequently, if $T=0$, we impose 
 $$\sigma_0 < c.$$
Numerically, we start by fixing  candidate values for $a$ and $\sigma_0$. In particular, as mentioned above, we chose $0< a< \min\left\{\frac{1}{\sqrt{\nu}}, \frac{\pi}{2}\right\}$ and $\sigma_0 < c$.  These candidate values are usually obtained by studying the graph of $|l|$ numerically. Then, the goal is to prove that $a$ and $\sigma_0$ satisfy \eqref{ineq : requirement on a}. To do so, given $x\geq 0$, we define 
\[
S_x \bydef \{z \in \mathbb C, ~~ |\text{Im}(z)| \leq a \text{ and } |\text{Re}(z)| \leq x\}
\]
and we use the following result.
\begin{prop}\label{prop : minimal value for x in proof of a}
    Let $x>0$ be big enough so that
    \begin{align}\label{condition on x for proof of a}
    x^2 > \begin{cases}
        \left(\frac{(x^2+a^2)}{T^2\frac{\cosh(2x)-1}{\cosh(2x)+1}}\right)^{\frac{1}{2}}(c+\sigma_0)^2  &\text{ if } T > 0\\
         \frac{1 + \frac{|\cos(2a)|}{\cosh(2x)}|}{(c-\sigma_0)^4\left(1 - \frac{|\cos(2a)|}{\cosh(2x)}\right)}  &\text{ if } T = 0.
    \end{cases} 
    \end{align}
  If $|m_T(z)-c| \geq \sigma_0$ for all $z \in S_x$, then $|m_T(z)-c| \geq \sigma_0$ for all $z \in S.$
\end{prop}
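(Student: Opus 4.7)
The plan is to reduce the statement from the infinite strip $S$ to the bounded rectangle $S_x$ by establishing the bound $|m_T(z)-c|\ge \sigma_0$ directly for every $z \in S$ with $|\text{Re}(z)|\ge x$; combined with the hypothesis on $S_x$ this will cover all of $S$. The guiding principle is that for $T>0$ one has $|m_T(z)|\to\infty$ as $|\text{Re}(z)|\to\infty$, so I will aim for a lower bound $|m_T(z)|\ge c+\sigma_0$ and conclude via $|m_T(z)-c|\ge |m_T(z)|-c$, whereas for $T=0$ one has $|m_0(z)|\to 0$, so I will instead aim for an upper bound $|m_0(z)|\le c-\sigma_0$ and conclude via $|m_0(z)-c|\ge c-|m_0(z)|$.

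The essential tool is the identity
\[
|\tanh(z)|^2 \;=\; \frac{\cosh(2\text{Re}(z))-\cos(2\text{Im}(z))}{\cosh(2\text{Re}(z))+\cos(2\text{Im}(z))},
\]
from which I would extract the lower bound $|\tanh(z)|^2\ge (\cosh(2x)-1)/(\cosh(2x)+1)$ (by bounding $\cos(2\text{Im}(z))\le 1$ in the numerator and exploiting monotonicity in $|\text{Re}(z)|$) and the upper bound $|\tanh(z)|^2\le \bigl(1+|\cos(2a)|/\cosh(2x)\bigr)/\bigl(1-|\cos(2a)|/\cosh(2x)\bigr)$ valid on the region $|\text{Re}(z)|\ge x$, $|\text{Im}(z)|\le a$. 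Writing $z=\xi+i\eta$, a direct expansion also yields $|1+Tz^2|^2 = T^2(\xi^2+\eta^2)^2 + 2T(\xi^2-\eta^2) + 1 \ge T^2\xi^4$ as soon as $\xi^2\ge a^2$. Combining these estimates with the identity $|m_T(z)|^4 = |\tanh(z)|^2\,|1+Tz^2|^2/|z|^2$ and the monotonicity of $\xi\mapsto \xi^4/(\xi^2+a^2)$, the case $T>0$ reduces to
\[
|m_T(z)|^4 \;\ge\; \frac{T^2 x^4\bigl(\cosh(2x)-1\bigr)}{(x^2+a^2)\bigl(\cosh(2x)+1\bigr)} \;>\; (c+\sigma_0)^4,
\]
where the final inequality is exactly the rearrangement of \eqref{condition on x for proof of a}. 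For $T=0$, using $1/|z|^2\le 1/\xi^2\le 1/x^2$, the analogous chain yields
\[
|m_0(z)|^4 \;\le\; \frac{1+|\cos(2a)|/\cosh(2x)}{x^2\bigl(1-|\cos(2a)|/\cosh(2x)\bigr)} \;<\; (c-\sigma_0)^4,
\]
again directly from \eqref{condition on x for proof of a}.

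The argument is essentially calculational and no serious obstacle is expected. The only points requiring care are selecting the correct direction of each elementary inequality on $|\tanh(z)|^2$, which is dictated by whether $m_T$ grows (case $T>0$) or decays (case $T=0$) as $|\text{Re}(z)|\to\infty$, and noting that the estimate $|1+Tz^2|^2\ge T^2\xi^4$ requires $x\ge a$, a mild size condition that is automatically enforced whenever \eqref{condition on x for proof of a} holds with meaningful parameters.
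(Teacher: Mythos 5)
Your argument is correct and follows essentially the same route as the paper's proof: reduce to $|\mathrm{Re}(z)|\ge x$, bound $|\tanh(z)|^2$ through the $\cosh$/$\cos$ identity, use $|1+Tz^2|^2\ge T^2\xi^4$ (writing $z=\xi+i\eta$) together with the monotonicity of $\xi\mapsto \xi^4/(\xi^2+a^2)$, and recognize the resulting threshold as a rearrangement of \eqref{condition on x for proof of a}; the only cosmetic difference is that you handle all $|\mathrm{Im}(z)|\le a$ at once, whereas the paper splits according to the sign of $\cos(2\,\mathrm{Im}(z))$. One small cleanup: the bound $|1+Tz^2|^2\ge T^2\xi^4$ is actually unconditional, since the dropped terms equal $2T^2\xi^2\eta^2+2T\xi^2+(T\eta^2-1)^2\ge 0$, so your side condition $\xi^2\ge a^2$ (and the unverified claim that \eqref{condition on x for proof of a} forces $x\ge a$) can simply be removed.
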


\begin{proof}
  We need to prove that $|m_T(z)-c| \geq \sigma_0$ for all $z \in S\setminus S_x$.  Let $z = \xi + iy \in S\setminus S_x$ and suppose that $|y| \leq \min\{a, \frac{\pi}{4}\}$. Let $T>0$, then using \eqref{eq : estimation last step}, we get
    \begin{align*}
        |m_T(\xi + iy)|^4 &= \frac{|\tanh(\xi+iy)|^2}{|\xi+iy|^2}|1+T(\xi+iy)^2|^2\\
        &= \frac{|1 - \frac{\cos(2y)}{\cosh(2\xi)}|}{|1 + \frac{\cos(2y)}{\cosh(2\xi)}|} \frac{|1+T(\xi+iy)^2|^2}{|\xi+iy|^2}\\
        &\geq \left(\frac{1 - \frac{1}{\cosh(2x)}}{1 + \frac{1}{\cosh(2x)}}\right)\frac{T^2\xi^4}{\xi^2 + a^2}\\
        &\geq  \left(\frac{\cosh(2x)-1}{\cosh(2x)+1}\right) \frac{T^2x^4}{x^2 + a^2} > (c+\sigma_0)^4
    \end{align*}
    as $\cos(2y) \geq 0$ and as $\xi \to \frac{T^2\xi^4}{\xi^2 + \frac{\pi^2}{16}}$ is increasing on $(0, \infty)$. Now if $T=0$, then
    \begin{align*}
        |m_0(\xi + iy)|^4 = \frac{|1 - \frac{\cos(2y)}{\cosh(2\xi)}|}{|1 + \frac{\cos(2y)}{\cosh(2\xi)}|} \frac{1}{\xi^2+y^2} \leq \frac{1}{\xi^2}\leq \frac{1}{x^2} \leq \frac{1 + \frac{|\cos(2a)|}{\cosh(2x)}|}{1 - \frac{|\cos(2a)|}{\cosh(2x)}} \frac{1}{x^2} < (c-\sigma_0)^4.
    \end{align*}
    If $a \leq \frac{\pi}{4}$, then this concludes the proof.
    If $a \geq \frac{\pi}{4}$, let $\frac{\pi}{4} \leq |y| \leq a$. First, let $T>0$, then using \eqref{eq : estimation last step}  again we obtain
     \begin{align*}
        |m_T(\xi + iy)|^4 
        &\geq  \frac{|1 - \frac{\cos(2y)}{\cosh(2\xi)}|}{|1 + \frac{\cos(2y)}{\cosh(2\xi)}|} \frac{T^2\xi^4}{\xi^2+ a^2}\\
        &\geq  \frac{T^2x^4}{x^2 + a^2} \geq \frac{\cosh(2x)-1}{\cosh(2x)+1} \frac{T^2x^4}{x^2 + a^2} > (c+\sigma_0)^4
    \end{align*}
    as $\cos(2y) \leq 0.$ Finally, let $T=0$ and observe that
    \begin{align*}
        |m_0(\xi + iy)|^4 = \frac{|1 - \frac{\cos(2y)}{\cosh(2\xi)}|}{|1 + \frac{\cos(2y)}{\cosh(2\xi)}|} \frac{1}{\xi^2+y^2} \leq \frac{1 + \frac{|\cos(2a)|}{\cosh(2x)}|}{1 - \frac{|\cos(2a)|}{\cosh(2x)}} \frac{1}{x^2} < (c-\sigma_0)^4.
    \end{align*}
\end{proof}

Given $x>0$ satisfying \eqref{condition on x for proof of a}, Proposition \ref{prop : minimal value for x in proof of a} provides that it is enough to prove that $|m_T(z)-c| >0$ for all $z \in S_x$ in order to obtain \eqref{ineq : requirement on a}. The proof on $S_x$ is achieved numerically using the arithmetic on intervals on Julia (cf. \cite{julia_interval}). Indeed, we write 
\[
S_x = \bigcup_{k=1}^{M_1}\bigcup_{j=1}^{M_2} I_k + i I_j
\]
for some $M_1, M_2 \in \mathbb N$ where $(I_k)$ and $(I_j)$ are families of intervals. Then if one can prove that 
\[
|m_T(I_k + iI_j) - c|\cap \{ 0 \} = \emptyset
\]
for all $k \in \{1, \dots, M_1\}$ and $j \in  \{1, \dots, M_2\}$, then $a$ satisfies the second inequality of  \eqref{ineq : requirement on a}. 
Then, using a similar approach, we write 
\begin{align*}
    [-x,x] = \bigcup_{k=1}^{M_1} I_k
\end{align*}
for a family of disjoint intervals $(I_k)_{k \in \{1, \dots, M_1\}}$ and verify that $$\inf (|m_T(I_k+ia)-c|), \inf (|m_T(I_k)-c|) \geq \sigma_0$$ for all $k \in \{1, \dots, M_1\}.$ This ensures that $\sigma_0$ and $a$ satisfy \eqref{ineq : requirement on a}.
The algorithmic details are presented in \cite{julia_cadiot}.

Now, in a similar fashion, we can determine a value for $\sigma_1$ satisfying \eqref{eq : alpha assumption}. In particular, the next lemma allows controlling the asymptotics of $m_T(\xi + ia)-c$ when $|\xi|$ gets big enough. 
\begin{prop}
    Let $x\geq a$ satisfying 
    \begin{align*}
        \frac{1}{2} \left(\left(\frac{\cosh(2x)-1}{\cosh(2x)+1}\right) \frac{T^2x^4}{x^2 + a^2}\right)^{\frac{1}{4}}  - |c| \geq 0.
    \end{align*}
    Now let $0 <\sigma_1 \leq \left(\frac{1}{32}\left(\frac{\cosh(2x)-1}{\cosh(2x)+1}\right)\right)^{\frac{1}{4}}$.
    If $|m_T(\xi+ia) - c| \geq \sigma_1\sqrt{T|\xi|}$ for all $|\xi| \leq x$, then $\sigma_1$ satisfies \eqref{eq : alpha assumption}. 
\end{prop}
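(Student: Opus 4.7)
The plan is to split $\{|\xi|\geq 1\}$ into the bounded range $1\leq |\xi|\leq x$, where the hypothesis supplies the bound by assumption, and the tail $|\xi|\geq x$, where I would reuse the complex-analytic lower bound already established inside the proof of Proposition~\ref{prop : minimal value for x in proof of a}. Concretely, that proof shows
\[
|m_T(\xi+ia)|^4 \;\geq\; \left(\frac{\cosh(2x)-1}{\cosh(2x)+1}\right)\frac{T^2\xi^4}{\xi^2+a^2}
\]
for every $|\xi|\geq x$, with the two regimes $a\leq \pi/4$ and $a\geq \pi/4$ being handled separately there (depending on the sign of $\cos(2a)$); so the estimate is valid at $y=a$ in either case.

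Given this tail estimate, I would use $x\geq a$ and $|\xi|\geq x$ to conclude $\xi^2+a^2\leq 2\xi^2$, yielding
\[
|m_T(\xi+ia)| \;\geq\; \left(\frac{\cosh(2x)-1}{2(\cosh(2x)+1)}\right)^{1/4}\sqrt{T|\xi|}.
\]
Next, since $|\xi|\mapsto T^2\xi^4/(\xi^2+a^2)$ is increasing for $|\xi|\geq a$, the standing assumption on $x$ propagates to
\[
|c| \;\leq\; \tfrac12\left(\frac{\cosh(2x)-1}{\cosh(2x)+1}\cdot\frac{T^2\xi^4}{\xi^2+a^2}\right)^{1/4} \;\leq\; \tfrac12\,|m_T(\xi+ia)|
\]
for every $|\xi|\geq x$. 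The triangle inequality would then give
\[
|m_T(\xi+ia)-c| \;\geq\; \tfrac12\,|m_T(\xi+ia)| \;\geq\; \left(\frac{\cosh(2x)-1}{32(\cosh(2x)+1)}\right)^{1/4}\sqrt{T|\xi|},
\]
which is at least $\sigma_1\sqrt{T|\xi|}$ by the choice of $\sigma_1$, completing the tail half of~\eqref{eq : alpha assumption}.

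No serious obstacle is anticipated: the heavy lifting was already done in Proposition~\ref{prop : minimal value for x in proof of a} and Lemma~\ref{lem : computation of f}, and the present statement amounts to a bookkeeping step gluing that asymptotic lower bound to the numerically verified hypothesis on the compact window $[1,x]$. The single point that warrants care is confirming that the inequality for $|m_T(\xi+ia)|^4$ is indeed available at $y=a$ in both regimes $a\leq \pi/4$ and $a\geq \pi/4$; this follows from the same case split already carried out in Proposition~\ref{prop : minimal value for x in proof of a}, so no new computation is required.
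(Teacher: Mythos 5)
Your argument is correct and follows essentially the same route as the paper's proof: both reduce the claim to the range $|\xi|\geq x$, invoke the lower bound $|m_T(\xi+ia)|^4\geq \frac{\cosh(2x)-1}{\cosh(2x)+1}\,\frac{T^2\xi^4}{\xi^2+a^2}$ established in the proof of Proposition \ref{prop : minimal value for x in proof of a} (valid at $y=a$ in both regimes), use monotonicity of $\xi\mapsto T^2\xi^4/(\xi^2+a^2)$ together with the standing assumption to get $|c|\leq\tfrac12|m_T(\xi+ia)|$, and finish with the triangle inequality and $\xi^2+a^2\leq 2\xi^2$, recovering the constant $\left(\tfrac{1}{32}\,\tfrac{\cosh(2x)-1}{\cosh(2x)+1}\right)^{1/4}$. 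The only difference is the order in which the two elementary estimates are applied, which is immaterial.
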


\begin{proof}
To prove the proposition, we need to prove that $|m_T(\xi+ia) - c| \geq \sigma_1\sqrt{T|\xi|}$ for all $|\xi| \geq x$.
     Using the proof of Proposition \ref{prop : minimal value for x in proof of a}, we have
\begin{align*}
    |m_T(\xi + ia)|^4 
        &\geq   \left(\frac{\cosh(2x)-1}{\cosh(2x)+1}\right) \frac{T^2\xi^4}{\xi^2 + a^2} 
\end{align*}
for all $\xi \geq x$. Let $\xi \geq x$, then using that
\begin{align*}
   \frac{1}{2}\left(\left(\frac{\cosh(2x)-1}{\cosh(2x)+1}\right) \frac{T^2\xi^4}{\xi^2 + a^2}\right)^{\frac{1}{4}}  - |c| \geq \frac{1}{2} \left(\left(\frac{\cosh(2x)-1}{\cosh(2x)+1}\right) \frac{T^2x^4}{x^2 + a^2}\right)^{\frac{1}{4}}  - |c| \geq 0
\end{align*}
by assumption, we obtain
\begin{align*}
  |m_T(\xi+ia)-c|^4 \geq \frac{1}{16}\left(\frac{\cosh(2x)-1}{\cosh(2x)+1}\right) \frac{T^2\xi^4}{\xi^2 + a^2}.
\end{align*}
Now, since $\xi \geq x \geq a$, then
$\frac{T^2\xi^4}{\xi^2 + a^2} \geq \frac{1}{2}{T^2\xi^2}$.  Therefore,  
\[
|m_T(\xi+ia)-c|^4 \geq \frac{1}{32}\left(\frac{\cosh(2x)-1}{\cosh(2x)+1}\right){T^2\xi^2}
\]
for all $\xi \geq x$.
\end{proof}

Using the previous lemma, it is enough to prove that  $|m_T(\xi+ia) - c| \geq \sigma_1\sqrt{T|\xi|}$ for all $|\xi| \leq x$ in order to prove that $\sigma_1$ satisfies \eqref{eq : alpha assumption}. Consequently, we can again break the interval $[-x,x]$ in sub-intervals and ensure that $|m_T(\xi+ia) - c| \geq \sigma_1\sqrt{T|\xi|}$ for all $|\xi| \leq x$ using the arithmetic on intervals (cf. \cite{julia_interval}). The computations details are presented in \cite{julia_cadiot}.

\bibliographystyle{abbrv}
\bibliography{biblio}

\begin{thebibliography}{10}

\bibitem{albert_concentration_compactness}
J.~P. Albert.
\newblock Concentration compactness and the stability of solitary-wave solutions to nonlocal equations.
\newblock {\em Amer. Math. Soc., Providence, RI}, 221:1--29, 1999.

\bibitem{arnesen_maximization_technique}
M.~Arnesen, M.~Ehrnström, and A.~Stefanov.
\newblock A maximisation technique for solitary waves: The case of the nonlocally dispersive whitham equation.
\newblock {\em Archive for Rational Mechanics and Analysis}, 248, 05 2024.

\bibitem{arnesen_existence_solitary}
M.~N. Arnesen.
\newblock Existence of solitary-wave solutions to nonlocal equations.
\newblock {\em Discrete Contin. Dyn. Syst.}, 36(7):3483--3510, 2016.

\bibitem{julia_interval}
L.~Benet and D.~Sanders.
\newblock Intervalarithmetic.jl.
\newblock 2022.
\newblock \url{ https://github.com/JuliaIntervals/IntervalArithmetic.jl}.

\bibitem{julia_fresh_approach_bezanson}
J.~Bezanson, A.~Edelman, S.~Karpinski, and V.~B. Shah.
\newblock Julia: A fresh approach to numerical computing.
\newblock {\em SIAM Review}, 59(1):65--98, 2017.

\bibitem{breden_polynomial_chaos}
M.~Breden.
\newblock A posteriori validation of generalized polynomial chaos expansions.
\newblock {\em SIAM Journal on Applied Dynamical Systems}, 22(2):765--801, 2023.

\bibitem{cont_global_bif_diag}
M.~Breden, J.-P. Lessard, and M.~Vanicat.
\newblock Global bifurcation diagrams of steady states of systems of pdes via rigorous numerics: a 3-component reaction-diffusion system.
\newblock {\em Acta Applicandae Mathematicae}, 128, 12 2013.

\bibitem{BRUELL20174232}
G.~Bruell, M.~Ehrnström, and L.~Pei.
\newblock Symmetry and decay of traveling wave solutions to the whitham equation.
\newblock {\em Journal of Differential Equations}, 262(8):4232--4254, 2017.

\bibitem{imploding_sols_3D_fluids}
T.~Buckmaster, G.~Cao-Labora, and J.~Gómez-Serrano.
\newblock Smooth imploding solutions for {3D} compressible fluids, arxiv:2208.09445, 2022.

\bibitem{Buffoni_existence_conditional}
B.~Buffoni.
\newblock Existence and conditional energetic stability of capillary-gravity solitary water waves by minimisation.
\newblock {\em Arch Ration Mech Anal}, 173:25--68, 07 2004.

\bibitem{BUFFONI20131006}
B.~Buffoni, M.~Groves, S.~Sun, and E.~Wahlén.
\newblock Existence and conditional energetic stability of three-dimensional fully localised solitary gravity-capillary water waves.
\newblock {\em Journal of Differential Equations}, 254(3):1006--1096, 2013.

\bibitem{julia_cadiot}
M.~Cadiot.
\newblock Whitham{S}oliton.jl.
\newblock 2023.
\newblock \url{ https://github.com/matthieucadiot/WhithamSoliton.jl}.

\bibitem{unbounded_domain_cadiot}
M.~Cadiot, J.-P. Lessard, and J.-C. Nave.
\newblock Rigorous computation of solutions of semilinear pdes on unbounded domains via spectral methods.
\newblock {\em SIAM Journal on Applied Dynamical Systems}, 23(3):1966--2017, 2024.

\bibitem{sh_cadiot}
M.~Cadiot, J.-P. Lessard, and J.-C. Nave.
\newblock Stationary non-radial localized patterns in the planar swift-hohenberg pde: Constructive proofs of existence.
\newblock {\em Journal of Differential Equations}, 414:555--608, 2025.

\bibitem{henot_marchal}
R.~Calleja, C.~Garcia-Azpeitia, O.~Henot, J.-P. Lessar, and J.~D.~Mireles~James.
\newblock From the lagrange triangle to the figure eight choreography: Proof of marchal's conjecture.
\newblock {\em arXiv:2406.17564}, 2024.

\bibitem{blowup_3D_Euler}
J.~Chen and T.~Y. Hou.
\newblock Stable nearly self-similar blowup of the {2D} boussinesq and {3D} euler equations with smooth data, arxiv:2210.07191, 2022.

\bibitem{CHEN_global_bifurcation}
R.~M. Chen, S.~Walsh, and M.~H. Wheeler.
\newblock On the existence and qualitative theory of stratified solitary water waves.
\newblock {\em Comptes Rendus Mathematique}, 354(6):601--605, 2016.

\bibitem{cont_equilibria_pde}
S.~Day, J.-P. Lessard, and K.~Mischaikow.
\newblock Validated continuation for equilibria of pdes.
\newblock {\em SIAM Journal on Numerical Analysis}, 45(4):1398--1424, 2007.

\bibitem{Ehrnstrom_existence_stability_solitary}
M.~Ehrnstr\"{o}m, M.~D. Groves, and E.~Wahl\'{e}n.
\newblock On the existence and stability of solitary-wave solutions to a class of evolution equations of {W}hitham type.
\newblock {\em Nonlinearity}, 25(10):2903--2936, 2012.

\bibitem{ehrnstrom_direct}
M.~Ehrnstr\"{o}m, K.~Nik, and C.~Walker.
\newblock A direct construction of a full family of {W}hitham solitary waves.
\newblock {\em Proc. Amer. Math. Soc.}, 151(3):1247--1261, 2023.

\bibitem{Ehrnström_2018}
M.~Ehrnström and M.~D. Groves.
\newblock Small-amplitude fully localised solitary waves for the full-dispersion kadomtsev–petviashvili equation.
\newblock {\em Nonlinearity}, 31(12):5351, oct 2018.

\bibitem{bif_diagram_ehrnstrom}
M.~Ehrnström, M.~Johnson, O.~Maehlen, and F.~Remonato.
\newblock On the bifurcation diagram of the capillary-gravity whitham equation.
\newblock {\em Water Waves}, 1:275–313, 2019.

\bibitem{cusped_behavior_ehsnstrom}
M.~Ehrnström, O.~Mæhlen I.~H., and K.~Varholm.
\newblock On the precise cusped behaviour of extreme solutions to whitham-type equations.
\newblock {\em Ann. Inst. H. Poincaré C Anal. Non Linéaire}, 2023.

\bibitem{EHRNSTROM2019on_whitham_conjecture}
M.~Ehrnström and E.~Wahlén.
\newblock On {W}hitham's conjecture of a highest cusped wave for a nonlocal dispersive equation.
\newblock {\em Annales de l'Institut Henri Poincaré C, Analyse non linéaire}, 36(6):1603--1637, 2019.

\bibitem{javier_convexity_highest}
A.~Enciso, J.~G\'{o}mez-Serrano, and B.~Vergara.
\newblock Convexity of {W}hitham's highest cusped wave.
\newblock {\em arXiv:1810.10935}, 2018.

\bibitem{EYCHENNE2023243}
A.~Eychenne and F.~Valet.
\newblock Decay of solitary waves of fractional korteweg-de vries type equations.
\newblock {\em Journal of Differential Equations}, 363:243--274, 2023.

\bibitem{scheel_center_manifold}
G.~Faye and A.~Scheel.
\newblock Center manifolds without a phase space.
\newblock {\em Trans. Amer. Math. Soc.}, 370(8):5843--5885, 2018.

\bibitem{faye2020corrigendum}
G.~Faye and A.~Scheel.
\newblock Corrigendum to "{C}enter {M}anifolds without a {P}hase {S}pace", 2020.

\bibitem{gomez_cap}
J.~G\'{o}mez-Serrano.
\newblock Computer-assisted proofs in {PDE}: a survey.
\newblock {\em SeMA J.}, 76(3):459--484, 2019.

\bibitem{Hildrum_2020}
F.~Hildrum.
\newblock Solitary waves in dispersive evolution equations of {W}hitham type with nonlinearities of mild regularity.
\newblock {\em Nonlinearity}, 33(4):1594, feb 2020.

\bibitem{Hur_wave_breaking}
V.~Hur.
\newblock Wave breaking in the whitham equation.
\newblock {\em Advances in Mathematics}, 317:410--437, Sept. 2017.
\newblock Publisher Copyright: {\textcopyright} 2017 Elsevier Inc.

\bibitem{julia_olivier}
O.~Hénot.
\newblock Radiipolynomial.jl.
\newblock 2022.
\newblock \url{ https://github.com/OlivierHnt/RadiiPolynomial.jl}.

\bibitem{johnson_global_capillary_whitham}
M.~A. Johnson, T.~Truong, and M.~H. Wheeler.
\newblock Solitary waves in a {W}hitham equation with small surface tension.
\newblock 2021.

\bibitem{johnson_generalized}
M.~A. Johnson and J.~D. Wright.
\newblock Generalized solitary waves in the gravity-capillary {W}hitham equation.
\newblock {\em Studies in Applied Mathematics}, 144(1):102--130, 2020.

\bibitem{koch_computer_assisted}
H.~Koch, A.~Schenkel, and P.~Wittwer.
\newblock Computer-assisted proofs in analysis and programming in logic: a case study.
\newblock {\em SIAM Rev.}, 38(4):565--604, 1996.

\bibitem{MR648529}
O.~E. Lanford, III.
\newblock A computer-assisted proof of the {F}eigenbaum conjectures.
\newblock {\em Bull. Amer. Math. Soc. (N.S.)}, 6(3):427--434, 1982.

\bibitem{lannes_water_wave}
D.~Lannes.
\newblock {\em The water waves problem}, volume 188 of {\em Mathematical Surveys and Monographs}.
\newblock American Mathematical Society, Providence, RI, 2013.
\newblock Mathematical analysis and asymptotics.

\bibitem{index_theorem_Lin_Chongchun}
Z.~Lin and C.~Zeng.
\newblock Instability, index theorem, and exponential trichotomy for linear hamiltonian pdes.
\newblock {\em Memoirs of the American Mathematical Society}, 275, 03 2017.

\bibitem{lions1984concentration}
P.-L. Lions.
\newblock The concentration-compactness principle in the calculus of variations. the locally compact case, part 1.
\newblock 1(2):109--145, 1984.

\bibitem{marstrander}
J.~U. Marstrander.
\newblock Solitary waves for dispersive equations with coifman–meyer nonlinearities.
\newblock {\em Discrete and Continuous Dynamical Systems}, 44(10):2985--3022, 2024.

\bibitem{MR1276767}
K.~Mischaikow and M.~Mrozek.
\newblock Chaos in the {L}orenz equations: a computer-assisted proof.
\newblock {\em Bull. Amer. Math. Soc. (N.S.)}, 32(1):66--72, 1995.

\bibitem{Moore_interval_analysis}
R.~E. Moore.
\newblock {\em Interval analysis.}
\newblock Prentice-Hall, Inc., Englewood Cliffs, N.J.,, 1966.

\bibitem{nakao_numerical}
M.~T. Nakao.
\newblock Numerical verification methods for solutions of ordinary and partial differential equations.
\newblock volume~22, pages 321--356. 2001.
\newblock International Workshops on Numerical Methods and Verification of Solutions, and on Numerical Function Analysis (Ehime/Shimane, 1999).

\bibitem{plum_numerical_verif}
M.~T. Nakao, M.~Plum, and Y.~Watanabe.
\newblock {\em Numerical verification methods and computer-assisted proofs for partial differential equations}.
\newblock Springer, 2019.

\bibitem{poularikas2018transforms}
A.~D. Poularikas.
\newblock {\em Transforms and applications handbook}.
\newblock CRC press, 2018.

\bibitem{REMONATO201751}
F.~Remonato and H.~Kalisch.
\newblock Numerical bifurcation for the capillary whitham equation.
\newblock {\em Physica D: Nonlinear Phenomena}, 343:51--62, 2017.

\bibitem{saut_wave_breaking}
J.-C. Saut and Y.~Wang.
\newblock The wave breaking for whitham-type equations revisited.
\newblock {\em SIAM Journal on Mathematical Analysis}, 54(2):2295--2319, 2022.

\bibitem{Stefanov2018SmallAT}
A.~Stefanov and J.~D. Wright.
\newblock Small amplitude traveling waves in the full-dispersion {W}hitham equation.
\newblock {\em Journal of Dynamics and Differential Equations}, 32:85--99, 2018.

\bibitem{truong_global_whitham}
T.~Truong, E.~Wahl{\'{e}}n, and M.~H. Wheeler.
\newblock Global bifurcation of solitary waves for the {W}hitham equation.
\newblock {\em Mathematische Annalen}, 383(3-4):1521--1565, aug 2021.

\bibitem{MR1701385}
W.~Tucker.
\newblock The {L}orenz attractor exists.
\newblock {\em C. R. Acad. Sci. Paris S\'{e}r. I Math.}, 328(12):1197--1202, 1999.

\bibitem{MR1870856}
W.~Tucker.
\newblock A rigorous {ODE} solver and {S}male's 14th problem.
\newblock {\em Found. Comput. Math.}, 2(1):53--117, 2002.

\bibitem{cont_suspension_bridge}
J.~B. {van den Berg}, M.~Breden, J.-P. Lessard, and M.~Murray.
\newblock Continuation of homoclinic orbits in the suspension bridge equation: A computer-assisted proof.
\newblock {\em Journal of Differential Equations}, 264(5):3086--3130, 2018.

\bibitem{van2021spontaneous}
J.~B. van~den Berg, M.~Breden, J.-P. Lessard, and L.~van Veen.
\newblock Spontaneous periodic orbits in the navier-stokes flow.
\newblock {\em Journal of nonlinear science}, 31(2):1--64, 2021.

\bibitem{MR3779642}
J.~B. van~den Berg and J.~Jaquette.
\newblock A proof of {W}right's conjecture.
\newblock {\em J. Differential Equations}, 264(12):7412--7462, 2018.

\bibitem{jb_rigorous_dynmamics}
J.~B. van~den Berg and J.-P. Lessard.
\newblock Rigorous numerics in dynamics.
\newblock {\em Notices Amer. Math. Soc.}, 62(9):1057--1061, 2015.

\bibitem{watson1995treatise}
G.~Watson.
\newblock {\em A Treatise on the Theory of Bessel Functions}.
\newblock Cambridge Mathematical Library. Cambridge University Press, 1995.

\bibitem{whitham_variational}
G.~B. Whitham.
\newblock Variational methods and applications to water waves.
\newblock {\em Proceedings of the Royal Society of London. Series A, Mathematical and Physical Sciences}, 299(1456):6--25, 1967.

\bibitem{whitham2011linear}
G.~B. Whitham.
\newblock {\em Linear and nonlinear waves}.
\newblock John Wiley \& Sons, 2011.

\bibitem{MR4113209}
D.~Wilczak and P.~Zgliczy\'{n}ski.
\newblock A geometric method for infinite-dimensional chaos: symbolic dynamics for the {K}uramoto-{S}ivashinsky {PDE} on the line.
\newblock {\em J. Differential Equations}, 269(10):8509--8548, 2020.

\end{thebibliography}

\end{document}